\theoremstyle{plain}
\theoremstyle{plain}
\newtheorem{theorem}{Theorem}[subsection]
\newtheorem{corollary}[theorem]{Corollary}
\newtheorem{definition}[theorem]{Definition}
\newtheorem{example}[theorem]{Example}
\newtheorem{lemma}[theorem]{Lemma}
\newtheorem{proposition}[theorem]{Proposition}
\newtheorem{remark}[theorem]{Remark}
\newcommand{\LeftEqNo}{\let\veqno\@@leqno}
 \numberwithin{equation}  {section}
\begin{document}

\

\vspace{-2cm}

\title[A generalization of the Voiculescu theorem]{A generalization of the Voiculescu theorem for normal operators in semifinite von Neumann algebras}
\author{Qihui Li}
\curraddr{School of Science, East China University of Science and Technology, Shanghai, 200237, P. R. China}
\email{qihui\_{}li@126.com}
\thanks{The first author was partly supported by NSFC(Grant No.11671133).}
\author{Junhao Shen}
\curraddr{Department of Mathematics \& Statistics, University of
New Hampshire, Durham, 03824, US}
\email{Junhao.Shen@unh.edu}
\thanks{}
\author{Rui Shi}
\curraddr{School of Mathematical Sciences, Dalian University of
Technology, Dalian, 116024, P. R. China}
\email{ruishi@dlut.edu.cn, ruishi.math@gmail.com}
\thanks{The corresponding author Rui Shi was partly supported by NSFC(Grant No.11401071) and the Fundamental Research Funds for the Central Universities (Grant No.DUT16RC(4)57).}

\subjclass[2010]{Primary 47C15}


\keywords{Weyl-von Neumann theorem, Voiculescu Theorem,   norm
ideal, $\Phi$-well-behaved sets, von Neumann algebras}

\begin{abstract}
In this paper, we provide a generalized  version of the Voiculescu
theorem for normal operators   by showing that, in a von Neumann
algebra  with separable pre-dual and a faithful normal semifinite
tracial weight $\tau$, a normal operator is
  an arbitrarily small $(\max\{\|\cdot\|,
\Vert\cdot\Vert_{2}\})$-norm perturbation of a diagonal operator.
Furthermore, in a countably decomposable, properly infinite von
Neumann algebra with a faithful  normal  semifinite tracial weight,
we prove that each self-adjoint operator can be diagonalized modulo
norm ideals satisfying a natural condition.
\end{abstract}

\maketitle

\section{Introduction}

Let $\mathcal{H}$ be a complex separable Hilbert space. Denote by $\mathcal{B%
}(\mathcal{H})$ the set of bounded linear operators on
$\mathcal{H}$ and by
$\mathcal{K}(\mathcal{H})$ the set of compact operators in $\mathcal{B}(%
\mathcal{H})$. 
In 1909, Weyl \cite{Weyl} proved that a self-adjoint operator  in $%
\mathcal{B}(\mathcal{H})$ is a   compact perturbation of a diagonal
operator.
 Later, in 1935, von Neumann \cite{Von2} improved the result by
replacing a  ``compact operator'' with an ``arbitrarily small
Hilbert-Schmidt operator''. That is, for a  self-adjoint
operator $a$ in $\mathcal{B}(%
\mathcal{H})$ and  $\epsilon>0$, there exist a diagonal self-adjoint
operator $d$ in $\mathcal{B}(\mathcal{H})$ and a compact operator
$k$ in $\mathcal{K}(\mathcal{H})$ such that
\begin{equation*}
a=d+k  \mbox{ and } \Vert k \Vert_{2}\le\epsilon,
\end{equation*}
where $\|\cdot\|_2$ is the Hilbert-Schmidt norm of
$\mathcal{B}(\mathcal{H})$. Recall that an operator $d$ in $\mathcal{B}(\mathcal{H})$ is called \emph{%
diagonal} if there exist a family $\{e_n\}_{n=1}^\infty$ of
orthogonal
projections in $\mathcal{B}(\mathcal{H})$ and a family $\{\lambda_n\}_{n=1}^%
\infty$ of complex numbers such that $d=\sum_{n=1}^\infty
\lambda_ne_n$.

In 1957, Kato and Rosenblum \cite{Kato,Rosenblum} showed that, up to unitary
equivalence, the \emph{absolutely continuous part} of a self-adjoint
operator in $\mathcal{B}(\mathcal{H})$ \emph{can not} be changed by
trace-class perturbations.   Thus, if a self-adjoint operator $a$   in $\mathcal{B}(\mathcal{H})$  is not
purely singular, then $a$ {can not} be diagonalized modulo the trace class.
On the other hand, it was shown by Carey and Pincus   \cite {CP} that a purely singular  self-adjoint operator   in $\mathcal{B}(\mathcal{H})$  is a small   trace class perturbation of a   diagonal operator.

In 1958, Kuroda \cite{Kuroda} generalized the Weyl-von Neumann
theorem for every single self-adjoint operator in
$\mathcal{B}(\mathcal{H})$ with respect to a unitarily invariant
norm that is not equivalent to the trace norm. More specifically,
given $\epsilon>0$ and $\Phi(\cdot)$ a unitarily invariant norm
\emph{not equivalent to the trace norm}, for every self-adjoint
operator $a$ in $\mathcal{B}(\mathcal{H})$, there exist a diagonal
self-adjoint operator $d$ in $\mathcal{B}(\mathcal{H})$ and a
compact operator $k$ in $\mathcal{K}(\mathcal{H})$ such that $a=d+k
\mbox{
and } \Phi(k)\le \epsilon. 
$

Answering a question proposed by Halmos \cite{Halmos}, Berg
\cite{Berg} and Sikonia \cite{Siknia} independently provided an
extension of the Weyl-von Neumann theorem for a single normal
operator $a$ in $\mathcal{B}(\mathcal{H}) $ relative to the operator
norm. In detail, for any given $\epsilon>0$, there exist a diagonal
operator $d$ in $\mathcal{B}(\mathcal{H})$ and a compact operator
$k$ in $\mathcal{K}(\mathcal{H})$ such that $a=d+k \mbox{ and }
\Vert k\Vert\le \epsilon.$ In the same paper, Berg proposed an
interesting question:{\emph{Whether the $k$ can be in the
Hilbert-Schmidt class.}}

Note that Berg's result can also be viewed as an extension of the Weyl-von
Neumann theorem for two commuting self-adjoint operators in $\mathcal{B}(%
\mathcal{H})$ relative to the operator norm. The next significant improvement of the
Weyl-von Neumann theorem came in 1979 from Voiculescu \cite{Voi} by proving
a version of the Weyl-von Neumann theorem for $r$ commuting self-adjoint
operators in $\mathcal{B}(\mathcal{H})$ relative to norm ideals. Among many
remarkable results, Voiculescu was able to show that, for $r\geq 2$, given $r$
commuting self-adjoint operators $a_{1},\ldots,a_{r}$ in $\mathcal{B}(%
\mathcal{H})$ and $\epsilon>0$, there exist $r$  commuting self-adjoint
diagonal operators $d_{1},\ldots,d_{r}$ in $\mathcal{B}(\mathcal{H})$ such
that
\begin{equation*}
\Vert a_{i}-d_{i}\Vert_{r}\le\epsilon, \mbox{ for } i=1,\ldots, r,
\end{equation*}
where $\|\cdot\|_r$ is the Schatten $r$-norm of $\mathcal{B}(\mathcal{H})$.
In particular, \emph{a normal operator in $\mathcal{B}(\mathcal{H})$  is a
sum of a diagonal operator and a small Hilbert-Schmidt operator.} These
results of Voiculescu were based on his striking non-commutative Weyl-von
Neumann theorem in \cite{Voi2}, which gave an affirmative answer to the 8th
problem proposed by Halmos \cite{Halmos} in 1970. (More   developments on diagonalizations of self-adjoint operators in
$\mathcal B(\mathcal H)$ can be found in \cite{BV}, \cite{Voi3}-\cite{Voi9}, \cite{Xia1}-\cite{Xia5} and etc.)

A von Neumann algebra is a $\ast $-subalgebra of
$\mathcal{B}(\mathcal{H}_0)$ that is closed in the strong operator
topology for   a complex Hilbert space  $\mathcal{H}_0$. In their
fundamental papers \cite{MurrayVonNeuman1},
\cite{MurrayVonNeuman2}, \cite{MurrayVonNeuman3}, and \cite{MurrayVonNeuman4}%
, Murray and von Neumann discussed some basic properties for von Neumann
algebras and separated von Neumann algebras into type I, type II and type
III. A von Neumann algebra is called \textquotedblleft \emph{semifinite}%
\textquotedblright\ if it has no direct summand of type III, or equivalently
it has a faithful normal semifinite tracial weight (see Remark 8.5.9 in \cite%
{Kadison} for details). A von Neumann algebra $\mathcal{M}$ is \emph{%
countably decomposable} if each orthogonal family of nonzero projections in $%
\mathcal{M}$ is countable. A factor is a von Neumann algebra with a trivial
center. Obviously, for a separable Hilbert space $\mathcal{H}$, $\mathcal{B}(%
\mathcal{H})$ is a countably decomposable semifinite factor.

It is natural to consider extensions of the Weyl-von Neumann theorem in the
setting of von Neumann algebras. Several results were obtained in the case
of semifinite factors. Let $\mathcal{N}$ be a countably decomposable
semifinite factor and $\mathcal{K}(\mathcal{N})$ a two-sided closed ideal
generated by all finite projections in $\mathcal{N}$. A  result by Zsido
\cite{Zaido} in 1975 showed that, for a self-adjoint operator $a$ in $%
\mathcal{N}$, there exists a diagonal operator $d$ in $\mathcal{N}$ such
that $a-d\in \mathcal{K}(\mathcal{N})$, where  an operator $d$ in $\mathcal{N}$ is called \emph{%
diagonal} if there exist a family $\{e_n\}_{n=1}^\infty$ of
orthogonal
projections in $\mathcal{N}$ and a family $\{\lambda_n\}_{n=1}^%
\infty$ of complex numbers such that $d=\sum_{n=1}^\infty
\lambda_ne_n$. This result was further extended by
Akemann and Pedersen in \cite{Ake}, when they verified that the operator
norm of $a-d$ can be arbitrarily small. Later, in 1978, Kaftal \cite{Kaftal}
proved that, for a self-adjoint operator $a$ in $\mathcal{N}$ and $\epsilon>0
$, there exists a diagonal operator $d$ in $\mathcal{N}$ such that $\Vert
a-d\Vert<\epsilon \mbox{
and } \Vert a-d\Vert_{2}<\epsilon. $ Here $\Vert
a-d\Vert_{2}\triangleq(\tau(|a-d|^2))^{1/2}$ and $\tau$ is a faithful normal
semifinite tracial weight  of $\mathcal{N}$.

The main result of the paper is the following generalization of the
Voiculescu Theorem for normal operators in $\mathcal{B}(\mathcal{H})$ to the
setting of von Neumann algebras.

\vspace{0.2cm}

{T{\Small HEOREM}} \ref{thm6.1} \emph{Let $\mathcal{N}$ be a
countably decomposable, properly infinite, semifinite factor with a
faithful  normal semifinite tracial weight $\tau$. Let $r\geq2$ be a
positive integer. Assume
$\{ a_{i} \}_{i=1}^r $ is a family of commuting self-adjoint operators in $%
\mathcal{N}$. Then, for any $\epsilon>0,$ there is a family $%
\{d_{i}\}_{i=1}^r $of commuting diagonal operators in $\mathcal{N}$ such
that
\begin{equation*}
\max_{1\le i\le r} \{\|a_i-d_i\|, \|a_i-d_i\|_r\} \le \epsilon,
\end{equation*}
where $\Vert  a_i-d_i\Vert_{r}=(\tau(|a_i-d_i|^r))^{1/r}$ for each $1\le
i\le r$.}

\vspace{0.2cm} {T{\Small HEOREM}} \ref{voiNormal2}. \ \emph{\ Let $\mathcal{M%
}$ be a semifinite von Neumann algebra with separable pre-dual and
let $\tau$ be a faithful
normal semifinite tracial weight of $\mathcal M$. Assume $a$ is a normal operator in $%
\mathcal{M}.$ Given an $\epsilon>0,$ there is a diagonal operator $d$ in $%
\mathcal{M}$ such that
\begin{equation*}
\max \{ \|a-d\|, \|a-d\|_2 \}\le \epsilon,
\end{equation*}
where $\Vert  a-d\Vert_{2}=(\tau(|a-d|^2))^{1/2}$.}

\

A key ingredient in the proofs of the preceding results is our Theorem \ref%
{VoiThm_2}, which partially extends Voiculescu's celebrated non-commutative
Weyl-von Neumann theorem from $\mathcal{B}(\mathcal{H})$ to a semifinite
factor.

Besides Theorem \ref{thm6.1} and Theorem \ref{voiNormal2}, we also
considered perturbations of a self-adjoint operator in a von Neumann
algebra and provided a generalization of Kuroda Theorem (see
\cite{Kuroda}) as follows.

\vspace{0.2cm}

{T{\Small HEOREM}} \ref{thm2.3.3}. \ \emph{Let $\mathcal{M}$ be a
countably decomposable, properly infinite von Neumann algebra with a
faithful  normal  semifinite tracial weight $\tau$ and let
$\mathcal{K}_{\Phi}(\mathcal{M},\tau)$ be a norm ideal of
$(\mathcal{M},\tau)$ (see Definition \ref{prelim_def1} for its
definition). Assume that
\begin{equation*}
\displaystyle
\lim_{\overset{\tau(e)\rightarrow\infty}{e\in \mathcal{P}\mathcal{F}(%
\mathcal{M},\tau)}}\frac{\Phi(e)}{\tau(e)} =0.
\end{equation*}
Let $a\in \mathcal{M} $ be a self-adjoint element. Then for every $\epsilon>0
$, there exists a diagonal operator $d$ in $\mathcal{M}$ such that: }

\begin{enumerate}
\item[(i)] \emph{$a-d\in{\mathcal{K}^{0}_{\Phi}(\mathcal{M},\tau)^{}}$; }

\item[(ii)] \emph{$\Phi(a-d)\le\epsilon$. }
\end{enumerate}

The paper is organized as follows. In Section $2$, we prepare
related notation, definitions, and lemmas. In Section $3$, we extend
Kuroda Theorem for self-adjoint operators in a countably
decomposable, properly infinite von Neumann algebra with a faithful
normal semifinite tracial weight. In Section $4$, a definition of
$\Phi$-well-behaved sets is introduced and some of its properties
are discussed. Examples of $\Phi$-well-behaved sets are also given.
In Section $5$, an extended Voiculescu's non-commutative Weyl-von
Neumann Theorem in semifinite factors is proved for separable nuclear $C^{*}$%
-algebras relative to norm ideals. In Section $6$, we prove that in
a countably decomposable, properly infinite, semifinite factor, for
$r\ge 2$, $r$ mutually commuting self-adjoint operators can be
diagonalized simultaneously up to arbitrarily small perturbations
with respect to a $(\max\{\|\cdot\|, \Vert\cdot\Vert_{r}\})$-norm.
As a corollary, a normal operator in a countably decomposable,
properly infinite, semifinite factor can be diagonalized up to an
arbitrarily small $(\max\{\|\cdot\|, \Vert\cdot\Vert_{2}\})$-norm
perturbation. By applying von Neumann's reduction theory
\cite{MurrayVonNeuman5}, a normal operator in a von Neumann algebra
with separable predual and a faithful normal semifinite tracial
weight $\tau$ can be diagonalized up to an arbitrarily small $%
(\max\{\|\cdot\|, \Vert\cdot\Vert_{2}\})$-norm perturbation.

\section{Preliminaries and Notation}

\label{sec2} In the paper we let $\mathcal{M}$ be a countably
decomposable, properly infinite von Neumann algebra with a faithful
normal semifinite tracial weight $\tau $. Let
\begin{equation*}
\begin{aligned}
  \mathcal P\mathcal F(\mathcal M,\tau) &=\{ e  \ : \ e=e^*=e^2\in \mathcal M \text { and } \tau(e)<\infty\}\\
  \mathcal F(\mathcal M,\tau) &= \{xey \ : \ e\in  \mathcal P\mathcal F(\mathcal M,\tau)  \text { and } x,y\in\mathcal M\}\\
    \mathcal K(\mathcal M,\tau)&= \text{$\|\cdot\|$-norm closure of }  \mathcal F(\mathcal M,\tau) \text{ in } \mathcal M
 \end{aligned}
\end{equation*}%
be the sets of finite rank projections, finite rank operators, and compact
operators respectively, in $(\mathcal{M},\tau )$.

\begin{remark}
\label{2.0.1} For a von Neumann algebra $\mathcal{M}$, the $\| \cdot\|$-norm
closed ideal generated by finite projections in $\mathcal{M}$ is denoted $%
\mathcal{K}(\mathcal{M})$. Generally, $\mathcal{K}(\mathcal{M}%
,\tau)\subseteq \mathcal{K}(\mathcal{M})$, because a finite projection might
not be a finite rank projection with respect to $\tau$. However, if $%
\mathcal{M}$ is a countably decomposable semifinite factor, then $\mathcal{K}%
(\mathcal{M},\tau) = \mathcal{K}(\mathcal{M})$ for a faithful, normal,
semifinite tracial weight $\tau$.
\end{remark}
For each $x$ in $\mathcal{M}$, we let $R(x)$ denote the range
projection of $x$ in $\mathcal{M}$.
 We will repeatedly use the following facts in the paper.
\begin{enumerate}
\item [(i)]  Let $(\mathcal{M})_1^+$ be the  unit ball of positive
operators in $\mathcal{M}$.
Suppose $x,y$ are in $(\mathcal{M})_1^+$. Then $xy=x=yx$ if and only if $%
x\le R(x)\le y$.
\item [(ii)]    Suppose $e$ is a nonzero projection in $\mathcal M$. Then, by Proposition 8.5.2 in \cite{Kadison}, there exists
a family $\{e_n\}_{n=1}^\infty$ of orthogonal projections in $\mathcal M$ such that $e=\sum_{n=1}^\infty e_n$ and $\tau(e_n)<\infty$ for all $n\in\mathbb N$.
\end{enumerate}

Recall that the weak$^*$-topology on $\mathcal{M}$ is the topology
on $\mathcal{M} $ induced from   the predual of $\mathcal{M}$.


\subsection{Norm ideals of semifinite von Neumann algebras}

\begin{definition}
\label{prelim_def1} A \emph{norm ideal} $\mathcal{K}_\Phi(\mathcal{M},\tau)$
of $(\mathcal{M},\tau)$ is a two sided ideal of $\mathcal{M}$ equipped with
a norm $\Phi : \mathcal{K}_\Phi(\mathcal{M},\tau)\rightarrow [0,\infty)$,
which satisfies

\begin{enumerate}
\item[(i)] $\displaystyle \Phi(uxv)=\Phi(x)$ for all $x\in \mathcal{K}_\Phi(%
\mathcal{M},\tau)$ and unitary elements $u,v$ in $\mathcal{M}$, i.e. $\Phi$
is unitarily invariant;

\item[(ii)] there exists $\lambda>0$ such that $\displaystyle \Phi(x) \ge
\lambda\|x\|$ for all $x\in \mathcal{K}_\Phi(\mathcal{M},\tau)$, i.e. $\Phi$
is $\|\cdot \|$-dominating;

\item[(iii)] $\mathcal{K}_\Phi(\mathcal{M},\tau)$ is a Banach space with
respect to the norm $\Phi$;

\item[(iv)] $\displaystyle \mathcal{F}(\mathcal{M},\tau)\subseteq \mathcal{K}%
_\Phi(\mathcal{M},\tau) \subseteq \mathcal{K }(\mathcal{M},\tau)$.
\end{enumerate}

The $\Phi$-norm closure of $\mathcal{F}(\mathcal{M},\tau)$ in $\mathcal{K}%
_\Phi(\mathcal{M},\tau)$ will be denoted by $\mathcal{K}_\Phi^0(\mathcal{M}%
,\tau)$, which is also a norm ideal of $(\mathcal{M},\tau)$. If $\mathcal{K}%
_\Phi^0(\mathcal{M},\tau)=\mathcal{K}_\Phi(\mathcal{M},\tau)$, then $%
\mathcal{K}_\Phi(\mathcal{M},\tau)$ is called a \emph{minimal} norm ideal of
$(\mathcal{M},\tau)$.
\end{definition}

\begin{remark}
For the purpose of convenience, if $x\notin \mathcal{K}_\Phi(\mathcal{M}%
,\tau)$, then we set $\Phi(x)=\infty.$
\end{remark}

\begin{example}
See \cite{Voi} for examples of norm ideals when $(\mathcal
M,\tau)=(\mathcal B(\mathcal H),Tr)$, where   $\mathcal H$ is a
separable complex Hilbert space and $Tr$ is a canonical trace of
$\mathcal B(\mathcal H)$.
\end{example}

\begin{example}
$\mathcal{K}(\mathcal{M},\tau)$ is a norm ideal of $(\mathcal{M},\tau)$ with
respect to the $\|\cdot\|$-norm.
\end{example}

We list some useful properties of a norm ideal in the next lemma.

\begin{lemma}
\label{prelim_lemma1} Suppose that $\mathcal{K}_\Phi(\mathcal{M},\tau)$ is a
norm ideal in $(\mathcal{M},\tau)$. Then the following statements are true.

\begin{enumerate}
\item[(i)] $\Phi(axb)\le \|a\|\Phi(x) \|b\|, $ for all $x\in \mathcal{K}%
_\Phi(\mathcal{M},\tau)$ and $a,b\in\mathcal{M}$.

\item[(ii)] For all $x\in \mathcal{K}_\Phi(\mathcal{M},\tau)$, $x^*\in
\mathcal{K}_\Phi(\mathcal{M},\tau)$ and $\Phi(x)=\Phi(x^*)=\Phi(|x|)$.

\item[(iii)] If $x, y\in \mathcal{K}_\Phi(\mathcal{M},\tau)$ such that $0\le
x\le y$, then $\Phi(x)\le \Phi(y)$.

\item[(iv)] If $x \in \mathcal{F}(\mathcal{M},\tau)$ and $R(x)$ is the range
projection of $x$ in $\mathcal{M}$, then $\Phi(x) \le \|x\|\Phi(R(x))$.

\item[(v)] Suppose that $\{x_n\}_{n=1}^\infty\subseteq \mathcal{K}_\Phi^0(\mathcal{M}%
,\tau)$ such that (1) $\sum_n x_n$ converges to $x\in\mathcal{M}$  in weak$^*
$-topology and (2) $\sum_n\Phi(x_n)<\infty$. Then $x\in \mathcal{K}_\Phi^0(%
\mathcal{M},\tau)$ and $\lim_k\Phi(x-\sum_{n=1}^k x_n)=0$.
\end{enumerate}
\end{lemma}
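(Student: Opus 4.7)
The strategy is to prove (i) first as the linchpin and derive (ii)--(iv) from it via polar decomposition, Douglas's factorization lemma, and the $\Phi$-invariance of Murray--von Neumann equivalent projections; (v) is a completeness argument. For (i), by positive homogeneity it suffices to prove $\Phi(ax)\le\Phi(x)$ for $\|a\|\le 1$, since the right-multiplication estimate is symmetric. Fix $\epsilon\in(0,1)$; since $\|(1-\epsilon)a\|<1$, Haagerup's refinement of the Russo--Dye theorem (valid in any unital $C^*$-algebra, hence in $\mathcal M$) produces an integer $n$ and unitaries $u_1,\dots,u_n\in\mathcal M$ with $(1-\epsilon)a=\frac{1}{n}(u_1+\cdots+u_n)$. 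Then by Definition~\ref{prelim_def1}(i),
$$
(1-\epsilon)\Phi(ax)=\Phi\bigl((1-\epsilon)ax\bigr)\le \frac{1}{n}\sum_{i=1}^n \Phi(u_ix)=\Phi(x),
$$
and letting $\epsilon\to 0^+$ yields $\Phi(ax)\le\Phi(x)$.

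For (ii), polar-decompose $x=w|x|$ in $\mathcal M$: then $|x|=w^*x$ and $x^*=|x|w^*$ both lie in the two-sided ideal $\mathcal K_\Phi(\mathcal M,\tau)$, and two applications of (i) give $\Phi(x^*)\le\Phi(|x|)\le\Phi(x)$; replacing $x$ by $x^*$ reverses these and gives $\Phi(x)=\Phi(x^*)=\Phi(|x|)$. For (iii), Douglas's lemma applied to $x^{1/2},y^{1/2}\in\mathcal M$ produces a contraction $c\in\mathcal M$ with $x^{1/2}=y^{1/2}c$, so $x=c^*yc$, and (i) gives $\Phi(x)\le\Phi(y)$. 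For (iv), the key observation is that (i) and (ii) force $\Phi(p)=\Phi(q)$ whenever $p\sim q$ via a partial isometry $v$ with $v^*v=p$, $vv^*=q$: since $|v|=p$ and $|v^*|=q$, part~(ii) gives $\Phi(v)=\Phi(p)$ and $\Phi(v)=\Phi(v^*)=\Phi(q)$. Combining $|x|\le\|x\|R(|x|)$ with (iii), (ii), and $R(|x|)=w^*w\sim ww^*=R(x)$ delivers $\Phi(x)=\Phi(|x|)\le\|x\|\Phi(R(|x|))=\|x\|\Phi(R(x))$.

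For (v), the partial sums $S_k=\sum_{n=1}^k x_n$ are $\Phi$-Cauchy since $\Phi(S_k-S_j)\le\sum_{n=j+1}^k\Phi(x_n)\to 0$, so by completeness of the closed subspace $\mathcal K_\Phi^0(\mathcal M,\tau)$ they converge in $\Phi$-norm to some $y\in\mathcal K_\Phi^0(\mathcal M,\tau)$. Since $\Phi$ dominates $\|\cdot\|$ by Definition~\ref{prelim_def1}(ii), $S_k\to y$ in operator norm and hence in the weak-$*$ topology; comparing with the assumed weak-$*$ limit $x$ yields $x=y\in\mathcal K_\Phi^0(\mathcal M,\tau)$ and $\Phi(x-S_k)\to 0$. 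The main technical hurdle is (i): the decisive input is Haagerup's decomposition of a strict contraction as an equal-weight average of unitaries in $\mathcal M$, together with the scaling identity $\Phi((1-\epsilon)ax)=(1-\epsilon)\Phi(ax)$ that promotes the strict-contraction bound to the closed unit ball.
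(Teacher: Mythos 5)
Your proof is correct, and for the only part the paper actually argues, item (v), your reasoning (partial sums Cauchy in $\Phi$-norm, completeness of $\mathcal{K}_\Phi^0(\mathcal{M},\tau)$, $\|\cdot\|$-domination, uniqueness of weak$^*$-limits) is essentially identical to the paper's. The paper dismisses (i)--(iv) as obvious; your Russo--Dye/Haagerup, polar-decomposition, and Douglas-lemma fill-ins are the standard valid routes, the only tacit point being that in (iv) the polar partial isometry $w$ lies in $\mathcal{K}_\Phi(\mathcal{M},\tau)$ (since $w=w(w^*w)$ and $w^*w=R(|x|)\in\mathcal{F}(\mathcal{M},\tau)$) before (ii) is applied to it.
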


\begin{proof}
(i), (ii) , (iii) and (iv) are obvious. We will need only to show (v). Since
$\sum_n\Phi(x_n)<\infty$, it is clear that $\{\sum_{n=1}^kx_n\}_{k=1}^\infty$
is a Cauchy sequence in $\Phi$-norm. From the condition that $%
\{x_n\}_{n=1}^\infty\subseteq \mathcal{K}_\Phi^0(\mathcal{M},\tau)$,
we might assume that
$\sum_n x_n$ converges to $y \in \mathcal{K}_\Phi^0(\mathcal{M},\tau)$ in $%
\Phi$-norm. From the property that $\Phi$ is $\|\cdot \|$-dominating, it
follows that $\sum_n x_n$ converges to $y $ in $\|\cdot \|$-topology. Note
that $\sum_n x_n$ converges to $x\in\mathcal{M}$ in weak$^*$-topology. We
can conclude that $x=y$, which finishes the proof of the lemma.
\end{proof}

Examples of norm ideals of $(\mathcal{M},\tau)$ can come from the next
lemma. Recall that $L^r(\mathcal{M},\tau)$, for $1\le r<\infty$, is the
non-commutative $L^r$-spaces associated with $(\mathcal{M},\tau)$ and its
norm $\|\cdot\|_r$ is defined by
\begin{equation*}
\|x\|_r=(\tau(|x|^r)^{1/r}, \ \ \forall \ x\in L^r(\mathcal{M},\tau)
\end{equation*}
(see \cite{Pisier} for more details).

\begin{lemma}
\label{prelim_lemma2} Let $1\le r<\infty$ and $\mathcal{J}=L^r(\mathcal{M}%
,\tau)\cap \mathcal{M}$. Define a mapping $\Phi$ on $\mathcal{J}$ by
\begin{equation*}
\Phi(x) =\max \{ \|x\|_r,\|x\| \}, \ \ \forall \ x\in \mathcal{J}.
\end{equation*}
Then $\mathcal{J}$ is a norm ideal of $(\mathcal{M},\tau)$ with respect to
the norm $\Phi$.
\end{lemma}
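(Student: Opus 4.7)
The plan is to verify the four axioms of Definition \ref{prelim_def1} plus the two-sided ideal property for $\mathcal{J}=L^r(\mathcal{M},\tau)\cap\mathcal{M}$ with $\Phi(x)=\max\{\|x\|_r,\|x\|\}$. Axiom (i) follows because both $\|\cdot\|$ and $\|\cdot\|_r$ are unitarily invariant (the latter by the tracial property of $\tau$). Axiom (ii) is immediate with $\lambda=1$, since $\Phi(x)\geq\|x\|$. The ideal property follows from $\|axb\|\leq\|a\|\|x\|\|b\|$ together with the noncommutative H\"older inequality $\|axb\|_r\leq\|a\|\|x\|_r\|b\|$, which simultaneously gives condition (i) of Lemma \ref{prelim_lemma1}.

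For axiom (iv), the inclusion $\mathcal{F}(\mathcal{M},\tau)\subseteq\mathcal{J}$ is immediate: a typical generator $xey$ with $e\in\mathcal{P}\mathcal{F}(\mathcal{M},\tau)$ satisfies $\|xey\|_r\leq\|x\|\|y\|\,\tau(e)^{1/r}<\infty$. For $\mathcal{J}\subseteq\mathcal{K}(\mathcal{M},\tau)$, I would take $x\in\mathcal{J}$ with polar decomposition $x=u|x|$ and, for any $\epsilon>0$, consider the spectral projection $p_\epsilon=\mathbf{1}_{(\epsilon,\,\|x\|]}(|x|)$. Since $\epsilon^r p_\epsilon\leq |x|^r$, monotonicity of $\tau$ gives $\tau(p_\epsilon)\leq\|x\|_r^r/\epsilon^r<\infty$, so $p_\epsilon\in\mathcal{P}\mathcal{F}(\mathcal{M},\tau)$ and $xp_\epsilon\in\mathcal{F}(\mathcal{M},\tau)$. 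Then $\|x-xp_\epsilon\|=\|u|x|(I-p_\epsilon)\|\leq\epsilon$, which exhibits $x$ as a $\|\cdot\|$-limit of finite-rank elements.

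The main obstacle is axiom (iii), completeness of $(\mathcal{J},\Phi)$. Let $\{x_n\}$ be $\Phi$-Cauchy. Then it is Cauchy in the operator norm, hence converges to some $x\in\mathcal{M}$, and it is $\|\cdot\|_r$-Cauchy in the Banach space $L^r(\mathcal{M},\tau)$, hence converges to some $y\in L^r(\mathcal{M},\tau)$. The key point is to identify $x$ and $y$. I would do this by testing against the dense set $\mathcal{P}\mathcal{F}(\mathcal{M},\tau)$: for every $p\in\mathcal{P}\mathcal{F}(\mathcal{M},\tau)$,
\begin{equation*}
|\tau((x_n-x)p)|\leq\|x_n-x\|\,\tau(p)\longrightarrow 0,
\end{equation*}
while the noncommutative H\"older inequality (with conjugate exponent $r'$, interpreting $r'=\infty$ when $r=1$) gives
\begin{equation*}
|\tau((x_n-y)p)|\leq\|x_n-y\|_r\|p\|_{r'}\longrightarrow 0.
\end{equation*}
Thus $\tau((x-y)p)=0$ for every $p\in\mathcal{P}\mathcal{F}(\mathcal{M},\tau)$, and by semifiniteness of $\tau$ this forces $x=y$ as an element of $L^r(\mathcal{M},\tau)\cap\mathcal{M}=\mathcal{J}$. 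Consequently $\Phi(x_n-x)=\max\{\|x_n-x\|,\|x_n-x\|_r\}\to 0$, proving completeness and finishing the lemma.
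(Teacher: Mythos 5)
Your proof is correct, and it follows the paper's outline except at the one genuinely delicate point, the identification of the two limits in the completeness argument, where you take a different route. The paper handles this by citing Nelson's theorem: both $\|\cdot\|$-convergence and $\|\cdot\|_r$-convergence imply convergence in the measure topology, so the operator-norm limit and the $L^r$-limit coincide as $\tau$-measurable operators. You instead pair against finite-trace projections, using $|\tau((x_n-x)p)|\le\|x_n-x\|\,\tau(p)$ and H\"older to get $\tau((x-y)p)=0$ for all $p\in\mathcal{P}\mathcal{F}(\mathcal{M},\tau)$. This avoids importing the measure topology, but be aware that your closing phrase ``by semifiniteness this forces $x=y$'' compresses a real (if standard) argument: from vanishing on finite projections you should pass to vanishing of $b\mapsto\tau(p(x-y)pb)$ on all of $p\mathcal{M}p$ (the span of projections is norm-dense there and this functional is bounded by $\|p(x-y)p\|_1\|b\|$), conclude $p(x-y)p=0$ by the $L^1$--$\mathcal{M}$ duality, and then use an orthogonal family of finite projections summing to $I$ to get $x=y$. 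A slightly shorter variant of your idea: for each finite $p$, $\|(x_n-x)p\|_r\le\|x_n-x\|\tau(p)^{1/r}\to0$ and $\|(x_n-y)p\|_r\to0$, so $xp=yp$ for all such $p$, and $p\uparrow I$ gives $x=y$ without any duality. The remaining items (unitary invariance, $\|\cdot\|$-domination, the ideal property via H\"older, $\mathcal{F}(\mathcal{M},\tau)\subseteq\mathcal{J}\subseteq\mathcal{K}(\mathcal{M},\tau)$ via spectral projections of $|x|$) match the paper's proof essentially verbatim.
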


\begin{proof}
It is not hard to check that $\mathcal{J}$ is a two sided ideal of $\mathcal{%
M}$ and $\Phi$ is a norm on $\mathcal{J}$. Moreover, $\Phi$ is unitarily
invariant and $\|\cdot \|$-dominating.

We will show that $\mathcal{J}$ is a Banach space with respect to $\Phi$.
Actually, assume that $\{x_n\}$ is a Cauchy sequence in $\Phi$-norm. Thus $%
\{x_n\}$ is a Cauchy sequence in both $\|\cdot\|_r$-norm and $\|\cdot\|$%
-norm. Therefore there exist $y_1\in L^r(\mathcal{M},\tau)$ and $y_2\in
\mathcal{M}$ such that $\lim_n\|y_1-x_n\|_r = 0$ and $\lim_n\|y_2-x_n\|= 0$.
From Theorem 5 in \cite{Nelson}, we get that $\{x_n\}$ converges to both $y_1
$ and $y_2$ in measure topology. This implies $y_1=y_2\in\mathcal{J}$. Hence
$\mathcal{J}$ is a Banach space with respect to $\Phi$.

It is easy to see that $\mathcal{F}(\mathcal{M},\tau)\subseteq \mathcal{J}$.
Assume that $0\le x\in \mathcal{M}$ such that $\|x\|_r<\infty$. For any $%
\lambda>0$, let $e_{(\lambda,\infty)}$ be the spectral projection of $x$
onto $\sigma(x)\cap (\lambda,\infty)$. Thus $0\le \lambda \cdot
e_{(\lambda,\infty)}\le x$. It follows that $\|\lambda \cdot
e_{(\lambda,\infty)}\|_r\le \|x\|_r<\infty$. Hence $e_{(\lambda,\infty)}\in
\mathcal{F}(\mathcal{M},\tau).$ Combining with the fact that $\|x-x \cdot
e_{(\lambda,\infty)}\|\le \lambda$, we conclude that $x\in \mathcal{K}(%
\mathcal{M},\tau).$ Hence, $\mathcal{J }\subseteq \mathcal{K}(\mathcal{M}%
,\tau).$

Therefore, $\mathcal{J}$ is a norm ideal of $(\mathcal{M},\tau)$ with
respect to the norm $\Phi$.
\end{proof}

\begin{remark}
In the proof of last lemma, from the fact that $\tau $ is a normal weight of
$\mathcal{M}$ and $\Vert x\Vert _{r}<\infty $, it follows that
\begin{equation*}
\lim_{\lambda \rightarrow 0^{+}}\Vert x-x\cdot e_{(\lambda ,\infty )}\Vert
_{r}=0.
\end{equation*}%
Thus such $\mathcal{J}$ is actually a minimal norm ideal of $(\mathcal{M}%
,\tau )$ with respect to the norm $\Phi $.
\end{remark}

\begin{definition}
\label{prelim_def2} Let $1\le r<\infty$. We define $\mathcal{K}_r(\mathcal{M}%
,\tau)$ to be $L^r(\mathcal{M},\tau)\cap \mathcal{M}$ equipped with the norm
$\Phi$ satisfying $\Phi(x) =\max \{ \|x\|_r,\|x\| \},$ for all $x\in L^r(%
\mathcal{M},\tau)\cap \mathcal{M}. $ Thus $\mathcal{K}_r(\mathcal{M},\tau)$
is a minimal norm ideal of $(\mathcal{M},\tau)$.
\end{definition}

\begin{remark}
More examples of norm ideals in $(\mathcal{M},\tau)$ can be found in
noncommutative Banach functional spaces (see \cite{Pagter} for details).
\end{remark}

\subsection{Extension of a norm ideal $\mathcal{K}_\Phi(\mathcal{M},\protect%
\tau)$ from $\mathcal{M}$ to $\mathcal{M}\otimes \mathcal{B}(\mathcal{H})$}

\label{section2.2}

\quad \newline

Let $\mathcal{H}$ be an infinite dimensional separable Hilbert space and let $%
\mathcal{B}(\mathcal{H})$ be the set of bounded linear operators on $%
\mathcal{H}$. Suppose that $\{f_{i,j}\}_{i,j= 1}^\infty$ is a system of
matrix units of $\mathcal{B}(\mathcal{H})$.

Recall that $\mathcal{M}$ is a countably decomposable, properly infinite von
Neumann algebra  with a faithful, normal, semifinite tracial weight $\tau$.
There exists a sequence $\{v_i\}_{i = 1}^\infty$ of partial isometries in $%
\mathcal{M}$ such that
\begin{equation*}
v_iv_i^*=I_{\mathcal{M}}, \ \ \ \ \sum_{i = 1}^\infty v_i^*v_i=I_{\mathcal{M}%
}, \ \ \ \ \text{ and } v_jv_i^*=0 \text { when } i\ne j.
\end{equation*}

Let $\mathcal{M}\otimes \mathcal{B}(\mathcal{H})$ be a von Neumann algebra
tensor product of $\mathcal{M}$ and $\mathcal{B}(\mathcal{H})$.

\begin{definition}
We introduce following two mappings:
\begin{equation*}
\phi: \mathcal{M}\rightarrow \mathcal{M}\otimes \mathcal{B}(\mathcal{H}) \ \
\ \text{ and } \ \ \psi: \mathcal{M}\otimes \mathcal{B}(\mathcal{H})
\rightarrow \mathcal{M }
\end{equation*}
defined by, for all $x\in \mathcal{M }$ and all \ $\sum_{i,j= 1}^\infty
x_{i,j}\otimes f_{i,j}\in \mathcal{M}\otimes \mathcal{B}(\mathcal{H})$,
\begin{equation*}
\phi(x) =\sum_{i,j= 1}^\infty (v_ixv^*_j)\otimes f_{i,j} \ \ \ \ \text{ and }
\ \ \ \ \psi( \sum_{i,j= 1}^\infty x_{i,j}\otimes f_{i,j} )= \sum_{i,j=
1}^\infty v_i^*x_{i,j}v_j.
\end{equation*}
\end{definition}

\begin{lemma}
\label{prelim_lemma2.6} Both $\phi$ and $\psi$ are normal $*$-homomorphisms
satisfying
\begin{equation*}
\text{ $\psi \circ \phi=id_{\mathcal{M}}$ \qquad and \qquad $\phi \circ
\psi=id_{\mathcal{M}\otimes \mathcal{B}(\mathcal{H})}.$}
\end{equation*}
\end{lemma}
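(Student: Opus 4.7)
The plan is to realize both $\phi$ and $\psi$ as spatial $*$-isomorphisms implemented by one and the same unitary, after which every assertion becomes automatic. I would fix a faithful representation of $\mathcal{M}$ on a Hilbert space $\mathcal{H}_0$, choose an orthonormal basis $\{e_k\}_{k\geq 1}$ of $\mathcal{H}$ compatible with the matrix units $\{f_{i,j}\}$ (so that $f_{i,j}\xi=\langle\xi,e_j\rangle e_i$), and define
\begin{equation*}
V:\mathcal{H}_0\rightarrow \mathcal{H}_0\otimes\mathcal{H},\qquad V\xi=\sum_{i\geq 1} v_i\xi\otimes e_i.
\end{equation*}
The relation $\sum_i v_i^*v_i=I$ immediately gives $\|V\xi\|^2=\sum_i\|v_i\xi\|^2=\|\xi\|^2$, so $V$ is a well-defined isometry. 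Taking adjoints of the stated relations $v_jv_i^*=0$ for $i\neq j$ and $v_iv_i^*=I$ yields $v_iv_j^*=\delta_{ij}I$, from which one computes $V^*(\eta\otimes e_j)=v_j^*\eta$ and then $VV^*(\eta\otimes e_j)=\sum_i v_iv_j^*\eta\otimes e_i=\eta\otimes e_j$. So $V$ is a unitary.

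Next I would verify the spatial identifications $\phi(x)=VxV^*$ and $\psi(y)=V^*yV$ by testing on elementary tensors. For $x\in\mathcal{M}$,
\begin{equation*}
VxV^*(\eta\otimes e_j)=V(xv_j^*\eta)=\sum_i v_ixv_j^*\eta\otimes e_i,
\end{equation*}
which is precisely how $\sum_{i,j}(v_ixv_j^*)\otimes f_{i,j}$ acts on $\eta\otimes e_j$; an analogous calculation confirms $V^*yV=\sum_{i,j}v_i^*x_{i,j}v_j$ whenever $y=\sum x_{i,j}\otimes f_{i,j}$. To confirm the image of $\operatorname{Ad}_V$ lies in $\mathcal{M}\otimes\mathcal{B}(\mathcal{H})$, I would check that $V$ intertwines $\mathcal{M}'$ with $\mathcal{M}'\otimes\mathbb{C}I$: since each $v_i\in\mathcal{M}$ commutes with every $m'\in\mathcal{M}'$,
\begin{equation*}
(m'\otimes I)V\xi=\sum_i m'v_i\xi\otimes e_i=\sum_i v_im'\xi\otimes e_i=Vm'\xi.
\end{equation*}
Thus $VxV^*$ commutes with $\mathcal{M}'\otimes\mathbb{C}I=(\mathcal{M}\otimes\mathcal{B}(\mathcal{H}))'$ and so belongs to $\mathcal{M}\otimes\mathcal{B}(\mathcal{H})$; the symmetric argument places $V^*yV$ in $(\mathcal{M}')'=\mathcal{M}$.

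With these spatial identifications in hand, the remaining conclusions are immediate: $\phi=\operatorname{Ad}_V$ and $\psi=\operatorname{Ad}_{V^*}$ are normal $*$-isomorphisms (conjugation by a unitary preserves all $*$-algebraic and weak$^*$-topological structure), and
\begin{equation*}
\psi\circ\phi(x)=V^*VxV^*V=x,\qquad \phi\circ\psi(y)=VV^*yVV^*=y.
\end{equation*}
The main obstacle I anticipate is a bookkeeping one: the formal series defining $\phi(x)$ and $\psi(y)$ in the statement are a priori only symbolic, and one must verify they represent genuine bounded operators in the claimed algebras and not mere formal sums. The spatial construction sidesteps this entirely, because $VxV^*$ and $V^*yV$ are manifestly bounded, and the matrix-entry computation merely confirms they coincide with the series written in the definition.
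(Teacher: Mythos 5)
Your proof is correct, but it takes a different route from the paper's. The paper's own argument is the minimal one: it verifies directly (at the algebraic level, using the relations $v_iv_i^*=I$, $\sum_i v_i^*v_i=I$, $v_jv_i^*=0$ for $i\ne j$) that $\phi$ and $\psi$ are mutually inverse $*$-homomorphisms, and then obtains normality not by hand but by citing Corollary III.3.10 of Takesaki, i.e.\ the general fact that a $*$-isomorphism between von Neumann algebras is automatically normal. You instead implement both maps spatially: the single isometry $V\xi=\sum_i v_i\xi\otimes e_i$ is shown to be unitary, to intertwine $\mathcal{M}'$ with $\mathcal{M}'\otimes\mathbb{C}I=(\mathcal{M}\otimes\mathcal{B}(\mathcal{H}))'$, and to satisfy $\phi=\operatorname{Ad}_V$, $\psi=\operatorname{Ad}_{V^*}$, after which the $*$-homomorphism property, normality, and the identities $\psi\circ\phi=\mathrm{id}_{\mathcal M}$, $\phi\circ\psi=\mathrm{id}_{\mathcal{M}\otimes\mathcal{B}(\mathcal{H})}$ are all immediate. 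What your approach buys is that it dispenses with the appeal to automatic normality of isomorphisms and simultaneously settles the convergence/interpretation of the defining series, since $VxV^*$ and $V^*yV$ are manifestly bounded operators whose matrix entries agree with the formal sums; the mild cost is that you invoke the commutant identity $(\mathcal{M}\otimes\mathcal{B}(\mathcal{H}))'=\mathcal{M}'\otimes\mathbb{C}I$ together with the double commutant theorem (both elementary in this type~I tensor case), and you should work in the given (faithful, normal) representation of $\mathcal{M}$ on $\mathcal{H}_0$ so that $\mathcal{M}''=\mathcal{M}$ and the tensor product algebra is the concrete one on $\mathcal{H}_0\otimes\mathcal{H}$. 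Either argument is complete at the level of detail the paper itself supplies.
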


\begin{proof}
It can be verified directly that $\phi$ and $\psi$ are $*$-homomorphisms
satisfying
\begin{equation*}
\text{ $\psi \circ \phi=id_{\mathcal{M}}$ \qquad and \qquad $\phi \circ
\psi=id_{\mathcal{M}\otimes \mathcal{B}(\mathcal{H})}.$}
\end{equation*}
From Corollary III.3.10 in \cite{Takesaki}, both $\phi$ and $\psi$ are
normal.
\end{proof}

\begin{definition}
\label{prelim_def2.7} We will further define a mapping $\tilde \tau :(%
\mathcal{M}\otimes \mathcal{B}(\mathcal{H}))^+\rightarrow
[0,\infty]$ to be
\begin{equation*}
\tilde \tau (y)=\tau(\psi(y)), \qquad \forall \ y\in
(\mathcal{M}\otimes \mathcal{B}(\mathcal{H}))^+.
\end{equation*}
\end{definition}

\noindent From Lemma \ref{prelim_lemma2.6} and Definition \ref{prelim_def2.7}%
, it follows our next result.

\begin{lemma}
\label{prelim_lemma2.7}

\begin{enumerate}
\item[(i)] $\displaystyle\tilde \tau$ is a faithful, normal, semifinite tracial weight
of $\mathcal{M}\otimes \mathcal{B}(\mathcal{H})$.

\item[(ii)] $\displaystyle\tilde \tau(\sum_{i,j=1}^\infty x_{i,j}\otimes
f_{i,j})=\sum_{i=1}^\infty \tau(x_{i,i})$ for all \ $\displaystyle%
\sum_{i,j=1}^\infty x_{i,j}\otimes f_{i,j}\in (\mathcal{M}\otimes \mathcal{B}%
(\mathcal{H}))^+$.

\item[(iii)]  We have
\begin{equation*}
\begin{aligned}
  \mathcal P\mathcal F(\mathcal M\otimes \mathcal B(\mathcal H), \tilde \tau)&=\phi(\mathcal P\mathcal F(\mathcal M,\tau)), \quad \\
   \mathcal F(\mathcal M\otimes \mathcal B(\mathcal H), \tilde \tau)&=\phi(\mathcal F(\mathcal M,\tau)), \quad \\
    \mathcal K(\mathcal M\otimes \mathcal B(\mathcal H), \tilde \tau)&=\phi(\mathcal K(\mathcal M,\tau)).\end{aligned}
\end{equation*}
\end{enumerate}
\end{lemma}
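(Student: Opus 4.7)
The plan is to exploit the observation from Lemma \ref{prelim_lemma2.6} that $\phi$ and $\psi$ are mutually inverse, normal $\ast$-isomorphisms between $\mathcal{M}$ and $\mathcal{M}\otimes\mathcal{B}(\mathcal{H})$. Thus $\tilde\tau = \tau\circ\psi$ is the pull-back of $\tau$ along a normal $\ast$-isomorphism, and each property of a faithful normal semifinite tracial weight can be transported through $\psi$. All three parts of the lemma then reduce to tracking what $\phi$ and $\psi$ do on specific elements.

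For part (i), faithfulness follows from the injectivity of $\psi$: if $\tilde\tau(y)=0$ for $y\geq 0$, then $\tau(\psi(y))=0$ forces $\psi(y)=0$, and hence $y=\phi(\psi(y))=0$. Normality is immediate from the normality of $\tau$ and $\psi$. The trace identity comes from $\psi$ being a $\ast$-homomorphism, since $\tilde\tau(y^{\ast}y) = \tau(\psi(y)^{\ast}\psi(y)) = \tau(\psi(y)\psi(y)^{\ast}) = \tilde\tau(yy^{\ast})$. For semifiniteness, given any nonzero positive $y$, apply semifiniteness of $\tau$ to $\psi(y)$ to produce $0<w\leq \psi(y)$ with $\tau(w)<\infty$, and push back by $\phi$: the element $\phi(w)$ satisfies $0<\phi(w)\leq y$ and $\tilde\tau(\phi(w))=\tau(w)<\infty$.

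For part (ii), I would first verify the diagonal simple-tensor case: for $x\geq 0$,
\[
\tilde\tau(x\otimes f_{ii}) \;=\; \tau(v_i^{\ast} x v_i) \;=\; \tau(x v_i v_i^{\ast}) \;=\; \tau(x),
\]
using the trace property of $\tau$ together with $v_iv_i^{\ast}=I_{\mathcal{M}}$. For a general positive $y=\sum_{i,j}x_{ij}\otimes f_{ij}$, set $p_n=\sum_{i\leq n}f_{ii}$ and $Q_n=I_{\mathcal{M}}\otimes p_n$, so that $Q_n y Q_n$ is a finite block matrix with entries $(x_{ij})_{i,j\leq n}$ lying in $\mathcal{M}\otimes p_n\mathcal{B}(\mathcal{H})p_n\cong M_n(\mathcal{M})$. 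On this corner, $\tilde\tau$ is a normal tracial weight that agrees with $\tau\otimes\mathrm{Tr}$ on diagonal simple tensors by the computation above, and a tracial weight on $M_n(\mathcal{M})$ is determined by its values on such tensors, so the two agree. Therefore $\tilde\tau(Q_n y Q_n)=\sum_{i\leq n}\tau(x_{ii})$, and normality of $\tilde\tau$ combined with $Q_n y Q_n\uparrow y$ yields the claimed identity.

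For part (iii), since $\phi$ and $\psi$ are inverse $\ast$-isomorphisms and $\tilde\tau=\tau\circ\psi$, projections in $\mathcal{M}\otimes\mathcal{B}(\mathcal{H})$ correspond bijectively with projections in $\mathcal{M}$ and the correspondence preserves trace values, immediately giving $\mathcal{P}\mathcal{F}(\mathcal{M}\otimes\mathcal{B}(\mathcal{H}),\tilde\tau)=\phi(\mathcal{P}\mathcal{F}(\mathcal{M},\tau))$. Since $\mathcal{F}$ is generated from $\mathcal{P}\mathcal{F}$ by two-sided multiplication by algebra elements and $\phi$ is a $\ast$-isomorphism, the identity $\mathcal{F}(\mathcal{M}\otimes\mathcal{B}(\mathcal{H}),\tilde\tau)=\phi(\mathcal{F}(\mathcal{M},\tau))$ follows. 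Finally, $\phi$ is isometric as a $\ast$-isomorphism of $C^{\ast}$-algebras, so it commutes with the $\|\cdot\|$-norm closure, giving $\phi(\mathcal{K}(\mathcal{M},\tau))=\mathcal{K}(\mathcal{M}\otimes\mathcal{B}(\mathcal{H}),\tilde\tau)$. I expect the main technical point to be the identification of $\tilde\tau$ with $\tau\otimes\mathrm{Tr}$ on the finite-rank corners in part (ii); once this is in hand, the rest is essentially functorial in $\phi$ and $\psi$.
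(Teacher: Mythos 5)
Your overall strategy is exactly the one the paper intends: the paper gives no written proof of Lemma \ref{prelim_lemma2.7}, stating only that it follows from Lemma \ref{prelim_lemma2.6} and Definition \ref{prelim_def2.7}, i.e.\ from transporting the properties of $\tau$ through the mutually inverse normal $*$-isomorphisms $\phi$ and $\psi$, and your parts (i) and (iii) carry this out correctly. One step in part (ii), however, is wrong as stated: with $Q_n=I_{\mathcal M}\otimes p_n$, the compressions $Q_nyQ_n$ are \emph{not} in general an increasing sequence for a positive $y$ (already for a $2\times 2$ matrix $y$ with all entries $1$, the difference $y-Q_1yQ_1$ fails to be positive), so you cannot invoke normality via ``$Q_nyQ_n\uparrow y$.'' The conclusion you want is still true and the repair is the same tracial trick you already used for the diagonal case: since $\tilde\tau$ is tracial, $\tilde\tau(Q_nyQ_n)=\tilde\tau\bigl((Q_ny^{1/2})(Q_ny^{1/2})^*\bigr)=\tilde\tau\bigl(y^{1/2}Q_ny^{1/2}\bigr)$, and the sequence $y^{1/2}Q_ny^{1/2}$ \emph{is} increasing to $y$, so normality gives $\tilde\tau(Q_nyQ_n)\to\tilde\tau(y)$; equivalently one can write $\tilde\tau(y)=\sum_i\tilde\tau\bigl((I\otimes f_{i,i})y(I\otimes f_{i,i})\bigr)=\sum_i\tau(x_{i,i})$ directly. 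With that one-line correction your argument is complete and matches the paper's intended route.
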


\begin{remark}
Preceding lemma shows that $\tilde \tau$ is a natural extension of $\tau$
from $\mathcal{M}$ to $\mathcal{M}\otimes \mathcal{B}(\mathcal{H})$. If no
confusion arises, $\tilde\tau$ will be also denoted by $\tau$.
\end{remark}

\begin{definition}
\label{prelim_def2.10}  Let $\mathcal{K}_{\Phi}(\mathcal{M},\tau)$ be a norm
ideal of $(\mathcal{M},\tau)$ equipped with a norm $\Phi$. We define
\begin{equation*}
\tilde {\mathcal{K}}_{\tilde \Phi} = \phi(\mathcal{K}_{\Phi}(\mathcal{M}%
,\tau)),
\end{equation*}
and
\begin{equation*}
\tilde \Phi(y)=\Phi(\psi(y)), \qquad \forall y\in \tilde {\mathcal{K}}%
_{\tilde \Phi}.
\end{equation*}
\end{definition}

\begin{lemma}
\label{prelim_lemma2.11} The following statements are true.

\begin{enumerate}
\item[(i)] $\tilde {\mathcal{K}}_{\tilde \Phi} $ is a norm ideal of $(%
\mathcal{M}\otimes \mathcal{B}(\mathcal{H}), \tau)$ with respect to the norm
$\tilde \Phi$.

\item[(ii)] If $\sum_{i,j=1}^\infty x_{i,j}\otimes f_{i,j}\in \tilde {%
\mathcal{K}}_{\tilde \Phi}$, then $x_{i,j}\in \mathcal{K}_{\Phi}(\mathcal{M}%
,\tau)$ for all $i,j\ge 1$.

\item[(iii)] If $x \in \mathcal{K}_{\Phi}(\mathcal{M},\tau)$, then $x\otimes
f_{i,j}\in \tilde {\mathcal{K}}_{\tilde \Phi}$ and $\Phi(x)=\tilde\Phi(x%
\otimes f_{i,j})$ for all $i,j\ge 1$.
\end{enumerate}
\end{lemma}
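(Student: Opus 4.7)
The plan is to exploit the fact that $\phi\colon\mathcal M\to\mathcal M\otimes\mathcal B(\mathcal H)$ and $\psi\colon\mathcal M\otimes\mathcal B(\mathcal H)\to\mathcal M$ are mutually inverse normal $\ast$-isomorphisms (by Lemma \ref{prelim_lemma2.6}), so $\phi$ transports norm-ideal structure in one direction and $\psi$ in the other. Since $\widetilde{\mathcal K}_{\widetilde\Phi}=\phi(\mathcal K_\Phi(\mathcal M,\tau))$ and $\widetilde\Phi(y)=\Phi(\psi(y))$, the map $\psi$ restricts to an isometric bijection $(\widetilde{\mathcal K}_{\widetilde\Phi},\widetilde\Phi)\to(\mathcal K_\Phi(\mathcal M,\tau),\Phi)$ whose inverse is $\phi$. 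Essentially all axioms of Definition \ref{prelim_def1} will transfer through this isomorphism, with the two inclusion conditions handled by Lemma \ref{prelim_lemma2.7}(iii).

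For (i), I would verify each of the four conditions in Definition \ref{prelim_def1} in turn. The ideal property follows from $\psi(zy)=\psi(z)\psi(y)$ together with the fact that $\mathcal K_\Phi(\mathcal M,\tau)$ is an ideal in $\mathcal M$. Unitary invariance of $\widetilde\Phi$ reduces to unitary invariance of $\Phi$, since $\psi$ sends unitaries to unitaries. The $\|\cdot\|$-dominating property follows because $\psi$ is an isometric $\ast$-isomorphism, so $\widetilde\Phi(y)=\Phi(\psi(y))\ge\lambda\|\psi(y)\|=\lambda\|y\|$. Completeness of $\widetilde{\mathcal K}_{\widetilde\Phi}$ in $\widetilde\Phi$ is immediate from completeness of $\mathcal K_\Phi(\mathcal M,\tau)$ in $\Phi$ via the same isometry. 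Finally, the sandwich $\mathcal F(\mathcal M\otimes\mathcal B(\mathcal H),\tau)\subseteq\widetilde{\mathcal K}_{\widetilde\Phi}\subseteq\mathcal K(\mathcal M\otimes\mathcal B(\mathcal H),\tau)$ comes by applying $\phi$ to the analogous inclusions $\mathcal F(\mathcal M,\tau)\subseteq\mathcal K_\Phi(\mathcal M,\tau)\subseteq\mathcal K(\mathcal M,\tau)$ and invoking Lemma \ref{prelim_lemma2.7}(iii).

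For (ii), given $y=\sum_{i,j}x_{i,j}\otimes f_{i,j}\in\widetilde{\mathcal K}_{\widetilde\Phi}$, I use the ideal property from (i) to deduce that $x_{i,j}\otimes f_{i,j}=(I_{\mathcal M}\otimes f_{i,i})\,y\,(I_{\mathcal M}\otimes f_{j,j})$ lies in $\widetilde{\mathcal K}_{\widetilde\Phi}$; applying $\psi$ gives $v_i^{\ast}x_{i,j}v_j\in\mathcal K_\Phi(\mathcal M,\tau)$, and then $x_{i,j}=v_i(v_i^{\ast}x_{i,j}v_j)v_j^{\ast}$ lies in $\mathcal K_\Phi(\mathcal M,\tau)$ by the ideal property in $\mathcal M$. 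For (iii), a direct computation using $v_kv_i^{\ast}=\delta_{k,i}I_{\mathcal M}$ yields $\phi(v_i^{\ast}xv_j)=x\otimes f_{i,j}$, which shows $x\otimes f_{i,j}\in\widetilde{\mathcal K}_{\widetilde\Phi}$ whenever $x\in\mathcal K_\Phi(\mathcal M,\tau)$, and then $\widetilde\Phi(x\otimes f_{i,j})=\Phi(v_i^{\ast}xv_j)$; the equality $\Phi(v_i^{\ast}xv_j)=\Phi(x)$ is obtained by using Lemma \ref{prelim_lemma1}(i) in both directions, observing $\Phi(v_i^{\ast}xv_j)\le\|v_i^{\ast}\|\|v_j\|\Phi(x)\le\Phi(x)$ and conversely $\Phi(x)=\Phi(v_iv_i^{\ast}xv_jv_j^{\ast})\le\Phi(v_i^{\ast}xv_j)$. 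The only mildly delicate step is this last two-sided sandwich estimate, which crucially relies on $v_iv_i^{\ast}=v_jv_j^{\ast}=I_{\mathcal M}$; everything else is a direct transfer through the $\ast$-isomorphism $\phi$.
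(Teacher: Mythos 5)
Your proposal is correct and follows the same route the paper intends: the paper's proof simply says the statements follow from the definitions of $\tilde{\mathcal{K}}_{\tilde\Phi}$ and $\tilde\Phi$ together with Lemma \ref{prelim_lemma2.6} and Lemma \ref{prelim_lemma2.7}, i.e.\ transferring the norm-ideal structure through the mutually inverse normal $*$-isomorphisms $\phi$ and $\psi$, which is exactly what you carry out in detail (including the correct use of $v_iv_i^*=I_{\mathcal M}$ for the two-sided estimate in (iii)). Your write-up is just a fleshed-out version of the verification the paper leaves to the reader.
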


\begin{proof}
The results follow from definitions of $\tilde {\mathcal{K}}_{\tilde \Phi} $%
, $\tilde \Phi$, and Lemma \ref{prelim_lemma2.6}, Lemma \ref{prelim_lemma2.7}%
.
\end{proof}

\begin{remark}
Lemma \ref{prelim_lemma2.11} shows that if we identify $\mathcal{M}$ with $%
\mathcal{M}\otimes f_{1,1}$ in $\mathcal{M}\otimes \mathcal{B}(\mathcal{H})$%
, then $\tilde {\mathcal{K}}_{\tilde \Phi} $ and $\tilde \Phi$ can be viewed
as extensions of $\mathcal{K}_{\Phi}(\mathcal{M},\tau)$ and $\Phi$ from $%
\mathcal{M}$ to $\mathcal{M}\otimes \mathcal{B}(\mathcal{H})$.
\end{remark}

\begin{proposition}
\label{prelim_prop2.12}  The norm ideal $\tilde {\mathcal{K}}_{\tilde \Phi} $
equipped with the norm  $\tilde \Phi$ is independent of the choice of the
system of matrix units $\{f_{i,j}\}_{i,j=1}^\infty$  of $\mathcal{B}(%
\mathcal{H})$ and the choice of the family $\{v_i\}_{i =1}^\infty$ of
partial isometries in $\mathcal{M}$.
\end{proposition}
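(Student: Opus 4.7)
My plan is to reduce the claim to two separate verifications by changing only one of the two ingredients at a time, and in each sub-case exhibit an explicit unitary that intertwines the two constructions. Since $\tilde{\mathcal K}_{\tilde\Phi}$ is by construction a two-sided ideal of $\mathcal M\otimes\mathcal B(\mathcal H)$ and $\Phi$ is unitarily invariant on $\mathcal K_\Phi(\mathcal M,\tau)$, any such intertwining will immediately give both that the ideals coincide and that the norms agree. Given two admissible choices $(\{f_{i,j}\},\{v_i\})$ and $(\{g_{i,j}\},\{w_i\})$, I pass through the intermediate $(\{g_{i,j}\},\{v_i\})$, which isolates the two one-ingredient changes.

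In the first step, where only the matrix units change, I use that any two systems of matrix units of $\mathcal B(\mathcal H)$ are conjugate by a unitary $z\in\mathcal B(\mathcal H)$ with $g_{i,j}=zf_{i,j}z^*$. Writing $(\phi,\psi)$ for the maps induced by $(\{f_{i,j}\},\{v_i\})$ and $(\phi_1,\psi_1)$ for those induced by $(\{g_{i,j}\},\{v_i\})$, a direct check on the defining formula gives $\phi_1(x)=Z\phi(x)Z^*$ with $Z:=I_{\mathcal M}\otimes z$ a unitary in $\mathcal M\otimes\mathcal B(\mathcal H)$. The ideal associated with $\phi_1$ then equals $Z\phi(\mathcal K_\Phi(\mathcal M,\tau))Z^*=\phi(\mathcal K_\Phi(\mathcal M,\tau))$ by the two-sided ideal property, and since $\psi(Z)$ is a unitary in $\mathcal M$ (because $\psi$ is a $*$-isomorphism),
\[
\Phi(\psi_1(y))=\Phi(\psi(Z^*yZ))=\Phi(\psi(Z)^*\psi(y)\psi(Z))=\Phi(\psi(y))
\]
by unitary invariance of $\Phi$.

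The second step, where the partial isometries change, is the substantive one; its main technical point is to realize the intertwining unitary inside $\mathcal M$ itself. Letting $(\phi',\psi')$ be the maps induced by $(\{g_{i,j}\},\{w_i\})$, I would set
\[
U:=\sum_{i=1}^\infty w_i^*v_i\in\mathcal M,
\]
and verify from the relations $v_iv_j^*=\delta_{ij}I_{\mathcal M}$ and $w_iw_j^*=\delta_{ij}I_{\mathcal M}$ (both following from $v_iv_i^*=I_{\mathcal M}$ together with $v_jv_i^*=0$ for $i\ne j$, and their analogues for $\{w_i\}$) that $U$ is unitary in $\mathcal M$ and that $Uv_i^*=w_i^*$ and $v_jU^*=w_j$ for all $i,j$. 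Substituting these three identities into the formula for $\psi_1$ collapses the double sum and yields $\psi'(y)=U\psi_1(y)U^*$ for every $y\in\mathcal M\otimes\mathcal B(\mathcal H)$; unitary invariance of $\Phi$ and the ideal property of $\mathcal K_\Phi(\mathcal M,\tau)$ then give $\phi'(\mathcal K_\Phi(\mathcal M,\tau))=\phi_1(\mathcal K_\Phi(\mathcal M,\tau))$ and $\Phi\circ\psi'=\Phi\circ\psi_1$. Combined with the first step, this completes the proof. The only obstacle I anticipate is the algebraic manipulation of partial isometries needed to verify the three identities for $U$; no analytic subtlety enters, since the two constructions simply differ by a genuine unitary conjugation.
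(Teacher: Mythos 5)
Your proposal is correct, and its engine is the same as the paper's: unitary invariance of $\Phi$ together with the two-sided ideal property of $\mathcal{K}_\Phi(\mathcal{M},\tau)$ and of $\tilde{\mathcal{K}}_{\tilde\Phi}$. The organization, however, differs in a way worth recording. The paper changes both ingredients at once and runs a single chain of equivalences, asserting unitaries $u\in\mathcal{M}$ and $w\in\mathcal{B}(\mathcal{H})$ with $u^*v_ju=\hat v_j$ and $w^*f_{i,j}w=\hat f_{i,j}$; you factor through the intermediate choice $(\{g_{i,j}\},\{v_i\})$ and write explicit intertwining formulas $\phi_1=\mathrm{Ad}(I_{\mathcal{M}}\otimes z)\circ\phi$ and $\psi'=\mathrm{Ad}(U)\circ\psi_1$. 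The substantive difference is your intertwiner $U=\sum_i w_i^*v_i$: you use only the one-sided relations $Uv_i^*=w_i^*$ and $v_jU^*=w_j$, which follow at once from $v_iv_j^*=\delta_{ij}I_{\mathcal{M}}$ and $w_iw_j^*=\delta_{ij}I_{\mathcal{M}}$ (unitarity of $U$, and strong convergence of the defining sum, also use $\sum_i v_i^*v_i=\sum_i w_i^*w_i=I_{\mathcal{M}}$, which is part of the hypothesis), and this is exactly what the computation $U\psi_1(y)U^*=\psi'(y)$ requires. The paper instead invokes the stronger statement that the two families of partial isometries are simultaneously unitarily conjugate, $u^*v_ju=\hat v_j$, which is not proved there and is more than is needed; the natural candidate $u=\sum_k v_k^*\hat v_k$ only yields the one-sided identity $v_ju=\hat v_j$, i.e.\ precisely your version, and the paper's chain of equivalences goes through with that weaker relation. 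So your route buys a fully explicit and easily verifiable argument (at the cost of one harmless intermediate step), while the paper's buys brevity by compressing both changes into one equivalence chain.
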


\begin{proof}
Assume that $\{\hat f_{i,j}\}_{i,j=1}^\infty$ is a family of system of
matrix units of $\mathcal{B}(\mathcal{H})$ and $\{\hat v_i\}_{i =1}^\infty$
is a family of partial isometries on $\mathcal{M}$ such that
\begin{equation*}
\hat v_i\hat v_i^*=I_{\mathcal{M}}, \ \ \ \ \sum_i \hat v_i^*\hat v_i=I_{%
\mathcal{M}}, \ \ \ \ \text{ and } \hat v_j\hat v_i^*=0 \text { when } i\ne
j.
\end{equation*}
Thus there exist unitary elements $u$ in $\mathcal{M}$ and $w\in \mathcal{B}(%
\mathcal{H})$ such that
\begin{equation*}
u^*v_ju=\hat v_j \qquad \text{ and } \qquad w^* f_{i,j}w=\hat f_{i,j}.
\qquad \forall \ i,j\ge 1.
\end{equation*}
Let $\hat \phi$, $\hat \psi$, $\hat {\mathcal{K}}_{\hat \Phi}$ and ${\hat
\Phi}$ be defined accordingly as $\phi$, $\psi$, ${\mathcal{K}}_{ \Phi}$ and
${\ \Phi}$.  We have
\begin{align}
\sum_{i,j=1}^\infty x_{i,j} \otimes f_{i,j}\in \tilde {\mathcal{K}}_{\tilde
\Phi} & \Leftrightarrow \sum_{i,j=1}^\infty u x_{i,j}u^*\otimes f_{i,j} \in
\tilde {\mathcal{K}}_{\tilde \Phi}  \tag{as $\tilde {\mathcal K}_{\tilde
\Phi}$ is a two-sided ideal} \\
& \Leftrightarrow \sum_{i,j=1}^\infty v_i^*ux_{i,j}u^*v_j \in \mathcal{K}%
_{\Phi}(\mathcal{M},\tau)  \tag{by definition of $\tilde {\mathcal
K}_{\tilde \Phi}$} \\
& \Leftrightarrow \sum_{i,j=1}^\infty u^*v_i^*u x_{i,j}u^*v_ju \in \mathcal{K%
}_{\Phi}(\mathcal{M},\tau)  \tag{as $\mathcal K_{\Phi}(\mathcal M,\tau)$ is
a two-sided ideal} \\
& \Leftrightarrow \sum_{i,j=1}^\infty \hat v_i^* x_{i,j} \hat v_j \in
\mathcal{K}_{\Phi}(\mathcal{M},\tau)  \tag{by the choice of $u$} \\
& \Leftrightarrow \sum_{i,j=1}^\infty x_{i,j} \otimes \hat f_{i,j} \in \hat {%
\mathcal{K}}_{\hat \Phi}  \tag{by definition of $\hat {\mathcal K}_{\hat
\Phi}$} \\
& \Leftrightarrow \sum_{i,j=1}^\infty x_{i,j} \otimes w^* \hat f_{i,j}w \in
\hat {\mathcal{K}}_{\hat \Phi}  \tag{as $\hat {\mathcal K}_{\hat \Phi}$ is a
two-sided ideal} \\
& \Leftrightarrow \sum_{i,j=1}^\infty x_{i,j} \otimes f_{i,j} \in \hat {%
\mathcal{K}}_{\hat \Phi}.  \tag{by the choice of $w$}
\end{align}
So
\begin{equation*}
\tilde {\mathcal{K}}_{\tilde \Phi}= \hat {\mathcal{K}}_{\hat \Phi}.
\end{equation*}
Similarly, it can be shown that
\begin{equation*}
\tilde \Phi(\sum_{i,j=1}^\infty x_{i,j} \otimes f_{i,j}) = \hat \Phi
(\sum_{i,j=1}^\infty x_{i,j}\otimes f_{i,j}),  \qquad \text { for all } \ \
\sum_{i,j=1}^\infty x_{i,j}\otimes f_{i,j}\in \tilde {\mathcal{K}}_{\tilde
\Phi} .
\end{equation*}
This completes the proof of the proposition.
\end{proof}

\begin{remark}
\label{prelim_rem2.2.10} Because of Lemma \ref{prelim_lemma2.11} and
Proposition \ref{prelim_prop2.12}, if no confusion arises, we will use $%
\mathcal{K}_{\Phi }(\mathcal{M}\otimes \mathcal{B}(\mathcal{H}),\tau )$ and $%
\Phi $ to denote $\tilde{\mathcal{K}}_{\tilde{\Phi}}$ and $\tilde{\Phi}$ in
Definition \ref{prelim_def2.10}.
\end{remark}

\subsection{Approximately unitarily equivalent mappings}

\quad \newline

Let $\mathcal{H}$ be an infinite dimensional separable Hilbert space and let $%
\mathcal{B}(\mathcal{H})$ be the set of bounded linear operators on $%
\mathcal{H}$. Recall that $\mathcal{M}$ is a countably decomposable, properly
infinite von Neumann algebra  with a faithful, normal, semifinite tracial
weight $\tau$. Let $\mathcal{M}\otimes \mathcal{B}(\mathcal{H})$ be a von
Neumann algebra tensor product of $\mathcal{M}$ with $\mathcal{B}(\mathcal{H}%
)$.

Let $\mathcal{K}_\Phi(\mathcal{M},\tau)$ be a norm ideal of $\mathcal{M}$.
Let $\mathcal{K}_\Phi(\mathcal{M}\otimes \mathcal{B}(\mathcal{H}),\tau)$ be
a norm ideal of $\mathcal{M}\otimes \mathcal{B}(\mathcal{H})$ extended from $%
\mathcal{K}_\Phi(\mathcal{M},\tau)$ (see Definition \ref{prelim_def2.10} and
Remark \ref{prelim_rem2.2.10}).

Let $\mathcal{A}$ be a separable $C^*$-subalgebra of $\mathcal{M}$ with an
identity $I_{\mathcal{A}}$ and $\mathcal{B}$ a $*$-subalgebra of $\mathcal{A}
$ such that $I_{\mathcal{A}}\in\mathcal{B}$.

Assume $\psi$ is a positive mapping from $\mathcal{A}$ into $\mathcal{M}$
such that $\psi(I_{\mathcal{A}})$ is a projection in $\mathcal{M}$. Then for
all $0\le x\in \mathcal{A}$, we have $0\le \psi(x) \le \|x\|\psi(I_{\mathcal{%
A}})$. Therefore, $\psi(x)\psi(I_{\mathcal{A}})=\psi(I_{\mathcal{A}%
})\psi(x)=\psi(x)$ for all positive $x\in \mathcal{A}$. In other words, $%
\psi(I_{\mathcal{A}})$ can be viewed as an identity of $\psi(\mathcal{A})$.
Or, $\psi(\mathcal{A})\subseteq \psi(I_{\mathcal{A}})\mathcal{M}\psi(I_{%
\mathcal{A}})$.

\begin{definition}
\label{prelim_def3.1.1} Suppose $\{f_{i,j}\}_{i,j\ge 1}$ is a system of
matrix units of $\mathcal{B}(\mathcal{H})$. Let $M,N\in \mathbb{N}%
\cup\{\infty\}$. Suppose that $\psi_1,\ldots, \psi_M$ and $\phi_1,\ldots,
\phi_N$ are positive mappings from $\mathcal{A}$ into $\mathcal{M}$ such
that $\psi_1(I_{\mathcal{A}}),\ldots, \psi_M(I_{\mathcal{A}})$, $\phi_1(I_{%
\mathcal{A}}),\ldots, \phi_N(I_{\mathcal{A}})$ are projections in $\mathcal{M%
}$.

\begin{enumerate}
\item[(a)] Let $F\subseteq \mathcal{B}$ be finite and $\epsilon>0$. Then we
call
\begin{equation*}
\text{ $\psi_1\oplus \cdots \oplus\psi_M$ is \emph{$(F,\epsilon)$%
-strongly-approximately-unitarily-equivalent} to $\phi_1\oplus\cdots\oplus
\phi_N$ over $\mathcal{B}$,}
\end{equation*}
denoted by
\begin{equation*}
\psi_1\oplus\psi_2\oplus\cdots \oplus\psi_M \sim_{\mathcal{B}%
}^{(F,\epsilon)} \phi_1\oplus \phi_2\oplus\cdots\oplus \phi_N, \qquad \mod (%
\mathcal{K}_{\Phi}(\mathcal{M},\tau) )
\end{equation*}
if there exists a partial isometry $v$ in $\mathcal{M}\otimes \mathcal{B}(%
\mathcal{H})$ such that

\begin{enumerate}
\item[(i)] $\displaystyle v^*v= \sum_{i=1}^M \psi_i(I_{\mathcal{A}})\otimes
f_{i,i}$ and $\displaystyle vv^*= \sum_{i=1}^N \phi_i(I_{\mathcal{A}%
})\otimes f_{i,i}$;

\item[(ii)] $\displaystyle  \sum_{i=1}^M \psi_i(x)\otimes f_{i,i} - v^*\left
(\sum_{i=1}^N \phi_i(x)\otimes f_{i,i}\right ) v \in \mathcal{K}_\Phi(%
\mathcal{M}\otimes \mathcal{B}(\mathcal{H}),\tau)$ for all $x\in \mathcal{B}$%
;

\item[(iii)] $\displaystyle \Phi\left ( \sum_{i=1}^M \psi_i(x)\otimes
f_{i,i} - v^*\left (\sum_{i=1}^N \phi_i(x)\otimes f_{i,i}\right ) v
\right)<\epsilon$ for all $x\in F$.
\end{enumerate}

\item[(b)] We call
\begin{equation*}
\text{ $\psi_1\oplus \cdots \oplus\psi_M$ is \emph{%
strongly-approximately-unitarily-equivalent } to $\phi_1\oplus\cdots\oplus
\phi_N$ over $\mathcal{B}$,}
\end{equation*}
denoted by
\begin{equation*}
\psi_1\oplus\psi_2\oplus\cdots \oplus\psi_M \sim_{\mathcal{B}} \phi_1\oplus
\phi_2\oplus\cdots\oplus \phi_N, \qquad \mod (\mathcal{K}_{\Phi}(\mathcal{M}%
,\tau))
\end{equation*}
if, for any finite subset $F\subseteq \mathcal{B}$ and $\epsilon>0$,
\begin{equation*}
\psi_1\oplus\psi_2\oplus\cdots \oplus\psi_M \sim_{\mathcal{B}%
}^{(F,\epsilon)} \phi_1\oplus \phi_2\oplus\cdots\oplus \phi_N, \qquad \mod (%
\mathcal{K}_{\Phi}(\mathcal{M},\tau)).
\end{equation*}
\end{enumerate}
\end{definition}

\begin{remark}
If no confusion arises, $\sum_{i=1}^M \psi_i(x)\otimes f_{i,i}$ and $%
\sum_{i=1}^N \phi_i(x)\otimes f_{i,i}$ are also denoted by $\psi_1(x)\oplus
\cdots \oplus \psi_M(x)$, and $\phi_1(x)\oplus \cdots \oplus \phi_N(x)$
respectively.
\end{remark}

\begin{remark}
It is clear from the definition that $\sim _{\mathcal{B}}$ is an equivalence
relation and $\sim _{\mathcal{B}}^{(F,\epsilon )}$ is a reflexive and
symmetric relation. By adding $\epsilon $'s, $\sim _{\mathcal{B}%
}^{(F,\epsilon )}$ may also be viewed as a \textquotedblleft \emph{transitive%
}\textquotedblright\ relation.
\end{remark}

\section{Perturbations of A Self-adjoint Element in Semifinite von Neumann
Algebras}

Let $\mathcal{M}$ be a countably decomposable, properly infinite von
Neumann algebra with a faithful  normal  semifinite tracial weight
$\tau$
and let  $\mathcal{P}\mathcal{F}(\mathcal{M},\tau),  \mathcal{F}(\mathcal{M}%
,\tau),  \mathcal{K}(\mathcal{M},\tau) $ be the sets of finite rank
projections, finite rank operators, and compact operators respectively, in $(%
\mathcal{M},\tau)$ (see Section \ref{sec2} for their definitions).

\subsection{Some lemmas}

\ \newline

In this section, we are interested in a class of norm ideals $\mathcal{K}%
_{\Phi}(\mathcal{M},\tau)$ in $(\mathcal{M},\tau)$ that satisfies
\begin{equation*}
\lim_{\overset{\tau(e)\rightarrow\infty}{e\in \mathcal{P}\mathcal{F}(%
\mathcal{M},\tau)}}\frac{\Phi(e)}{\tau(e)} =0.
\end{equation*}
As shown by next lemma, in the case of semifinite factors, this condition is
closely related to Kuroda's characterization of $\|\cdot\|_1$-norm of $%
\mathcal{M}$ (see \cite{Kuroda}).

\begin{lemma}
Let $\mathcal{N}$ be a countably decomposable,  properly infinite,
semifinite
factor with a faithful  normal  semifinite tracial weight $\tau$ and let $%
\mathcal{K}_{\Phi}(\mathcal{N},\tau)$ be a norm ideal of $(\mathcal{N},\tau)$%
. Then the following statements are equivalent.

\begin{enumerate}
\item[(i)] There exists a positive number $\lambda$ such that
\begin{equation*}
\Phi(e)\ge \lambda\tau(e), \qquad \forall \ e\in \mathcal{P}\mathcal{F}(%
\mathcal{N},\tau).
\end{equation*}

\item[(ii)] The limit
\begin{equation*}
\lim_{\overset{\tau(e)\rightarrow\infty}{e\in \mathcal{P}\mathcal{F}(%
\mathcal{N},\tau)}}\frac{\Phi(e)}{\tau(e)} \quad \text{ exists and is
positive}.
\end{equation*}
\end{enumerate}
\end{lemma}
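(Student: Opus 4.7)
The plan is to encode the problem as a statement about the one-variable function $\varphi(t) := \Phi(e)$, where $e$ is any projection in $\mathcal{P}\mathcal{F}(\mathcal{N},\tau)$ with $\tau(e) = t$, and then apply a Fekete-type argument for subadditive functions. The first step is to verify that $\varphi$ is well-defined on $S := \{\tau(e) : e \in \mathcal{P}\mathcal{F}(\mathcal{N},\tau)\}$. Since $\mathcal{N}$ is a semifinite factor with faithful trace, any two finite projections with equal trace are Murray--von Neumann equivalent (by factor comparability and faithfulness of $\tau$). Since $\mathcal{N}$ is also properly infinite, the complement $I_{\mathcal{N}} - e$ of any finite projection $e$ is necessarily infinite, hence equivalent to $I_{\mathcal{N}}$, so the Murray--von Neumann equivalence between any two equally-traced finite projections can be extended to a unitary equivalence in $\mathcal{N}$. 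Unitary invariance of $\Phi$ then makes $\varphi$ well-defined.

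I would next establish two structural properties of $\varphi$: (a) monotonicity $\varphi(s) \le \varphi(t)$ whenever $s \le t$, which follows from Lemma \ref{prelim_lemma1}(iii) after choosing nested projections with the prescribed traces; (b) subadditivity $\varphi(s+t) \le \varphi(s) + \varphi(t)$, obtained by using the properly infinite hypothesis to choose orthogonal $e_1, e_2 \in \mathcal{P}\mathcal{F}(\mathcal{N},\tau)$ with $\tau(e_1) = s$ and $\tau(e_2) = t$, and then applying the triangle inequality $\Phi(e_1 + e_2) \le \Phi(e_1) + \Phi(e_2)$. Iterating subadditivity gives $\varphi(nt) \le n\varphi(t)$ for every $n \in \mathbb{N}$.

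The implication (i)$\Rightarrow$(ii) then follows from the standard subadditive-function lemma: for a non-negative, non-decreasing, subadditive $\varphi$, the limit $\lim_{t\to\infty}\varphi(t)/t$ exists and equals $\inf_{t>0,\,t\in S}\varphi(t)/t$. The routine proof fixes $t_0$ with $\varphi(t_0)/t_0$ close to the infimum, writes $t = n t_0 + r$ with $0 \le r < t_0$, and estimates $\varphi(t)/t \le n\varphi(t_0)/t + \varphi(r)/t$, with $\varphi(r) \le \varphi(t_0)$ by monotonicity. Hypothesis (i) says this infimum is at least $\lambda > 0$, so the limit is positive. Conversely, for (ii)$\Rightarrow$(i), if $L > 0$ is the limit, then for any fixed $t \in S \setminus \{0\}$ the inequality $\varphi(t)/t \ge \varphi(nt)/(nt)$ together with $\lim_{n\to\infty}\varphi(nt)/(nt) = L$ yields $\varphi(t)/t \ge L$, giving (i) with $\lambda = L$. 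The main technical obstacle is the well-definedness step, which is where the factor hypothesis (for Murray--von Neumann equivalence) and the properly infinite hypothesis (for upgrading to unitary equivalence and for producing orthogonal projections of prescribed traces) are both indispensable; once $\varphi$ is in hand, the subadditive-function analysis is standard.
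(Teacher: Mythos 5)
Your proposal is correct, but it takes a genuinely different route from the paper. The paper reduces everything to a single monotonicity statement: if $0<\tau(e)\le\tau(f)<\infty$ then $\Phi(f)/\tau(f)\le\Phi(e)/\tau(e)$, proved by passing (using the factor property) to $e\le f$, compressing to the finite factor $f\mathcal{N}f$ with its normalized trace $\tau_f$ and normalized norm $\Phi_f$, and invoking Corollary 3.31 of \cite{Fang}, which says a normalized unitarily invariant norm on a finite factor dominates the normalized trace; monotonicity of the ratio then makes the limit exist and equal the infimum, giving (i)$\Leftrightarrow$(ii) at once. You instead show that $\Phi$ on finite projections depends only on the trace (using trace-determined equivalence in a factor plus equivalence of the infinite complements -- note this last step is where countable decomposability is used -- to upgrade to unitary equivalence), establish monotonicity via Lemma \ref{prelim_lemma1}(iii) and subadditivity via orthogonal realizations, and then run a Fekete-type argument to get $\lim=\inf$. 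Your approach is self-contained and elementary, avoiding the external result from \cite{Fang}, at the cost of some bookkeeping about the set $S$ of attainable trace values (e.g.\ in a type I$_\infty$ factor $S$ is a discrete lattice, so you must check that the remainder $r=t-nt_0$ lies in $S$; this does hold, by taking nested projections, or can be sidestepped entirely by bounding $\varphi(t)\le(n+1)\varphi(t_0)$ via monotonicity alone). The paper's argument is shorter given the cited result and yields the stronger fact that the ratio $\Phi(e)/\tau(e)$ is itself non-increasing in $\tau(e)$, whereas yours only produces the limit-equals-infimum identity, which is all the lemma needs.
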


\begin{proof}
To prove the equivalence between (i) and (ii), it suffices to show
the following statement: if $e$ and $f$ are projections in
$\mathcal{N}$ such that $0<\tau(e)\le \tau(f)<\infty$, then
$\displaystyle \frac{\Phi(f)}{\tau(f)}\le \frac{\Phi(e)}{\tau(e)}.$

Since $\mathcal{N}$ is a factor, we might assume that $e\le f$. Note that $f%
\mathcal{N}f$ is a finite factor with a tracial state $\tau_f$, defined by $%
\displaystyle \tau_f(x)=\frac {\tau(x)}{\tau(f)}$ for all $x\in f\mathcal{N }%
f.$ Let $\Phi_f$ be a mapping on $f\mathcal{N}f$ such that $\displaystyle %
\Phi_f(x)= \frac {\Phi(x)}{\Phi(f)}$ for all $x\in f\mathcal{N }f.$ Thus $%
\Phi_f$ is a normalized unitarily invariant norm on a finite factor $f%
\mathcal{N}f$. By Corollary 3.31 in \cite{Fang}, we know that $\tau_f(e) \le
\Phi_f(e)$, whence
\begin{equation*}
\frac{\Phi(f)}{\tau(f)}\le \frac{\Phi(e)}{\tau(e)}.
\end{equation*}
This ends the proof of the lemma.
\end{proof}

The following easy result is sometime useful.

\begin{lemma}
\label{Lemma 3.1.2} Let $\mathcal{K}_{\Phi}(\mathcal{M},\tau)$ be a norm
ideal of $(\mathcal{M},\tau)$. Assume $\delta$ is a positive number such
that
\begin{equation*}
\sup\{\tau(e):e\mbox{ is a minimal projection in } \mathcal{M}\ \}\leq
\delta .
\end{equation*}
If $\lambda>0$ and $e_1$ is projection in $\mathcal{M}$ such that $%
\tau(e_1)\le \lambda$, then there exists a projection $f_1$ in $\mathcal{M}$
such that
\begin{equation*}
\lambda\le \tau(f_1)\le\lambda+\delta \qquad \text { and } \Phi(e_1)\le
\Phi(f_1).
\end{equation*}
\end{lemma}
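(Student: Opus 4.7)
The plan is to construct $f_1$ as an enlargement $e_1+h_0$ of $e_1$ by a projection $h_0 \le I-e_1$ with $\lambda-\tau(e_1)\le \tau(h_0)\le \lambda-\tau(e_1)+\delta$; monotonicity of $\Phi$ on the positive cone (Lemma \ref{prelim_lemma1}(iii)) will then give $\Phi(e_1)\le \Phi(f_1)$ for free. Setting $t:=\lambda-\tau(e_1)$, the trivial case $t=0$ is handled by $f_1=e_1$, so I assume $t>0$ from here on.

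To locate $h_0$, I would apply Zorn's lemma to the family
\[
\mathcal{S}:=\{\,h\in \mathcal{M}\ :\ h=h^*=h^2,\ h\le I-e_1,\ \tau(h)\le t+\delta\,\}
\]
ordered by the usual projection order; a chain in $\mathcal{S}$ admits its supremum projection as an upper bound in $\mathcal{S}$ because $\tau$ is normal and $I-e_1$ is closed under suprema. Let $h_0$ denote a maximal element. The content of the proof is then to show $\tau(h_0)\ge t$, which I would do by contradiction: assume $\tau(h_0)<t$ and set $p:=I-e_1-h_0$, a projection of infinite trace (using proper infiniteness of $\mathcal{M}$ and $\tau(e_1+h_0)<\infty$). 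If $p$ dominates some minimal projection $e$ of $\mathcal{M}$, then $\tau(e)\le \delta$ and
\[
\tau(h_0+e)=\tau(h_0)+\tau(e)<t+\delta,
\]
placing $h_0+e$ strictly above $h_0$ in $\mathcal{S}$. Otherwise $p\mathcal{M}p$ has no minimal projection at all, and I would pick any nonzero $q\le p$ of finite trace (available by Proposition 8.5.2 of \cite{Kadison}) and iterate the Murray--von Neumann halving trick --- any non-minimal projection splits into two nonzero orthogonal pieces, one of trace at most half --- to obtain a nonzero $q'\le p$ with $\tau(q')<t+\delta-\tau(h_0)$, again contradicting maximality via $h_0+q'\in \mathcal{S}$.

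The one genuinely nontrivial step is the halving argument in the second case, which supplies projections of arbitrarily small positive trace inside the continuous part $p\mathcal{M}p$; everything else is formal from Zorn's lemma and the monotonicity of $\Phi$. The hypothesis $\tau(e)\le \delta$ on minimal projections enters only in the first case, precisely in order to keep $h_0+e$ inside $\mathcal{S}$.
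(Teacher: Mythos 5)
Your proposal is correct and takes essentially the same route as the paper: enlarge $e_1$ to a projection $f_1\ge e_1$ whose trace lands in $[\lambda,\lambda+\delta]$, which is possible because minimal projections have trace at most $\delta$, and then deduce $\Phi(e_1)\le\Phi(f_1)$ from monotonicity of $\Phi$ (Lemma \ref{prelim_lemma1}(iii)). The paper asserts the existence of this enlargement in one sentence; your Zorn's lemma argument, with the case split between an atom under $p$ and repeated splitting of a finite-trace subprojection in the atomless case, simply supplies the details the paper leaves implicit.
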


\begin{proof}
From the assumption on minimal projections in $\mathcal{M}$, it induces that
there exists a projection $f_1$ in $\mathcal{M}$ such that $e_1\le f_1$ and $%
\lambda\le \tau(f_1)\le \lambda+\delta$. Hence the result of the lemma holds.
\end{proof}

Recall that, for each $x\in\mathcal{M}$, we let $R(x)$ be the range
projection of $x$ in $\mathcal{M}$.

\begin{lemma}
\label{Lemma 3.1.3} Let $a\in \mathcal{M} $ be a self-adjoint element. For
each $x\in{\mathcal{F}(\mathcal{M},\tau)}$ and $m\in{\mathbb{N}}$, there
exist a projection $q$ in ${\mathcal{F}(\mathcal{M},\tau)}$ and self-adjoint
elements $d$, $b$ in ${\mathcal{F}(\mathcal{M},\tau)}$ such that:

\begin{enumerate}
\item[(i)] $(I-q)x=0$;

\item[(ii)] $d=dq=qd$ is diagonal and $\|d\|\le \|a\|$;

\item[(iii)] $a=d+b+(I-q)a(I-q)$;

\item[(iv)] $\Vert b\Vert\leq 3\Vert a\Vert/m$, and $\tau(R(b))\leq
2m\cdot\tau(R(x^{}))$.
\end{enumerate}
\end{lemma}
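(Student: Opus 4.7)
The plan is to construct $q$ from the spectral projections of $a$ adapted to the range of $x$, in the spirit of the classical Weyl--von Neumann partition argument transplanted to the semifinite setting.

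Set $p = R(x)$, a projection in $\mathcal{P}\mathcal{F}(\mathcal{M},\tau)$. Partition $[-\|a\|,\|a\|]$ into $m$ intervals $\{I_k\}_{k=1}^m$ of length at most $2\|a\|/m$ with midpoints $\lambda_k$, and let $E_k = \chi_{I_k}(a)$ be the corresponding spectral projections of $a$. Define
\[
q_k = R(E_k p), \quad q = \sum_{k=1}^m q_k, \quad d = \sum_{k=1}^m \lambda_k q_k, \quad b = qa + aq - qaq - d.
\]
Since $R(E_k p) \le E_k$, the projections $\{q_k\}$ are pairwise orthogonal, so $q$ is a projection and $d = qdq$ is diagonal with $\|d\| \le \|a\|$. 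Moreover $q_k = q_k E_k$ and $q_k E_k p = E_k p$ (the defining property of the range projection), so
\[
qp = \sum_{k=1}^m q_k p = \sum_{k=1}^m q_k E_k p = \sum_{k=1}^m E_k p = p,
\]
which gives $(I-q)x = 0$ and verifies (i). A direct rearrangement shows $a - d - b = (I-q)a(I-q)$, which is (iii), and (ii) is immediate from the definition of $d$.

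For the norm estimate in (iv), the crucial point is that $q_k \le E_k$ combined with $aE_k = E_k a$ forces $q_k a q_j = \delta_{jk} q_k a_k q_k$ and $q_k a (I-q) = q_k a_k (E_k - q_k)$, where $a_k := E_k a E_k$. Since $\|a_k - \lambda_k E_k\| \le \|a\|/m$ and $q_k(E_k - q_k) = 0$, we obtain
\[
\|q_k a q_k - \lambda_k q_k\| \le \|a\|/m \quad\text{and}\quad \|q_k a(E_k - q_k)\| \le \|a\|/m.
\]
The pieces for distinct $k$ act between pairwise orthogonal source and target subspaces (via the orthogonal families $\{q_k\}$ and $\{E_k - q_k\}$), so these pointwise bounds lift to
\[
\|qaq - d\| \le \|a\|/m, \qquad \|qa(I-q)\| \le \|a\|/m, \qquad \|(I-q)aq\| \le \|a\|/m,
\]
and a triangle inequality applied to $b = (qaq - d) + qa(I-q) + (I-q)aq$ yields $\|b\| \le 3\|a\|/m$.

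For the rank bound, $R(q_k) = R(E_k p)$ is Murray--von Neumann equivalent to $R(pE_k) \le p$, so $\tau(q_k) \le \tau(p)$ and $\tau(q) \le m\tau(R(x))$. A direct block computation using $b = qa + aq - qaq - d$ gives $b(I-q) = qa(I-q)$ and $(I-q)b(I-q) = 0$, so the range of $b$ is contained in the range of $q$ joined with the range of $(I-q)aq$; the latter range projection is Murray--von Neumann equivalent to $R(qa(I-q)) \le q$, whence $\tau(R((I-q)aq)) \le \tau(q)$ and therefore $\tau(R(b)) \le 2\tau(q) \le 2m\tau(R(x))$. The main subtlety I anticipate is the block-orthogonality bookkeeping that turns the a priori large off-diagonal $\|qa(I-q)\|$ into a quantity of order $\|a\|/m$; this is exactly where the careful choice $q_k = R(E_k p) \le E_k$ pays off, making $q$ almost commute with $a$ up to error $\|a\|/m$.
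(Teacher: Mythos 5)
Your proof is correct and follows essentially the same route as the paper: the same spectral partition of $[-\|a\|,\|a\|]$, the same choice $q_k=R(E_k R(x))=R(e_kx)\le e_k$ with midpoints $\lambda_k$, the same $d$ and $b$, the block-orthogonality argument for $\|b\|\le 3\|a\|/m$, and the Murray--von Neumann equivalence $R(y)\sim R(y^*)$ for the trace bound $\tau(R(b))\le 2\tau(q)\le 2m\,\tau(R(x))$. No changes are needed.
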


\begin{proof}
The result is trivial when $x=0$. Therefore, we will assume that $x\ne 0$.

Note $\sigma(a)$, the spectrum of $a$ in $\mathcal{M}$, is contained in the
interval $[-\|a\|,\|a\|]$. We partition $[-\|a\|,\|a\|]$ into $m$ equal
subintervals, $\Delta_1,\ldots, \Delta_m$, having length $2\|a\|/m$. For $%
1\le j\le m$, let $e_j$ be the spectral projection of $a$ onto $%
\sigma(a)\cap \Delta_j$ and let $q_{j}=R(e_{{j}}x)$, the range projection of
$e_jx$ in $\mathcal{M}$. Let $\lambda_j$ be a midpoint of $\Delta_j$.

Define
\begin{equation*}
d\triangleq\sum^{m}_{j=1}\lambda_{j}q_{j}, \quad
q\triangleq\sum^{m}_{j=1}q_{j} \quad \text{ and } \quad  b\triangleq
a-d-(I-q)a(I-q).
\end{equation*}
Thus, the following statements hold:

\begin{enumerate}
\item[(a)] $q_{j}\leq e_{{j}}$, whence $q_{i}\perp q_{j}$ for $i\neq j$;

\item[(b)] $\tau(q_{j})=\tau(R(e_{{j}}x))=\tau(R(x^{\ast}e_{{j}%
}))\leq\tau(R(x^{\ast}))=\tau(R(x^{}))< \infty$;

\item[(c)] $\tau(q)\leq m\cdot\tau(R(x^{}))$, by definition of $q$.
\end{enumerate}

Now, we will verify that $q$, $d$ and $b$ have the stated properties.

(i) The equality
\begin{equation*}
qx=\sum^{m}_{j=1}q_{j}x=\sum^{m}_{j=1}q_{j}e_{{j}}x=\sum^{m}_{j=1}e_{{j}}x=x
\end{equation*}
shows that $(I-q)x=0$.

(ii) From (a) and the definition of $d$, it is clear that $d=dq=qd$ is
diagonal and $\|d\|\le \|a\|$.

(iii) By the definition of $b$, we know that $a=d+b+(I-q)a(I-q)$.

(iv) From the definition of $b$, it follows that
\begin{equation*}
\begin{array}{rcl}
b & = & qaq-\sum^{m}_{j=1}\lambda_{j}q_{j}+qa(I-q)+(qa(I-q))^{\ast} \\
& = & q\sum^{m}_{j=1}(a-\lambda_{j})q_{j}+qa(I-q)+(qa(I-q))^{\ast}.%
\end{array}
\end{equation*}
Combining it with (c), one obtains that $\tau(R(b))\leq 2m\cdot\tau(R(x^{}))$%
. Note that
\begin{equation*}
\begin{array}{rcl}
\Vert(I-q)aq\Vert & = & \Vert(I-q)a\sum^{m}_{j=1}q_{j}\Vert \\
& = & \Vert(I-q)a\sum^{m}_{j=1}q_{j}-(I-q)\sum^{m}_{j=1}\lambda_{j}q_{j}\Vert
\\
& \leq & \Vert\sum^{m}_{j=1}(a-\lambda_{j})q_{j}\Vert. \\
\end{array}
\end{equation*}
Since $q_{j}$ is a subprojection of $e_{{j}}$, it follows that
\begin{equation*}
\Vert\sum^{m}_{j=1}(a-\lambda_{j})q_{j}\Vert=\Vert\sum^{m}_{j=1}e_{{j}%
}(a-\lambda_{j})q_{j}e_{{j}}\Vert.
\end{equation*}
Therefore $\sum^{m}_{j=1}(a-\lambda_{j})q_{j}$ is block diagonal and
\begin{equation*}
\Vert\sum^{m}_{j=1}(a-\lambda_{j})q_{j}\Vert=\max^{}_{1\leq j\leq m}\Vert e_{%
{j}}(a-\lambda_{j})q_{j}e_{{j}}\Vert.
\end{equation*}
The fact that $e_{{j}}$'s are spectral projections of $a$ and our choice of $%
\lambda_j$ ensure that
\begin{equation*}
\Vert(a-\lambda_{j})q_{j}\Vert\leq\Vert e_{{j}}(a-\lambda_{j})e_{{j}%
}\Vert\leq \Vert a\Vert/m.
\end{equation*}
Thus, it follows that $\Vert b\Vert\leq 3\Vert a\Vert/m$.

This ends the proof of the lemma.
\end{proof}

\begin{proposition}
\label{Lemma 3.1.4} Let $\mathcal{K}_{\Phi}(\mathcal{M},\tau)$ be a norm
ideal of $(\mathcal{M},\tau)$. Assume that

\begin{enumerate}
\item[(a)] there exists a positive number $\delta$ such that
\begin{equation*}
\sup\{\tau(e):e \mbox{ is a minimal projection in } \mathcal{M}\ \}\leq
\delta;
\end{equation*}

\item[(b)] $\displaystyle
\lim_{\overset{\tau(e)\rightarrow\infty}{e\in \mathcal{P}\mathcal{F}(%
\mathcal{M},\tau)}}\frac{\Phi(e)}{\tau(e)} =0. $
\end{enumerate}

Let $a\in\mathcal{M}$ be self-adjoint.  Then, for every $x\in{\mathcal{F}(%
\mathcal{M},\tau)}$ and $\epsilon>0$, there exist a projection $q$ in ${%
\mathcal{F}(\mathcal{M},\tau)}$ and self-adjoint elements $d$ and $b$ in ${%
\mathcal{F}(\mathcal{M},\tau)}$ such that:

\begin{enumerate}
\item[(i)] $(I-q)x=0$;

\item[(ii)] $d=dq=qd$ is diagonal and $\|d\|\le \|a\|$;

\item[(iii)] $a=d+b+(I-q)a(I-q)$;

\item[(iv)] $\Phi(b)\leq \epsilon$.
\end{enumerate}
\end{proposition}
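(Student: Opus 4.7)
The plan is to apply Lemma \ref{Lemma 3.1.3} to produce $q$, $d$, $b$ with properties (i)--(iii) already in hand, and then quantitatively control $\Phi(b)$ using the ideal-norm estimate of Lemma \ref{prelim_lemma1}(iv), the projection enlargement of Lemma \ref{Lemma 3.1.2}, and the decay hypothesis (b). The trivial case $x=0$ is handled by taking $q=d=b=0$, so assume $x\neq 0$ and set $t \triangleq \tau(R(x))>0$.

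First I would fix a positive integer $m$ (to be chosen at the end) and apply Lemma \ref{Lemma 3.1.3} to produce a projection $q\in\mathcal{F}(\mathcal{M},\tau)$ and self-adjoint $d,b\in\mathcal{F}(\mathcal{M},\tau)$ satisfying (i), (ii), (iii) of the present proposition together with
\begin{equation*}
\|b\|\le \frac{3\|a\|}{m},\qquad \tau(R(b))\le 2m\, t.
\end{equation*}
Since $b$ is self-adjoint and in $\mathcal{F}(\mathcal{M},\tau)$, Lemma \ref{prelim_lemma1}(iv) gives $\Phi(b)\le \|b\|\,\Phi(R(b))$.

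Next I would control $\Phi(R(b))$. By hypothesis (a) and Lemma \ref{Lemma 3.1.2}, applied with $e_1=R(b)$ and $\lambda=2m t$, there exists a projection $f\in\mathcal{M}$ with $R(b)\le f$,
\begin{equation*}
2m t\le \tau(f)\le 2m t+\delta,\qquad \Phi(R(b))\le \Phi(f).
\end{equation*}
Combining these estimates,
\begin{equation*}
\Phi(b)\le \frac{3\|a\|}{m}\,\Phi(f)=\frac{3\|a\|}{m}\,\tau(f)\cdot\frac{\Phi(f)}{\tau(f)}\le \bigl(6\|a\|\, t+3\|a\|\,\delta\bigr)\cdot \frac{\Phi(f)}{\tau(f)}
\end{equation*}
for all $m\ge 1$. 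Since $\mathcal{M}$ is properly infinite with semifinite $\tau$, we may let $m\to\infty$, forcing $\tau(f)\ge 2mt\to\infty$. Hypothesis (b) then yields $\Phi(f)/\tau(f)\to 0$, so for $m$ sufficiently large we obtain $\Phi(b)\le\epsilon$, completing condition (iv).

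The conceptual obstacle, and the point where hypothesis (b) does real work, is precisely the tension in Lemma \ref{Lemma 3.1.3}: shrinking $\|b\|$ by a factor $1/m$ inflates $\tau(R(b))$ by a factor $m$, so a crude bound $\Phi(b)\le\|b\|\Phi(R(b))$ only controls $\Phi(b)$ if $\Phi$ grows strictly slower than $\tau$ on large finite projections. That is exactly what assumption (b) provides, and the enlargement step via Lemma \ref{Lemma 3.1.2} is what lets us invoke (b) at a projection whose trace we have already driven to infinity in a controlled way. Everything else is assembling the inequalities in the order above.
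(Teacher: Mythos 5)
Your proposal is correct and follows essentially the same route as the paper: apply Lemma \ref{Lemma 3.1.3} for a parameter $m$, bound $\Phi(b)\le\|b\|\Phi(R(b))$ via Lemma \ref{prelim_lemma1}(iv), enlarge $R(b)$ to a projection $f$ with $2m\tau(R(x))\le\tau(f)\le 2m\tau(R(x))+\delta$ via Lemma \ref{Lemma 3.1.2}, and invoke hypothesis (b) as $\tau(f)\to\infty$ to make $\Phi(b)\le\epsilon$ for large $m$. The quantitative bookkeeping (your constant $6\|a\|t+3\|a\|\delta$ versus the paper's factorization $3\|a\|\cdot\frac{\Phi(f_m)}{\tau(f_m)}\cdot\frac{\tau(f_m)}{m}$) is only a cosmetic difference.
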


\begin{proof}
The result is trivial when $x=0$. In the following, we will assume that $%
x\ne 0$. By using Lemma \ref{Lemma 3.1.3}, given $x\in{\mathcal{F}(\mathcal{M%
},\tau)}$, there exist a sequence $\{q_m\}_{m=1}^\infty$ of projections and
two sequences $\{d_m\}_{m=1}^\infty$, $\{b_m\}_{m=1}^\infty$ of self-adjoint
elements in ${\mathcal{F}(\mathcal{M},\tau)}$, such that:

\begin{enumerate}
\item[(i$_1$)] $(I-q_m)x=0$;

\item[(ii$_1$)] $d_m=d_mq_m=q_md_m$ is diagonal and $\|d_m\|\le \|a\|$;

\item[(iii$_1$)] $a=d_m+b_m+(I-q_m)a(I-q_m)$;

\item[(iv$_1$)] $\Vert b_m\Vert\leq 3\Vert a\Vert/m$ and $\tau(R(b_m))\leq
2m\cdot\tau(R(x^{}))$.
\end{enumerate}

For each $b_{m}$, Lemma \ref{prelim_lemma1} yields that
\begin{equation*}
\Phi(b_{m})\le\Vert b_{m}\Vert\Phi(R(b_{m})).
\end{equation*}
By the Assumption (a), Lemma \ref{Lemma 3.1.2} and (iv$_1$), for each $R(b_m)
$ there exists a projection $f_m$ in $\mathcal{M}$ such that
\begin{equation*}
2m\cdot\tau(R(x^{}))  \leq\tau(f_m)\leq 2m\cdot\tau(R(x^{}))+\delta
\end{equation*}
and
\begin{equation*}
\Phi(R(b_{m}))\le\Phi(f_m).
\end{equation*}
Therefore,
\begin{equation*}
\Phi(b_{m})\leq\Vert b_{m}\Vert\Phi(R(b_{m}))\leq\frac{3\Vert a\Vert}{m}%
\Phi(f_{m} )= {3\Vert a\Vert} \frac {\Phi(f_{m} )}{\tau(f_m)} \frac {%
\tau(f_m)}{m} .
\end{equation*}
Note that $\tau(f_m)\rightarrow\infty$ when $m\rightarrow\infty$. By
applying the Assumption (b), for $m$ large enough we can find self-adjoint
elements $d_m$, $b_m$ and a projection $q_m$, all in $\mathcal{F}(\mathcal{M}%
, \tau)$, with desired properties.
\end{proof}

\begin{proposition}
\label{prop3.1.5} Let $\mathcal{K}_{\Phi}(\mathcal{M},\tau)$ be a norm ideal
of $(\mathcal{M},\tau)$. Assume that

\begin{enumerate}
\item[(a)] there exists a positive number $\delta$ such that
\begin{equation*}
\sup\{\tau(e):e \mbox{ is a minimal projection in } \mathcal{M}\ \}\leq
\delta;
\end{equation*}

\item[(b)] $\displaystyle
\lim_{\overset{\tau(e)\rightarrow\infty}{e\in \mathcal{P}\mathcal{F}(%
\mathcal{M},\tau)}}\frac{\Phi(e)}{\tau(e)} =0. $
\end{enumerate}

Let $a\in \mathcal{M}$ be self-adjoint and let $\{e_n\}_{n=1}^\infty$ be a
family of projections in $\mathcal{F}(\mathcal{M},\tau)$. Let $\epsilon>0$.
Then there exist a sequence $\{q_n\}_{n=1}^\infty$ of projections in $%
\mathcal{F}(\mathcal{M},\tau)$ and two sequences $\{d_n\}_{n=1}^\infty$, $%
\{b_n\}_{n=1}^\infty$ of self-adjoint elements in $\mathcal{F}(\mathcal{M}%
,\tau)$ such that, for each $n\ge 1$,

\begin{enumerate}
\item[(i)] $q_iq_n=0$ and $q_ib_n=0$ for all $1\le i<n;$

\item[(ii)] $(\sum_{i=1}^n q_i)e_k=e_k$ for all $1\le k\le n$;

\item[(iii)] $d_n=q_nd_n=d_nq_n$ is diagonal and $\|d_n\|\le \|a\|$;

\item[(iv)] $a=(\sum_{i=1}^n d_i)+ (\sum_{i=1}^n b_i) + (I-\sum_{i=1}^n
q_i)a(I-\sum_{i=1}^n q_i)$;

\item[(v)] $\Phi(b_n)\le \epsilon/2^{n}$.
\end{enumerate}
\end{proposition}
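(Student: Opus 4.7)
The plan is to build the sequences inductively on $n$, at each step invoking Proposition \ref{Lemma 3.1.4} in a suitable corner of $\mathcal{M}$.

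For the base case $n=1$, apply Proposition \ref{Lemma 3.1.4} to $a$ with input $x=e_1 \in \mathcal{F}(\mathcal{M},\tau)$ and tolerance $\epsilon/2$; this yields $q_1,d_1,b_1$ satisfying the desired properties with $n=1$, noting that (ii) at $n=1$ says $q_1 e_1 = e_1$, which is equivalent to the conclusion $(I-q_1)e_1 = 0$ supplied by the proposition.

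For the inductive step, suppose $q_1,\ldots,q_n,\ d_1,\ldots,d_n,\ b_1,\ldots,b_n$ have been produced. Set $Q_n = \sum_{i=1}^{n} q_i \in \mathcal{F}(\mathcal{M},\tau)$ and pass to the corner $\mathcal{M}_n = (I-Q_n)\mathcal{M}(I-Q_n)$. Since $\mathcal{M}$ is properly infinite and $Q_n$ has finite trace, for every nonzero central projection $z$ of $\mathcal{M}$ the projection $z(I-Q_n) = z - zQ_n$ is infinite, so $I-Q_n$ is properly infinite; hence $\mathcal{M}_n$ is itself a countably decomposable, properly infinite von Neumann algebra with faithful normal semifinite tracial weight $\tau|_{\mathcal{M}_n}$. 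Minimal projections in $\mathcal{M}_n$ are minimal in $\mathcal{M}$, so assumption (a) is preserved, and the restricted norm ideal $(I-Q_n)\mathcal{K}_{\Phi}(\mathcal{M},\tau)(I-Q_n)$ on $\mathcal{M}_n$ inherits assumption (b). Thus Proposition \ref{Lemma 3.1.4} is available inside $\mathcal{M}_n$. I apply it to the self-adjoint element $a_n = (I-Q_n)a(I-Q_n)$ with input $x = (I-Q_n)e_{n+1}(I-Q_n) \in \mathcal{F}(\mathcal{M}_n,\tau)$ and tolerance $\epsilon/2^{n+1}$, obtaining $q_{n+1} \in \mathcal{F}(\mathcal{M}_n)$ and self-adjoint $d_{n+1},b_{n+1} \in \mathcal{F}(\mathcal{M}_n)$ with $(I-Q_n-q_{n+1})x=0$, with $d_{n+1} = q_{n+1}d_{n+1} = d_{n+1}q_{n+1}$ diagonal in $\mathcal{M}_n$ (hence in $\mathcal{M}$) and $\|d_{n+1}\| \le \|a_n\| \le \|a\|$, with $a_n = d_{n+1} + b_{n+1} + (I-Q_{n+1})a_n(I-Q_{n+1})$, and with $\Phi(b_{n+1}) \le \epsilon/2^{n+1}$. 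Properties (i), (iii), (v) are then immediate (using $q_{n+1},b_{n+1}\in\mathcal{M}_n$ for the orthogonality assertions), and (iv) follows by substituting this decomposition of $a_n$ into the inductive identity $a = \sum_{i=1}^{n}d_i + \sum_{i=1}^{n}b_i + a_n$ together with $(I-Q_{n+1})(I-Q_n) = I-Q_{n+1}$.

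The crux is the verification of (ii) at step $n+1$, that is, $e_{n+1}\le Q_{n+1}$. Writing $T=(I-Q_n)e_{n+1}$ we have $x = T T^{*}$, so $R(x) = R(T) = R\bigl((I-Q_n)e_{n+1}\bigr)$; since $x\ge 0$ lies in $\mathcal{M}_n$, the identity $(I-Q_n-q_{n+1})x = 0$ forces $R(x)\le q_{n+1}$. Hence $(I-Q_n)e_{n+1} = q_{n+1}(I-Q_n)e_{n+1} = q_{n+1}e_{n+1}$, where the last equality uses $q_{n+1}\le I-Q_n$ and $q_{n+1}Q_n = 0$. Consequently $e_{n+1} = Q_n e_{n+1} + (I-Q_n)e_{n+1} = Q_n e_{n+1} + q_{n+1}e_{n+1} = Q_{n+1}e_{n+1}$. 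Combining with the inductive hypothesis $Q_n e_k = e_k$ for $k\le n$ and the domination $Q_{n+1}\ge Q_n$ yields (ii).

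The main obstacle is precisely this last reduction from the weaker-looking $R\bigl((I-Q_n)e_{n+1}(I-Q_n)\bigr)\le q_{n+1}$ delivered by Proposition \ref{Lemma 3.1.4} to the absorption $e_{n+1}\le Q_{n+1}$ demanded by (ii); everything else is routine bookkeeping from the inductive hypothesis, together with checking that the corner $\mathcal{M}_n$ inherits the hypotheses of Proposition \ref{Lemma 3.1.4}.
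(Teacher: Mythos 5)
Your proposal is correct and follows essentially the same route as the paper: an induction in which Proposition \ref{Lemma 3.1.4} is applied, with tolerance $\epsilon/2^{n+1}$, to the compressed element $(I-\sum_{i=1}^n q_i)a(I-\sum_{i=1}^n q_i)$ and the compressed projection $(I-\sum_{i=1}^n q_i)e_{n+1}(I-\sum_{i=1}^n q_i)$ in the corner algebra. Your derivation of (ii) via $R\bigl((I-Q_n)e_{n+1}(I-Q_n)\bigr)=R\bigl((I-Q_n)e_{n+1}\bigr)\le q_{n+1}$ is the same absorption step the paper performs (there phrased through positivity of $e_{n+1}$), and your extra check that the corner inherits the standing hypotheses is a welcome but minor addition.
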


\begin{proof}
We will use an inductive construction to find sequences $\{q_n\}_{n=1}^\infty
$, $\{d_n\}_{n=1}^\infty$ and $\{b_n\}_{n=1}^\infty$ with desired properties.

When $n=1$, applying Proposition \ref{Lemma 3.1.4} to a self-adjoint element
$a$ and a finite rank operator $e_1$ in $\mathcal{M}$, we obtain  a
projection $q_1$ in ${\mathcal{F}(\mathcal{M},\tau)}$ and self-adjoint
elements $d_1$, $b_1$ in ${\mathcal{F}(\mathcal{M},\tau)}$ satisfying

\begin{enumerate}
\item[(ii)] $(I-q_1)e_1=0$;

\item[(iii)] $d_1=d_1q_1=q_1d_1$ is diagonal and $\|d_1\|\le \|a\|$;

\item[(iv)] $a=d_1+b_1+(I-q_1)a(I-q_1)$;

\item[(v)] $\Phi(b_1)\leq \epsilon/2$.
\end{enumerate}

Assume $n$ is a positive integer and we have obtained $\{q_i\}_{i=1}^n$, $%
\{d_i\}_{i=1}^n$ and $\{b_i\}_{i=1}^n$ with desired properties. We apply
Proposition \ref{Lemma 3.1.4} to a self-adjoint element $(I-\sum_{i=1}^n
q_i)a(I-\sum_{i=1}^n q_i)$ and a finite rank operator $(I-\sum_{i=1}^n
q_i)e_{n+1}(I-\sum_{i=1}^n q_i)$ in $(I-\sum_{i=1}^n q_i)\mathcal{M}%
(I-\sum_{i=1}^n q_i)$ and obtain  a projection $q_{n+1}$ in ${\mathcal{F}(%
\mathcal{M},\tau)}$ and self-adjoint elements $d_{n+1}$, $b_{n+1}$ in ${%
\mathcal{F}(\mathcal{M},\tau)}$ satisfying

\begin{enumerate}
\item[(i$_1$)] $q_{n+1}\le (I-\sum_{i=1}^n q_i) $ and $R(b_{n+1})\le
(I-\sum_{i=1}^n q_i)$;

\item[(ii$_1$)] $((I-\sum_{i=1}^n q_i)-q_{n+1})e_{n+1}(I-\sum_{i=1}^n q_i)=0$%
;

\item[(iii)] $d_{n+1}=d_{n+1}q_{n+1}=q_{n+1}d_{n+1}$ is diagonal and $%
\|d_{n+1}\|\le \|a\|$;

\item[(iv$_1$)] $(I-\sum_{i=1}^n q_i)a(I-\sum_{i=1}^n
q_i)=d_{n+1}+b_{n+1}+(I-\sum_{i=1}^{n+1} q_i )a(I-\sum_{i=1}^{n+1} q_i )$;

\item[(v)] $\Phi(b_{n+1})\leq \epsilon/2^{n+1}$.
\end{enumerate}

We will further verify that (i), (ii) and (iv) are also true. Actually, from
(i$_1$), it induces that $q_iq_{n+1}=0$ and  $q_ib_{n+1}=0$ for all $1\le
i<n+1$, whence (i) is true. As $(I-\sum_{i=1}^n q_i)-q_{n+1}$ is a
sub-projection of $I-\sum_{i=1}^n q_i$, from  (ii$_1$),  one concludes that $%
(I-\sum_{i=1}^{n+1} q_i)e_{n+1}=0$. Furthermore, for $1\le i\le n$,  $%
(I-\sum_{i=1}^{n+1} q_i)e_{i}= (I-\sum_{i=1}^{n+1} q_i)(I-\sum_{i=1}^n q_i)
e_{i}=0$ by the induction hypothesis. Thus (ii) is also true.  From (iv$_1$)
and induction hypothesis,
\begin{equation*}
\begin{aligned}
a = \sum_{i=1}^n d_i +  \sum_{i=1}^n b_i  + (I-\sum_{i=1}^n q_i)a(I-\sum_{i=1}^n q_i)  = \sum_{i=1}^{n+1} d_i +  \sum_{i=1}^{n+1} b_i  + (I-\sum_{i=1}^{n+1} q_i)a(I-\sum_{i=1}^{n+1} q_i).\end{aligned}
\end{equation*}
Hence (iv) is satisfied.

This completes the inductive construction of the sequences and finishes the
proof of the result.
\end{proof}

\subsection{Kuroda Theorem for a self-adjoint operator in semifinite von
Neumann algebras}

\ \newline

In the following theorem, we prove that a self-adjoint element in $\mathcal{M%
} $ can be written as a diagonal element up to a small perturbation in ${%
\mathcal{K}^{0}_{\Phi}(\mathcal{M},\tau)^{}}$ when $\Phi$ satisfies a
natural condition.

\begin{proposition}
\label{theorem3.2.1} Let $\mathcal{M}$ be a countably decomposable,
properly infinite von Neumann algebra with a faithful  normal
semifinite tracial
weight $\tau$ and let $\mathcal{K}_{\Phi}(\mathcal{M},\tau)$ be a norm ideal of $%
(\mathcal{M},\tau)$. Assume that

\begin{enumerate}
\item[(a)] there exists a positive number $\delta$ such that
\begin{equation*}
\sup\{\tau(e):e \mbox{ is a minimal projection in } \mathcal{M}\ \}\leq
\delta ;
\end{equation*}

\item[(b)] $\displaystyle
\lim_{\overset{\tau(e)\rightarrow\infty}{e\in \mathcal{P}\mathcal{F}(%
\mathcal{M},\tau)}}\frac{\Phi(e)}{\tau(e)} =0. $
\end{enumerate}

Let $a\in \mathcal{M}$ be self-adjoint. Then for every $\epsilon>0$, there
exists a diagonal operator $d$ in $\mathcal{M}$ such that:

\begin{enumerate}
\item[(i)] $a-d\in{\mathcal{K}^{0}_{\Phi}(\mathcal{M},\tau)^{}}$;

\item[(ii)] $\Phi(a-d)\le\epsilon$ and $\|d\|\le \|a\|$.
\end{enumerate}
\end{proposition}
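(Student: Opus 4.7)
The plan is to iterate Proposition \ref{prop3.1.5} and assemble a diagonal operator out of the pieces it produces. First, since $\mathcal{M}$ is countably decomposable and $\tau$ is semifinite, by the fact recorded in Section \ref{sec2}(ii) applied to the identity, there is an increasing sequence $\{e_n\}_{n=1}^\infty$ of projections in $\mathcal{P}\mathcal{F}(\mathcal{M},\tau)$ with $e_n\nearrow I$ in the strong operator topology (equivalently, weak$^{*}$ topology, on the unit ball). This sequence will serve as the ``test'' projections driving the inductive construction.

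Next, I apply Proposition \ref{prop3.1.5} to $a$, the sequence $\{e_n\}$, and the given $\epsilon>0$, obtaining sequences $\{q_n\}_{n=1}^\infty$, $\{d_n\}_{n=1}^\infty$, $\{b_n\}_{n=1}^\infty$ in $\mathcal{F}(\mathcal{M},\tau)$ satisfying (i)--(v) there. By (i) the projections $\{q_n\}$ are pairwise orthogonal, and by (ii) we have $\sum_{i=1}^{n}q_i\ge e_n$ for every $n$; letting $n\to\infty$ yields $\sum_{n=1}^\infty q_n=I$ strongly. Since each $d_n=q_nd_nq_n$ is diagonal with support $q_n$, $\|d_n\|\le\|a\|$, and the $q_n$'s are mutually orthogonal, the series
\[
d\;=\;\sum_{n=1}^{\infty}d_n
\]
converges in the strong operator topology to a self-adjoint $d\in\mathcal{M}$ with $\|d\|\le\|a\|$; the concatenation of the diagonal decompositions of the individual $d_n$ exhibits $d$ itself as a diagonal operator in $\mathcal{M}$.

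Finally, rewriting (iv) of Proposition \ref{prop3.1.5} as
\[
a-\sum_{i=1}^{n}d_i-\sum_{i=1}^{n}b_i=\Bigl(I-\sum_{i=1}^{n}q_i\Bigr)a\Bigl(I-\sum_{i=1}^{n}q_i\Bigr),
\]
the right-hand side tends to $0$ in the weak$^{*}$ topology because $\sum_{i=1}^{n}q_i\to I$ strongly and multiplication by a bounded element is separately weak$^{*}$-continuous. Combined with the strong convergence $\sum_{i=1}^n d_i\to d$, this shows that $\sum_{n=1}^\infty b_n$ converges weak$^{*}$ to $a-d$. Since $b_n\in\mathcal{F}(\mathcal{M},\tau)\subseteq\mathcal{K}_\Phi^0(\mathcal{M},\tau)$ and
\[
\sum_{n=1}^{\infty}\Phi(b_n)\;\le\;\sum_{n=1}^{\infty}\frac{\epsilon}{2^{n}}\;=\;\epsilon,
\]
Lemma \ref{prelim_lemma1}(v) delivers both $a-d\in\mathcal{K}_\Phi^0(\mathcal{M},\tau)$ and $\Phi(a-d)\le\epsilon$, completing (i) and (ii); the bound $\|d\|\le\|a\|$ has already been established.

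The substantive work has been pushed into Proposition \ref{prop3.1.5}; what is left here is really a bookkeeping argument. The one point requiring care is to ensure that the ``unused'' tail $(I-\sum_{i\le n}q_i)a(I-\sum_{i\le n}q_i)$ genuinely disappears in the limit, which is why I insist on choosing the $\{e_n\}$ so that $e_n\nearrow I$ and then exploit the containment $\sum_{i\le n}q_i\ge e_n$ from condition (ii) of Proposition \ref{prop3.1.5}. The role of hypothesis (b) of the present proposition is hidden inside the estimate $\Phi(b_n)\le\epsilon/2^n$ furnished by Proposition \ref{prop3.1.5}, while hypothesis (a) is used at the same place (so that $\Phi(R(b_n))$ can be controlled via a comparison projection supplied by Lemma \ref{Lemma 3.1.2}).
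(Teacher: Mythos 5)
Your proposal is correct and follows essentially the same route as the paper: approximate $I$ by finite-trace projections, run Proposition \ref{prop3.1.5}, sum the diagonal pieces $d_n$, and use $\sum_n\Phi(b_n)\le\epsilon$ together with Lemma \ref{prelim_lemma1}(v) to place $a-d$ in $\mathcal{K}^0_{\Phi}(\mathcal{M},\tau)$ with $\Phi(a-d)\le\epsilon$. The only (harmless) differences are cosmetic: you identify $a-d$ as the weak$^{*}$ limit of $\sum_{i\le n}b_i$ by letting $n\to\infty$ in identity (iv) — where the convergence $(I-\sum_{i\le n}q_i)a(I-\sum_{i\le n}q_i)\to 0$ is best justified by strong convergence plus boundedness rather than separate weak$^{*}$-continuity — whereas the paper checks $q_n(a-d-b)=0$ for each $n$.
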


\begin{proof}
By the fact that $\mathcal{M}$ is a countably decomposable,
properly infinite von Neumann algebra with a faithful normal
semifinite tracial weight $\tau$ and the Assumption (a), one
concludes that there exists a
sequence $\{e_{n}\}^{\infty}_{n=1}$ of orthogonal projections  in $\mathcal{F%
}(\mathcal{M},\tau)$ such that
\begin{equation*}
\text{$\sum^{\infty}_{n=1}e_{n}=I$ \qquad and \qquad $\tau(e_{n})\leq \delta$%
.}
\end{equation*}

By Proposition \ref{prop3.1.5}, we obtain a sequence $\{q_n\}_{n=1}^\infty$
of projections in $\mathcal{F}(\mathcal{M},\tau)$ and two sequences $%
\{d_n\}_{n=1}^\infty$ and $\{b_n\}_{n=1}^\infty$ of self-adjoint elements in
$\mathcal{F}(\mathcal{M},\tau)$ such that, for each $n\ge 1$,

\begin{enumerate}
\item[(i)] $q_iq_n=0$ and $q_ib_n=0$ for all $1\le i<n;$

\item[(ii)] $(\sum_{i=1}^n q_i)e_k=e_k$ for all $1\le k\le n$;

\item[(iii)] $d_n=q_nd_n=d_nq_n$ is diagonal and $\|d_n\|\le \|x\|$;

\item[(iv)] $a=(\sum_{i=1}^n d_i)+ (\sum_{i=1}^n b_i) + (I-\sum_{i=1}^n
q_i)a(I-\sum_{i=1}^n q_i)$;

\item[(v)] $\Phi(b_n)\le \epsilon/2^{n}$.
\end{enumerate}

From (i), it follows that $\{q_n\}_{n=1}^\infty$ is a sequence of orthogonal
projections in $\mathcal{F}(\mathcal{M},\tau)$. Thus $\sum_{n=1}^\infty q_n$
converges to a projection $q \in\mathcal{M}$ in weak$^*$ topology. By (ii)
and the fact that $\sum^{\infty}_{n=1}e_{n}=I$, one knows that $q=I$.

From (iii) and the fact that $\{q_n\}_{n=1}^\infty$ is a sequence of
orthogonal projections, we let  $d=\sum_{n=1}^\infty d_n$ be a diagonal
operator in $\mathcal{M}$ satisfying $\|d\|\le \|a\|$. From (v) and Lemma %
\ref{prelim_lemma1}, it follows that $\sum_{n=1}^\infty b_n$ converges to an
element $b\in \mathcal{K}_{\Phi}^0(\mathcal{M},\tau)$ in $\Phi$-norm and $%
\Phi(b)\le \epsilon.$

Now we need only to show that $a=d+b$. In viewing of $\sum_{n=1}^\infty q_n=I
$, it suffice to show that $q_n(a-d-b)=0$ for each $n\ge 1$, which follows
directly from (i), (iii), (iv) and definitions of $d$, $b$. This ends the
proof of the proposition.
\end{proof}

Now we are ready to present the main result of this section, which gives an
extension of Kuroda's result (see \cite{Kuroda}) in a type I$_\infty$ factor.

\begin{theorem}
\label{thm2.3.3} Let $\mathcal{M}$ be a countably decomposable,
properly infinite von Neumann algebra with a faithful  normal
semifinite tracial
weight $\tau$ and let $\mathcal{K}_{\Phi}(\mathcal{M},\tau)$ be a norm ideal of $%
(\mathcal{M},\tau)$. Assume that
\begin{equation*}
\displaystyle
\lim_{\overset{\tau(e)\rightarrow\infty}{e\in \mathcal{P}\mathcal{F}(%
\mathcal{M},\tau)}}\frac{\Phi(e)}{\tau(e)} =0.
\end{equation*}
Let $a\in \mathcal{M} $ be a self-adjoint element. Then for every $\epsilon>0
$, there exists a diagonal operator $d$ in $\mathcal{M}$ such that:

\begin{enumerate}
\item[(i)] $a-d\in{\mathcal{K}^{0}_{\Phi}(\mathcal{M},\tau)^{}}$;

\item[(ii)] $\Phi(a-d)\le\epsilon$.
\end{enumerate}
\end{theorem}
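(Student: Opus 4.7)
The plan is to deduce Theorem \ref{thm2.3.3} from Proposition \ref{theorem3.2.1} by means of a central decomposition of $\mathcal{M}$ that isolates the portion on which hypothesis (a) of Proposition \ref{theorem3.2.1}---the uniform bound $\delta$ on the $\tau$-trace of minimal projections---can fail. The observation driving the decomposition is structural: if $p \in \mathcal{M}$ is a minimal projection, then $p\mathcal{M}p = \mathbb{C}p$ forces the central support $c(p)$ to be a minimal projection of $Z(\mathcal{M})$; consequently all minimal projections of $\mathcal{M}$ lie under $z_0 := \sum_n e_n$, where $\{e_n\}_{n \in I}$ is the (at most countable, by countable decomposability) family of minimal projections of $Z(\mathcal{M})$.

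On the summand $(I-z_0)\mathcal{M}$ the center is atomless, hence by the above $(I-z_0)\mathcal{M}$ has no minimal projections at all and hypothesis (a) holds vacuously. Each summand $e_n\mathcal{M}$ has center $\mathbb{C}e_n$ and is therefore a factor; as a direct summand of a countably decomposable, properly infinite, semifinite von Neumann algebra it is again a countably decomposable, properly infinite, semifinite factor (type $\mathrm{I}_\infty$ or $\mathrm{II}_\infty$). In either type all minimal projections, if any, share a common trace $\delta_n$, so hypothesis (a) holds on $e_n\mathcal{M}$ with this $\delta_n$. Cutting the norm ideal $\mathcal{K}_\Phi(\mathcal{M},\tau)$ by a central projection $z$ yields a norm ideal on $z\mathcal{M}$ whose asymptotic condition (b) is inherited, since $\mathcal{P}\mathcal{F}(z\mathcal{M},\tau) \subseteq \mathcal{P}\mathcal{F}(\mathcal{M},\tau)$.

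I would then apply Proposition \ref{theorem3.2.1} to $(I-z_0)a$ in $(I-z_0)\mathcal{M}$ with tolerance $\epsilon/2$, obtaining a diagonal $d_0$, and to each $e_n a$ in $e_n\mathcal{M}$ with tolerance $\epsilon \cdot 2^{-(n+2)}$, obtaining diagonals $d_n$. Setting $d := d_0 + \sum_n d_n$ produces a diagonal operator of $\mathcal{M}$, since the constituents are supported on mutually orthogonal central summands. The element $a - d$ decomposes as the sum of the mutually orthogonal pieces $(I-z_0)a - d_0$ and $\{e_n a - d_n\}_n$, each of which lies in the appropriate $\mathcal{K}_\Phi^0$-ideal of its central summand. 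Invoking Lemma \ref{prelim_lemma1}(v), I would then conclude $a - d \in \mathcal{K}_\Phi^0(\mathcal{M},\tau)$ with
\[
\Phi(a - d) \le \frac{\epsilon}{2} + \sum_{n} \frac{\epsilon}{2^{n+2}} \le \epsilon.
\]

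The main obstacle is the structural step: recognising that minimal projections of $\mathcal{M}$ are governed by minimal projections of $Z(\mathcal{M})$, so that after peeling off the at-most-countable atomic part of the center, hypothesis (a) either becomes vacuous or holds on an honest factor. Once this is in place the remaining verifications---restricting the norm ideal to each central summand, invoking Proposition \ref{theorem3.2.1}, and assembling the diagonals via Lemma \ref{prelim_lemma1}(v)---are routine.
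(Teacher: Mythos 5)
Your proposal is correct and is essentially the paper's own argument: the paper likewise splits $\mathcal{M}$ by central projections into a summand with no minimal projections (where hypothesis (a) of Proposition \ref{theorem3.2.1} is vacuous) together with countably many factor summands coming from central atoms (where all minimal projections share a common finite trace), applies Proposition \ref{theorem3.2.1} on each piece with summable tolerances $\epsilon/2^{n+1}$, and assembles the diagonals via Lemma \ref{prelim_lemma1}(v). The only cosmetic difference is bookkeeping: the paper absorbs the type $\mathrm{II}_\infty$ central atoms into its ``diffused'' summand so the remaining factors are all of type $\mathrm{I}_\infty$, whereas you keep every central atom separate, which changes nothing in the estimates.
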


\begin{proof}
Note that $\mathcal{M}$ is a countably decomposable, properly
infinite von Neumann algebra  with a faithful  normal  semifinite
tracial weight $\tau$. There  exists a countable family
$\{q_n\}_{n\ge 0}$ of orthogonal
projections in the center of  $\mathcal{M}$ such that $\sum_{n } q_n=I$, $q_0%
\mathcal{M}$ is diffused  (or $0$) and $q_n\mathcal{M}$ is a type I$_\infty$
factor (or $0$) for each  $n\ge 1$.

For each $n\ge 0$, from Proposition \ref{theorem3.2.1}, there exists a
diagonal operator $d_n$ in $q_n\mathcal{M}$ such that

\begin{enumerate}
\item[(i$_1$)] $q_na-d_n\in{\mathcal{K}^{0}_{\Phi}(\mathcal{M},\tau)^{}}$;

\item[(ii$_1$)] $\Phi(q_na-d_n)\le\epsilon/2^{n+1}$ and $\|d_n\|\le
\|q_na\|\le \|a\|$.
\end{enumerate}

Let $d=\sum_n d_n$. Then from Lemma \ref{prelim_lemma1} and the choice of $%
\{d_n\}$ we deduce that $d$ is a diagonal operator in $\mathcal{M}$
satisfying

\begin{enumerate}
\item[(i)] $a-d\in{\mathcal{K}^{0}_{\Phi}(\mathcal{M},\tau)^{}}$;

\item[(ii)] $\Phi(a-d)\le\epsilon$.
\end{enumerate}

This completes the proof of the theorem.
\end{proof}

The next result follows directly from preceding theorem and Definition \ref%
{prelim_def2}. The result has been obtained in \cite{Kaftal} when $\mathcal{M%
}$ is a semifinite factor.

\begin{corollary}
Let $\mathcal{M}$ be a countably decomposable,  properly infinite
von Neumann algebra  with a faithful  normal  semifinite tracial
weight $\tau$.  Let $r>
1$ and $a\in \mathcal{M} $ be a self-adjoint element. Then for every $%
\epsilon>0$, there exists a diagonal operator $d$ in $\mathcal{M}$ such that

\begin{enumerate}
\item[(i)] $a-d\in L^r(\mathcal{M},\tau)\cap \mathcal{M}$;

\item[(ii)] $\max\{\|a-d\|, \|a-d\|_r\}\le\epsilon$.
\end{enumerate}
\end{corollary}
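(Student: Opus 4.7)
The plan is to deduce the corollary directly from Theorem \ref{thm2.3.3} applied to the specific norm ideal $\mathcal{K}_r(\mathcal{M},\tau) = L^r(\mathcal{M},\tau) \cap \mathcal{M}$ introduced in Definition \ref{prelim_def2}, equipped with the norm $\Phi(x) = \max\{\|x\|_r, \|x\|\}$. By that definition, $\mathcal{K}_r(\mathcal{M},\tau)$ is already known to be a (minimal) norm ideal of $(\mathcal{M},\tau)$, so the only hypothesis of Theorem \ref{thm2.3.3} that is not for free is the asymptotic vanishing condition
\begin{equation*}
\lim_{\overset{\tau(e)\rightarrow\infty}{e\in \mathcal{P}\mathcal{F}(\mathcal{M},\tau)}}\frac{\Phi(e)}{\tau(e)} = 0.
\end{equation*}

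The main (and only nontrivial) step is to verify this condition. For any nonzero projection $e \in \mathcal{P}\mathcal{F}(\mathcal{M},\tau)$, I would compute
\begin{equation*}
\|e\|_r = \bigl(\tau(|e|^r)\bigr)^{1/r} = \tau(e)^{1/r} \qquad \text{and} \qquad \|e\| = 1,
\end{equation*}
so that $\Phi(e) = \max\{\tau(e)^{1/r}, 1\}$. Therefore
\begin{equation*}
\frac{\Phi(e)}{\tau(e)} = \max\left\{\tau(e)^{\,\frac{1}{r}-1},\ \frac{1}{\tau(e)}\right\}.
\end{equation*}
Since $r > 1$ gives $\tfrac{1}{r} - 1 < 0$, both quantities inside the $\max$ tend to $0$ as $\tau(e) \to \infty$, which yields the required limit.

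With the hypothesis verified, I would then apply Theorem \ref{thm2.3.3} to obtain a diagonal operator $d \in \mathcal{M}$ with $a - d \in \mathcal{K}_\Phi^0(\mathcal{M},\tau) \subseteq \mathcal{K}_r(\mathcal{M},\tau) = L^r(\mathcal{M},\tau)\cap\mathcal{M}$ and $\Phi(a-d) \le \epsilon$. Since $\Phi$ is the maximum of $\|\cdot\|$ and $\|\cdot\|_r$, the single inequality $\Phi(a-d) \le \epsilon$ is exactly statement (ii) of the corollary, and $a-d \in L^r(\mathcal{M},\tau)\cap\mathcal{M}$ is statement (i). I expect no serious obstacle: the work has already been done in Theorem \ref{thm2.3.3}, and the only substantive computation is the elementary limit above that identifies $\mathcal{K}_r(\mathcal{M},\tau)$ as an instance of the Kuroda-type norm ideals to which that theorem applies.
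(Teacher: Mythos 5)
Your proposal is correct and follows essentially the same route as the paper, which deduces the corollary directly from Theorem \ref{thm2.3.3} applied to the norm ideal $\mathcal{K}_r(\mathcal{M},\tau)$ of Definition \ref{prelim_def2}; your explicit verification that $\Phi(e)/\tau(e)=\max\{\tau(e)^{1/r-1},\tau(e)^{-1}\}\to 0$ for $r>1$ is exactly the (implicit) check the paper relies on.
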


From Lemma II.2.8 in \cite{Davidson} and Theorem \ref{thm2.3.3}, we can
quickly conclude the following result. The result has been obtained in \cite%
{Zaido} (see also in \cite{Ake}) when $\mathcal{M}$ is a semifinite factor.

\begin{corollary}
Let $\mathcal{M}$ be a countably decomposable,  properly infinite
von Neumann
algebra  with a faithful  normal  semifinite tracial weight $\tau$. Let  $%
x_1,\ldots, x_r$ be a commuting family of self-adjoint operators  in $%
\mathcal{M}$. Then for every $\epsilon>0$, there exists a commuting family
of diagonal operators $d_1,\ldots, d_r$ in $\mathcal{M}$ such that

\begin{enumerate}
\item[(i)] $a_i-d_i\in{\mathcal{K}(\mathcal{M},\tau)^{}}$ for all $1\le i\le
r$;

\item[(ii)] $\|a_i-d_i\|\le\epsilon$ for all $1\le i\le r$.
\end{enumerate}
\end{corollary}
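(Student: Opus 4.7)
The plan is to combine Theorem \ref{thm2.3.3}, applied to a single operator in the commuting family, with Davidson's Lemma II.2.8, which extends a diagonal approximation modulo a norm ideal from one operator to an entire commuting family. First I would observe that $\Phi=\|\cdot\|$ makes $\mathcal{K}(\mathcal{M},\tau)$ into a norm ideal of $(\mathcal{M},\tau)$, as recorded in an earlier example, and that $\mathcal{K}_\Phi^0(\mathcal{M},\tau)=\mathcal{K}(\mathcal{M},\tau)$ by definition of the latter. For any nonzero $e\in\mathcal{P}\mathcal{F}(\mathcal{M},\tau)$ one has $\Phi(e)=\|e\|=1$, so
\[
\lim_{\substack{\tau(e)\to\infty\\ e\in\mathcal{P}\mathcal{F}(\mathcal{M},\tau)}}\frac{\Phi(e)}{\tau(e)} = 0,
\]
and the hypothesis of Theorem \ref{thm2.3.3} is satisfied. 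Applying that theorem to $a_1$ produces a diagonal operator $d_1\in\mathcal{M}$ with $a_1-d_1\in\mathcal{K}(\mathcal{M},\tau)$ and $\|a_1-d_1\|\le\epsilon$.

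Next I would invoke Davidson's Lemma II.2.8, which in the present semifinite setting asserts the following: given a diagonal self-adjoint $d_1$ with $a_1-d_1\in\mathcal{K}(\mathcal{M},\tau)$ and any self-adjoint $a_i\in\mathcal{M}$ commuting with $a_1$, one can find a diagonal self-adjoint $d_i$ commuting with $d_1$ such that $a_i-d_i\in\mathcal{K}(\mathcal{M},\tau)$ and the operator-norm error is at most $\epsilon$. Iterating this for $i=2,\ldots,r$ (each new $d_i$ being constructed so as to share with $d_1$ a common refining system of finite spectral projections) produces commuting diagonals $d_1,\ldots,d_r$ satisfying $a_i-d_i\in\mathcal{K}(\mathcal{M},\tau)$ and $\|a_i-d_i\|\le\epsilon$ for every $1\le i\le r$, which is the desired conclusion.

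The main technical point is verifying that Davidson's lemma, originally formulated for $\mathcal{B}(\mathcal{H})$, transfers to the present semifinite von Neumann algebra $\mathcal{M}$. Writing $d_1=\sum_n\lambda_n e_n$ with orthogonal $e_n\in\mathcal{F}(\mathcal{M},\tau)$, the fact that $a_i$ commutes with $a_1\approx d_1$ forces the commutators $[a_i,e_n]$ to lie in $\mathcal{K}(\mathcal{M},\tau)$, so that each $a_i$ is approximately block-diagonal with respect to $\{e_n\}$ modulo the ideal. Each block $e_n\mathcal{M}e_n$ is a finite von Neumann algebra because $\tau(e_n)<\infty$, so the commuting compressions $\{e_n a_i e_n\}_{i=2}^r$ can be simultaneously diagonalized by commutative spectral calculus, and the pieces assemble into global diagonals $d_i$ commuting with $d_1$. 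The delicate part is maintaining the operator-norm bound $\|a_i-d_i\|\le\epsilon$ together with both ideal membership and pairwise commutativity for all $i$; this is handled by choosing the block approximations uniformly finely enough, exactly as in the classical $\mathcal{B}(\mathcal{H})$ argument.
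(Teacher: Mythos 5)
Your single-operator step is fine: with $\Phi=\|\cdot\|$ one has $\Phi(e)/\tau(e)=1/\tau(e)\to 0$, so the hypothesis of Theorem \ref{thm2.3.3} holds, and $\mathcal{K}^0_\Phi(\mathcal{M},\tau)=\mathcal{K}(\mathcal{M},\tau)$. The gap is in how you pass to the commuting family, and it starts with a misreading of the quoted lemma. Lemma II.2.8 of \cite{Davidson} is not a perturbation statement at all: it says that a countable commuting family of self-adjoint operators can be written as $a_i=f_i(b)$ for a single self-adjoint $b$ in the abelian von Neumann algebra they generate and continuous functions $f_i$ (one maps the Cantor set continuously onto the joint spectrum, takes a Borel section, and uses Borel functional calculus inside $W^*(a_1,\dots,a_r)$; this argument transfers verbatim to $\mathcal{M}$). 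The intended ``quick'' proof is then: apply Theorem \ref{thm2.3.3} with $\Phi=\|\cdot\|$ to the single operator $b$ with a sufficiently small tolerance $\delta$, and set $d_i=f_i(d)$. These are commuting diagonal operators; $a_i-d_i=f_i(b)-f_i(d)$ lies in the norm-closed ideal $\mathcal{K}(\mathcal{M},\tau)$ because $b-d$ does (approximate $f_i$ uniformly by polynomials), and $\|a_i-d_i\|\le\epsilon$ once $\delta$ is small, by the uniform norm-continuity of the continuous functional calculus on bounded self-adjoint elements. In particular the whole family is handled by one application of Theorem \ref{thm2.3.3}, not by $r$ successive ones.

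The inductive step you propose instead --- fix the $d_1$ produced for $a_1$, then for each $a_i$ commuting with $a_1$ find a diagonal $d_i$ commuting with $d_1$ such that $a_i-d_i\in\mathcal{K}(\mathcal{M},\tau)$ and $\|a_i-d_i\|\le\epsilon$ --- is not a theorem, and your sketch does not repair it. From $[a_i,a_1]=0$ and $a_1-d_1\in\mathcal{K}(\mathcal{M},\tau)$ you only get $[a_i,d_1]\in\mathcal{K}(\mathcal{M},\tau)$; this does not yield $[a_i,e_n]\in\mathcal{K}(\mathcal{M},\tau)$ for the individual spectral projections $e_n$ of $d_1$ unless the corresponding eigenvalues are isolated, and it gives no norm control whatsoever on $[a_i,e_n]$, since $d_1$ was chosen with no reference to $a_2,\dots,a_r$. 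Hence the off-block part $a_i-\sum_n e_na_ie_n$ need be neither small nor in the ideal, and the compressions $e_na_ie_n$, $e_na_je_n$ need not commute (compressions of commuting operators by a projection that does not commute with them lose commutativity), so the blockwise simultaneous diagonalization is unavailable; ``choosing the blocks finely enough'' is impossible because the blocks are already fixed by step one. The failure is visible already in $\mathcal{B}(\mathcal{H})$: take $a_1=0$ and $d_1$ a small compact diagonal with distinct eigenvalues; then every diagonal operator commuting with $d_1$ is diagonal with respect to that fixed basis, while a self-adjoint $a_2$ (which commutes with $a_1$) is in general not a compact perturbation of any operator diagonal in a prescribed basis --- for multiplication by $x$ on $L^2[0,1]$ and the trigonometric basis, the diagonal entries of a compact operator tend to $0$, which would force $M_x-\tfrac12 I$ to be compact, a contradiction. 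If one insists on a block argument, the finite-trace projections must be chosen almost commuting with all of $a_1,\dots,a_r$ simultaneously, as in Lemmas \ref{lemma4.3.2} and \ref{lemma4.3.3}; the point of citing Lemma II.2.8 is precisely to avoid that work here.
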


\section{$\Phi$-well-behaved Sets in Semifinite von Neumann Algebras}

In this section,   we let $\mathcal{M}$ be a countably decomposable,
properly infinite von Neumann algebra  with a faithful  normal
semifinite tracial weight $\tau$.
Let  $\mathcal{P}\mathcal{F}(\mathcal{M},\tau)$, $\mathcal{F}(\mathcal{M}%
,\tau)$ and   $\mathcal{K}(\mathcal{M},\tau)$ be  the sets of finite rank
projections, finite rank operators, and compact operators  respectively, in $%
(\mathcal{M},\tau)$.

For each $x$ in $\mathcal{M}$, let $R(x)$ be the range projection of $x$ in $%
\mathcal{M}$. Note that, \emph{for $x,y\in (\mathcal{M})_1^+$, $xy=x=yx$
if and only if $x\le R(x)\le y$.}

The concept of quasi-central approximate units plays an important role in
the study of $C^*$-algebras. For von Neumann algebras, we introduce a
concept of \emph{$\Phi$-well-behaved sets}, following a notation from
Voiculescu in \cite{Voi}, which will be a replacement of quasi-central
approximate units for $C^*$-algebras.

\subsection{Definition of $\Phi$-well-behaved sets}

\begin{definition}
\label{def4.1.1}  Let $\mathcal{K}_{\Phi}(\mathcal{M},\tau)$ be a norm ideal
of $(\mathcal{M},\tau)$. Let $\mathcal{B}$ be a countably generated $*$%
-subalgebra of $\mathcal{M}$ with an identity $I_{\mathcal{B}}$. We call
that $\mathcal{B}$ is $\Phi$-well-behaved in $\mathcal{M}$ if there exists a
sequence $\{f_n\}_{n=1}^\infty$ of operators in $\mathcal{F}(\mathcal{M}%
,\tau)$ such that

\begin{enumerate}
\item[(i)] $0\le f_1\le R(f_1)\le f_2 \le R(f_2)\le \cdots \le f_n\le
R(f_n)\le \cdots \le I_{\mathcal{B}}$;

\item[(ii)] As $n$ goes to infinity, $f_n$ converges to $I_{\mathcal{B}}$ in
weak$^*$-topology of $\mathcal{M}$;

\item[(iii)] $\displaystyle lim_n \Phi(bf_n-f_nb)=0$ for each $b\in \mathcal{%
B}.$
\end{enumerate}
\end{definition}

\begin{example}
Examples of $\Phi$-well-behaved sets will be given in next two subsections.
\end{example}

A direct computation shows the following result. (See Lemma 2.2 in \cite{DavidsonNormal2} for a proof.)

\begin{lemma}
\label{lemma4.1.2} Let $\mathcal{K}_{\Phi}(\mathcal{M},\tau)$ be a norm
ideal of $(\mathcal{M},\tau)$. Let $0\le x\le I_{\mathcal M}$ be a positive operator in $%
\mathcal{F}(\mathcal{M},\tau)$. Then, for every $y\in \mathcal{M}$,
\begin{equation*}
\Phi(\sin(\frac {\pi x}2)y-y\sin(\frac {\pi x}2))\le 4\Phi(xy-yx)
\end{equation*}
and
\begin{equation*}
\Phi(\cos(\frac {\pi x}2)y-y\cos(\frac {\pi x}2))\le 4\Phi(xy-yx).
\end{equation*}
\end{lemma}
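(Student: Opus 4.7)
The plan is to combine the telescoping commutator identity
\[
x^n y - yx^n = \sum_{k=0}^{n-1} x^{k}(xy-yx)x^{n-1-k}
\]
with the power series expansions of $\sin$ and $\cos$. Since $\|x\|\le 1$, Lemma \ref{prelim_lemma1}(i) applied termwise gives the basic monomial bound
\[
\Phi(x^{n} y - yx^{n}) \;\le\; n\,\Phi(xy-yx), \qquad n\ge 1.
\]

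With this in hand, I would expand
\[
\sin\!\Bigl(\tfrac{\pi x}{2}\Bigr) \;=\; \sum_{n=0}^{\infty} \frac{(-1)^{n}(\pi/2)^{2n+1}}{(2n+1)!}\,x^{2n+1}, \qquad \cos\!\Bigl(\tfrac{\pi x}{2}\Bigr) \;=\; \sum_{n=0}^{\infty} \frac{(-1)^{n}(\pi/2)^{2n}}{(2n)!}\,x^{2n},
\]
which converge in operator norm by analyticity. Taking commutators with $y$, applying the monomial bound in every term, and summing yields
\[
\Phi\!\Bigl(\sin\!\bigl(\tfrac{\pi x}{2}\bigr)y - y\sin\!\bigl(\tfrac{\pi x}{2}\bigr)\Bigr) \;\le\; \sum_{n=0}^{\infty} \frac{(\pi/2)^{2n+1}}{(2n)!}\,\Phi(xy-yx) \;=\; \tfrac{\pi}{2}\cosh\!\bigl(\tfrac{\pi}{2}\bigr)\,\Phi(xy-yx),
\]
and, analogously (the $n=0$ term drops out because $\cos 0=1$ commutes with $y$),
\[
\Phi\!\Bigl(\cos\!\bigl(\tfrac{\pi x}{2}\bigr)y - y\cos\!\bigl(\tfrac{\pi x}{2}\bigr)\Bigr) \;\le\; \tfrac{\pi}{2}\sinh\!\bigl(\tfrac{\pi}{2}\bigr)\,\Phi(xy-yx).
\]
A direct numerical check gives $\tfrac{\pi}{2}\cosh(\pi/2) < 4$ and $\tfrac{\pi}{2}\sinh(\pi/2) < 4$, which yields the stated constant $4$.

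The one subtlety is the convergence of the commutator series in $\Phi$-norm: the partial sums form a Cauchy sequence in $\Phi$ by the above estimates, so by the $\|\cdot\|$-dominating property of $\Phi$ together with Lemma \ref{prelim_lemma1}(v), the $\Phi$-limit coincides with the operator-norm limit, which is exactly the commutator $\sin(\pi x/2)y - y\sin(\pi x/2)$ (respectively for cosine). There is no genuine obstacle beyond this bookkeeping; the argument is the standard reduction of a commutator of an entire function of $x$ to the basic commutator $[x,y]$, transplanted from $\mathcal{B}(\mathcal{H})$ into the norm-ideal setting of $(\mathcal{M},\tau)$.
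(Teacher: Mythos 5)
Your proof is correct, and it is worth noting that the paper itself gives no argument for this lemma: it calls it a direct computation and refers to Lemma 2.2 of \cite{DavidsonNormal2}, so your write-up is a genuinely self-contained alternative in the $(\mathcal{M},\tau)$ setting. The ingredients all check out: the telescoping identity together with $\|x\|\le 1$ and Lemma \ref{prelim_lemma1}(i) gives $\Phi(x^{n}y-yx^{n})\le n\,\Phi(xy-yx)$; since $x\in\mathcal{F}(\mathcal{M},\tau)$, every term $x^{n}y-yx^{n}$ lies in $\mathcal{F}(\mathcal{M},\tau)\subseteq\mathcal{K}^{0}_{\Phi}(\mathcal{M},\tau)$, so Lemma \ref{prelim_lemma1}(v) (or simply completeness of $\mathcal{K}_{\Phi}$ plus the $\|\cdot\|$-dominating property) legitimately identifies the $\Phi$-limit of the partial sums with the operator-norm limit $\sin(\frac{\pi x}{2})y-y\sin(\frac{\pi x}{2})$, after which $\Phi$ of the limit is bounded by the termwise sum; and your constants do come in under $4$, namely $\frac{\pi}{2}\cosh(\frac{\pi}{2})\approx 3.94$ and $\frac{\pi}{2}\sinh(\frac{\pi}{2})\approx 3.61$, though the sine bound clears the bar with very little room. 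For comparison, the standard route behind Davidson-type lemmas is the Duhamel identity $e^{itx}y-ye^{itx}=\int_{0}^{t}e^{isx}\,i(xy-yx)\,e^{i(t-s)x}\,ds$, which gives $\Phi(e^{itx}y-ye^{itx})\le |t|\,\Phi(xy-yx)$ (approximating the integral by Riemann sums and using unitary invariance), and then writing $\sin(\frac{\pi x}{2})$ and $\cos(\frac{\pi x}{2})$ as combinations of $e^{\pm i\pi x/2}$ yields the sharper constant $\frac{\pi}{2}$. Your power-series argument buys elementarity (no operator-valued integrals), at the cost of a near-tight constant; the exponential route buys a better constant and a shorter estimate. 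Either way, the statement as used in the paper is fully justified by your proof.
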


The following lemma is an extension of Lemma 2.2 in \cite{Voi}.

\begin{lemma}
\label{lemma2.4}  Let $\mathcal{K}_{\Phi}(\mathcal{M},\tau)$ be a norm ideal
of $(\mathcal{M},\tau)$. Let $\mathcal{B}$ be a countably generated $*$%
-subalgebra of $\mathcal{M}$ with an identity $I_{\mathcal{B}}$. If $%
\mathcal{B}$ is $\Phi$-well-behaved in $\mathcal{M}$, then for every finite
subset $F$ of $\mathcal{B}$ and $\epsilon>0$ there exists a sequence $%
\{e_n\}_{n=1}^\infty$ of positive operators in $\mathcal{F}(\mathcal{M},\tau)
$ such that

\begin{enumerate}
\item[(i)] $\displaystyle \sum_{n=1}^\infty e_n^2=I_{\mathcal{B}}$,

\item[(ii)] $\displaystyle \sum_{n=1}^\infty \Phi( b e_n-e_nb)\le \epsilon$
for all  $b\in F$,

\item[(iii)] $\displaystyle \sum_{n=1}^\infty \Phi( b e_n-e_nb)<\infty $ for
all  $b\in \mathcal{B}$.
\end{enumerate}
\end{lemma}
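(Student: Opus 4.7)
The plan is to adapt Voiculescu's original argument for Lemma 2.2 in \cite{Voi} to the von Neumann-algebra setting. First, since $\mathcal{B}$ is countably generated as a $*$-algebra, I would fix a countable $\|\cdot\|$-dense subset $\{c_k\}_{k=1}^\infty$ of $\mathcal{B}$ containing $F$. By property (iii) of Definition \ref{def4.1.1}, $\Phi([c_k, f_n]) \to 0$ as $n \to \infty$ for every $k$. A standard diagonal argument then produces a subsequence of $\{f_n\}$---which I relabel as $\{f_n\}$---so sparse that $\sum_n \Phi([c_k, f_n]) < \infty$ for each $k$ and $\sum_n \Phi([b, f_n]) \le \epsilon/C$ for every $b \in F$, where $C$ is a constant absorbing the multiplicative factors that arise below.

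Second, I would exploit a hidden commutativity. The nesting $f_{n-1} \le R(f_{n-1}) \le f_n \le I_\mathcal{B}$ forces $f_n$ to act as the identity on the range of $R(f_{n-1})$, so $f_n f_{n-1} = f_{n-1} f_n = f_{n-1}$. Iterating, the whole family $\{f_n\}$ is a commuting family inside $I_\mathcal{B}\mathcal{M}I_\mathcal{B}$, and joint functional calculus in the commutative $C^*$-subalgebra they generate is available.

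Third, with $f_0 := 0$, I would define $e_n := (f_n - f_{n-1})^{1/2}$ via functional calculus. Each $e_n$ lies in $\mathcal{F}(\mathcal{M},\tau)$ because its range projection is dominated by $R(f_n)$, and by telescoping $\sum_{n=1}^N e_n^2 = f_N \to I_\mathcal{B}$ in the weak-$*$ topology, giving conclusion (i). The commutator estimates (ii) and (iii) are then obtained by combining Lemma \ref{lemma4.1.2}---applied to the sine-smoothed surrogates $g_n = \sin(\pi f_n / 2)$, for which $\Phi([g_n, b]) \le 4 \Phi([f_n, b])$---with the integral representation
\[
a^{1/2} = \frac{1}{\pi}\int_0^\infty \frac{a}{\sqrt{\lambda}\,(a+\lambda)}\,d\lambda, \qquad a\ge 0,
\]
to bound $\Phi([e_n, b])$ in terms of $\Phi([f_n, b]) + \Phi([f_{n-1}, b])$ (up to constants arising from operator-norm resolvent bounds on the finite-rank corner in which $e_n$ lives). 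Summability of the resulting series---and the bound by $\epsilon$ on $F$---then follows from the sparse choice of subsequence.

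The main obstacle is this last square-root commutator step: there is no general Lipschitz-type bound $\Phi([a^{1/2}, b]) \le K \Phi([a, b])$ valid for an arbitrary norm ideal. Overcoming it requires using both the commutativity of $\{f_n\}$, which places each $e_n$ inside a commutative $C^*$-algebra where functional calculus is clean, and the sparseness of the subsequence, which ensures the integral representation yields a summable estimate even after the non-Lipschitz square root is taken.
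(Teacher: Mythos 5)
Your overall architecture matches the paper's: pass to a sparse subsequence of the well-behaved sequence so the commutator norms are summable (and small on $F$), use the nesting $f_{n-1}\le R(f_{n-1})\le f_n$ to get $f_nf_{n-1}=f_{n-1}$, define the $e_n$ as square roots of successive differences, and telescope for (i). But the one step you yourself identify as ``the main obstacle'' --- bounding $\Phi(e_nb-be_n)$ by $\Phi(f_nb-bf_n)+\Phi(f_{n-1}b-bf_{n-1})$ --- is not actually resolved, and the route you propose does not work. Writing $e_n=(f_n-f_{n-1})^{1/2}$ and using the integral representation, the commutator bound one gets is
\begin{equation*}
\Phi\bigl([e_n,b]\bigr)\ \le\ \frac{1}{\pi}\int_0^\infty \sqrt{\lambda}\,\bigl\|(f_n-f_{n-1}+\lambda)^{-1}\bigr\|^2\,\Phi\bigl([f_n-f_{n-1},b]\bigr)\,d\lambda ,
\end{equation*}
and since $f_n-f_{n-1}$ is a positive finite-rank operator whose nonzero spectrum can accumulate at $0$ (restricting to the corner $R(f_n)\mathcal{M}R(f_n)$ does not create a spectral gap), the integrand is only bounded by $\lambda^{-3/2}$ near $0$ and the integral diverges. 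Neither the commutativity of the family $\{f_n\}$ nor the sparseness of the subsequence helps here, because the commutator is taken with $b$, which does not commute with the $f_n$; there is indeed no Lipschitz bound for the square root in an arbitrary norm ideal, and your sketch never supplies a substitute.

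The paper's (Voiculescu's) device is precisely designed to avoid ever estimating the commutator of a square root of a difference: one sets $e_n=\bigl(\sin^2(\tfrac{\pi f_n}{2})-\sin^2(\tfrac{\pi f_{n-1}}{2})\bigr)^{1/2}$, and the relation $f_nf_{n-1}=f_{n-1}$ makes this square root factor algebraically, $e_n=\sin(\tfrac{\pi f_n}{2})\cos(\tfrac{\pi f_{n-1}}{2})$, so that Lemma \ref{lemma4.1.2} applies separately to the two factors and gives $\Phi(e_nb-be_n)\le 4\Phi(f_nb-bf_n)+4\Phi(f_{n-1}b-bf_{n-1})$ with no square-root estimate; the telescoping sum still converges to $I_{\mathcal{B}}$ because $\sin^2(\tfrac{\pi f_n}{2})$ increases to $I_{\mathcal{B}}$. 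You mention the sine surrogates but then take the square root of $f_n-f_{n-1}$ itself, which is where the argument breaks. A second, smaller point: for conclusion (iii) you work with a countable norm-dense subset $\{c_k\}$ of $\mathcal{B}$, but summability of $\sum_n\Phi([c_k,e_n])$ for a dense set does not pass to an arbitrary $b\in\mathcal{B}$, since the error term $\Phi([b-c_k,e_n])\le 2\|b-c_k\|\Phi(e_n)$ need not be summable in $n$; the paper instead takes a countable linear base of $\mathcal{B}$, so that every $b$ is a finite linear combination and (iii) follows by the triangle inequality.
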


\begin{proof}
Since $\mathcal{B}$ is a countably generated $*$-algebra and $F$ is a
finite subset of $\mathcal{B}$, we might assume that $\{b_k\}_{k=1}^\infty$
is a base of $\mathcal{B}$ (as a linear space) and $F=\{b_1,\ldots, b_m\}$.
From the definition of $\Phi$-well-behaved sets, we assume that $%
\{p_n\}_{n=1}^\infty$ is a sequence of finite rank operators satisfying

\begin{enumerate}
\item[(a)] $0\le p_1\le R(p_1)\le p_2 \le R(p_2)\le \cdots \le p_n\le
R(p_n)\le \cdots \le I_{\mathcal{B}}$;

\item[(b)] As $n$ goes to infinity, $p_n$ converges to $I_{\mathcal{B}}$ in
weak$^*$-topology of $\mathcal{M}$.

\item[(c)] $\displaystyle  \Phi(b_kp_n-p_nb_k)\le \frac {\epsilon}{2^{n+4}}$
for each $1\le k\le m+n.$
\end{enumerate}

Let $p_0=0$. Define
\begin{equation*}
f_n=\sin^2(\frac {\pi p_n} 2), \qquad \text{ for each } n\ge 0
\end{equation*}
and
\begin{equation*}
e_{n}=(f_{n}-f_{n-1})^{1/2}, \qquad \text{ for each } n\ge 1.
\end{equation*}
From (a) and (b), we know that $f_n$ is an increasing sequence that
converges to $I_{\mathcal{B}}$ in weak$^*$-topology. This means that
\begin{equation*}
\sum_{n=1}^\infty e_n^2 =I_{\mathcal{B}}.
\end{equation*}
Again from (a), we know that $p_np_{n-1}=p_{n-1}=p_{n-1}p_n$ for $n\ge 1$,
whence
\begin{equation*}
e_{n}= (f_{n}-f_{n-1})^{1/2}=(\sin^2(\frac {\pi p_{n}} 2)-\sin^2(\frac {\pi
p_{n-1}} 2))^{1/2}= \sin(\frac {\pi p_{n}} 2)\cos(\frac {\pi p_{n-1 }} 2).
\end{equation*}
Therefore, from Lemma \ref{lemma4.1.2} and (c),
\begin{equation*}
\begin{aligned}
\Phi(e_{n}b_k-b_ke_{n})&=\Phi\left(\sin(\frac {\pi p_{n}} 2)\cos(\frac {\pi p_{n-1}} 2)b_k -b_k\sin(\frac {\pi p_{n}} 2)\cos(\frac {\pi p_{n-1}} 2)\right )\\
  &\le \Phi\left(\sin(\frac {\pi p_{n}} 2)b_k-b_k\sin(\frac {\pi p_{n}} 2)\right )+ \Phi\left (\cos(\frac {\pi p_{n-1}} 2)b_k-b_k\cos(\frac {\pi p_{n-1}} 2)\right )\\
  &\le 4\Phi(p_{n}b_k-b_kp_{n})+4\Phi(p_{n-1}b_k-b_kp_{n-1})\\
  &\le   \frac {4\epsilon}{2^{n+4}}+   \frac {4\epsilon}{2^{n+3}}\le \frac {\epsilon}{2^{n}} \qquad \text{ for } 1\le k\le m+n.
\end{aligned}
\end{equation*}
It follows that, for $k>m$,
\begin{equation*}
\begin{aligned}
\sum_{n=1}^\infty  \Phi( b_k  e_n-e_nb_k) &= \sum_{n=1}^{k-m-1}  \Phi( b_k  e_n-e_nb_k)+ \sum_{n= {k-m}}^\infty  \Phi( b_k  e_n-e_nb_k)\\
& \le \sum_{n=1}^{k-m-1}  \Phi( b_k  e_n-e_nb_k) + \sum_{n= {k-m}}^\infty  \frac {\epsilon}{2^{n}}\\ & <\infty.\end{aligned}
\end{equation*}
When $1\le k\le m$,
\begin{equation*}
\sum_{n=1}^\infty \Phi( b_k e_n-e_nb_k) \le \sum_{n= 1}^\infty \frac {%
\epsilon}{2^{n}}\le \epsilon.
\end{equation*}
This ends the proof of the lemma.
\end{proof}

\subsection{$\|\cdot\|$-well-behaved sets}

\begin{lemma}
\label{lemma4.2.1} Let $\mathcal{W}$ be a von Neumann subalgebra of $%
\mathcal{M}$. For any $\epsilon>0$ and any $0\le x\in \mathcal{K}(\mathcal{M}%
,\tau)\cap \mathcal{W}$, there exists an element $y\in \mathcal{F}(\mathcal{M%
},\tau)\cap \mathcal{W}$ such that $0\le y\le x$ and $\|x-y\|\le \epsilon$.
\end{lemma}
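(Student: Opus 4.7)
The plan is to define $y$ via continuous functional calculus applied to $x$. Let $f : [0,\infty)\to[0,\infty)$ be the continuous function $f(t) = \max(t-\epsilon, 0)$, and set $y := f(x)$. Since $f$ is continuous with $f(0)=0$ and $x\in \mathcal{W}$ is positive, $y$ lies in the $C^{*}$-algebra generated by $x$, hence in $\mathcal{W}$. The pointwise bounds $0 \le f(t) \le t$ and $0 \le t-f(t) \le \epsilon$ for $t \in \sigma(x) \subseteq [0,\|x\|]$ give immediately $0 \le y \le x$ and $\|x-y\|\le \epsilon$.

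The substantive step is to verify $y \in \mathcal{F}(\mathcal{M},\tau)$. The key observation is that $f$ vanishes on $[0,\epsilon]$, so $y = ey$ where $e := \chi_{(\epsilon,\infty)}(x)$ is a spectral projection of $x$. Hence it suffices to show $e \in \mathcal{P}\mathcal{F}(\mathcal{M},\tau)$, because then $y = I\cdot e \cdot y$ is of the required form $x_{0}ey_{0}$ for the definition of $\mathcal{F}(\mathcal{M},\tau)$.

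To show $e$ is a finite-rank projection, I would use the compactness of $x$ together with the order structure. Approximate $x$ in operator norm by a self-adjoint element $w \in \mathcal{F}(\mathcal{M},\tau)$ with $\|x-w\| < \epsilon/2$; this uses that $\mathcal{F}(\mathcal{M},\tau)$ is $*$-closed and closed under finite sums in the properly infinite setting, which can be arranged using the partial isometries from Section~\ref{section2.2}. Since $e$ commutes with $x$ and $xe = \int_{(\epsilon,\infty)} t\,dE_{t} \ge \epsilon e$ by the spectral theorem, we get
\[
ewe \;\ge\; exe - \tfrac{\epsilon}{2} e \;\ge\; \tfrac{\epsilon}{2}\,e.
\]
Because $\mathcal{F}(\mathcal{M},\tau)$ is a two-sided ideal, $ewe \in \mathcal{F}(\mathcal{M},\tau)$, and its left support $e_{0}$ lies in $\mathcal{P}\mathcal{F}(\mathcal{M},\tau)$ (for any $a = x_{0}e'y_{0} \in \mathcal{F}$, the left support of $a$ is Murray--von Neumann equivalent to the right support of $x_{0}e'$, which is dominated by $e'$, so has finite trace). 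The inequalities $\tfrac{\epsilon}{2}\,e \le ewe \le \|w\|\,e_{0}$ give $e \le C\,e_{0}$ for $C = 2\|w\|/\epsilon$; then the elementary fact that $p \le Cq$ for projections $p,q$ forces $p \le q$ (applied to any vector in the range of $I-q$), yielding $e \le e_{0} \in \mathcal{P}\mathcal{F}(\mathcal{M},\tau)$.

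The main obstacle is the sub-claim that any projection dominated by a positive element of $\mathcal{K}(\mathcal{M},\tau)$ is finite-rank, as implemented above. This is the only place where the compactness hypothesis $x \in \mathcal{K}(\mathcal{M},\tau)$ (as opposed to merely $x \in \mathcal{M}$) is essentially used; the rest of the argument is purely structural via functional calculus and the ideal properties of $\mathcal{F}(\mathcal{M},\tau)$.
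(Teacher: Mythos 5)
Your proof is correct and follows essentially the same route as the paper: cut the spectrum of $x$ off below $\epsilon$ and use that the spectral projection $e=\chi_{(\epsilon,\infty)}(x)$ lies in $\mathcal{P}\mathcal{F}(\mathcal{M},\tau)$ because $x\in\mathcal{K}(\mathcal{M},\tau)$ (the paper takes $y=xe$ instead of $f(x)$, which makes no essential difference). The only divergence is that the paper simply asserts $e\in\mathcal{F}(\mathcal{M},\tau)$, while you supply a valid justification by norm-approximating $x$ with a self-adjoint finite-rank operator and comparing supports.
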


\begin{proof}
Let $\sigma(x)$ be the spectrum of $x$ in $\mathcal M$ and $e_\epsilon$ be the spectral
projection of $x$ onto $\sigma(x) \cap (\epsilon,\infty)$. Thus $%
e_\epsilon\in \mathcal{W}$. Note that $x\in \mathcal{K}(\mathcal{M},\tau)$.
We know that $e_\epsilon \in \mathcal{F}(\mathcal{M},\tau)$. Thus $y=x\cdot
e_\epsilon$ is an element in $\mathcal{F}(\mathcal{M},\tau)\cap \mathcal{W}$
such that $0\le y\le x$ and $\|x-y\|\le \epsilon$.
\end{proof}

Recall that $\mathcal{K}(\mathcal{M},\tau)$ is a norm ideal of $(\mathcal{M},\tau)
$ with respect to $\|\cdot \|$-norm.

\begin{proposition}
\label{lemma4.2.2} Let $\mathcal{B}$ be a countably generated $*$-subalgebra
of $\mathcal{M}$ with an identity $I_{\mathcal{B}}$. Then $\mathcal{B}$ is $%
\|\cdot \|$-well-behaved in $\mathcal{M}$.
\end{proposition}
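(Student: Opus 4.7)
The plan is to construct the sequence $\{f_n\}$ as convex combinations of an explicit increasing family of finite-trace projections, and to secure the quasi-centrality condition via an Arveson-type convexification argument.

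Let $\mathcal{A}$ denote the norm-closure of $\mathcal{B}$ in $\mathcal{M}$, a separable unital $C^*$-subalgebra of $I_{\mathcal{B}}\mathcal{M} I_{\mathcal{B}}$, and fix a countable norm-dense sequence $\{b_j\}_{j \ge 1}$ in $\mathcal{A}$. By the countable decomposability of $\mathcal{M}$ recorded in Section~\ref{sec2}, I will decompose $I_{\mathcal{B}} = \sum_{k=1}^{\infty} q_k$ with pairwise orthogonal $q_k \in \mathcal{P}\mathcal{F}(\mathcal{M},\tau)$ and set $P_N := \sum_{k=1}^{N} q_k$; these form an increasing sequence of finite-trace projections with $P_N \to I_{\mathcal{B}}$ strongly. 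Each $f_n$ will be a convex combination $f_n = \sum_{k \in J_n} \lambda_k^{(n)} P_k$, where the finite index sets $J_n$ are arranged into disjoint blocks with $\max J_n < \min J_{n+1}$. Because each $P_k$ is a projection and the family is increasing, one has $P_{\min J_n} \le f_n \le P_{\max J_n}$ automatically, so $f_n \in \mathcal{F}(\mathcal{M},\tau)$, $R(f_n) \le P_{\max J_n}$, and $f_{n+1} \ge P_{\min J_{n+1}} \ge P_{\max J_n} \ge R(f_n)$, which delivers (i); and $f_n \ge P_{\min J_n} \to I_{\mathcal{B}}$ strongly forces $f_n \to I_{\mathcal{B}}$ in weak$^*$, which is (ii). Once the commutator control $\|[f_n, b_j]\| < 2^{-n}$ is secured for $1 \le j \le n$ at each stage, property (iii) for arbitrary $b \in \mathcal{B}$ follows from the density of $\{b_j\}$ together with $\|[b, f_n]\| \le 2\|b - b_j\| + \|[b_j, f_n]\|$.

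The substantive step is, at stage $n$ and given that the prior blocks exhausted indices up to some $N_n$, to exhibit a convex combination $\sum_{k > N_n} \lambda_k P_k$ whose commutators against $b_1, \ldots, b_n$ are each smaller than $2^{-n}$. A direct calculation yields $[P_k, a] \to 0$ in the weak operator topology for every $a \in \mathcal{A}$, because $P_k \xi \to \xi$ and $P_k a\xi \to a\xi$ for $\xi \in I_{\mathcal{B}}\mathcal{H}_0$. I plan to carry out an Arveson-style convexification inside the $C^*$-algebra $\mathcal{A} + \mathcal{J}$, where $\mathcal{J} := \mathcal{K}(\mathcal{M},\tau) \cap I_{\mathcal{B}}\mathcal{M} I_{\mathcal{B}}$ is a closed two-sided ideal that is $\sigma$-unital through the strictly positive element $h = \sum_{k} 2^{-k} q_k \in \mathcal{J}$, to convert this weak convergence of commutators into norm convergence of convex combinations of the tail $\{P_k\}_{k > N_n}$.

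The main obstacle is that $\{P_N\}$ is not in general a norm approximate unit for $\mathcal{J}$ in the semifinite setting (in contrast to what holds in $\mathcal{B}(\mathcal{H})$), so Voiculescu's proof of Lemma~2.2 in~\cite{Voi} does not transcribe verbatim. To circumvent this, the plan is first to apply Arveson's quasi-central approximate unit theorem to $(\mathcal{A}+\mathcal{J},\mathcal{J})$ in order to produce an auxiliary norm approximate unit of $\mathcal{J}$ that is quasi-central for $\mathcal{A}$, then to use Lemma~\ref{lemma4.2.1} to drop the resulting operators into $\mathcal{F}(\mathcal{M},\tau)$, and finally to perform a secondary Mazur-type convexification to re-express each such element as a convex combination of the prescribed tail $\{P_k\}_{k > N_n}$; coordinating these two convexifications so that the eventual $f_n$ is simultaneously of the required form and quasi-central for $b_1, \ldots, b_n$ is the chief technical challenge.
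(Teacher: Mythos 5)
There is a genuine gap, and it sits exactly where you place "the chief technical challenge": the requirement that each $f_n$ be a convex combination of the prescribed tail $\{P_k\}_{k>N_n}$ is incompatible, in general, with norm-small commutators. Your only evidence for quasi-centrality is that $[P_k,b]\to 0$ in the weak (indeed strong) operator topology; but Mazur-type convexification converts weak convergence into norm convergence only for the weak topology $\sigma(X,X^{*})$ coming from the \emph{full} dual, whereas WOT on bounded sets gives only $\sigma(\mathcal{M},\mathcal{M}_{*})$-convergence, and $\mathcal{M}_{*}\subsetneq\mathcal{M}^{*}$. In Arveson's theorem the passage to the norm topology works because the chosen net is an approximate unit of the ideal, so its limit in the enveloping von Neumann algebra of $\mathcal{A}+\mathcal{J}$ is a \emph{central} projection; your $P_k$ are not a norm approximate unit of $\mathcal{J}$ (as you note), so no such central limit is available. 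The failure is not merely a missing argument: if $u\in\mathcal{B}$ is a unitary in sufficiently generic (e.g.\ free) position relative to the abelian algebra generated by the $q_k$, then any positive contraction $c$ in the norm-closed convex hull of $\{P_k\}_{k>N}$ that is weak$^*$-close to $I_{\mathcal{B}}$ has spectral projections $e=\chi_{[0,1/4]}(c)\ne 0$ (since $c\le P_{K}$ for the largest index used) and $f=\chi_{[3/4,1]}(c)\ne 0$, and the standard estimate $\|[c,u]\|\ge \tfrac12\|fue\|$ together with the generic position of $ueu^{*}$ and $f$ gives a lower bound on $\|[c,u]\|$ independent of the convex coefficients. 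Consequently your final step --- re-expressing an Arveson quasi-central element (after the Lemma~\ref{lemma4.2.1} cut-down) as a norm-approximate convex combination of the $P_k$ --- cannot succeed in general either: such elements stay at a definite norm distance from $\operatorname{conv}\{P_k\}$, and a merely weak approximation would not preserve commutator norms anyway.

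The paper's proof avoids this by never insisting that the $f_n$ lie in the convex hull of a preassigned family: it forms the separable $C^{*}$-algebra $\mathcal{A}$ generated by $\mathcal{B}$ \emph{and} the projections $q_n$, takes the ideal $\mathcal{I}=\mathcal{A}\cap\mathcal{K}(\mathcal{M},\tau)$ (which contains all $q_n$, so its approximate identities tend weak$^*$ to $I$), chooses a sequential approximate identity with $e_ne_{n+1}=e_n$ (Corollary 1.5.11 of \cite{Lin}), and runs Arveson's convexification \cite{Ave} with the order $x\prec y \Leftrightarrow x\le R(x)\le y$ to obtain $\prec$-increasing quasi-central $f_n\in\mathcal{I}$; only then does it approximate each $f_n$ from below by finite-rank elements of the abelian von Neumann algebra the $f_n$ generate (Lemma~\ref{lemma4.2.1}), via an inductive claim that preserves $\prec$, so that conditions (i)--(iii) of Definition~\ref{def4.1.1} all survive. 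If you drop your convex-combination constraint you lose your route to condition (i), so you would need to import something like this $\prec$-preserving machinery; as written, the proposal does not establish condition (iii).
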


\begin{proof}
We should define a partial order ``$\prec$'' on $(\mathcal{M})_1^+$, the
unit ball of positive operators in $\mathcal{M}$, as follows: $x\prec y$ in $%
(\mathcal{M})_1^+$ if and only if $0\le x\le R(x)\le y\le I_{\mathcal{M}}$,
where $R(x)$ is the range projection of $x$ in $\mathcal{M}$.

Replacing $\mathcal{M}$ by $I_{\mathcal{B}}\mathcal{M }I_{\mathcal{B}}$ if
necessary, we will assume that $I_{\mathcal{B}}=I_{\mathcal{M}}$. Note that $%
\mathcal{M}$ is a countably decomposable von Neumann algebra with a normal,
faithful and semifinite tracial weight $\tau$. There exists a sequence $%
\{q_n\}_{n=1}^\infty$ of orthogonal finite rank projections in $\mathcal{M}$
such that $\sum_{n=1}^\infty q_n=I_{\mathcal{M}}$.

Let $\mathcal{A}$ be the unital separable $C^{\ast }$-subalgebra generated by $\mathcal{B}$ and the projections $\{q_{n}\}_{n=1}^{\infty }$ in $\mathcal{M}$. Denote by $\mathcal{I}$ the closed two sided ideal $\mathcal{A}\cap \mathcal{K}(\mathcal{M},\tau)$ of $\mathcal{A}$.
By Corollary 1.5.11 in \cite{Lin}, there exists an
approximate identity $\{e_{n}\}_{n=1}^{\infty }$ in $\mathcal{I}$ such that $%
e_{n}e_{n+1}=e_{n}=e_{n+1}e_{n}$ for all $n\geq 1$. Because $%
\{q_{n}\}_{n=1}^{\infty }\subseteq \mathcal{I}$, we know that $e_{n}$
converges to $I_{\mathcal{M}}$ in weak$^{\ast }$-topology and $e_{1}\prec
e_{2}\prec \cdots \prec e_{n}\prec e_{n+1}\prec \cdots $.

Since $\mathcal{B}$ is a countably generated $*$-subalgebra of $\mathcal{M}$%
, there exists a sequence $\{b_n\}_{n=1}^\infty$ in $\mathcal{B}$ that forms
a base for $\mathcal{B}$ (as a linear space). In the proof of Theorem 1 in \cite{Ave}, replacing the natural order by
``$\prec$'' on positive operators,
we can find a sequence $\{f_n\}_{n=1}^\infty$ in $\mathcal{I}$ such that

\begin{enumerate}
\item[(i)] $0\prec f_1\prec f_2\prec \cdots \prec f_n\prec f_{n+1} \prec
\cdots \prec I_{\mathcal{M}}$;

\item[(ii)] As $n$ approaches infinity, $f_n$ goes to $I_{\mathcal{M}}$ in
weak$^*$-topology;

\item[(iii)] $\lim_n \|f_n b_k-b_kf_n\|=0$ for all $k\ge 1$.
\end{enumerate}

Let 
$\mathcal{W}$ be the commutative von Neumann algebra
generated by $\{f_n\}_{n=1}^\infty$ in $\mathcal{M}$. 

\

\noindent\emph{Claim \ref{lemma4.2.2}.1. \ There exists a sequence $%
\{p_n\}_{n=1}^\infty$ in $\mathcal{F}(\mathcal{M},\tau)$ such that  $%
\|f_n-p_n\|\le \frac 1 {2^n}$ for all $n\ge 1$ and $0\prec p_1\prec p_2\prec
\cdots \prec p_n\prec p_{n+1} \prec \cdots\prec I_{\mathcal{M}}$.}

\

\noindent {Proof of the claim: } We will use an induction process to find a
sequence with desired properties.

When $n=1$, by Lemma \ref{lemma4.2.1}, there exists a $p_1\in \mathcal{F}(%
\mathcal{M},\tau)\cap \mathcal{W}$ such that $0\le p_1\le f_1$ and $%
\|p_1-f_1\|\le \frac 1 2.$

Let $n$ be a positive integer and assume we have chosen $\{p_1,\ldots, p_n\}$
in $\mathcal{F}(\mathcal{M},\tau)\cap \mathcal{W}$ satisfying

\begin{enumerate}
\item[(a)] $p_i\in \mathcal{F}(\mathcal{M},\tau)\cap \mathcal{W}$, $0\le
p_i\le f_i$ and $\|f_i-p_i\|\le 1/ {2^i}$ for $1\le i\le n$;

\item[(b)] $p_1\prec p_2\prec \cdots \prec p_n$.
\end{enumerate}

From the fact that $f_n\prec f_{n+1}$ and $0\le p_n\le f_n$, it follows that
$p_n\prec f_{n+1}$. Hence $f_{n+1}=f_{n+1}(I_{\mathcal{M}}-R(p_n))+R(p_n)$,
where $R(p_n)$ is the range projection of $p_n$ in $\mathcal{W}$. By Lemma %
\ref{lemma4.2.1}, there exists a $\hat p_{n+1} \in \mathcal{F}(\mathcal{M}%
,\tau)\cap \mathcal{W}$ such that $0\le \hat p_{n+1}\le f_{n+1}(I_{\mathcal{M%
}}-R(p_n))$ and $\|\hat p_{n+1}- f_{n+1}(I_{\mathcal{M}}-R(p_n))\|\le \frac
1 {2^{n+1}}.$ Let $p_{n+1}= \hat p_{n+1}+R(p_n)$. Then $p_{n+1}\in \mathcal{F%
}(\mathcal{M},\tau)\cap \mathcal{W}$ satisfying $0\le p_{n+1}\le f_{n+1}$, $%
\|p_{n+1}-f_{n+1}\|\le \frac 1 {2^{n+1}}$ and $p_n\prec p_{n+1}$. This ends
the construction of $\{p_n\}_{n=1}^\infty$ and the proof of the claim.

\

\noindent (\emph{Continue the proof of the result}) From Claim \ref%
{lemma4.2.2}.1, $\{p_n\}_{n=1}^\infty$ is a sequence in $\mathcal{F}(%
\mathcal{M},\tau)$ such that

\begin{enumerate}
\item[(i)] $0\le p_1\le R(p_1)\le p_2 \le R(p_2)\le \cdots \le p_n\le
R(p_n)\le \cdots \le I_{\mathcal{B}}$;

\item[(ii)] As $n$ goes to infinity, $p_n$ converges to $I_{\mathcal{B}}$ in
weak$^*$-topology of $\mathcal{M}$;

\item[(iii)] $\displaystyle lim_n \Vert b_kp_n-p_nb_k\Vert=0$ for each $k\ge
1.$
\end{enumerate}

Thus $\mathcal{B}$ is $\|\cdot \|$-well-behaved in $\mathcal{M}$.
\end{proof}

\begin{lemma}
{\label{Averson'sLemma}} Suppose $\mathcal{M}$ is a countably
decomposable, properly infinite von Neumann algebra  with a faithful
normal  semifinite tracial weight $\tau $ and $\mathcal{A}$ is a
separable $C^*$-subalgebra of $\mathcal{M}$ with an identity
$I_{\mathcal{A}}$. For any finite set $F\subseteq \mathcal{A}$ and
any positive number $\epsilon$, there exists a sequence
$\{e_n\}_{n=1}^\infty $ of positive operators in
$\mathcal{F}(\mathcal{M},\tau)$ such that

\begin{enumerate}
\item[(i)] $\displaystyle \sum_{n=1}^\infty e_n^2=I_{\mathcal{A}}$;

\item[(ii)] $\displaystyle x-\sum_{n=1}^\infty e_nxe_n \in \mathcal{K}(%
\mathcal{M},\tau)$ for all $x$ in $\mathcal{A}$; and

\item[(iii)] $\displaystyle \|x-\sum_{n=1}^\infty e_nxe_n \|\le \epsilon, $
for all $x$ in $F$.
\end{enumerate}
\end{lemma}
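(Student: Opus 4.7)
The plan is to apply Lemma~\ref{lemma2.4} with $\Phi=\|\cdot\|$ to a well-chosen countably generated $*$-subalgebra of $\mathcal{M}$. Pick a countable norm-dense subset $\{b_k\}_{k=1}^\infty$ of the separable $C^*$-algebra $\mathcal{A}$, and let $\mathcal{B}$ be the unital $*$-subalgebra of $\mathcal{M}$ generated by $\{I_\mathcal{A}\}\cup F\cup\{b_k\}_{k\ge1}$. Then $\mathcal{B}$ is countably generated as a $*$-algebra, norm-dense in $\mathcal{A}$, has $I_\mathcal{A}$ as identity, and contains $F$. By Proposition~\ref{lemma4.2.2}, $\mathcal{B}$ is $\|\cdot\|$-well-behaved in $\mathcal{M}$. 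Lemma~\ref{lemma2.4} then yields positive operators $\{e_n\}_{n=1}^\infty$ in $\mathcal{F}(\mathcal{M},\tau)$ satisfying $\sum_{n=1}^\infty e_n^2=I_\mathcal{A}$, $\sum_{n=1}^\infty\|xe_n-e_nx\|\le\epsilon$ for $x\in F$, and $\sum_{n=1}^\infty\|be_n-e_nb\|<\infty$ for each $b\in\mathcal{B}$. Property (i) is immediate.

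For (ii) and (iii) the key identity, obtained from $\sum_n e_n^2=I_\mathcal{A}$, is
\[
x-\sum_{n=1}^N e_n x e_n=\Bigl(I-\sum_{n=1}^N e_n^2\Bigr)x+\sum_{n=1}^N e_n[e_n,x].
\]
For $x\in\mathcal{B}$, the first summand tends to $0$ in the strong operator topology because the positive partial sums $\sum_{n=1}^N e_n^2$ increase to $I_\mathcal{A}$ in SOT, while the second summand is norm-Cauchy thanks to $\|e_n[e_n,x]\|\le\|[e_n,x]\|$ and the summability from Lemma~\ref{lemma2.4}. Hence $\sum_{n=1}^N e_n x e_n$ converges strongly to some $S_x\in\mathcal{M}$ with
\[
x-S_x=\sum_{n=1}^\infty e_n[e_n,x],
\]
a norm-convergent sum of elements of $\mathcal{F}(\mathcal{M},\tau)$, so $x-S_x\in\mathcal{K}(\mathcal{M},\tau)$. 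Specializing to $x\in F\subseteq\mathcal{B}$ gives $\|x-S_x\|\le\sum_n\|[e_n,x]\|\le\epsilon$, which is (iii).

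To upgrade (ii) to arbitrary $x\in\mathcal{A}$, I would use the uniform contractivity of $y\mapsto\sum_{n=1}^N e_n y e_n$ on $\mathcal{M}$: since $\sum_{n=1}^N e_n^2\le I$, the Cauchy--Schwarz bound
\[
\Bigl|\Bigl\langle\sum_{n=1}^N e_n y e_n\,\xi,\eta\Bigr\rangle\Bigr|\le\|y\|\Bigl(\sum_{n=1}^N\|e_n\xi\|^2\Bigr)^{1/2}\Bigl(\sum_{n=1}^N\|e_n\eta\|^2\Bigr)^{1/2}\le\|y\|\,\|\xi\|\,\|\eta\|
\]
gives $\|\sum_{n=1}^N e_n y e_n\|\le\|y\|$. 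Given $x\in\mathcal{A}$ and $\delta>0$, choose $\tilde x\in\mathcal{B}$ with $\|x-\tilde x\|<\delta$; the partial sums for $x$ then also converge strongly to some $S_x$ with $\|S_x-S_{\tilde x}\|\le\delta$, and
\[
x-S_x=(\tilde x-S_{\tilde x})+\bigl((x-\tilde x)-(S_x-S_{\tilde x})\bigr),
\]
where the first summand lies in $\mathcal{K}(\mathcal{M},\tau)$ by the previous paragraph and the second has norm at most $2\delta$. Since $\mathcal{K}(\mathcal{M},\tau)$ is norm-closed and $\delta$ is arbitrary, $x-S_x\in\mathcal{K}(\mathcal{M},\tau)$, proving (ii).

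The main obstacle is precisely this last extension step: Lemma~\ref{lemma2.4} only delivers the absolute summability $\sum_n\|[e_n,x]\|<\infty$ on the countable $*$-algebra $\mathcal{B}$, so pushing (ii) to every $x\in\mathcal{A}$ requires combining norm-approximation by elements of $\mathcal{B}$ with the uniform contractive bound for $y\mapsto\sum_n e_n y e_n$, rather than any further summability estimate.
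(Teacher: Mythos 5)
Your proposal is correct and follows essentially the same route as the paper: choose a norm-dense, countably generated unital $*$-subalgebra $\mathcal{B}\subseteq\mathcal{A}$ containing $F$, invoke Proposition~\ref{lemma4.2.2} to get that $\mathcal{B}$ is $\|\cdot\|$-well-behaved, and then apply Lemma~\ref{lemma2.4} with $\Phi=\|\cdot\|$. The paper simply states that the result ``follows directly'' from these two ingredients; your commutator identity, the norm-summability of $\sum_n e_n[e_n,x]$, and the contractivity-plus-density argument extending (ii) from $\mathcal{B}$ to all of $\mathcal{A}$ are exactly the routine details being left implicit, and they are carried out correctly.
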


\begin{proof}
Let $\mathcal{B}$ be a norm dense, countably generated $*$-subalgebra of $%
\mathcal{A}$ such that $I_{\mathcal{A}}\in\mathcal{B}$ and $F\subseteq
\mathcal{B}$. Now the result follows directly from Lemma \ref{lemma2.4} and
Proposition \ref{lemma4.2.2}.
\end{proof}

\subsection{$(\max\{\|\cdot\|, \|\cdot\|_r\})$-well-behaved sets}

\ \newline

The main result of this subsection is the following proposition.

\begin{proposition}
\label{prop4.3.1} Let $r\ge 2$. Suppose $\mathcal{M}$ is a countably
decomposable,  properly infinite   von Neumann algebra  with a
faithful  normal  semifinite
tracial weight $\tau$ and $\mathcal{K}_r(\mathcal{M},\tau)=L^r(\mathcal{M}%
,\tau)\cap \mathcal{M}$ is a norm ideal of $(\mathcal{M},\tau)$ equipped
with a norm $\Phi$ satisfying
\begin{equation*}
\Phi(x) = \max\{\|x\|, \|x\|_r\}, \qquad \forall \ x\in L^r(\mathcal{M}%
,\tau)\cap \mathcal{M}.
\end{equation*}
Let $\mathcal{B}$ be a unital $*$-subalgebra generated by a commuting family
of self-adjoint elements $x_1, \ldots, x_r$ in $\mathcal{M}$ with an
identity $I_{\mathcal{B}}$. Then $\mathcal{B}$ is $\Phi$-well-behaved in $%
\mathcal{M}$.
\end{proposition}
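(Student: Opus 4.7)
My plan is to exploit the commutativity of $x_1,\dots,x_r$: these generate an abelian von Neumann subalgebra $\mathcal{W}\supseteq\mathcal{B}$ with identity $I_\mathcal{B}$, and I work with the joint spectral measure $E$ of $(x_1,\dots,x_r)$, supported on a compact set $K\subseteq\mathbb{R}^r$. For each $n\ge 1$, partition $K$ into finitely many Borel cubes $\{\Delta_j^{(n)}\}_{j\in J_n}$ of side $1/n$ with chosen centers $\lambda_j^{(n)}$, and set $p_j^{(n)}:=E(\Delta_j^{(n)})\in\mathcal{W}$ and $d_i^{(n)}:=\sum_{j}(\lambda_j^{(n)})_i\,p_j^{(n)}\in\mathcal{W}$. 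Then $\sum_{j\in J_n}p_j^{(n)}=I_\mathcal{B}$ and $\|x_i-d_i^{(n)}\|\le 1/(2n)$ for each $i$.

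The key construction is to take $f_n=\sum_{j\in J_n}q_j^{(n)}$ as a finite-rank projection, where each $q_j^{(n)}\le p_j^{(n)}$ is a $\tau$-finite sub-projection (available by semifiniteness of $\tau$ on the corner $p_j^{(n)}\mathcal{M}p_j^{(n)}$), with $\tau(f_n)=\sum_{j}\tau(q_j^{(n)})$ engineered to grow like $n^{r-1}$. Since $q_j^{(n)}\le p_j^{(n)}$, the projection $f_n$ commutes with every $p_j^{(n)}$ and hence with $d_i^{(n)}$, yielding
\begin{equation*}
[x_i,f_n]=[x_i-d_i^{(n)},f_n]=\sum_{j\in J_n}\bigl[(x_i-(\lambda_j^{(n)})_i)p_j^{(n)},\;q_j^{(n)}\bigr],
\end{equation*}
with off-diagonal terms vanishing by orthogonality of the $p_j^{(n)}$'s and each summand living in the corner $p_j^{(n)}\mathcal{M}p_j^{(n)}$. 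Then $\|[x_i,f_n]\|\le 2\|x_i-d_i^{(n)}\|\le 1/n$, while applying the Hölder-type estimate $\|AP-PA\|_r\le 2\|A\|\,\tau(P)^{1/r}$ in each block together with the orthogonal-corner identity $\|\sum_{j}y_j\|_r^r=\sum_j\|y_j\|_r^r$ (valid for $y_j\in p_j^{(n)}\mathcal{M}p_j^{(n)}$ with mutually orthogonal $p_j^{(n)}$'s) gives $\|[x_i,f_n]\|_r^r\le (1/n)^r\tau(f_n)\le n^{-1}$, so $\|[x_i,f_n]\|_r\le n^{-1/r}\to 0$ (since $r\ge 2$). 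Hence $\Phi([x_i,f_n])\to 0$, and a Leibniz-rule argument extends this to $\Phi([b,f_n])\to 0$ for every $b\in\mathcal{B}$.

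The main obstacle will be arranging the monotonicity condition $0\le f_1\le R(f_1)\le f_2\le\cdots\le I_\mathcal{B}$ and the weak-$*$ convergence $f_n\to I_\mathcal{B}$ while preserving block-diagonality at every level. My plan is to use nested partitions (cubes of side $1/2^n$ at step $n$, so that level-$(n{+}1)$ cubes subdivide level-$n$ cubes) and to choose each $q_j^{(n)}$ inside the relative commutant $\mathcal{W}'\cap p_j^{(n)}\mathcal{M}p_j^{(n)}$, which is a semifinite von Neumann subalgebra, so that the chosen sub-projections automatically commute with every future refinement. Inductively enforcing $\sum_{k:\,\Delta_k^{(n)}\subseteq\Delta_j^{(n-1)}}q_k^{(n)}\ge q_j^{(n-1)}$ then gives $f_{n-1}\le f_n$ (with $R(f_{n-1})=f_{n-1}$ automatic for projections), and exhausting each spectral region in the limit produces $f_n\uparrow I_\mathcal{B}$ strongly, verifying all three conditions of Definition \ref{def4.1.1}.
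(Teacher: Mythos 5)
There is a genuine gap, and it sits exactly where you flag ``the main obstacle'': the monotonicity/exhaustion step. Your mechanism for it --- choosing each $q_j^{(n)}$ inside the relative commutant $\mathcal{W}'\cap p_j^{(n)}\mathcal{M}p_j^{(n)}$ so that it survives all future refinements --- is unsound, because the restriction of $\tau$ to $\mathcal{W}'\cap\mathcal{M}$ need not be semifinite. Take $\mathcal{M}=\mathcal{B}(L^2([0,1]^r))$ with $\tau=\mathrm{Tr}$ and $x_i$ the coordinate multiplication operators: then $\mathcal{W}$ is a masa with diffuse joint spectral measure, $\mathcal{W}'\cap\mathcal{M}=\mathcal{W}$, and every nonzero projection in $\mathcal{W}$ (hence in every corner $\mathcal{W}p_j^{(n)}$) has infinite trace. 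So the projections $q_j^{(n)}$ you need simply do not exist, and the same phenomenon occurs in natural type II$_\infty$ examples.

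Even dropping the relative-commutant idea, the quantitative scheme is stuck at the critical exponent. Your $r$-norm bound needs $\tau(f_n)=o(n^r)$ while $f_n$ is a sum of subprojections of the $\sim n^r$ spectral cubes of mesh $1/n$. In the multiplicity-one model above, nonzero blocks have trace $\ge 1$, so $\tau(f_n)=o(n^r)$ forces the spectral support of $\operatorname{ran}f_n$ to have measure tending to $0$; combined with $f_m\le f_n$ this forces every $f_m=0$ (absolute continuity of the integral), contradicting $f_n\to I_{\mathcal{B}}$ weak$^*$. Minimal ``dominating'' refinements do not help either: passing from mesh $\epsilon_{n-1}$ to $\epsilon_n$ can multiply the trace by the number of children $\sim(\epsilon_{n-1}/\epsilon_n)^r$, which exactly cancels the gain $\epsilon_n\tau(f_n)^{1/r}$, so the bound never decays. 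This is precisely the obstruction the paper's proof is built to beat: Lemma \ref{lemma4.3.1} does not use projections subordinate to spectral blocks at all, but positive contractions $e_n=\lambda_n^{-1}\sum_{k=n}^{n^2}p_{2k}/k$ (logarithmic Ces\`aro averages of the ``propagation'' projections $p_k$ generated from a finite-trace seed $q$), whose mutually orthogonal commutator blocks give $\|[x_i,e_n]\|_r^r\lesssim \tau(q)(\log n)^{-(r-1)}\to 0$ for $r\ge 2$; Lemma \ref{lemma4.3.2} then handles the operator norm by first cutting into near-scalar spectral blocks, and Lemma \ref{lemma4.3.3} gets monotonicity for free because each new $e_n$ can be seeded with $q=R(e_{n-1})\vee f_n$, so that $R(e_{n-1})\le e_n$. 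If you want to salvage your projection-based approach you would need an extra idea supplying a genuinely subcritical gain (as the $\log$-averaging does); as written, steps (c)--(d) of Definition \ref{def4.1.1} cannot be met.
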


Before presenting the proof of Proposition \ref{prop4.3.1}, we need
to show some lemmas first.



\begin{lemma}
\label{lemma4.3.1}Let $r\ge 2$ be a positive integer. Let $x_1,\ldots, x_r$
be a commuting family of self-adjoint elements in $\mathcal{M}$. For every
projection $q$ in $\mathcal{M}$ with $\tau(q)<\infty$ and $\epsilon>0$,
there exists an operator $e$ in $\mathcal{M}$ such that

\begin{enumerate}
\item[(i)] $q\le e\le I_{\mathcal M} $ and $\tau(R(e))<\infty$;

\item[(ii)] $\displaystyle \max_{1\le i\le r} \|x_ie-ex_i\|_r\le\epsilon.$
\end{enumerate}
\end{lemma}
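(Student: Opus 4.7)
The plan is to construct $e$ via a two-stage procedure: first average $q$ against a smooth density over the commutative unitary group generated by $\{x_i\}_{i=1}^{r}$, then apply a smooth functional cutoff that enforces $q \leq e \leq I$ and finite trace range.

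Set $X=(x_1,\ldots,x_r)$ and, for $s=(s_1,\ldots,s_r)\in\mathbb{R}^r$, let $U_s:=\exp\bigl(i\sum_{j=1}^{r} s_j x_j\bigr)$. Commutativity of the $x_j$ makes $\{U_s\}$ a commutative family of unitaries in $\mathcal{M}$, and $\partial_{s_j}(U_s q U_s^{*})=i[x_j,U_s q U_s^{*}]$. Fix a nonnegative smooth $\rho:\mathbb{R}^r\to[0,\infty)$ with $\int\rho=1$ and compact support, and for $t>0$ set $\rho_t(s):=t^{-r}\rho(s/t)$. Define
\begin{equation*}
q_t:=\int_{\mathbb{R}^r}\rho_t(s)\,U_s q U_s^{*}\,ds\in\mathcal{M}.
\end{equation*}
Then $0\leq q_t\leq I$, $\tau(q_t)=\tau(q)$, and $\|q_t\|_r\leq\tau(q)^{1/r}$. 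Integration by parts in each $s_j$ yields
\begin{equation*}
[x_j,q_t]=i\int(\partial_j\rho_t)(s)\,U_s q U_s^{*}\,ds,
\end{equation*}
so $\|[x_j,q_t]\|_r\leq t^{-1}\|\partial_j\rho\|_1\,\tau(q)^{1/r}$, which is $<\epsilon$ for $t$ sufficiently large.

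To obtain $e$ with $q\leq e\leq I$ and $\tau(R(e))<\infty$, I would take
\begin{equation*}
e:=q+(I-q)\,g(q_t)\,(I-q),
\end{equation*}
where $g:[0,1]\to[0,1]$ is smooth, supported in $[\alpha/2,1]$, $g(1)=1$, $g\leq 1$ (with $\alpha>0$ to be chosen). Since $g(q_t)\leq I$, one has $(I-q)g(q_t)(I-q)\leq I-q$, so $e\leq I$, while manifestly $e\geq q$. The range projection $R\bigl((I-q)g(q_t)(I-q)\bigr)$ is dominated by $\chi_{[\alpha/2,1]}(q_t)$, which has trace $\leq 2\alpha^{-1}\tau(q_t)=2\alpha^{-1}\tau(q)$ by applying the polar decomposition / trace-of-support argument to $y:=(I-q)g(q_t)^{1/2}$. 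Hence $\tau(R(e))\leq\tau(q)+2\alpha^{-1}\tau(q)<\infty$.

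The main obstacle is the estimate $\|[x_j,e]\|_r\leq\epsilon$. Expanding,
\begin{equation*}
[x_j,e]=[x_j,q]-[x_j,q]\,g(q_t)(I-q)-(I-q)\,g(q_t)\,[x_j,q]+(I-q)[x_j,g(q_t)](I-q).
\end{equation*}
The last term is $O(1/t)$ via Duhamel's formula and the smoothness of $g$. The remaining three terms involve $[x_j,q]=B_j^{*}-B_j$ with $B_j:=q x_j(I-q)$; setting $p_j:=R((I-q)x_jq)$, the polar-decomposition argument gives $\tau(p_j)\leq\tau(q)$. Using $B_j=B_j p_j$ and $B_j^{*}=p_j B_j^{*}$, the first three terms can be collapsed to a quantity controlled by $\|p_j-p_j g(q_t)p_j\|_r$ plus an analogous expression from $(I-q)g(q_t)(I-q)$. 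The crux is to show that the spectral distribution of $q_t$ restricted to $\mathrm{range}(p_j)$ concentrates near $1$ as $t\to\infty$---heuristically because $p_j$ is built from $q$ via action of $x_j$, and the averaging propagates mass of $q$ along orbits of $U_s$, covering $p_j$ after a scale $\sim t^{-1}$. Making this concentration rigorous, and balancing the parameters $t$, $\alpha$, and the smoothness profile of $g$ to simultaneously yield $\|[x_j,e]\|_r\leq\epsilon$, is the main technical obstacle; the commutativity of the $x_j$ enters decisively through the joint functional calculus underlying both $U_s$ and the identity $\partial_{s_j}(U_s q U_s^{*})=i[x_j,U_s q U_s^{*}]$.
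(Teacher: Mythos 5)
Your construction has a genuine gap at exactly the point you flag, and it cannot be closed as proposed. The smoothing step is fine: $q_t=\int\rho_t(s)U_sqU_s^{*}\,ds$ does give $\|[x_j,q_t]\|_r\le t^{-1}\|\partial_j\rho\|_1\tau(q)^{1/r}$. But your cutoff $e=q+(I-q)g(q_t)(I-q)$ reintroduces the full commutator $[x_j,q]$, which is not small, and the cancellation you need amounts to $g(q_t)$ acting approximately as the identity on the range of $(I-q)x_jq$ (and, implicitly, on the range of $q$ itself). The claimed concentration of the spectral distribution of $q_t$ near $1$ as $t\to\infty$ is false in general: if $x_j$ has absolutely continuous spectrum on the cyclic subspace of a vector $\xi$ in the range of $q$, then $\langle q_t\xi,\xi\rangle=\int\rho_t(s)\|qU_s^{*}\xi\|^2\,ds\to 0$, so for large $t$ the compression of $q_t$ to any fixed finite-trace subspace is small; consequently $g(q_t)\approx 0$ there, $e\approx q$, and $\|[x_j,e]\|_r$ stays bounded away from $0$. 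There is a structural tension you cannot tune away with $t$, $\alpha$, $g$: small commutator forces $q_t$ to spread out, while $q\le e$ forces $e$ to sit on top of $q$. A telltale sign is that your argument nowhere uses $r\ge 2$; the bound $t^{-1}\|\partial_j\rho\|_1\tau(q)^{1/r}$ would work just as well for $r=1$, yet the statement is false for the trace norm (this is the Kato--Rosenblum/Voiculescu obstruction for operators with absolutely continuous spectrum), so any scheme insensitive to $r$ must break precisely at the unresolved step.

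For comparison, the paper avoids averaging over the unitary group and instead builds a polynomial filtration adapted to $q$: $p_k=\vee\{R(x_1^{i_1}\cdots x_r^{i_r}q): i_1+\cdots+i_r\le k\}$, so that $q=p_0\le p_1\le\cdots$, $\tau(p_k-p_{k-1})\le\frac{(k+r-1)!}{k!(r-1)!}\tau(q)$ grows only like $k^{r-1}$, and $x_ip_k-p_kx_i=f_{k+1}x_if_k-f_kx_if_{k+1}$ lives on disjoint ``shells,'' making the commutators for different $k$ mutually orthogonal. Taking the harmonically weighted average $e_n=\lambda_n^{-1}\sum_{k=n}^{n^2}p_{2k}/k$ with $\lambda_n=\sum_{k=n}^{n^2}1/k$ gives $q\le e_n\le I$ and $R(e_n)$ of finite trace for free, and the orthogonality lets one sum $r$-th powers: $\|[x_i,e_n]\|_r^r\lesssim\lambda_n^{-r}\sum_{k=n}^{n^2}k^{r-1-r}\sim(\log n)^{1-r}\to 0$, which is exactly where $r\ge 2$ enters. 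If you want to keep a smoothing flavor, you would have to replace $g(q_t)$ by something with this kind of long, slowly decaying ``tail'' interpolating from $1$ on $q$ down to $0$ across a filtration of finite-trace subspaces; the functional calculus of $q_t$ alone does not provide it.
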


\begin{proof}
We might assume that $\|x_i\|\le 1$ for all $1\le i\le r$. For each $k\ge 0$%
, define
\begin{equation*}
p_k=\vee \{R(x_1^{i_1}\cdots x_r^{i_r} q) \ : \ 0\le i_1,\ldots, i_r \text {
satisfy } i_1+ \cdots + i_r\le k\}.
\end{equation*}
Thus
\begin{equation*}
q=p_0\le p_1\le \cdots \le p_k\le \cdots
\end{equation*}
is an increasing sequence of finite rank projections. Define
\begin{equation*}
f_k=p_k-p_{k-1} \qquad \text{ for all } k\ge 1.
\end{equation*}
From the definitions of $p_k$ and $f_k$, it follows that

\begin{enumerate}
\item[(a)] $R(x_ip_k)\le p_{k+1}$ for $1\le i\le r$ and $k\ge 1$; and

\item[(b)] $(I_{\mathcal M}-p_{k+1})x_ip_k=0$ for $1\le i\le r$ and $k\ge 1$;

\item[(c)] $\displaystyle \tau(f_k)\le \sum_{i_1+\cdots +i_r=k}
\tau(R(x_1^{i_1}\cdots x_r^{i_r} q)) \le \frac{(k+r-1) !}{k !(r-1) !}\tau(q).
$
\end{enumerate}

Moreover, for all $1\le i\le r$ and $k\ge 1$,
\begin{align}
x_ip_k-p_kx_i &= p_{k+1}x_ip_k-p_kx_ip_{k+1}  \tag{by (b)} \\
&=(p_{k+1}-p_k)x_i(p_k-p_{k-1})-(p_k-p_{k-1})x_i(p_{k+1}-p_k)  \notag \\
& \quad +(p_kx_ip_k+(p_{k+1}
-p_k)x_ip_{k-1})-(p_kx_ip_k+p_{k-1}x_i(p_{k+1}-p_k))  \notag \\
&=(p_{k+1}-p_k)x_i(p_k-p_{k-1})-(p_k-p_{k-1})x_i(p_{k+1}-p_k)  \tag{by (b)}
\\
&= f_{k+1}x_if_k-f_kx_if_{k+1}.  \label{equ4.2}
\end{align}
Now it follows from (c) that
\begin{equation}
\tau(R(x_ip_k-p_kx_i))\le 2 \tau(f_k)\le 2\tau(q)\cdot \frac{(k+r-1) !}{k
!(r-1) !}.  \label{equ4.1}
\end{equation}
Furthermore, from equation (\ref{equ4.2}) we conclude that, for each $1\le
i\le r$,
\begin{equation*}
\{ x_ip_{_{2k }}-p_{_{2k }}x_i\}_{k=1}^\infty
\end{equation*}
is a family of finite rank operators that satisfies
\begin{equation*}
(x_ip_{_{2k_1}}-p_{_{2k_1}}x_i)(x_ip_{_{2k_2}}-p_{_{2k_2}}x_i)^*=0\qquad
\qquad \text{ for all } k_1\ne k_2.
\end{equation*}
Or,
\begin{equation}
|x_ip_{_{2k_1}}-p_{_{2k_1}}x_i| \cdot |x_ip_{_{2k_2}}-p_{_{2k_2}}x_i
|=0\qquad \qquad \text{ for all } k_1\ne k_2.  \label{equ4.3}
\end{equation}

For each $n\ge 1$, we let
\begin{equation*}
\lambda_n=\sum_{k=n}^{n^2} \frac 1 k \qquad \text{ and }\qquad e_n=\frac 1
{\lambda_n}\sum_{k=n}^{n^2}\frac {p_{_{2k}}}{k}.
\end{equation*}
Then
\begin{equation*}
q\le e_n \le I_{\mathcal M}\qquad \text{ for each } n\ge 1.
\end{equation*}
And
\begin{align}
\|x_ie_n-&e_nx_i\|_r^r =\tau(|x_ie_n-e_nx_i|^{r})  \notag \\
&=\tau(\left |\frac 1 {\lambda_n}\sum_{k=n}^{n^2}\frac {x_ip_{_{2k}}-
p_{_{2k}} x_i}{k}\right|^{r}) = \tau(\frac 1 {\lambda_n^r}\sum_{k=n}^{n^2}%
\frac {\left |x_ip_{_{2k}}- p_{_{2k}}x_i\right|^{r}}{k^r})  \tag{by
\ref{equ4.3}} \\
&\le \frac 1 {\lambda_n^r}\sum_{k=n}^{n^2}\frac {2^r\tau(R(x_ip_{_{2k}}-
p_{_{2k}}x_i))}{k^r} \le \frac 1 {\lambda_n^r}\sum_{k=n}^{n^2}\frac {%
2^r\cdot 2\tau(q)\cdot \frac{(k+r-1) !}{k !(r-1) !}}{k^r} \qquad  \tag{ by
\ref{equ4.1} } \\
&\le \frac 1 {\lambda_n^r}\sum_{k=n}^{n^2}\frac {2^r\cdot 2\tau(q)\cdot
2^{r-1}\cdot {k ^{r-1}}}{k^r\cdot {\ (r-1) !}} \quad  \tag{ when  $ n\ge r $
} \\
&\quad \longrightarrow 0. \qquad  \tag{ as $ n\rightarrow \infty $}
\end{align}
Thus, when $n$ is large enough, there exists an operator $e=e_n$ in $%
\mathcal{M}$ such that

\begin{enumerate}
\item[(i)] $q\le e \le I_{\mathcal M}$ and $\tau(R(e))<\infty$;

\item[(ii)] $\displaystyle \max_{1\le i\le r} \|x_ie-ex_i\|_r\le\epsilon.$
\end{enumerate}
\end{proof}

\begin{lemma}
\label{lemma4.3.2}Let $r\ge 2$ be a positive integer. Let $x_1,\ldots, x_r$
be a commuting family of self-adjoint elements in $\mathcal{M}$. For every
projection $q$ in $\mathcal{M}$ with $\tau(q)<\infty$ and $\epsilon>0$,
there exists an operator $e$ in $\mathcal{M}$ such that

\begin{enumerate}
\item[(i)] $q\le e\le I_{\mathcal M}$ and $\tau(R(e))<\infty$;

\item[(ii)] $\displaystyle \max_{1\le i\le r}\{ \|x_ie-ex_i\|_r,
\|x_ie-ex_i\|\}\le\epsilon.$
\end{enumerate}
\end{lemma}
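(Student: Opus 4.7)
The plan is to rerun the same construction used in the proof of Lemma \ref{lemma4.3.1}, i.e.\ take $e = e_n$ where
\[
e_n \;=\; \frac{1}{\lambda_n}\sum_{k=n}^{n^2}\frac{p_{2k}}{k}, \qquad \lambda_n=\sum_{k=n}^{n^2}\frac1k,
\]
with $p_k=\bigvee\{R(x_1^{i_1}\cdots x_r^{i_r}q):i_1+\cdots+i_r\le k\}$ and $f_k=p_k-p_{k-1}$ as before. Lemma \ref{lemma4.3.1} already supplies condition (i) ($q\le e_n\le I_{\mathcal M}$, $\tau(R(e_n))<\infty$) and the bound $\|x_ie_n-e_nx_i\|_r\to 0$. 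All that remains is to verify that, for the same $e_n$, the operator norm $\|x_ie_n-e_nx_i\|$ also tends to $0$; then one picks $n$ large enough so that both quantities are at most $\epsilon$ for every $1\le i\le r$.

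The key observation is an orthogonality argument. Set $A_k^{(i)}:=x_ip_{2k}-p_{2k}x_i$. As computed in the proof of Lemma \ref{lemma4.3.1}, equation (4.2),
\[
A_k^{(i)} \;=\; f_{2k+1}x_if_{2k}-f_{2k}x_if_{2k+1},
\]
so $A_k^{(i)}$ is skew-Hermitian and both its left and right supports are majorized by $f_{2k}+f_{2k+1}$. Since the indices $\{2k,2k+1\}$ and $\{2j,2j+1\}$ are disjoint for $k\ne j$, the projections $f_{2k}+f_{2k+1}$ are pairwise orthogonal. A direct expansion then shows $A_k^{(i)}A_j^{(i)}=0$ for $k\ne j$, whence $A_k^{(i)}(A_j^{(i)})^{*}=-A_k^{(i)}A_j^{(i)}=0$ and $(A_k^{(i)})^{*}A_j^{(i)}=0$ for all $k\ne j$.

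Now write $S_n^{(i)}:=x_ie_n-e_nx_i=\frac{1}{\lambda_n}\sum_{k=n}^{n^2}\frac{A_k^{(i)}}{k}$. The orthogonality above gives
\[
S_n^{(i)}(S_n^{(i)})^{*} \;=\; \frac{1}{\lambda_n^{2}}\sum_{k=n}^{n^2}\frac{A_k^{(i)}(A_k^{(i)})^{*}}{k^{2}},
\]
and the positive operators $A_k^{(i)}(A_k^{(i)})^{*}$ for distinct $k$ have mutually orthogonal ranges. Hence the operator norm of this positive sum equals the maximum of the summand norms, giving
\[
\bigl\|S_n^{(i)}\bigr\|^{2} \;=\; \max_{n\le k\le n^{2}}\frac{\|A_k^{(i)}\|^{2}}{k^{2}\lambda_n^{2}} \;\le\; \frac{4\|x_i\|^{2}}{n^{2}\lambda_n^{2}}.
\]
Since $\lambda_n=\sum_{k=n}^{n^2}\frac1k\to\infty$ (in fact $\sim\log n$), in particular $n\lambda_n\to\infty$, so $\|x_ie_n-e_nx_i\|\to 0$ as $n\to\infty$ for each $i$. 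Combining with the $r$-norm bound from Lemma \ref{lemma4.3.1}, for $n$ sufficiently large the operator $e:=e_n$ satisfies (i) and
\[
\max_{1\le i\le r}\bigl\{\|x_ie-ex_i\|_r,\ \|x_ie-ex_i\|\bigr\} \;\le\; \epsilon,
\]
which is (ii).

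The main obstacle is really just conceptual: one must recognize that doubling the spacing (using $p_{2k}$ rather than $p_k$) was already forced in Lemma \ref{lemma4.3.1} in order to get orthogonality of the $A_k^{(i)}$'s, and that the very same orthogonality — together with skew-Hermiticity — allows the operator norm of a weighted sum to collapse to the maximum of weighted summand norms. Once this is observed, the extra factor $1/n$ in the maximum (compared with the $r$-norm, which only averages in $\ell^r$ fashion) is what delivers the uniform control in operator norm, and no new estimates beyond those already established are needed.
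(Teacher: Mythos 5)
Your proof is correct, but it takes a genuinely different route from the paper. The paper proves Lemma \ref{lemma4.3.2} by first invoking the spectral theorem for the abelian von Neumann algebra generated by $I,x_1,\ldots,x_r$: it cuts $I$ into finitely many projections $p_1,\ldots,p_n$ on which each $x_i$ is within $\epsilon/2$ of a scalar $\lambda_{i,j}p_j$, applies Lemma \ref{lemma4.3.1} inside each corner $p_j\mathcal{M}p_j$ (to $p_jx_i$ and a suitable finite projection $q_j$), and sets $e=\sum_j e_j$; the operator-norm bound then comes from near-scalarity on each block, while the $\|\cdot\|_r$ bound is obtained by summing the blockwise estimates $\epsilon/n$. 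You instead rerun the construction inside the proof of Lemma \ref{lemma4.3.1} and observe that the commutator $x_ie_n-e_ne x_i$ is block-diagonal with respect to the mutually orthogonal projections $f_{2k}+f_{2k+1}$ — exactly the orthogonality already recorded in (\ref{equ4.2})--(\ref{equ4.3}) and exploited there only for the $r$-norm — so its operator norm collapses to the maximum of the block norms, giving $\|x_ie_n-e_nx_i\|\le 2\|x_i\|/(n\lambda_n)\to 0$. All the individual steps (supports of $A_k^{(i)}$ under $f_{2k}+f_{2k+1}$, skew-Hermiticity, the identity $\|\sum_k B_k\|=\max_k\|B_k\|$ for positive $B_k$ with orthogonal supports, and the bound $\|A_k^{(i)}\|\le 2\|x_i\|$) check out. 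What your argument buys is economy: it shows the construction of Lemma \ref{lemma4.3.1} already satisfies the stronger conclusion, so the spectral decomposition in the paper's proof of Lemma \ref{lemma4.3.2} is not needed; the mild cost is that you must reuse the internal construction and notation of the earlier proof rather than its statement alone (the statement of Lemma \ref{lemma4.3.1} by itself would not suffice), which you acknowledge explicitly.
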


\begin{proof}
Let $\mathcal{W}$ be an abelian von Neumann subalgebra generated by $I_{\mathcal M}, %
x_1,\ldots, x_r$ in $\mathcal{M}$. For $\epsilon>0$, there exist a
positive
integer $n$, a family of orthogonal projections $p_1,\ldots,p_n$ in $%
\mathcal{W}$ and a family of real numbers $\{\lambda_{i,j}\}_{1\le
i\le r, 1\le j\le n}$ such that $\sum_{j=1}^n p_j=I_{\mathcal M}$
and
\begin{equation*}
\max\{\|x_ip_j-\lambda_{i,j}p_j \| \ : \ 1\le i\le r, 1\le j\le
n\}<\epsilon/2.
\end{equation*}
Let
\begin{equation*}
q_j=\vee\{R(p_jqp_kqp_j) \ : \ 1\le k\le n\} \qquad \text{ for $1\le j\le n$
}.
\end{equation*}
For each $1\le j\le n$, applying Lemma \ref{lemma4.3.1} to a commuting
family of self-adjoint elements $p_j x_1, p_j x_2,\ldots, p_j x_n$ and a
finite rank projection $q_j$ in $p_j\mathcal{M }p_j$, we obtain an operator $%
e_j$ in $\mathcal{M}$ such that

\begin{enumerate}
\item[(i')] $q_j\le e_j\le p_j$ and $\tau(R(e_j))<\infty$;

\item[(ii')] $\displaystyle \max_{1\le i\le r}
\|p_jx_ie_j-e_jp_jx_i\|_r<\epsilon/n.$
\end{enumerate}

Let $e=\sum_{j=1}^n e_j$. Then $\tau(R(e))<\infty$ and
\begin{equation*}
\begin{aligned}
q\le &\vee \{R(p_jq p_k \ : \ 1\le j,k\le n\} \\&=\bigvee_{1\le j\le
n} \left (\vee\{R(p_jq p_kqp_j \ : \ 1\le  k\le n\}  \right ) =
\bigvee_{1\le j\le n} q_j\\&\le \sum_{j=1}^n e_j=e  \\&\le
\sum_{j=1}^n p_j=I_{\mathcal M}. \end{aligned}
\end{equation*}%
Therefore (i) is true. Furthermore,
\begin{equation*}
\begin{aligned}
\|ex_i-x_ie\|&=\max_{1\le j\le n} \|p_j(ex_i-x_ie)\| \le \max_{1\le j\le n} \|e_jp_j x_i-p_jx_ie_j \| \\&\le \max_{1\le j\le n}\|e_j (\lambda_{i,j}p_j)-(\lambda_{i,j}p_j)e_j\|+\epsilon=\epsilon, \end{aligned}
\end{equation*}
and
\begin{equation*}
\|ex_i-x_ie\|_r\le \sum_{1\le j\le n} \|e_jx_i-x_ie_j\|_r \le \epsilon.
\end{equation*}
Thus (ii) is true. This completes the proof of the lemma.
\end{proof}

\begin{lemma}
\label{lemma4.3.3} Let $\mathcal{B}$ be a unital $*$-subalgebra generated by a commuting family
of self-adjoint elements $x_1, \ldots, x_r$ in $\mathcal{M}$ with an
identity $I_{\mathcal{B}}$.  Then there exists a sequence $%
\{e_n\}_{n=1}^\infty$ of positive operators in $\mathcal{F}(\mathcal{M},\tau)
$ such that

\begin{enumerate}
\item[(i)] $0\le e_1\le R(e_1)\le e_2 \le R(e_2)\le \cdots \le e_n\le
R(e_n)\le \cdots \le I_{\mathcal{B}}$;

\item[(ii)] As $n$ goes to infinity, $e_n$ converges to $I_{\mathcal{B}}$ in
weak$^*$-topology of $\mathcal{M}$;

\item[(iii)] $\displaystyle lim_n \|e_nx_i-x_ie_n\| =0$ and $\displaystyle %
lim_n \|e_nx_i-x_ie_n\|_r =0$ for each $1\le i\le r.$
\end{enumerate}
\end{lemma}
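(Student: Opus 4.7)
The plan is to construct $\{e_n\}_{n=1}^{\infty}$ recursively, using Lemma~\ref{lemma4.3.2} at each step as the basic engine. The strategy is to feed into the lemma a finite rank projection that simultaneously dominates the range projection of the previously chosen $e_n$ and a partial sum of an orthogonal decomposition of $I_{\mathcal{B}}$. This single choice will force the chained support condition in (i) and the weak$^{*}$-convergence in (ii), while the tolerance parameter in Lemma~\ref{lemma4.3.2} will give the commutator bound in (iii).

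First, applying Fact (ii) from Section~\ref{sec2} to the projection $I_{\mathcal{B}}$, I fix an orthogonal family $\{q_n\}_{n=1}^{\infty}\subseteq\mathcal{P}\mathcal{F}(\mathcal{M},\tau)$ with $\sum_{n}q_n=I_{\mathcal{B}}$, and set $s_n:=q_1+\cdots+q_n\in\mathcal{P}\mathcal{F}(\mathcal{M},\tau)$, so that $s_n\nearrow I_{\mathcal{B}}$ in weak$^{*}$-topology. For the base case, I apply Lemma~\ref{lemma4.3.2} with starting projection $q_1$ and tolerance $\epsilon=1$ to obtain $e_1\in\mathcal{F}(\mathcal{M},\tau)$ satisfying $q_1\le e_1\le I_{\mathcal{B}}$, $\tau(R(e_1))<\infty$, and $\max_{1\le i\le r}\{\|x_ie_1-e_1x_i\|,\|x_ie_1-e_1x_i\|_r\}\le 1$. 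For the inductive step, given $e_n$, I put
\begin{equation*}
p_n:=R(e_n)\vee s_{n+1},
\end{equation*}
which lies in $\mathcal{P}\mathcal{F}(\mathcal{M},\tau)$ since the join of two finite rank projections is finite rank (by Kaplansky's formula, $\tau(p_n)\le\tau(R(e_n))+\tau(s_{n+1})<\infty$). Applying Lemma~\ref{lemma4.3.2} to $p_n$ with tolerance $1/(n+1)$ yields $e_{n+1}\in\mathcal{F}(\mathcal{M},\tau)$ with $p_n\le e_{n+1}\le I_{\mathcal{B}}$ and $\max_{1\le i\le r}\{\|x_ie_{n+1}-e_{n+1}x_i\|,\|x_ie_{n+1}-e_{n+1}x_i\|_r\}\le 1/(n+1)$.

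Verification of the three conclusions is then short. For (i): by construction $R(e_n)\le p_n\le e_{n+1}$, and the inequality $e_n\le R(e_n)$ follows from the fact recalled at the start of Section~\ref{sec2} that $x\le R(x)$ for any $x\in(\mathcal{M})_1^+$; chaining these gives $0\le e_1\le R(e_1)\le e_2\le R(e_2)\le\cdots\le I_{\mathcal{B}}$. For (ii): since $s_n\le e_n\le I_{\mathcal{B}}$, for each normal positive functional $\omega$ on $\mathcal{M}$ one has $\omega(s_n)\le\omega(e_n)\le\omega(I_{\mathcal{B}})$, and the left side converges to $\omega(I_{\mathcal{B}})$; polarization extends this to arbitrary normal functionals, so $e_n\to I_{\mathcal{B}}$ in weak$^{*}$-topology. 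Conclusion (iii) is immediate from the explicit bound $1/n$ in both norms.

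The main obstacle is reconciling the three requirements within a single recursive choice: the nesting condition (i) forces us to start the $(n+1)$-st invocation of Lemma~\ref{lemma4.3.2} above $R(e_n)$, while the weak$^{*}$-approximation in (ii) forces it to also absorb a further piece of $I_{\mathcal{B}}$ from below. The device $p_n=R(e_n)\vee s_{n+1}$ threads these two demands together, after which the commutator control in (iii) comes essentially for free from the tolerance parameter built into Lemma~\ref{lemma4.3.2}.
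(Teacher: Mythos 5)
Your construction is essentially the paper's: the paper fixes a family of orthogonal finite-rank projections $\{f_n\}_{n=1}^\infty$ summing to the identity and recursively applies Lemma \ref{lemma4.3.2} to $R(e_{n-1})\vee f_n$ with tolerance $1/n$, which is exactly your scheme with $p_n=R(e_n)\vee s_{n+1}$, followed by the same routine verification of (i)--(iii). The only point to make explicit is that Lemma \ref{lemma4.3.2} as stated only yields $e\le I_{\mathcal{M}}$, so to get $e_n\le I_{\mathcal{B}}$ you should, as the paper does, first pass to the corner $I_{\mathcal{B}}\mathcal{M}I_{\mathcal{B}}$ (legitimate since each $x_i\in I_{\mathcal{B}}\mathcal{M}I_{\mathcal{B}}$) and assume $I_{\mathcal{B}}=I_{\mathcal{M}}$.
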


\begin{proof}
By considering von Neumann subalgebra $I_{\mathcal{B}}\mathcal{M}I_{\mathcal{%
B}}$ instead, we might assume that $I_{\mathcal{B}}=I_{\mathcal M}
$. Note that $\mathcal{M}$ is a countably decomposable, properly
infinite von Neumann algebra with a faithful normal semifinite
tracial weight $\tau$. There exists a family of
orthogonal projections $\{f_n\}_{n=1}^\infty$ in $\mathcal{F}(\mathcal{M}%
,\tau)$ such that $\sum_{n=1}^\infty f_n=I_{\mathcal M} $.

Using Lemma \ref{lemma4.3.2} repeatedly, we can construct a sequence of $%
\{e_n\}_{n=1}^\infty$ of positive operators in $\mathcal{F}(\mathcal{M},\tau)
$ such that

\begin{enumerate}
\item[(a)] $\left (R(e_{n-1})\vee f_n\right ) \le e_{n} \le I_{\mathcal M} $
(we let $e_0=0$);

\item[(b)] $\max_{1\le i\le r} \{\|x_i e_n-e_nx_i\|, \|x_i e_n-e_nx_i\|_r
\}\le 1/n.$
\end{enumerate}

From (a) and (b), we can directly verify that $\{e_n\}_{n=1}^\infty$ is a
sequence with stated properties.
\end{proof}

Now Proposition \ref{prop4.3.1} is a direct consequence of Lemma \ref%
{lemma4.3.3} and Definition \ref{def4.1.1} and its proof is thus skipped.

\section{Voiculescu's Noncommutative Weyl-von Neumann Theorem in Semifinite
Factors}

The section is devoted to prove a version of Voiculescu's noncommutative
Weyl-Neumann theorem in semifinite factors.

Let $\mathcal{N}$ be a countably decomposable,  properly infinite,
semifinite
factor with a faithful normal semifinite tracial weight $\tau$. Let $%
\mathcal{F}(\mathcal{N},\tau)$ and $\mathcal{K}(\mathcal{N},\tau)$ be the
sets of finite rank operators, and compact operators respectively, in $(%
\mathcal{N},\tau)$.  Recall that $\mathcal{K}(\mathcal{N })$ is the $\|\cdot \|$%
-norm closed ideal generated by finite projections in $\mathcal{N}$.  As $%
\mathcal{N}$ is a countably decomposable, properly infinite, semifinite
factor, we know that $\mathcal{K}(\mathcal{N})=\mathcal{K}(\mathcal{N},\tau)$%
. Thus, in all results proved in this section, $\mathcal{K}(\mathcal{N},\tau)
$ can be replaced by $\mathcal{K}(\mathcal{N})$.

\subsection{Voiculescu Theorem for nuclear $C^*$-subalgebras in semifinite
factors with respect to compact operators}

\ \newline

We will need the following lemma from \cite{Niu}.

\begin{lemma}
{\label{Niu'sLemma}} Let $\mathcal{N}$ be a countably decomposable,
properly infinite,  semifinite factor with a faithful  normal
semifinite tracial
weight $\tau$. Let $\mathcal{F}(\mathcal{N},\tau)$ and $\mathcal{K}(\mathcal{%
N},\tau)$ be the sets of finite rank operators, and compact operators
respectively, in $(\mathcal{N},\tau)$. Let $\mathcal{A}$ be a nuclear
separable $C^*$-subalgebra with an identity $I_{\mathcal{A}}$. Let $q\in
\mathcal{K}(\mathcal{N},\tau)$ be a finite rank projection in $\mathcal{F}(%
\mathcal{N},\tau)$. Suppose $\phi:\mathcal{A}\rightarrow \mathcal{N}$ is a
completely positive mapping satisfying

\begin{enumerate}
\item[(i)] $\displaystyle \phi(I_{\mathcal{A}})$ is a projection in $%
\mathcal{N}$; and

\item[(ii)] $\displaystyle \phi(\mathcal{A}\cap \mathcal{K}(\mathcal{N}%
,\tau))=0$.
\end{enumerate}

Then for any finite set $F\subseteq \mathcal{A}$ and any positive number $%
\epsilon$, there exists a partial isometry $v$ in $\mathcal{N}$ such that $%
qv=0$, $v^*v=\phi(I_{\mathcal{A}}) $, $vv^*\le I_{\mathcal{A}}$ and
\begin{equation*}
\|\phi(x)-v^*xv\|\le \epsilon, \ \ \forall \ x\in F.
\end{equation*}
\end{lemma}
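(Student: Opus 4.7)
The plan is to adapt Voiculescu's classical non-commutative Weyl-von Neumann theorem \cite{Voi2} to the semifinite factor setting, replacing $\mathcal{B}(\mathcal{H})$ by $\mathcal{N}$, compact operators by $\mathcal{K}(\mathcal{N},\tau)$, and the classical Voiculescu quasi-central approximate unit by the one constructed in Lemma \ref{Averson'sLemma}. The hypothesis $\phi(\mathcal{A}\cap\mathcal{K}(\mathcal{N},\tau))=0$ plays the role of the ``essential'' condition on representations in Voiculescu's original formulation, while proper infiniteness of $\mathcal{N}$ provides the ``room'' classically furnished by separability of $\mathcal{H}$.

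I would proceed in three main steps. First, use nuclearity of $\mathcal{A}$ together with the completely positive approximation property to find $k\in\mathbb{N}$, a unital completely positive map $\alpha:\mathcal{A}\to M_k(\mathbb{C})$, and a completely positive map $\beta:M_k(\mathbb{C})\to\mathcal{N}$ with $\beta(I_k)=\phi(I_{\mathcal{A}})$, such that $\|\beta\circ\alpha(x)-\phi(x)\|<\epsilon/3$ for every $x\in F$. Second, put $\beta$ in concrete Stinespring form $\beta(T)=\sum_{i,j}b_i^*T_{i,j}b_j$, where $b_1,\dots,b_k\in\mathcal{N}$ satisfy $\sum_i b_i^*b_i=\phi(I_{\mathcal{A}})$; the target partial isometry $v$ will be assembled from these $b_i$'s via a system of matrix units realizing a copy of $M_k(\mathbb{C})\hookrightarrow\mathcal{N}$. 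Since $\mathcal{N}$ is properly infinite and $q$ is finite rank, $I_{\mathcal{N}}-q\sim I_{\mathcal{N}}$, so such matrix units can be chosen inside $(I_{\mathcal{N}}-q)\mathcal{N}(I_{\mathcal{N}}-q)$, which forces $qv=0$; the range inequality $vv^*\le I_{\mathcal{A}}$ can be arranged similarly by taking ranges inside $I_{\mathcal{A}}\mathcal{N}I_{\mathcal{A}}$. Third, apply Lemma \ref{Averson'sLemma} to produce a sequence $(e_n)\subset\mathcal{F}(\mathcal{N},\tau)$ with $\sum e_n^2=I_{\mathcal{A}}$, $x-\sum_n e_n x e_n\in\mathcal{K}(\mathcal{N},\tau)$ for $x\in\mathcal{A}$, and $\|x-\sum_n e_n xe_n\|<\epsilon/3$ on $F$; use these cutoffs to decompose the construction into mutually orthogonal blocks so that all cross-terms lie in $\mathcal{K}(\mathcal{N},\tau)$ and are therefore annihilated by $\phi$.

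The main obstacle is satisfying the algebraic identities $v^*v=\phi(I_{\mathcal{A}})$, $vv^*\le I_{\mathcal{A}}$, and $qv=0$ \emph{exactly} while preserving the norm bound on $F$. The construction outlined above naturally yields only an ``almost'' partial isometry whose initial projection is close to $\phi(I_{\mathcal{A}})$; a standard polar-decomposition and spectral-cutoff argument, carried out inside the countably decomposable, properly infinite, semifinite factor $(I_{\mathcal{N}}-q)\mathcal{N}(I_{\mathcal{N}}-q)$, is then needed to promote it to a genuine partial isometry at the price of a small extra perturbation that fits into the remaining $\epsilon/3$ budget. The compact-vanishing hypothesis on $\phi$ is used precisely to ensure that these corrections do not disturb the approximation $\|\phi(x)-v^*xv\|\le\epsilon$ on $F$, since the discrepancies produced by spectral truncation and by the partition of unity all end up in $\mathcal{K}(\mathcal{N},\tau)$, which $\phi$ maps to zero.
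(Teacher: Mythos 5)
There is a genuine gap, and it sits exactly where the content of the lemma lies. After your CPAP factorization $\phi\approx\beta\circ\alpha$ with $\beta(T)=\sum_{i,j}b_i^*T_{i,j}b_j$ and $\sum_i b_i^*b_i=\phi(I_{\mathcal A})$, assembling $v=\sum_i w_i b_i$ from ``matrix units'' $w_i$ gives $v^*xv=\sum_{i,j}b_i^*(w_i^*xw_j)b_j$; to conclude $v^*xv\approx\beta(\alpha(x))\approx\phi(x)$ you need $w_i^*xw_j\approx\alpha(x)_{i,j}\,p$ for all $x\in F$, where $p$ is a projection dominating the left supports of the $b_j$. Your Step 2 only secures the algebraic requirements ($qv=0$, range under $I_{\mathcal A}$, exact matrix-unit relations) by placing a copy of $M_k(\mathbb C)$ inside $(I-q)\mathcal N(I-q)$; it never explains why compressing the elements of $F$ by that copy reproduces the matrix coefficients $\alpha(x)_{i,j}$, and an arbitrarily positioned copy will not do so. That step is the semifinite analogue of Glimm's lemma (excision, orthogonally to a prescribed finite-rank projection, of a c.p.\ map that kills $\mathcal A\cap\mathcal K(\mathcal N,\tau)$), and it is precisely what Lemma 3.4 of \cite{Niu} provides; the paper's proof consists of that citation together with nuclearity. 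In your sketch this essential analytic ingredient is treated as a routine choice of matrix units, so the core of the argument is missing.

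Relatedly, the role you assign to the hypothesis $\phi(\mathcal A\cap\mathcal K(\mathcal N,\tau))=0$ is not tenable: you say the cutoff and cross-term discrepancies ``lie in $\mathcal K(\mathcal N,\tau)$ and are therefore annihilated by $\phi$,'' but those discrepancies are operators of $\mathcal N$ (differences such as $\phi(x)-v^*xv$), not elements of the domain of $\phi$, and membership in $\mathcal K(\mathcal N,\tau)$ gives no operator-norm smallness, since compact operators can have norm one. The hypothesis is used for a different purpose: it lets $\phi$ factor through $\mathcal A/(\mathcal A\cap\mathcal K(\mathcal N,\tau))$, which is what makes it possible to implement $\phi$ ``at infinity,'' i.e.\ with $qv=0$ for an arbitrary finite-rank $q$. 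Without it the statement is simply false (take $\mathcal N=\mathcal B(\mathcal H)$, $\mathcal A=\mathbb C I+\mathcal K(\mathcal H)$, $\phi(x)=exe$ for a rank-one projection $e$, and $q=e$: then $qv=0$ forces $v^*ev=0$ while $\|\phi(e)\|=1$). Finally, a minor remark: invoking Lemma \ref{Averson'sLemma} here is not circular, since the paper proves it independently, but in the paper it is used after the present lemma, in Theorem \ref{Voiculescu'sTheorem}, to glue the partial isometries this lemma produces; it cannot substitute for the missing excision step.
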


\begin{proof}
The result is a direct consequence of the definition of nuclear $C^*$%
-algebra and Lemma 3.4 in \cite{Niu}.
\end{proof}

Now we are ready to prove the following main result of this subsection.

\begin{theorem}
{\label{Voiculescu'sTheorem}} Let $\mathcal{N}$ be a countably
decomposable,  properly infinite,  semifinite factor acting on a
Hilbert space and let $\tau$ be a
faithful  normal  semifinite tracial weight of $\mathcal N$. Let $\mathcal{F}(\mathcal{N}%
,\tau)$ and $\mathcal{K}(\mathcal{N},\tau)$ be the sets of finite rank
operators, and compact operators respectively, in $(\mathcal{N},\tau)$.

Suppose that $\mathcal{A}$ is a nuclear separable $C^*$-subalgebra of $\mathcal{N}$ with an
identity $I_{\mathcal{A}}$. Suppose $\psi:\mathcal{A}\rightarrow \mathcal{N}$
is a $*$-homomorphism satisfying $\displaystyle \psi(\mathcal{A}\cap
\mathcal{K}(\mathcal{N},\tau))=0$. Then there exists a sequence $%
\{v_j\}_{j=1}^\infty$ of partial isometries in $\mathcal{N}$ such that

\begin{enumerate}
\item[(i)] $\displaystyle  v_jv_j^*\le I_{\mathcal{A}}, v^*_jv_j=\psi(I_{%
\mathcal{A}})$ and $v^*_jv_k=0$ for all $j,k\ge 1$ with $j\ne k$;

\item[(ii)] $\displaystyle \psi(x)-v_j^*xv_j$ is in $\mathcal{K}(\mathcal{N}%
,\tau)$ for all $x\in \mathcal{A}$ and $j\ge 1$;

\item[(iii)] $\displaystyle \lim_j\| \psi(x)-v_j^*xv_j\|=0$ for all $x\in
\mathcal{A}$.

\item[(iv)] $\displaystyle v_j\psi(x)-xv_j$ is in $\mathcal{K}(\mathcal{N}%
,\tau)$ for all $x\in \mathcal{A}$ and $j\ge 1$;

\item[(v)] $\displaystyle \lim_j\|v_j\psi(x)-xv_j\|=0$ for all $x\in
\mathcal{A}$;
\end{enumerate}
\end{theorem}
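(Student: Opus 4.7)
The plan is to adapt the classical Voiculescu strategy to the semifinite factor setting, coordinating Lemma~\ref{Niu'sLemma} (the single partial-isometry approximation) with Lemma~\ref{Averson'sLemma} (the quasicentral partition of unity) to upgrade norm approximations on finite sets to compact errors on all of $\mathcal{A}$, and using proper infiniteness of $\mathcal{N}$ to arrange pairwise orthogonality of the $v_j$'s.

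The core claim to prove first is: given $F\subseteq\mathcal{A}$ finite, $\epsilon>0$, and a projection $Q\le I_{\mathcal{A}}$ with $I_{\mathcal{A}}-Q$ infinite in the factor $\mathcal{N}$, there exists a partial isometry $v\in\mathcal{N}$ with $Qv=0$, $v^*v=\psi(I_{\mathcal{A}})$, $vv^*\le I_{\mathcal{A}}-Q$, $\|\psi(x)-v^*xv\|<\epsilon$ for $x\in F$, and $\psi(x)-v^*xv\in\mathcal{K}(\mathcal{N},\tau)$ for every $x\in\mathcal{A}$. Choose a nest $F\subseteq G_1\subseteq G_2\subseteq\cdots$ of finite sets with dense union in $\mathcal{A}$ and positive $\delta_k$ with $\sum_k\delta_k<\epsilon$. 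Apply Lemma~\ref{Averson'sLemma} to obtain $\{e_k\}\subseteq\mathcal{F}(\mathcal{N},\tau)$ with $\sum_ke_k^2=I_{\mathcal{A}}$ and $x-\sum_ke_kxe_k\in\mathcal{K}(\mathcal{N},\tau)$ for every $x\in\mathcal{A}$. Inductively invoke Lemma~\ref{Niu'sLemma} at stage $k$, after a small spectral cut-off on $e_k$ so that the compression $\phi_k(x):=e_k\psi(x)e_k$ sends $I_{\mathcal{A}}$ to a projection, with finite set $G_k$, tolerance $\delta_k$, and the finite-rank reservation built from a finite-rank piece exhausting more of $Q$ together with all previously-placed $u_iu_i^*$; this produces partial isometries $u_k$ with pairwise orthogonal finite-rank ranges in $I_{\mathcal{A}}-Q$ and $\|\phi_k(x)-u_k^*xu_k\|<\delta_k$ on $G_k$. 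Set $v=\sum_ku_k$ (strongly convergent) and telescope,
\begin{equation*}
\psi(x)-v^*xv=\Bigl(\psi(x)-\sum_k e_k\psi(x)e_k\Bigr)+\sum_k\bigl(\phi_k(x)-u_k^*xu_k\bigr);
\end{equation*}
the first bracket is compact by quasicentrality of $\{e_k\}$ together with $\psi$ vanishing on compacts, and the second is an absolutely convergent sum of finite-rank operators whose norm on $F$ is bounded by $\sum_k\delta_k<\epsilon$.

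To produce the full sequence, choose $F_1\subseteq F_2\subseteq\cdots$ finite with dense union (and containing products $x^*x$ for $x$ in a dense countable subfamily) and $\epsilon_j\downarrow 0$. Inductively apply the core claim with $Q_j=\sum_{i<j}v_iv_i^*$; the complement $I_{\mathcal{A}}-Q_j$ remains infinite in $\mathcal{N}$ at every stage because $\mathcal{N}$ is properly infinite and $I_{\mathcal{A}}$ is infinite. This enforces $v_jv_j^*\perp v_iv_i^*$ for $i<j$, equivalently $v_j^*v_k=0$ for $j\ne k$, along with (i)--(iii). For (iv) and (v), setting $k_x=v_j^*xv_j-\psi(x)\in\mathcal{K}(\mathcal{N},\tau)$, a direct expansion using $v_j^*v_j=\psi(I_{\mathcal{A}})$ and $\psi(x^*x)=\psi(x^*)\psi(x)$ yields
\begin{equation*}
(v_j\psi(x)-xv_j)^*(v_j\psi(x)-xv_j)=k_{x^*x}-\psi(x^*)k_x-k_x^*\psi(x),
\end{equation*}
which lies in $\mathcal{K}(\mathcal{N},\tau)$ (a two-sided ideal), giving (iv); its norm is bounded by $\|k_{x^*x}\|+2\|\psi(x)\|\,\|k_x\|$, which tends to $0$ as $j\to\infty$ once $x$ and $x^*x$ fall into the $F_j$'s, giving (v).

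The main obstacle is the core single-step construction: coordinating the quasicentral partition $\{e_k\}$ with the iterated use of Lemma~\ref{Niu'sLemma} so that at each stage the reservation is still finite-rank (despite $Q$ itself being possibly infinite-rank, one only needs to avoid finitely many $u_iu_i^*$ and a finite-rank slice of $Q$ at each step), the approximations accumulate telescopically to a compact global error, and the range projections of the $u_k$'s remain pairwise orthogonal. A minor technical point is the spectral cut-off on each $e_k$ required to meet the hypothesis ``$\phi(I_{\mathcal{A}})$ is a projection'' of Lemma~\ref{Niu'sLemma}, which must be taken fine enough that the cut-off error remains negligible in the telescoping bound.
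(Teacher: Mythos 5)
Your overall architecture (quasicentral partitions from Lemma \ref{Averson'sLemma}, single-step approximations from Lemma \ref{Niu'sLemma}, a telescoping decomposition, and the algebraic identity used for (iv)--(v), which is exactly the paper's) is the right one, but the core single-step construction has genuine gaps. First, your telescoping identity $\psi(x)-v^*xv=\bigl(\psi(x)-\sum_k e_k\psi(x)e_k\bigr)+\sum_k\bigl(\phi_k(x)-u_k^*xu_k\bigr)$ silently assumes $v^*xv=\sum_k u_k^*xu_k$, i.e.\ that all cross terms $u_k^*xu_l$ with $k\ne l$ vanish. Your reservation at stage $k$ contains only the previously placed range projections $u_iu_i^*$ (plus a slice of $Q$), which kills $u_k^*u_l$ but not $u_k^*xu_l$, so the identity fails as written. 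In the paper the reservation $q_{j,n}$ is the span of $R(v_{s,t})$, $R(x_iv_{s,t})$ and $R(x_i^*v_{s,t})$ over earlier $(s,t)$ and the first $j+n$ elements of a dense sequence, precisely to force $v_{j,n}^*x_iv_{s,t}=0$; even then a leftover finite sum of cross terms (for large $i$) has to be absorbed separately into the compact error. Second, condition (i) demands $v^*v=\psi(I_{\mathcal A})$ exactly, and $v=\sum_k u_k$ does not deliver it: Lemma \ref{Niu'sLemma} needs the compression of $I_{\mathcal A}$ to be a projection, your ``spectral cut-off on $e_k$'' changes the maps and destroys $\sum_k e_k^2=\psi(I_{\mathcal A})$, and the initial projections $u_k^*u_k$ are then neither orthogonal nor do they sum to $\psi(I_{\mathcal A})$. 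The paper's device is to compress by the range projections $p_{j,n}=R(e_{j,n})$ (no cut-off is needed, since $p_{j,n}\psi(I_{\mathcal A})p_{j,n}=p_{j,n}$) and then to weight the sum, $v_j=\sum_n v_{j,n}e_{j,n}$, which gives $v_j^*v_j=\sum_n e_{j,n}p_{j,n}e_{j,n}=\sum_n e_{j,n}^2=\psi(I_{\mathcal A})$ exactly and simultaneously makes the error series $\sum_n e_{j,n}(p_{j,n}\psi(x)p_{j,n}-v_{j,n}^*xv_{j,n})e_{j,n}$ norm summable, hence compact.

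A further problem is your outer iteration: you reserve against $Q_j=\sum_{i<j}v_iv_i^*$, but Lemma \ref{Niu'sLemma} only accepts a finite-rank reservation, and reserving finite-rank slices of $Q_j$ does not yield $Q_jv_j=0$; moreover there is no reason that $I_{\mathcal A}-Q_j$ stays infinite (it can even vanish, since the lemma only guarantees $vv^*\le I_{\mathcal A}$), so the inductive hypothesis of your core claim need not persist. The paper avoids both issues by building all the $v_{j,n}$, $j,n\ge 1$, in one interleaved induction over $\mathbb N\times\mathbb N$ in the diagonal order, where every reservation is automatically finite-rank because each $v_{s,t}$ has finite-rank range; the orthogonality $v_j^*v_k=0$ for $j\ne k$ then comes for free from $v_{j,n}^*v_{k,m}=0$. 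Your derivation of (iv) and (v) from (ii)--(iii) via the expansion of $(v_j\psi(x)-xv_j)^*(v_j\psi(x)-xv_j)$ is correct and coincides with the paper's.
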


\begin{proof}
Let $\mathcal{B}=\psi(\mathcal{A})$ be a separable $C^*$-subalgebra of $%
\mathcal{N}$ with an identity $\psi(I_{\mathcal{A}})$. Let $\{x_i\}_{i= 1}^\infty$
be a norm dense subset in the unit ball of $\mathcal{A}$. For each $j\ge 1$,
applying Lemma \ref{Averson'sLemma} to $\mathcal{B}$, there exists a family $%
\{e_{j,n}\}_{n= 1}^\infty$ of positive finite rank operators in $\mathcal{F}(%
\mathcal{N},\tau)$ such that

\begin{enumerate}
\item[(a)] $\displaystyle \sum_{n= 1}^\infty e_{j,n}^2=\psi(I_{\mathcal{A}})$%
, for $j\ge 1$;

\item[(b)] $\displaystyle \psi(x)-\sum_{n= 1}^\infty e_{j,n}\psi(x)e_{j,n}
\in \mathcal{K}(\mathcal{N},\tau)$ for all $x$ in $\mathcal{A}$ and $j\ge 1$%
;

\item[(c)] $\displaystyle \|\psi(x_i)-\sum_{n= 1}^\infty
e_{j,n}\psi(x_i)e_{j,n} \|\le \frac 1 {2^j}, $ for $1\le i\le j$.
\end{enumerate}

Let $p_{j,n}=R(e_{j,n})$ be the range projection of $e_{j,n}$ in $\mathcal{N}
$. It is obvious that $p_{j,n}$ is a finite rank projection in $\mathcal{F}(%
\mathcal{N},\tau)$ such that $p_{j,n}\le \psi(I_{\mathcal{A}})$. Note that
each $p_{j,n}\psi(\cdot) p_{j,n}$ is a completely positive mapping from $%
\mathcal{A}$ into $\mathcal{N}$ such that (1) $p_{j,n}\psi (I_{\mathcal{A}})
p_{j,n}$ is a projection and (2) $p_{j,n}\psi(\mathcal{A}\cap \mathcal{K}(%
\mathcal{N},\tau)) p_{j,n}=0$. Now we are ready to prove the following claim.

\vspace{0.2cm}

\noindent {\ {\em Claim \ref{Voiculescu'sTheorem}.1. \ }}\emph{There exists a
family of partial isometries $\{v_{j,n}\}_{j,n= 1}^\infty$ in $\mathcal{N}$
such that, for each $j,n\ge 1$, }

\begin{enumerate}
\item[(d)] \emph{%
\begin{equation}
\|p_{j,n}\psi(x_i)p_{j,n}-v_{j,n}^*x_iv_{j,n}\|\le \frac 1 {2^{j+n}}, \ \
\text{for $1\le i\le j+n$,}  \label{equ2.1}
\end{equation}
}

\item[(e)] \emph{%
\begin{equation}
v_{j,n}v_{j,n}^*\le I_{\mathcal{A}}, \ \ \ \ v_{j,n}^* v_{j,n} =p_{j,n},
\label{equ2.2}
\end{equation}
}

\item[(f)] \emph{%
\begin{equation}
v^*_{j,n} v_{s,t}=0, \ \ v^*_{j,n} x_i v_{s,t}=0, \text{ and } \
v^*_{j,n}x_i^* v_{s,t}=0 ,  \label{equ2.25}
\end{equation}
when $i, s,t\in \mathbb{N}$ satisfy $1\le i\le j+n, s+t<j+n $, or we have $%
s+t=j+n $ and $s<j$. }
\end{enumerate}

\vspace{0.2cm}

\noindent {Proof of Claim: \ } Define an order ``$\prec$'' on $\mathbb{N}%
\times \mathbb{N}$ as follows:
\begin{equation*}
(s,t)\prec (j,n)
\end{equation*}
if and only if
\begin{equation*}
\text{$s+t<j+n $, or we have $s+t=j+n $ and $s<j$}.
\end{equation*}
Next we will use an inductive process on $\mathbb{N}\times \mathbb{N}$, with
respect to the order $\prec$, to construct a family $\{v_{j,n}\}_{j,n=
1}^\infty$ of partial isometries in $\mathcal{N}$ with stated properties.

When $(j,n)=(1,1)$, applying Lemma \ref{Niu'sLemma} to $p_{1,1}\psi(%
\cdot)p_{1,1}$ and $q_{1,1}=0$, we know that there exists a partial isometry
$v_{1,1}$ in $\mathcal{N}$ such that
\begin{equation*}
\|p_{1,1}\psi(x_i)p_{1,1}-v_{1,1}^*x_iv_{1,1}\|\le \frac 1 {2^{2}}, \
\end{equation*}
and
\begin{equation*}
v_{1,1}v_{1,1}^*\le I_{\mathcal{A}}, \ \ \ \ v_{1,1}^* v_{1,1} =p_{1,1}.
\end{equation*}

Let $j,n\ge 1$ be such that $(1,1)\prec (j,n)$ and assume the family $%
\{v_{j^{\prime },n^{\prime }}\}_{(j^{\prime },n^{\prime })\prec (j,n)}$ of
partial isometries in $\mathcal{N}$ with desired properties has been chosen.
Let
\begin{equation*}
q_{j,n}= \vee \{ R(v_{s,t}), R( x_i v_{s,t}), R(x_i^* v_{s,t}) \ :  \ 1\le
i\le j+n, \text{ and } (s,t)\prec (j,n)\},
\end{equation*}
where $R(x)$ is the range projection of $x$ in $\mathcal{N}$. Thus $q_{j,n}$
is a finite rank projection in $\mathcal{F}(\mathcal{N},\tau)$. Applying
Lemma \ref{Niu'sLemma} to $p_{j,n}\psi(\cdot)p_{j,n}$ and $q_{j,n}$, we can
find a partial isometry $v_{j,n}$ in $\mathcal{N}$ such that
\begin{equation*}
\|p_{j,n}\psi(x_i)p_{j,n}-v_{j,n}^*x_iv_{j,n}\|\le \frac 1 {2^{j+n}}, \
\forall \ 1\le i\le j+n,
\end{equation*}
\begin{equation*}
v_{j,n}v_{j,n}^*\le I_{\mathcal{A}}, \ \ \ \ v_{j,n}^* v_{j,n} =p_{j,n},
\end{equation*}
and
\begin{equation*}
v^*_{j,n} v_{s,t}=0, \ \ v^*_{j,n} x_i v_{s,t}=0, \ \text{ and } \
v^*_{j,n}x_i^* v_{s,t}=0, \ \forall 1\le i\le j+n, \text { and } (s,t)\prec
(j,n).
\end{equation*}
This finishes the inductive construction of $\{v_{j,n}\}_{j,n= 1}^\infty$
with stated properties. And the proof of the Claim is completed.

\vspace{0.2cm}

\noindent (\emph{Continue the proof of the theorem}:)
From inequality (\ref{equ2.1}), we have, for all $1\le i\le j$,
\begin{equation}
\sum_{n=1}^\infty\|p_{j,n}\psi(x_i)p_{j,n}-v_{j,n}^*x_iv_{j,n}\|\le \frac 1
{2^{j}},  \label{equ2.3}
\end{equation}
and, for all $i, j\ge 1$
\begin{equation}
\sum_{n=1}^\infty\|p_{j,n}\psi(x_i)p_{j,n}-v_{j,n}^*x_iv_{j,n}\|<\infty.
\label{equ2.4}
\end{equation}
From equation (\ref{equ2.25}),
\begin{equation}
v^*_{s,t}v_{j,n}=0 \qquad \text { if $(s,t)\ne (j,n)$.}  \label{equ2.0}
\end{equation}
Define, for each $j\ge 1$,
\begin{equation}
v_j= \sum_{n=1}^\infty v_{j,n}e_{j,n}, \qquad  \tag{\text{convergence is
in
strong operator topology}}
\end{equation}
which is a partial isometry in $\mathcal{N}$. We will verify that $%
\{v_j\}_{j= 1}^\infty$ satisfies the condition (i), (ii), (iii), (iv) and
(v).

(i): From equation (\ref{equ2.0}) and (\ref{equ2.2}), we have for $j,k\ge 1$
with $j\ne k$
\begin{equation}
v_j^*v_k=0, \ \ \ v_j^*v_j=\sum_{n=1}^\infty e_{j,n} p_{j,n} e_{j,n}
=\sum_{n=1}^\infty e_{j,n}^2=\psi(I_{\mathcal{A}}) \ \text { and }
v_jv_j^*\le I_{\mathcal{A}} .  \label{equ2.5}
\end{equation}

(ii) and (iii): Again from equation (\ref{equ2.25}), 
\begin{equation}
v^*_{j,n} x_i v_{s,t}=0 \ \text{ and } \ v^*_{j,n}x_i^* v_{s,t}=0,
\label{equ2.6}
\end{equation}
when $i,j,n, s,t\in \mathbb{N}$ satisfy $1\le i\le \max\{j+n, s+t\}$ and $%
(j,n)\ne (s,t).$ Hence, for each $i, j\ge 1$,
\begin{align}
\psi(x_i)-v_j^*x_iv_j &= \psi(x_i)-\sum_n e_{j,n}\psi(x_i) e_{j,n} + \sum_n
e_{j,n}\psi(x_i) e_{j,n} - \sum_{m,n\ge 1} e_{j,m}v_{j,m}^*x_i v_{j,n}e_{j,n}
\notag \\
&= (\psi(x_i)-\sum_n e_{j,n}\psi(x_i) e_{j,n})  \notag \\
&\quad + \sum_{n\ge 1} e_{j,n}(p_{j,n}\psi(x_i)p_{j,n}-v_{j,n}^*x_i
v_{j,n})e_{j,n}  \notag \\
&\quad - \sum_{m\ne n <i-j} e_{j,m}v_{j,m}^*x_i v_{j,n}e_{j,n}  \tag{by
(\ref{equ2.6})}
\end{align}
By the Condition (b) on the choice of $\{e_{j,n}\}^{\infty}_{j,n=1}$, $\psi(x_i)-\sum_n
e_{j,n}\psi(x_i) e_{j,n}\in \mathcal{K}(\mathcal{N},\tau)$ for $i, j\ge 1$.
Note that each $e_{j,n}(p_{j,n}\psi(x_i)p_{j,n}-v_{j,n}^*x_i v_{j,n})e_{j,n}$
is a finite rank operator in $\mathcal{F}(\mathcal{N},\tau)$ and
\begin{equation}
\sum_n \| e_{j,n}(p_{j,n}\psi(x_i)p_{j,n}-v_{j,n}^*x_i v_{j,n})e_{j,n}\| \le
\sum_n \| (p_{j,n}\psi(x_i)p_{j,n}-v_{j,n}^*x_i v_{j,n}) \| <\infty.
\tag{by (\ref{equ2.3})}
\end{equation}%
Thus $\sum_{n\ge 1} e_{j,n}(p_{j,n}\psi(x_i)p_{j,n}-v_{j,n}^*x_i
v_{j,n})e_{j,n}$ converges in norm to a compact operator in $\mathcal{K}(%
\mathcal{N},\tau)$. Finally $\sum_{m\ne n <i-j} e_{j,m}v_{j,m}^*x_i
v_{j,n}e_{j,n}$ is a finite sum, whence it is in $\mathcal{K}(\mathcal{N}%
,\tau)$. Therefore, for all $i, j\ge 1$,
\begin{equation*}
\psi(x_i)-v_j^*x_iv_j\in \mathcal{K}(\mathcal{N},\tau).
\end{equation*}
Since $\{x_i\}^{\infty}_{i=1}$ is norm dense in the unit ball of $\mathcal{A}$,
\begin{equation}
\psi(x )-v_j^*x v_j\in \mathcal{K}(\mathcal{N},\tau), \ \ \forall \ x\in
\mathcal{A}.  \label{equ2.75}
\end{equation}
Thus (ii) is satisfied.  Moreover, when $1\le i\le j$,
\begin{align}
\|\psi(x_i)-v_j^*x_iv_j \|&= \|\psi(x_i)-\sum_n e_{j,n}\psi(x_i) e_{j,n} +
\sum_n e_{j,n}\psi(x_i) e_{j,n} - \sum_{m,n\ge 1} e_{j,m}v_{j,m}^*x_i
v_{j,n}e_{j,n} \|  \notag \\
&\le \|\psi(x_i)-\sum_n e_{j,n}\psi(x_i) e_{j,n}\|  \notag \\
&\quad + \|\sum_{n\ge 1}
e_{j,n}(p_{j,n}\psi(x_i)p_{j,n}-v_{j,n}^*x_i v_{i,n})e_{j,n}\|
\tag{by
(\ref{equ2.6})} \\
&\quad \le \frac 1 {2^j}+ \frac 1 {2^j} =\frac 1 {2^{j-1}}.  \tag{by
Condition (c) and (\ref{equ2.3})}
\end{align}
So
\begin{equation*}
\lim_j \|\psi(x_i)-v_j^*x_iv_j \| =0, \qquad \forall \ i\ge 1,
\end{equation*}
whence
\begin{equation}
\lim_j \|\psi(x )-v_j^*x v_j \| =0, \qquad \forall \ x\in\mathcal{A}.
\label{equ2.76}
\end{equation}
Thus (iii) is satisfied.

(iv) and (v): For each $j\ge 1$ and $x\in \mathcal{A}$, as $v_j^*v_j=\psi(I_{\mathcal A})$, we have
\begin{align}
(v_j\psi&(x )-x v_j)^*(v_j\psi(x )-x v_j)  \notag \\
& = \psi(x^*)v_j^*v_j\psi(x )-\psi(x ^*)v_j^*x v_j - v_j^*x^*v_j\psi(x)
+v_j^*x^*xv_j  \notag \\
&=\psi(x^*)(\psi(x)-\ v_j^*x  v_j ) +(\psi(x^*)-v_j^*x^*v_j) \psi(x)-
(\psi(x^*x)-v_j^*x^*xv_j)  \label{equ2.9.5}
\end{align}
By (\ref{equ2.75}) and (\ref{equ2.9.5}),
\begin{equation}
(v_j\psi (x)-xv_j)^*(v_j\psi(x)-xv_j) \in \mathcal{K}(\mathcal{N},\tau), \ \
\forall \ j\ge 1.   \notag
\end{equation}%
Hence
\begin{equation}
v_j\psi(x)-xv_j \in \mathcal{K}(\mathcal{N},\tau), \ \ \forall \ j\ge 1.
\label{equ2.10}
\end{equation}%
Thus (iv) is satisfied. Furthermore, from (\ref{equ2.9.5}) and (\ref{equ2.76}%
) we have, for all $x\in \mathcal{A}$,
\begin{align}
\lim_j\|v_j\psi(x)-xv_j \|=\lim_j\| (v_j\psi&(x )-x v_j)^*(v_j\psi(x )-x
v_j) \|^{1/2}=0.  \label{equ2.11}
\end{align}
Thus (iv) is satisfied. This completes the proof of the theorem.
\end{proof}

\subsection{Voiculescu Theorem for nuclear $C^*$-subalgebras in semifinite
factors with respect to norm ideals}

\begin{theorem}
\label{VoiThm_1} Let $\mathcal{N}$ be a countably decomposable,
properly infinite,  semifinite factor with a faithful  normal
semifinite tracial
weight $\tau$. Let $\mathcal{F}(\mathcal{N},\tau)$ and $\mathcal{K}(\mathcal{%
N},\tau)$ be the sets of finite rank operators, and compact operators
respectively, in $(\mathcal{N},\tau)$. Let $\mathcal{K}_{\Phi}(\mathcal{N}%
,\tau)$ be a norm ideal of $(\mathcal{N},\tau)$.

Suppose that $\mathcal{A}$ is a separable nuclear $C^*$-subalgebra of $%
\mathcal{N}$ with an identity $I_{\mathcal{A}}$ and $\mathcal{B}$ is a
countably generated $*$-subalgebra of $\mathcal{A}$ such that $I_{\mathcal{A%
}}\in \mathcal{B}$. Suppose that $\psi:\mathcal{A}\rightarrow
\mathcal{N}$ is a $*$-homomorphism satisfying (i) $\displaystyle
\psi(\mathcal{A}\cap
\mathcal{K}(\mathcal{N},\tau))=0$ and (ii) $\psi(\mathcal{B})$ is $\Phi$%
-well-behaved in $\mathcal{N}$.

Then for every finite subset $F$ of $\mathcal{B}$ and any positive number $%
\epsilon$, there exists a partial isometry $v$ in $\mathcal{N}$ such that

\begin{enumerate}
\item[(a)] $v^*v=\psi(I_{\mathcal{A}})$, and $vv^*\le
I_{\mathcal{A}}$;

\item[(b)] $v\psi(b)-bv\in \mathcal{K}_{\Phi}(\mathcal{N},\tau)$, for all $%
b\in \mathcal{B}$;

\item[(c)] $\Phi(v\psi(b)-bv)\le \epsilon$, for all $b\in F$.
\end{enumerate}
Moreover,
\begin{enumerate}
\item[(d)] $\psi(b)-v^*bv\in \mathcal{K}_{\Phi}(\mathcal{N},\tau)$, for all $%
b\in \mathcal{B}$;

\item[(e)] $\Phi( \psi(b)-v^*bv)\le \epsilon$, for all $b\in F$.
\end{enumerate}
\end{theorem}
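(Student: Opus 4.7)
The plan is to combine the operator-norm Voiculescu theorem for nuclear $C^*$-subalgebras (Theorem \ref{Voiculescu'sTheorem}) with the $\Phi$-quasi-central approximate unit produced by Lemma \ref{lemma2.4} in the ``smearing'' fashion of Voiculescu's original argument, adapted to the norm-ideal setting. The principal obstacle is a single diagonal selection of a subsequence that makes the cross-term estimates $\Phi$-summable simultaneously on the finite test set $F$ and on a countable generating set of $\mathcal{B}$, while preserving enough orthogonality from Theorem \ref{Voiculescu'sTheorem} to read off $v^{*}v=\psi(I_{\mathcal{A}})$ cleanly.

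First, apply Theorem \ref{Voiculescu'sTheorem} to $\psi$ (whose kernel contains $\mathcal{A}\cap\mathcal{K}(\mathcal{N},\tau)$) to produce a sequence $\{v_j\}_{j=1}^\infty$ of partial isometries in $\mathcal{N}$ with $v_j^{*}v_j=\psi(I_{\mathcal{A}})$, $v_jv_j^{*}\le I_{\mathcal{A}}$, $v_j^{*}v_k=0$ for $j\ne k$, $v_j\psi(x)-xv_j\in\mathcal{K}(\mathcal{N},\tau)$, and $\|v_j\psi(x)-xv_j\|\to 0$ for every $x\in\mathcal{A}$. Next, apply Lemma \ref{lemma2.4} to the $\Phi$-well-behaved subalgebra $\psi(\mathcal{B})$ with the finite set $\psi(F)\subseteq\psi(\mathcal{B})$ to obtain positive operators $\{e_n\}_{n=1}^\infty\subseteq\mathcal{F}(\mathcal{N},\tau)$ satisfying $\sum_n e_n^2=\psi(I_{\mathcal{A}})$, $\sum_n\Phi(\psi(b)e_n-e_n\psi(b))\le\epsilon/2$ for each $b\in F$, and $\sum_n\Phi(\psi(b)e_n-e_n\psi(b))<\infty$ for every $b\in\mathcal{B}$. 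Since $e_n^{2}\le\psi(I_{\mathcal{A}})$, a standard kernel argument gives $e_n\le\psi(I_{\mathcal{A}})$, so $e_n\psi(I_{\mathcal{A}})=e_n$.

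Now fix a countable $\mathbb{C}$-linear basis $\{b_k\}_{k=1}^\infty$ of $\mathcal{B}$ (available because $\mathcal{B}$ is countably generated as a $*$-algebra). Using the inequality $\Phi((v_j\psi(b)-bv_j)e_n)\le\|v_j\psi(b)-bv_j\|\cdot\Phi(e_n)$ with $\Phi(e_n)$ finite and $\|v_j\psi(b)-bv_j\|\to 0$, a diagonal argument produces indices $\{j_n\}_{n=1}^\infty$ so that $\Phi((v_{j_n}\psi(b)-bv_{j_n})e_n)\le\epsilon/2^{n+1}$ for every $b\in F$ and $\Phi((v_{j_n}\psi(b_k)-b_kv_{j_n})e_n)\le 2^{-n}$ whenever $k\le n$. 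Define $v=\sum_{n=1}^\infty v_{j_n}e_n$ as an SOT-limit; convergence follows from $\sum_n\|v_{j_n}e_n\xi\|^{2}=\sum_n\langle e_n^{2}\xi,\xi\rangle=\langle\psi(I_{\mathcal{A}})\xi,\xi\rangle\le\|\xi\|^{2}$. The orthogonality $v_{j_m}^{*}v_{j_n}=\delta_{mn}\psi(I_{\mathcal{A}})$ combined with $e_n\psi(I_{\mathcal{A}})=e_n$ gives $v^{*}v=\sum_n e_n^{2}=\psi(I_{\mathcal{A}})$ (via a WOT passage from the exact identity $v_N^{*}v_N=\sum_{n\le N}e_n^{2}$ for the partial sums $v_N$), while $v_{j_n}=I_{\mathcal{A}}v_{j_n}$ forces $v=I_{\mathcal{A}}v$ and hence $vv^{*}\le I_{\mathcal{A}}$; this establishes (a). For (b) and (c), use the decomposition
\[
v\psi(b)-bv=\sum_{n} v_{j_n}\bigl(e_n\psi(b)-\psi(b)e_n\bigr)+\sum_{n}\bigl(v_{j_n}\psi(b)-bv_{j_n}\bigr)e_n,
\]
with both series absolutely $\Phi$-convergent (the first via Lemma \ref{lemma2.4} and $\Phi(v_{j_n}y)\le\Phi(y)$, the second by the choice of $\{j_n\}$) and total $\Phi$-norm at most $\epsilon$ when $b\in F$. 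Since every summand lies in $\mathcal{F}(\mathcal{N},\tau)$, Lemma \ref{prelim_lemma1}(v) places $v\psi(b)-bv$ in $\mathcal{K}_{\Phi}(\mathcal{N},\tau)$ for each $b_k$, and hence for every $b\in\mathcal{B}$ by $\mathbb{C}$-linearity. Finally, (d) and (e) follow from the algebraic identity $\psi(b)-v^{*}bv=v^{*}v\psi(b)-v^{*}bv=v^{*}(v\psi(b)-bv)$ together with $\Phi(v^{*}y)\le\|v^{*}\|\Phi(y)\le\Phi(y)$, which transfers (b) and (c) verbatim.
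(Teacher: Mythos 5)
Your proposal is correct and follows essentially the same route as the paper: combine Theorem \ref{Voiculescu'sTheorem} with the quasi-central sequence $\{e_n\}$ from Lemma \ref{lemma2.4}, form the smeared partial isometry $v=\sum_n v_{j_n}e_n$, and split $v\psi(b)-bv$ into two $\Phi$-summable series, deducing (d),(e) from $\psi(b)-v^*bv=v^*(v\psi(b)-bv)$. The only differences are presentational — your explicit diagonal choice of the subsequence $\{j_n\}$ and explicit appeal to Lemma \ref{prelim_lemma1}(v) merely spell out steps the paper leaves implicit.
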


\begin{proof}
Let $F$ be a finite subset of $\mathcal{B}$ and $\epsilon>0$. We will need
only to find a partial isometry $v$ in $\mathcal{N}$ satisfying (a), (b) and
(c), as (d) and (e) follow directly from (a), (b) and (c).

Since $\mathcal{B}$ is a countably generated $*$-algebra and $F$ is a
finite subset of $\mathcal{B}$, we might assume that $\{b_k\}_{k=1}^\infty$
is a base of $\mathcal{B}$ (as a linear space) and $F=\{b_1,\ldots, b_m\}$.
From the condition that $\psi(\mathcal{B})$ is $\Phi$-well-behaved in $%
\mathcal{N}$, by Lemma \ref{lemma2.4} there exists a sequence $%
\{e_n\}_{n=1}^\infty$ of finite rank positive operators in $\mathcal{F}(%
\mathcal{N},\tau)$ such that

\begin{enumerate}
\item $\displaystyle \sum_{n=1}^\infty e_n^2=\psi(I_{\mathcal{A}})$,

\item $\displaystyle \sum_{n=1}^\infty \Phi( \psi(b_k) e_n - e_n\psi(b_k)
)\le \epsilon/3$ for all  $1\le k\le m$,

\item $\displaystyle \sum_{n=1}^\infty \Phi(\psi(b_k) e_n -
e_n\psi(b_k))<\infty $  for  $k\ge 1$.
\end{enumerate}
From Theorem \ref{Voiculescu'sTheorem}, there exists a sequence $%
\{v_n\}_{n=1}^\infty$ of partial isometries in $\mathcal{N}$ satisfying

\begin{enumerate}
\item[(4)] $\displaystyle  v_nv^*_n\le I_{\mathcal{A}} , \ v^*_nv_n=\psi(I_{%
\mathcal{A}})$ and $v^*_nv_k=0$ for all $n,k\ge 1$ with $n\ne  k$;

\item[(5)] $\displaystyle  \Phi((v_n\psi(b_k)-b_kv_n)e_n)\le
\|v_n\psi(b_k)-b_kv_n\|\Phi(e_n) \le \frac {\epsilon} {3\cdot 2^{n }}$ for
all $1\le k\le m+n$;
\end{enumerate}
Let $$ v=\sum_{n=1}^\infty v_ne_n.$$ Then from (4) and (1) we
conclude that $v$ is a partial isometry in $\mathcal{N}$ such that
\begin{equation}
vv^*\le I_{\mathcal{A}} \ \ \text{ and } \ \ v^*v= \psi(I_{\mathcal{A}}).
\label{equ2.17}
\end{equation}
Furthermore, for each $k\ge 1$,
\begin{align}
\Phi(v\psi(b_k)-b_kv) &= \Phi(( \sum_{n=1}^\infty v_ne_n) \psi(b_k)-b_k
(\sum_n v_ne_n))  \notag \\
& \le \sum_{n<k} \Phi( v_ne_n \psi(b_k)-b_k v_ne_n )+ \sum_{n\ge k} \Phi(
v_ne_n \psi(b_k)-b_k v_ne_n )  \notag \\
&\le \sum_{n<k} \Phi( v_ne_n \psi(b_k)-b_k v_ne_n )+ \sum_{n\ge k} \Phi(
v_ne_n \psi(b_k)- v_n \psi(b_k)e_n )  \notag \\
& \qquad \qquad \qquad + \sum_{n\ge k} \Phi(v_n \psi(b_k)e_n - b_k v_ne_n )
\notag \\
& \le \sum_{n<k} \Phi( v_ne_n \psi(b_k)-b_k v_ne_n )+ \frac \epsilon 3 +
\sum_{n\ge k} \frac {\epsilon} {3\cdot 2^{n }}  \tag{by  (2) and (5) } \\
& <\infty.  \notag
\end{align}
Since $\{b_k\}_k$ is a base of $\mathcal{B}$ (as a linear space),
\begin{equation}
\text{$v\psi(b)-bv\in \mathcal{K}_{\Phi}(\mathcal{N},\tau)$, for all
$b\in \mathcal{B}$} . \label{equ2.18}
\end{equation}%
When $1\le k\le m$,
\begin{align}
\Phi(v\psi(b_k)-b_kv) &= \Phi(( \sum_{n=1}^\infty v_ne_n) \psi(b_k)-b_k
(\sum_{n=1}^\infty v_ne_n))  \notag \\
& \le \sum_{n=1}^\infty \Phi( v_ne_n \psi(b_k)- v_n \psi(b_k)e_n )+
\sum_{n=1}^\infty \Phi( v_n \psi(b_k)e_n -b_k v_ne_n )  \notag \\
&\le \frac \epsilon 3 + \sum_{n=1}^\infty \frac {\epsilon} {3\cdot 2^{n }}
\tag{by  (2) and (5) } \\
& \le\epsilon.  \label{equ2.19}
\end{align}
From (\ref{equ2.17}), (\ref{equ2.18}) and (\ref{equ2.19}), we complete the
proof of the theorem.
\end{proof}

The following is the main result of the section, which can be viewed as an
analogue of Voiculescu's noncommutative Weyl-von Neumann Theorem (see Theorem 2.4 in \cite{Voi}) in
semifinite factors.

\begin{theorem}
\label{VoiThm_2}  Let $\mathcal{N}$ be a countably decomposable,
properly infinite,   semifinite factor with a faithful  normal
semifinite tracial weight  $\tau$ and let
$\mathcal{K}(\mathcal{N},\tau)$ be the set of compact operators in
$(\mathcal{N},\tau)$. Let $\mathcal{K}_{\Phi}(\mathcal{N},\tau)$ be
a norm ideal of $(\mathcal{N},\tau)$.

Suppose $\mathcal{A}$ is a separable nuclear $C^*$-subalgebra of $\mathcal{N}
$ with an identity $I_{\mathcal{A}}$ and $\mathcal{B}$ is a countably
generated $*$-subalgebra of $\mathcal{A}$ such that $I_{\mathcal{A}}\in
\mathcal{B}$. If $\psi:\mathcal{A}\rightarrow \mathcal{N}$ is a $*$%
-homomorphism satisfying (i) $\displaystyle \psi(\mathcal{A}\cap \mathcal{K}(%
\mathcal{N},\tau))=0$ and (ii) $\psi(\mathcal{B})$ is $\Phi$-well-behaved in
$\mathcal{N}$, then
\begin{equation*}
id_{\mathcal{A}} \sim_{\mathcal{B}} id_{\mathcal{A}} \oplus \psi, \qquad %
\mod (\mathcal{K}_{\Phi}(\mathcal{N},\tau)).
\end{equation*}
\end{theorem}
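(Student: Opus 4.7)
The plan is to construct the required partial isometry $v \in \mathcal{N}\otimes\mathcal{B}(\mathcal{H})$ as a $2\times 2$ matrix built from the partial isometry $w$ supplied by Theorem \ref{VoiThm_1} together with a Murray--von Neumann shift in $\mathcal{N}$. First I reduce to the nontrivial case $\psi(I_{\mathcal{A}})\ne 0$: Theorem \ref{Voiculescu'sTheorem} then produces countably many mutually orthogonal subprojections of $I_{\mathcal{A}}$ each equivalent to $\psi(I_{\mathcal{A}})$, which forces $\tau(I_{\mathcal{A}})=\infty$ and in particular $I_{\mathcal{A}}$ is properly infinite in $\mathcal{N}$.

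Given finite $F\subseteq\mathcal{B}$ (WLOG $F=F^*$) and $\epsilon>0$, I apply Theorem \ref{VoiThm_1} to obtain $w\in\mathcal{N}$ with $w^*w=\psi(I_{\mathcal{A}})$, $ww^*\le I_{\mathcal{A}}$, and both $\Phi(w\psi(b)-bw)$ and $\Phi(\psi(b)-w^*bw)$ less than $\epsilon/4$ for $b\in F$. Since $\mathcal{N}$ is a semifinite factor and $\tau(I_{\mathcal{A}}-ww^*)=\infty=\tau(I_{\mathcal{A}})$, I may choose a partial isometry $\alpha\in\mathcal{N}$ with $\alpha^*\alpha=I_{\mathcal{A}}-ww^*$ and $\alpha\alpha^*=I_{\mathcal{A}}$, and set
\[
v = \alpha\otimes f_{1,1} + w^*\otimes f_{2,1}.
\]
A direct calculation using $\alpha\,ww^*=0$ (which follows from $\alpha^*\alpha\perp ww^*$) gives $v^*v=I_{\mathcal{A}}\otimes f_{1,1}$ and $vv^*=I_{\mathcal{A}}\otimes f_{1,1}+\psi(I_{\mathcal{A}})\otimes f_{2,2}$, exactly matching Definition \ref{prelim_def3.1.1}, and
\[
v^*\bigl(x\otimes f_{1,1}+\psi(x)\otimes f_{2,2}\bigr)v=\bigl(\alpha^*x\alpha+w\psi(x)w^*\bigr)\otimes f_{1,1},
\]
so the task reduces to bounding $\Phi(\alpha^*x\alpha+w\psi(x)w^*-x)$ by $\epsilon$ on $F$. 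The intertwining bounds on $w$ give $\Phi(w\psi(x)w^*-xww^*)\le\Phi(w\psi(x)-xw)<\epsilon/4$, and combining both directions of the intertwining yields $\Phi([x,ww^*])<\epsilon/2$, so the off-diagonal pieces of $x$ in the $(I_{\mathcal{A}}-ww^*,ww^*)$ block decomposition are already controlled.

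The hard part will be the residual estimate $\Phi\bigl(\alpha^*x\alpha-(I_{\mathcal{A}}-ww^*)\,x\,(I_{\mathcal{A}}-ww^*)\bigr)<\epsilon/4$. A generic shift $\alpha$ implementing $I_{\mathcal{A}}-ww^*\sim I_{\mathcal{A}}$ conjugates $x$ to an isomorphic copy rather than to its corner restriction, and no useful bound is then available. The resolution, in the spirit of Voiculescu's original absorption argument, is to choose $\alpha$ compatibly with $w$ and $F$: use Theorem \ref{Voiculescu'sTheorem} to write $I_{\mathcal{A}}=\sum_{n\ge 1}p_n$ with $p_1=ww^*$ and all $p_n\sim\psi(I_{\mathcal{A}})$, then construct $\alpha$ as a telescoping shift $p_{n+1}\to p_n$ whose ``deviation from the inclusion'' on $F$ is controlled in $\Phi$-norm via the $\Phi$-well-behavedness of $\psi(\mathcal{B})$ transported through $w$ into $I_{\mathcal{A}}\mathcal{N}I_{\mathcal{A}}$, together with the quasi-central estimates of Lemma \ref{lemma2.4}. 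This is the step where the hypothesis that $\psi(\mathcal{B})$ be $\Phi$-well-behaved is truly used.
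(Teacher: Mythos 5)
Your reduction to the single $2\times 2$ picture is algebraically fine ($v=\alpha\otimes f_{1,1}+w^*\otimes f_{2,1}$ does satisfy the range/support conditions, and the problem does reduce to estimating $\Phi\bigl(\alpha^*x\alpha+w\psi(x)w^*-x\bigr)$), but the step you yourself flag as ``the hard part'' is a genuine gap, and the sketch you offer does not close it. To get $\Phi\bigl(\alpha^*x\alpha-(I_{\mathcal{A}}-ww^*)x(I_{\mathcal{A}}-ww^*)\bigr)$ small you would need a partial isometry implementing an approximate unitary equivalence, \emph{in $\Phi$-norm}, between $x\mapsto x$ on $I_{\mathcal{A}}$ and its compression to the corner $I_{\mathcal{A}}-ww^*$; this is essentially the absorption statement being proved. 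None of the tools you cite produce such an $\alpha$: Theorem \ref{Voiculescu'sTheorem} gives orthogonal subprojections $v_jv_j^*\le I_{\mathcal{A}}$ equivalent to $\psi(I_{\mathcal{A}})$, but they need not sum to $I_{\mathcal{A}}$, cannot be arranged to start at $ww^*$, and — decisively — the intertwining errors there are only operator-norm errors lying in $\mathcal{K}(\mathcal{N},\tau)$, not $\Phi$-norm errors in $\mathcal{K}_\Phi(\mathcal{N},\tau)$, so a telescoping shift $p_{n+1}\to p_n$ accumulates infinitely many deviations with no $\Phi$-control. In this paper $\Phi$-control is only ever obtained by weighting with the $\Phi$-well-behaved sequence $\{e_n\}$ of Lemma \ref{lemma2.4}, i.e. by forming $\sum_n v_ne_n$ as in Theorems \ref{Voiculescu'sTheorem} and \ref{VoiThm_1}; that mechanism applies to the homomorphism $\psi$ which kills $\mathcal{A}\cap\mathcal{K}(\mathcal{N},\tau)$, whereas the compression of $id_{\mathcal{A}}$ to the corner is neither a $*$-homomorphism nor vanishing on compacts, so Theorem \ref{VoiThm_1} cannot be invoked for it. (A secondary unjustified point: $\tau(I_{\mathcal{A}}-ww^*)=\infty$ need not hold — e.g. $ww^*$ could equal, or differ finitely from, $I_{\mathcal{A}}$ — so even the existence of your $\alpha$ requires an argument.)

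The paper sidesteps exactly this difficulty by never trying to move a corner of $id_{\mathcal{A}}$ inside $\mathcal{N}$: it applies Theorem \ref{VoiThm_1} not to $\psi$ but to the infinite ampliation $\psi^\infty(x\otimes f_{1,1})=\sum_{i\ge2}\psi(x)\otimes f_{i,i}$ in $\mathcal{N}\otimes\mathcal{B}(\mathcal{H})$, obtaining $id_{\mathcal{A}}\sim_{\mathcal{B}}^{(F,\epsilon/2)}\psi_q\oplus\psi\oplus\psi\oplus\cdots$ with $q=vv^*\le I_{\mathcal{A}}$ and $\psi_q(x)=(I_{\mathcal{A}}-q)x(I_{\mathcal{A}}-q)$; the extra copy of $\psi$ is then absorbed \emph{exactly} (error zero) by a coordinate shift in the $\mathcal{B}(\mathcal{H})$ tensor factor, $\psi_q\oplus\psi\oplus\psi\oplus\cdots\sim_{\mathcal{B}}\psi_q\oplus 0\oplus\psi\oplus\cdots$, which yields $id_{\mathcal{A}}\oplus\psi\sim_{\mathcal{B}}^{(F,\epsilon)}id_{\mathcal{A}}$. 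If you want to salvage your route, you would in effect have to rebuild this infinite-multiplicity trick inside your construction of $\alpha$; as written, the proposal does not constitute a proof.
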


\begin{proof}
It suffices to show that, for every finite $F\subseteq \mathcal{B}$ and $%
\epsilon>0$,
\begin{equation*}
id_{\mathcal{A}} \sim_{\mathcal{B}}^{(F,\epsilon)} id_{\mathcal{A}} \oplus
\psi , \qquad \mod (\mathcal{K}_{\Phi}(\mathcal{N},\tau)).
\end{equation*}
Suppose that a finite subset $F\subseteq \mathcal{B}$ and $\epsilon>0$ are
given. We might also assume that $F$ is a self-adjoint set, i.e. $b\in F$
implies $b^*\in F$.

Let $\mathcal{H}$ be an infinite dimensional separable Hilbert space and let $%
\mathcal{B}(\mathcal{H})$ be the set of bounded linear operators on $%
\mathcal{H}$. Suppose that $\{f_{i,j}\}_{i,j= 1}^\infty$ is a system of
matrix units of $\mathcal{B}(\mathcal{H})$.

Let $\mathcal{N}\otimes \mathcal{B}(\mathcal{H})$ be a von Neumann algebra
tensor product of $\mathcal{N}$ and $\mathcal{B}(\mathcal{H})$. Recall that $%
\mathcal{K}_{\Phi}(\mathcal{N}\otimes \mathcal{B}(\mathcal{H}),\tau)$ is an
extension of $\mathcal{K}_{\Phi}(\mathcal{N },\tau)$ from $\mathcal{N}$ to $%
\mathcal{N}\otimes \mathcal{B}(\mathcal{H})$ (see Section \ref{section2.2}
for details). We should identify $\mathcal{N}$ with $\mathcal{N}\otimes
f_{1,1}$ in $\mathcal{N}\otimes \mathcal{B}(\mathcal{H})$. 

Define a mapping
\begin{equation*}
\psi^\infty: \mathcal{A}\otimes f_{1,1} \rightarrow \mathcal{N}\otimes
\mathcal{B}(\mathcal{H})
\end{equation*}
by
\begin{equation*}
\psi^\infty ( x\otimes f_{1,1}) = \sum_{i=2}^\infty \psi(x)\otimes f_{i,i},
\qquad \text{ for all } x\in \mathcal{A}.
\end{equation*}
By Condition (i) and Lemma \ref{prelim_lemma2.7}, we know that $\psi^\infty %
\Big( \left( \mathcal{A}\otimes f_{1,1})\cap \mathcal{K}(\mathcal{N}\otimes
\mathcal{B}(\mathcal{H}) ,\tau \right )\Big)=0$. From Condition (ii),
Definition \ref{def4.1.1} and Lemma \ref{prelim_lemma2.11}, it induces that $%
\psi^\infty(\mathcal{B}\otimes f_{1,1})$ is a $\Phi$-well-behaved set in $%
\mathcal{N}\otimes \mathcal{B}(\mathcal{H})$. Therefore, by Theorem \ref%
{VoiThm_1}, there exists a partial isometry $v$ in $\mathcal{N}\otimes
\mathcal{B}(\mathcal{H})$ such that

\begin{enumerate}
\item[(a)] $v^*v=\psi^\infty(I_{\mathcal{A}}\otimes
f_{1,1})=\sum_{i=2}^\infty \psi (I_{\mathcal{A}})\otimes f_{i,i}$ and $%
vv^*\le I_{\mathcal{A}}\otimes f_{1,1}$;

\item[(b)] $v \psi^\infty(b\otimes f_{1,1})-(b\otimes f_{1,1})v\in \mathcal{K%
}_{\Phi}(\mathcal{N}\otimes \mathcal{B}(\mathcal{H}),\tau)$, for all $b\in
\mathcal{B}$;

\item[(c)] $\Phi\big(v \psi^\infty(b\otimes f_{1,1})-(b\otimes f_{1,1})v\big)%
\le \epsilon/8$, for all $b\in F$.
\end{enumerate}
Note that $\mathcal{B}$ is a $*$-algebra. It follows directly from (b) that
\begin{enumerate}
\item[(d)] $vv^*(b\otimes f_{1,1})-(b\otimes f_{1,1})vv^* \in \mathcal{K}%
_{\Phi}(\mathcal{N}\otimes \mathcal{B}(\mathcal{H}),\tau),$ for all $b\in
\mathcal{B}$.
\end{enumerate}
From (c) and the fact that $F$ is a self-adjoint set, one gets that
\begin{enumerate}
\item[(e)] $\Phi( \psi^\infty(b\otimes f_{1,1})v^*-v^*(b\otimes f_{1,1}))\le
\epsilon/8$, for all $b\in F$,
\end{enumerate}
and
\begin{enumerate}
\item[(f)] $\Phi( (b\otimes f_{1,1})vv^*-vv^*(b\otimes f_{1,1}))\le
\epsilon/4$, for all $b\in F$.
\end{enumerate}

By (a), if we denote $p=vv^*\le I_{\mathcal{A}}\otimes f_{1,1} $, then $%
p=q\otimes f_{1,1}$ for some projection $q\le I_{\mathcal{A}}$ in $\mathcal{N%
}$. Thus, from (d), (f) and Lemma \ref{prelim_lemma2.11},
\begin{equation}
bq-qb \in \mathcal{K}_{\Phi}(\mathcal{N },\tau), \qquad \forall \ b\in
\mathcal{B},  \label{equ5.16.4}
\end{equation}
and
\begin{equation}
\Phi(bq-qb)\le \epsilon/4, \qquad \forall \ b\in F.  \label{equ5.16.5}
\end{equation}
Define
\begin{equation*}
\psi_q: \mathcal{A}\rightarrow \mathcal{N}
\end{equation*}
by%
\begin{equation*}
\psi_q (x) =(I_{\mathcal{A}}-q)x (I_{\mathcal{A}}-q), \ \text{ for all }
x\in \mathcal{A }.
\end{equation*}
Let
\begin{equation*}
w=((I_{\mathcal{A}}-q)\otimes f_{1,1})+v=(\psi_q(I_{\mathcal{A}})\otimes
f_{1,1})+v \in \mathcal{N}\otimes \mathcal{B}(\mathcal{H}).
\end{equation*}
Then, from (a),
\begin{equation}
w^*w=\psi_q(I_{\mathcal{A}})\otimes f_{1,1}+ \sum_{i=2}^\infty \psi (I_{%
\mathcal{A}})\otimes f_{i,i}\quad \text{ and } \qquad ww^*= I_{\mathcal{A}%
}\otimes f_{1,1}.  \label{equ5.20}
\end{equation}
Now, for all $b\in \mathcal{B}$,
\begin{align}
w \Big(\psi_q(b)& \otimes f_{1,1} + \sum_{i=2}^\infty \psi (b)\otimes
f_{i,i} \Big )- \Big(b\otimes f_{1,1}\Big)w  \label{equ5.22} \\
&=\Big(\psi_q(b)\otimes f_{1,1}+ v\psi^\infty(b\otimes f_{1,1})\Big)- \Big(%
(b(I_{\mathcal{A}}-q))\otimes f_{1,1}+(b\otimes f_{1,1})v\Big)  \notag \\
&= \Big(\psi_q(b)\otimes f_{1,1}-(b(I_{\mathcal{A}}-q))\otimes f_{1,1}\Big)+%
\Big(v\psi^\infty(b\otimes f_{1,1})-(b\otimes f_{1,1})v\Big)  \notag \\
&= \Big( ( -q b(I_{\mathcal{A}}-q))\otimes f_{1,1}\Big)+\Big(%
v\psi^\infty(b\otimes f_{1,1})-(b\otimes f_{1,1})v\Big)  \tag{by definition
of $\psi_q$} \\
& \in \mathcal{K}_{\Phi}(\mathcal{N}\otimes \mathcal{B}(\mathcal{H}),\tau).
\tag{by  Lemma \ref{prelim_lemma2.11}, (\ref{equ5.16.4}) and (b)}
\end{align}
Moreover, for all $b\in F$,
\begin{align}
&\Phi(w \Big(\psi_q(b) \otimes f_{1,1} + \sum_{i=2}^\infty \psi (b)\otimes
f_{i,i} \Big )- \Big(b\otimes f_{1,1}\Big)w)  \label{equ5.21} \\
& \qquad\qquad \le \Phi \Big( ( -q b(I_{\mathcal{A}}-q))\otimes f_{1,1}\Big)%
+ \Phi\Big(v\psi^\infty(b\otimes f_{1,1})-(b\otimes f_{1,1})v\Big)  \notag \\
&\qquad\qquad \le \epsilon/2,  \tag{by Lemma \ref{prelim_lemma2.11},
(\ref{equ5.16.5}) and (c)}
\end{align}
From (\ref{equ5.20}), (\ref{equ5.22}), (\ref{equ5.21}) and Definition \ref%
{prelim_def3.1.1},
\begin{equation}
id_{\mathcal{A}} \sim_{\mathcal{B}}^{(F,\epsilon/2)} \psi_q\oplus
\psi\oplus\psi\oplus \cdots , \qquad \mod (\mathcal{K}_{\Phi}(\mathcal{N}%
,\tau)).  \label{equ5.23}
\end{equation}
It is a direct consequence of Definition \ref{prelim_def3.1.1} that
\begin{equation*}
\psi_q\oplus \psi\oplus\psi\oplus \cdots \sim_{\mathcal{B}} \psi_q\oplus
0\oplus\psi\oplus \psi\oplus\cdots, \qquad \mod (\mathcal{K}_{\Phi}(\mathcal{%
N},\tau))
\end{equation*}
which implies that
\begin{equation*}
id_{\mathcal{A}} \sim_{\mathcal{B}}^{(F,\epsilon/2)} \psi_q\oplus
0\oplus\psi\oplus \psi\oplus\cdots, \qquad \mod (\mathcal{K}_{\Phi}(\mathcal{%
N},\tau)).
\end{equation*}
Hence,
\begin{equation}
id_{\mathcal{A}} \oplus \psi \sim_{\mathcal{B}}^{(F,\epsilon/2)}
\psi_q\oplus \psi\oplus\psi\oplus \psi\oplus\cdots, \qquad \mod (\mathcal{K}%
_{\Phi}(\mathcal{N},\tau)).  \label{equ5.24}
\end{equation}
Combining (\ref{equ5.23}) and (\ref{equ5.24}), we have
\begin{equation*}
id_{\mathcal{A}} \oplus \psi \sim_{\mathcal{B}}^{(F, \epsilon)} id_{\mathcal{%
A}}, \qquad \mod (\mathcal{K}_{\Phi}(\mathcal{N},\tau)).
\end{equation*}
This completes the proof of the theorem.
\end{proof}

\subsection{Some consequences}

\ \newline

The following are quick consequences of Theorem \ref{VoiThm_2}, Proposition %
\ref{lemma4.2.2} and Proposition \ref{prop4.3.1}.

\begin{theorem}
\label{VoiThmNuclear}  Let $\mathcal{N}$ be a countably
decomposable, properly infinite,   semifinite factor with a faithful
normal semifinite tracial weight  $\tau$ and let
$\mathcal{K}(\mathcal{N},\tau)$ be the set of compact operators in
$(\mathcal{N},\tau)$.

Suppose $\mathcal{A}$ is a separable nuclear $C^*$-subalgebra of $\mathcal{N}
$ with an identity $I_{\mathcal{A}}$. If $\psi:\mathcal{A}\rightarrow
\mathcal{N}$ is a $*$-homomorphism satisfying $\displaystyle \psi(\mathcal{A}%
\cap \mathcal{K}(\mathcal{N},\tau))=0$, then
\begin{equation*}
id_{\mathcal{A}} \sim_{\mathcal{A}} id_{\mathcal{A}} \oplus \psi, \qquad %
\mod (\mathcal{K }(\mathcal{N},\tau)).
\end{equation*}
\end{theorem}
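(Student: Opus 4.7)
The plan is to deduce Theorem \ref{VoiThmNuclear} directly from Theorem \ref{VoiThm_2} by specializing to the norm ideal $(\mathcal{K}(\mathcal{N},\tau),\|\cdot\|)$ and then upgrading the equivalence from a countable dense $*$-subalgebra to all of $\mathcal{A}$. Concretely, I would first choose a countable norm-dense unital $*$-subalgebra $\mathcal{B}$ of $\mathcal{A}$ (generated over $\mathbb{Q}+i\mathbb{Q}$ by a countable norm-dense subset together with $I_{\mathcal{A}}$). Then $\psi(\mathcal{B})$ is a countably generated unital $*$-subalgebra of $\mathcal{N}$, so by Proposition \ref{lemma4.2.2}, $\psi(\mathcal{B})$ is $\|\cdot\|$-well-behaved in $\mathcal{N}$. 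The other hypothesis $\psi(\mathcal{A}\cap \mathcal{K}(\mathcal{N},\tau))=0$ of Theorem \ref{VoiThm_2} is given. Applying Theorem \ref{VoiThm_2} with $\Phi=\|\cdot\|$ yields
\[
  id_{\mathcal{A}} \sim_{\mathcal{B}} id_{\mathcal{A}}\oplus \psi, \qquad \mod (\mathcal{K}(\mathcal{N},\tau)).
\]

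Next I would promote $\sim_{\mathcal{B}}$ to $\sim_{\mathcal{A}}$ using the density of $\mathcal{B}$ in $\mathcal{A}$ and the fact that the operator norm makes all relevant maps contractive. Fix a finite $F\subseteq \mathcal{A}$ and $\epsilon>0$. Using norm density, pick $F'\subseteq \mathcal{B}$ and a bijection $F\leftrightarrow F'$, $a\mapsto b_a$, with $\|a-b_a\|<\epsilon/4$ for each $a\in F$. Apply the $\sim_{\mathcal{B}}^{(F',\epsilon/2)}$ relation to obtain a partial isometry $v\in \mathcal{N}\otimes\mathcal{B}(\mathcal{H})$ meeting the conditions of Definition \ref{prelim_def3.1.1} on $F'$. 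For each $a\in F$, a routine triangle inequality using $\|v\|\le 1$ and $\|\psi(a-b_a)\|\le \|a-b_a\|$ gives
\[
  \bigl\|(a\otimes f_{1,1}+\psi(a)\otimes f_{2,2}) - v^{*}(a\otimes f_{1,1})v\bigr\| \le \epsilon/2 + 3\|a-b_a\| < \epsilon,
\]
so the same $v$ witnesses $id_{\mathcal{A}}\oplus\psi \sim_{\mathcal{A}}^{(F,\epsilon)} id_{\mathcal{A}}$ up to norm $\epsilon$.

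Finally I need to check the ``containment in the ideal'' clause: for any $a\in\mathcal{A}$, the element $T(a)=(a\otimes f_{1,1}+\psi(a)\otimes f_{2,2})-v^{*}(a\otimes f_{1,1})v$ lies in $\mathcal{K}(\mathcal{N}\otimes\mathcal{B}(\mathcal{H}),\tau)$. Since $v$ is a fixed bounded element, $T:\mathcal{A}\to \mathcal{N}\otimes\mathcal{B}(\mathcal{H})$ is a norm-continuous linear map (using $\|\psi\|\le 1$), and by construction $T(\mathcal{B})\subseteq \mathcal{K}(\mathcal{N}\otimes\mathcal{B}(\mathcal{H}),\tau)$. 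Norm-closedness of this ideal together with norm density of $\mathcal{B}$ in $\mathcal{A}$ gives $T(\mathcal{A})\subseteq \mathcal{K}(\mathcal{N}\otimes\mathcal{B}(\mathcal{H}),\tau)$, completing the verification.

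The only non-mechanical step is the density promotion in the second paragraph; however it is straightforward because $\|\cdot\|$ dominates everything and $\psi$, being a $*$-homomorphism, is automatically contractive, so no special well-behavedness of $\mathcal{A}$ itself is needed — all the analytic work was absorbed into Theorem \ref{VoiThm_2} and Proposition \ref{lemma4.2.2}. Thus the proof of Theorem \ref{VoiThmNuclear} reduces to these two ingredients plus a standard $\epsilon/3$ argument.
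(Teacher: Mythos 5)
Your proposal is exactly the paper's route: the paper derives Theorem \ref{VoiThmNuclear} as a quick consequence of Theorem \ref{VoiThm_2} with $\Phi=\|\cdot\|$ and Proposition \ref{lemma4.2.2} (applied to a countable norm-dense unital $*$-subalgebra $\mathcal{B}$ of $\mathcal{A}$), with the density upgrade from $\sim_{\mathcal{B}}$ to $\sim_{\mathcal{A}}$ left implicit, and your paragraphs two and three supply precisely that routine step. The only blemish is arithmetic: with $\|a-b_a\|<\epsilon/4$ your bound is $\epsilon/2+3\epsilon/4>\epsilon$, so choose $\|a-b_a\|<\epsilon/6$ (or adjust the $\epsilon/2$); otherwise the argument is correct.
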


\begin{theorem}
\label{VoiThmComm}  Let $\mathcal{N}$ be a countably decomposable,
properly infinite,  semifinite factor with a faithful  normal
semifinite tracial weight  $\tau$ and let
$\mathcal{K}(\mathcal{N},\tau)$ be the set of compact
operators in $(\mathcal{N},\tau)$. Let $r\ge 2$ be a positive integer and let $%
\mathcal{K}_r(\mathcal{N},\tau)=L^r(\mathcal{N},\tau)\cap \mathcal{N}$ be a
norm ideal of $(\mathcal{N},\tau)$ equipped with a norm $\Phi$ satisfying $%
\Phi(x) = \max\{\|x\|, \|x\|_r\}, \ \forall \ x\in L^r(\mathcal{N},\tau)\cap
\mathcal{N}. $

Suppose $\mathcal{A}$ is a separable abelian $C^*$-subalgebra of $\mathcal{N}
$ with an identity $I_{\mathcal{A}}$ and $\mathcal{B}$ is a $*$-subalgebra
generated by a family of self-adjoint elements $I_{\mathcal A}, x_1,\ldots, x_r$ in $%
\mathcal{A}$. If $\psi:\mathcal{A}\rightarrow \mathcal{N}$ is a $*$%
-homomorphism satisfying $\displaystyle \psi(\mathcal{A}\cap \mathcal{K}(%
\mathcal{N},\tau))=0$, then
\begin{equation*}
id_{\mathcal{A}} \sim_{\mathcal{B}} id_{\mathcal{A}} \oplus \psi, \qquad %
\mod (\mathcal{K}_r (\mathcal{N},\tau)).
\end{equation*}
\end{theorem}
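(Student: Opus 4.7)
The plan is to deduce this result directly from Theorem \ref{VoiThm_2} by verifying that its two hypotheses are met, with Proposition \ref{prop4.3.1} supplying the nontrivial one. Since $\mathcal{A}$ is an abelian $C^*$-algebra, it is nuclear, and since it is separable we are in the setting where Theorem \ref{VoiThm_2} applies. The condition $\psi(\mathcal{A}\cap \mathcal{K}(\mathcal{N},\tau))=0$ is given. The subalgebra $\mathcal{B}$ is generated by finitely many self-adjoint elements, hence is countably generated as a $*$-subalgebra of $\mathcal{A}$, with $I_{\mathcal{A}}\in \mathcal{B}$. Thus the only point requiring verification is that $\psi(\mathcal{B})$ is $\Phi$-well-behaved in $\mathcal{N}$, where $\Phi(x)=\max\{\|x\|,\|x\|_r\}$.

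To verify this, first observe that $\psi(\mathcal{B})$ is a unital $*$-subalgebra of $\mathcal{N}$ with identity $\psi(I_{\mathcal{A}})$. Because $\mathcal{A}$ is abelian and $\psi$ is a $*$-homomorphism, the elements $\psi(x_1),\ldots,\psi(x_r)$ form a commuting family of self-adjoint elements in $\mathcal{N}$, and they (together with $\psi(I_{\mathcal{A}})$) generate $\psi(\mathcal{B})$ as a unital $*$-algebra. This is precisely the hypothesis of Proposition \ref{prop4.3.1}, applied inside $\mathcal{N}$ in place of $\mathcal{M}$; hence $\psi(\mathcal{B})$ is $\Phi$-well-behaved in $\mathcal{N}$.

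With both hypotheses of Theorem \ref{VoiThm_2} verified, the theorem gives
\begin{equation*}
id_{\mathcal{A}} \sim_{\mathcal{B}} id_{\mathcal{A}}\oplus \psi, \qquad \mod (\mathcal{K}_r(\mathcal{N},\tau)),
\end{equation*}
which is exactly the conclusion. I do not anticipate a genuine obstacle here, since all the heavy lifting has already been done: the commutator estimates producing the well-behaved approximate units are in Proposition \ref{prop4.3.1} (the main technical work of Section 4), and the absorption principle is Theorem \ref{VoiThm_2}. The present statement is essentially their combination, and the proof is a short assembly; I would simply write ``The result follows immediately from Theorem \ref{VoiThm_2} together with Proposition \ref{prop4.3.1}.''
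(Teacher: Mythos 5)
Your proposal is correct and matches the paper's intended argument: the paper gives no written proof but states that this theorem is a quick consequence of Theorem \ref{VoiThm_2} together with Proposition \ref{prop4.3.1} (nuclearity of the abelian algebra $\mathcal{A}$ and the given vanishing of $\psi$ on $\mathcal{A}\cap\mathcal{K}(\mathcal{N},\tau)$ supplying the remaining hypotheses), exactly as you assembled it. Your observation that $\psi(x_1),\ldots,\psi(x_r)$ form a commuting family of self-adjoint generators of $\psi(\mathcal{B})$, so that Proposition \ref{prop4.3.1} yields $\Phi$-well-behavedness, is precisely the point being used.
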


\section{Perturbations of Normal Operators in Semifinite von Neumann
Algebras}

The section is devoted to prove a version of Voiculescu's Theorem for normal
operators in a semifinite von Neumann algebra.


\subsection{Voiculescu Theorem for normal operators in semifinite factors}

\begin{lemma}
\label{k1} Let $\mathcal{N}$ be a countably decomposable,  properly
infinite,  semifinite factor with a faithful  normal  semifinite
tracial weight $\tau$. Let $\mathcal{F}(\mathcal{N},\tau)$ and
$\mathcal{K}(\mathcal{N},\tau)$ be
the sets of finite rank operators, and compact operators respectively, in $(%
\mathcal{N},\tau)$. Suppose $\mathcal{W}$ is a   von Neumann subalgebra of $%
\mathcal{N}$ containing $I$, an identity of $\mathcal N$. Then there
is a sequence $\{ p_{n} \} _{n\in\mathbb{N}}$ of orthogonal
projections in $\mathcal{W}$ such that

\begin{enumerate}
\item $0\leq\tau (p_{n} )<\mathcal{1}$;

\item For any $x\in \mathcal{W}\cap \mathcal{K}(\mathcal{N},\tau ),$ $( I-p
) x=0$ where $p=\sum_{n\in \mathbb{N}}p_{n}.$
\end{enumerate}
\end{lemma}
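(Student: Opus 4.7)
The plan is to define $p$ as the supremum of all finite-$\tau$-trace projections of $\mathcal{W}$ and then realize it as a countable orthogonal sum by a Zorn-plus-Kaplansky argument.

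First I would verify that for every $x \in \mathcal{W} \cap \mathcal{K}(\mathcal{N}, \tau)$ the range projection $R(x) = R(|x^*|)$ belongs to $\mathcal{W}$ via Borel functional calculus on $xx^* \in \mathcal{W}$, and that the spectral projections $\chi_{(\varepsilon, \infty)}(|x^*|) \in \mathcal{W}$ increase strongly to $R(x)$ as $\varepsilon \downarrow 0$. For $x \in \mathcal{K}(\mathcal{N},\tau)$ each such spectral projection actually lies in $\mathcal{F}(\mathcal{N},\tau)$: choosing $y \in \mathcal{F}(\mathcal{N},\tau)$ with $\|x - y\| < \varepsilon/2$, the operator $y^*$ is bounded below by $\varepsilon/2$ on $\chi_{(\varepsilon,\infty)}(|x^*|)\mathcal{H}$, so the polar decomposition of $y^*\chi_{(\varepsilon,\infty)}(|x^*|)$ Murray--von Neumann-dominates $\chi_{(\varepsilon,\infty)}(|x^*|)$ by $R(y^*)\in\mathcal{F}(\mathcal{N},\tau)$. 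Consequently
$$p_0 \;:=\; \bigvee_{x \in \mathcal{W} \cap \mathcal{K}(\mathcal{N}, \tau)} R(x) \;=\; \bigvee \{e \in \mathcal{W} : e = e^* = e^2,\ \tau(e) < \infty\},$$
and any $p \ge p_0$ will automatically satisfy conclusion (2).

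Next I would apply Zorn's lemma to pick a maximal orthogonal family of nonzero $\tau$-finite projections in $\mathcal{W}$; by countable decomposability of $\mathcal{N}$ this family is at most countable, so enumerate it as $\{p_n\}_{n \in \mathbb{N}}$ and set $p := \sum_n p_n$. To see that $p = p_0$, suppose for contradiction that $p < p_0$. Since $p_0$ is the join of the $\tau$-finite projections in $\mathcal{W}$, there is such an $e$ with $e \not\le p$, and the Kaplansky parallelogram law
$$e' \;:=\; e \vee p - p \;\sim\; e - e \wedge p$$
produces a nonzero projection $e' \in \mathcal{W}$ with $\tau(e') = \tau(e - e \wedge p) \le \tau(e) < \infty$ that is orthogonal to every $p_n$, contradicting maximality. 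Hence $p = p_0 \ge R(x)$ for every $x \in \mathcal{W} \cap \mathcal{K}(\mathcal{N},\tau)$, so $(I-p)x = (I-p)R(x)x = 0$.

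The only genuine technical point is the step that $\chi_{(\varepsilon,\infty)}(|x^*|)$ is $\tau$-finite whenever $x$ is $\tau$-compact; the remainder is the standard decomposition of a join of finite-trace projections in a countably decomposable semifinite von Neumann algebra into a countable orthogonal sum. The countable-decomposability hypothesis on $\mathcal{N}$ is essential, since it is what cuts down the Zorn-selected family to a sequence.
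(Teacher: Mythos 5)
Your proof is correct. It shares the paper's skeleton — a Zorn-maximal orthogonal family of finite-$\tau$-trace projections in $\mathcal{W}$, cut down to a sequence by countable decomposability of $\mathcal{N}$, with the key input being that spectral projections $\chi_{(\varepsilon,\infty)}$ of a $\tau$-compact positive element of $\mathcal{W}$ are $\tau$-finite — but the endgame is arranged differently. The paper derives the maximality contradiction directly: if $(I-p)x\neq 0$ for some $x\in\mathcal{W}\cap\mathcal{K}(\mathcal{N},\tau)$, it compresses to the nonzero positive compact element $(I-p)xx^{*}(I-p)\in(I-p)\mathcal{W}(I-p)$ and invokes Lemma 6.8.1 of Kadison--Ringrose to extract a nonzero finite-trace spectral projection under $I-p$, contradicting maximality. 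You instead introduce the join $p_{0}$ of all finite-trace projections of $\mathcal{W}$, prove $p=p_{0}$ via the Kaplansky parallelogram law $e\vee p-p\sim e-e\wedge p$, and check by hand (norm-approximation by a finite-rank $y$ plus the polar decomposition of $y^{*}\chi_{(\varepsilon,\infty)}(|x^{*}|)$, giving $\chi_{(\varepsilon,\infty)}(|x^{*}|)\precsim R(y^{*})$) that these spectral projections have finite trace, whence $R(x)\le p_{0}=p$ and $(I-p)x=(I-p)R(x)x=0$. The paper's route is shorter because the compression plus the cited lemma does the work of both your Kaplansky step and your finiteness estimate; your route is self-contained, avoids the external citation, and isolates the reusable fact that the range projection of every $\tau$-compact element of $\mathcal{W}$ lies under the join of the finite-trace projections of $\mathcal{W}$ (which also handles the degenerate case $\mathcal{W}\cap\mathcal{K}(\mathcal{N},\tau)=0$ uniformly, where the paper treats it as a trivial preliminary reduction).
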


\begin{proof}Without loss of generality we assume that $\mathcal{W}\cap \mathcal{K}(\mathcal{N},\tau )\neq 0$%
. Since $\mathcal{N}$ is countably decomposable, any family of nonzero
mutually orthogonal projections in $\mathcal{N}$ must be countable. By
Zorn's Lemma, there exists a maximal family $\{ p_{n} \} _{n=1}^{N} $ (here $%
N$ might be $\infty $) of nonzero orthogonal projections in $\mathcal{W}$
such that $0\leq \tau ( p_{n} ) <\infty $ for each $n$. Let $%
p=\sum_{n=1}^{N}p_{n}$ be a projection in $\mathcal{W}$.

We assert that $(I-p )x=0$ for every $x$ in $\mathcal{W}\cap \mathcal{K}(%
\mathcal{N},\tau)$. In fact, assume contrarily that there exists an $x$ in $%
\mathcal{W}\cap\mathcal{K}(\mathcal{N},\tau)$ such that $(I-p)x\neq0$. Then
\begin{equation*}
0\neq(I-p)xx^{\ast}(I-p)\in\mathcal{W}\cap\mathcal{K}(\mathcal{N},\tau).
\end{equation*}
By Lemma 6.8.1 in \cite{Kadison}, there exists a nonzero spectral projection
$e$ of $(I-p)xx^{\ast}(I-p)$ in $(I-p){\mathcal{W}}(I-p)$ such that $e\in%
\mathcal{W}\cap\mathcal{K}(\mathcal{N},\tau)$. This further implies that $%
e\in\mathcal{F}(\mathcal{N},\tau)$ and $0\leq\tau (e )<\infty$ and $e\leq
(I-p ).$ It contradicts the maximality of $\{ p_{n} \}^{N}_{n=1}$.

If $N<\infty$, then, by allowing $p_{n}=0$ for all $n>N$, we get a sequence $
\{ p_{n} \}^{\infty}_{n=1}$ with desired properties. This completes the
proof of the lemma.
\end{proof}

\begin{theorem}
\label{thm6.1} \label{vo}\bigskip Let $\mathcal{N}$ be a countably
decomposable,  properly infinite,  semifinite factor with a faithful
normal  semifinite tracial weight $\tau$. Let $r\geq2$ be a positive
integer. Assume
$\{ a_{i} \}_{i=1}^r $ is a family of commuting self-adjoint operators in $%
\mathcal{N}$. Then, for any $\epsilon>0,$ there is a family $%
\{d_{i}\}_{i=1}^r $of commuting diagonal operators in $\mathcal{N}$ such
that
\begin{equation*}
\max_{1\le i\le r} \{\|a_i-d_i\|, \|a_i-d_i\|_r\} \le \epsilon.
\end{equation*}
\end{theorem}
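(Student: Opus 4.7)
The strategy is to decompose the identity of $\mathcal{N}$ via Lemma \ref{k1} and handle the two resulting corners by distinct methods. I let $\mathcal{W}$ be the abelian von Neumann subalgebra of $\mathcal{N}$ generated by $I$ and $\{a_i\}_{i=1}^r$. Applying Lemma \ref{k1} furnishes orthogonal finite-trace projections $\{p_n\}\subseteq \mathcal{W}$ with $(I-p)x = 0$ for every $x \in \mathcal{W} \cap \mathcal{K}(\mathcal{N},\tau)$, where $p = \sum_n p_n$. Either $I-p=0$, or $\tau(I-p)=\infty$ (otherwise $I-p$ itself would lie in $\mathcal{W}\cap \mathcal{K}(\mathcal{N},\tau)$ and the lemma would force $(I-p)^2=0$); in the latter case $(I-p)\mathcal{N}(I-p)$ remains a countably decomposable, properly infinite, semifinite factor. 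Since the $a_i$ all commute with $p$, it suffices to diagonalize $\{a_ip\}$ in $p\mathcal{N}p$ and $\{a_i(I-p)\}$ in $(I-p)\mathcal{N}(I-p)$ separately; the two resulting commuting diagonal families live on orthogonal projections and combine into a single commuting diagonal family in $\mathcal{N}$.

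On the $p$-part, each $p_n\mathcal{W}p_n\cong L^\infty(X_n,\mu_n)$ with $\mu_n(X_n)=\tau(p_n)<\infty$. I approximate $\{a_ip_n\}_i$ jointly in the abelian algebra by commuting simple functions $d_{i,n}\in p_n\mathcal{W}$ (finitely many spectral projections) with $\|a_ip_n - d_{i,n}\|\leq \delta_n$, choosing $\delta_n$ so that both $\sup_n \delta_n\leq \epsilon/2$ and $\sum_n \delta_n^r\tau(p_n)\leq (\epsilon/2)^r$, an easy geometric-series adjustment via $\delta_n \asymp 2^{-n/r}\tau(p_n)^{-1/r}$ (truncated at $\epsilon/2$). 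Then $d_{i,p}=\sum_n d_{i,n}$ is diagonal in $p\mathcal{N}p$, commutes in $i$, and satisfies $\max_i\max\{\|a_ip-d_{i,p}\|,\|a_ip-d_{i,p}\|_r\}\leq \epsilon/2$.

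On the $(I-p)$-part, set $\mathcal{N}'=(I-p)\mathcal{N}(I-p)$, $a_i'=(I-p)a_i$, $\mathcal{A}'=C^*(I,a_1',\ldots,a_r')\subseteq \mathcal{N}'$, and let $\mathcal{B}'$ be its generating $*$-subalgebra. Lemma \ref{k1} together with the inclusion $\mathcal{K}(\mathcal{N}',\tau')\subseteq \mathcal{K}(\mathcal{N},\tau)\cap \mathcal{N}'$ yields $\mathcal{A}'\cap \mathcal{K}(\mathcal{N}',\tau')=0$. I pick $\{\lambda^{(k)}\}_k$ countable and dense in the joint spectrum of $(a_1',\ldots,a_r')$, and mutually orthogonal \emph{infinite-trace} projections $\{q_k\}\subseteq \mathcal{N}'$ summing to $I-p$ (available since $\mathcal{N}'$ is countably decomposable and properly infinite), and define the diagonal $*$-homomorphism $\pi:\mathcal{A}'\to \mathcal{N}'$ by $\pi(f)=\sum_k f(\lambda^{(k)})q_k$; density of the $\lambda^{(k)}$ gives injectivity, and non-compactness of each $q_k$ gives $\pi(\mathcal{A}')\cap \mathcal{K}(\mathcal{N}',\tau')=0$. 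The plan is then to invoke Theorem \ref{VoiThmComm} twice: first with $\psi=\pi$ to obtain $\iota_{\mathcal{A}'}\sim_{\mathcal{B}'}\iota_{\mathcal{A}'}\oplus \pi$, then with ambient subalgebra $\pi(\mathcal{A}')$ and $\psi=\iota_{\mathcal{A}'}\circ \pi^{-1}$ to obtain $\pi\sim_{\mathcal{B}'}\pi\oplus \iota_{\mathcal{A}'}$. Combining the unitary summand-swap $\iota_{\mathcal{A}'}\oplus \pi\sim \pi\oplus \iota_{\mathcal{A}'}$ with the transitivity of $\sim_{\mathcal{B}'}$ noted after Definition \ref{prelim_def3.1.1} produces $\iota_{\mathcal{A}'}\sim_{\mathcal{B}'}\pi$. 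Since both maps are unital, the implementing partial isometry collapses to a genuine unitary $u\in \mathcal{N}'$, and $d_i':=u^*\pi(a_i')u$ is diagonal (eigenprojections $u^*q_ku$), commutes in $i$, and satisfies $\max_i\Phi(a_i'-d_i')\leq \epsilon/2$. Setting $d_i=d_{i,p}+d_i'$ completes the construction.

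The main obstacle is the bidirectional use of Theorem \ref{VoiThmComm}. The forward absorption $\iota_{\mathcal{A}'}\sim \iota_{\mathcal{A}'}\oplus \pi$ is an immediate invocation, but the reverse absorption $\pi \sim \pi \oplus \iota_{\mathcal{A}'}$ requires reframing $\pi(\mathcal{A}')$ as the ambient separable abelian $C^*$-subalgebra of $\mathcal{N}'$ and checking that $\iota_{\mathcal{A}'}\circ \pi^{-1}$ vanishes on $\pi(\mathcal{A}')\cap \mathcal{K}(\mathcal{N}',\tau')$. This compact-vanishing constraint is precisely what forces the infinite-trace choice of the $q_k$'s at the outset, and it is where the structural assumption that $\mathcal{N}'$ is properly infinite becomes essential.
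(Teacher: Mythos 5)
Your proposal is correct and follows essentially the same route as the paper: split off the finite-trace part via Lemma \ref{k1}, approximate it by joint spectral projections with summable errors, and on the infinite part apply the absorption theorem (Theorem \ref{VoiThmComm}) in both directions --- the identity representation absorbing the diagonal representation $\pi$, and $\pi$ absorbing the identity --- to produce a unitary conjugating the $a_i$ close to a commuting diagonal family. The paper packages the second absorption via the auxiliary operators $y_i$ and the map $\psi_2=\rho^{-1}((I-p)\,\cdot\,)$ in the full algebra instead of working inside the corner $(I-p)\mathcal{N}(I-p)$, but this is the same two-sided argument.
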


\begin{proof}
Recall that $\mathcal{K}(\mathcal{N},\tau)$ is the set of compact operators in $(%
\mathcal{N},\tau)$ and $\mathcal{K}_{r}(\mathcal{N},\tau)=L^{r}(\mathcal{N}%
,\tau)\cap\mathcal{N}$ is a norm ideal of $(\mathcal{N},\tau)$ equipped with
a norm $\Phi$ satisfying
\begin{equation*}
\Phi (x)=\max\{\Vert x\Vert,\Vert x\Vert_{r}\},\ \forall\ x\in L^{r}(%
\mathcal{N},\tau)\cap\mathcal{N}.
\end{equation*}

Let $\mathcal{W}$ be an abelian von Neumann subalgebra generated by
$I, a_{1},\ldots ,a_{r} $ in $\mathcal{N}$, where $I$ is an identity
of $\mathcal N$. By Lemma \ref{k1}, there exists a sequence $ \{
p_{n} \} _{n=1}^{\infty }$ of mutually orthogonal projections in
$\mathcal{W} $ such that the following are true.

\begin{enumerate}
\item[(i)] $0\leq \tau ( p_{n} ) <\mathcal{1}$.

\item[(ii)] Let $p=\sum_{n\in \mathbb{N}}p_{n}$. Then $( I-p ) x=0$ for all $%
x\in \mathcal{W}\cap \mathcal{K} ( \mathcal{N},\tau ) $.
\end{enumerate}

From (ii), it follows
\begin{equation*}
\tau ( I-p ) =\infty \text{ or }0.
\end{equation*}%
We will proceed the proof according to either $\tau ( I-p ) =\infty $ or $%
\tau(I-p)=0$.

Case (1): Suppose $\tau ( I-p ) =\mathcal{1}$.

We have
\begin{equation}
a_{i}=pa_{i}+ ( I-p ) a_{i}\qquad \text{ for }1\leq i\leq r.
\label{equ6.0.1}
\end{equation}
Since $\mathcal{N}$ is a countably decomposable, properly infinite
factor, there exists a sequence of mutually orthogonal projections
$\{ q_{n} \}
_{n\in \mathbb{N}}$ in $\mathcal{N}$ such that $I-p=\sum_{n\in \mathbb{N}%
}q_{n}$ and $\tau ( q_{n} ) =\mathcal{1}$ for all $n\in \mathbb{N}.$

Let $\mathcal{A}_{1}$ and $\mathcal{B}_{1}$ be an abelian $C^*$-subalgebra,
and a $*$-subalgebra respectively, generated by $I, a_{1},\ldots ,a_{r} $ in
$\mathcal{W}$. Note $( I-p ) \mathcal{A}_{1}$ is also an abelian and
separable $C^*$-algebra. Thus there exists a sequence $\{ \rho _{k} \}
_{k\in \mathbb{N}}$ of one-dimensional $*$-representations of $( I-p )
\mathcal{A}_{1}$ such that $\oplus _{k}\rho _{k}$ is faithful on $( I-p )
\mathcal{A}_{1}.$

Note $I-p=\sum_{n\in \mathbb{N}}q_{n}$ and $\rho _{k}( ( I-p ) \mathcal{A}%
_{1})$ is a set of  scalars for each $k\ge 1$. We define
\begin{equation}
y_{i}\triangleq pa_{i}+\sum_{k=1}^{\mathcal{1}}\rho _{k} ( ( I-p ) a_{i} )
q_{k},\qquad \text{ for }1\leq i\leq r.  \label{equ6.0.2}
\end{equation}
Let $\mathcal{A}_{2}$ and $\mathcal{B}_{2}$ be an abelian
$C^*$-subalgebra, and a $*$-algebra respectively, generated by $\{I,
y_{1},\ldots ,y_{r} \} $ in $\mathcal{N}$. Note $p$ and $I-p$
commute with $\mathcal{A}_2$.

\vspace{0.2cm}

\textit{Claim \ref{thm6.1}.1. }\emph{For each $1 \le i\le r $, there is a
diagonal operator  $\widetilde{d_{i}}$ in $\mathcal{N}$ such that
\begin{equation*}
\Phi ( y_{i}-\widetilde{d_{i}} ) \le\frac{\epsilon }{2} .
\end{equation*}%
}

\vspace{0.2cm}

\noindent\emph{Proof of the Claim \ref{thm6.1}.1:} As each $\rho_k$ is a one
dimensional $*$-representation of $( I-p ) \mathcal{A}_{1}$,
\begin{equation*}
( I-p ) y_{i} =\sum_{k=1}^{\mathcal{1}}\rho _{k} ( ( I-p ) a_{i} ) q_{k}
\end{equation*}
is a diagonal operator in $\mathcal{N}$ for each $i\in \{ 1,\ldots ,r \}$.
From (ii) and (\ref{equ6.0.2}),
\begin{equation*}
py_{i}=pa_{i}=\sum^{\infty}_{n=1} p_{n}a_{i}.
\end{equation*}
For each $n\in \mathbb{N}$, let ${\mathcal{W}}_{n}$ be an abelian von
Neumann subalgebra generated by $\{p_n, p_{n}a_{1}  ,\ldots ,p_{n}a_{r} \}$
in $\mathcal{W}$. By spectral theorem, there are $k_n$ in $%
\mathbb{N}$, a family $\{ e_{1}^{ ( n ) },\ldots ,e_{k_{n}}^{ ( n ) } \} $
of mutually orthogonal projections in ${\mathcal{W}}_{n}$  and a family $\{
\lambda _{i,1}^{ ( n ) },\ldots ,\lambda _{i,k_{n}}^{ ( n ) } \} $ of real
numbers, such that
\begin{equation*}
\sum_{l=1}^{k_{n}} e_{l}^{ ( n ) }=p_n, \quad \text{ and } \quad \|
p_{n}a_{i} -\sum_{l=1}^{k_{n}}\lambda _{i,l}^{ ( n ) }e_{l}^{ ( n ) }\| \le%
\frac{\epsilon }{ 2^{n+1}(\Phi ( p_{n} ) +1) }, \qquad \forall \ 1\le i\le r.
\end{equation*}%
By Lemma \ref{prelim_lemma1},
\begin{equation*}
\Phi ( p_{n}a_{i} -\sum_{l=1}^{k_{n}}\lambda _{i,l}^{ ( n ) }e_{l}^{ ( n ) }
) \leq \Phi ( p_{n} ) \| p_{n}a_{i} -\sum_{l=1}^{k_{n}}\lambda _{i,l}^{ ( n
) }e_{l}^{ ( n ) }\| \le\frac{\epsilon }{2^{n+1}}\text{ } , \qquad \forall \
n\in \mathbb{N}, \ \forall 1\le i\le r.
\end{equation*}%
Then
\begin{equation*}
\widetilde{d}_{i}=\sum_{n=1}^\infty\Big( \sum_{l=1}^{k_{n}}\lambda _{i,l}^{
( n ) }e_{l}^{ ( n ) }\Big)+ \sum_{k=1}^{\mathcal{1}}\rho _{k} ((I-p) a_{i}
) q_{k}
\end{equation*}%
is a diagonal operator in $\mathcal{N}$ satisfying
\begin{equation*}
\Phi ( y_{i}-\widetilde{d_{i}} ) \le\frac{\epsilon }{2} , \qquad \forall \
i\in \{ 1,\ldots,r \}.
\end{equation*}
This ends the proof of the claim.

\vspace{0.2cm}

\noindent\textit{(Continue the proof of theorem:) }Define a $*$-homomorphism
$\psi _{1}:\mathcal{A}_{1}\rightarrow \mathcal{N}$ by
\begin{equation}
\psi_{1}(x)=\sum_{k=1}^{\mathcal{1}}\rho_{k} ( (I-p ) x ) q_{k} \qquad
\text { for all } x\in \mathcal{A}_1.  \label{equ6.1}
\end{equation}
From (ii) in the choice of $\{p_n\}_{n=1}^\infty$, it is clear that
\begin{equation*}
\psi _{1} ( \mathcal{A}_{1}\cap \mathcal{K} ( \mathcal{N},\tau ) ) =0\qquad
\text{ and } \qquad \psi_1 (I)=I-p.
\end{equation*}%

Since $\oplus _{k}\rho _{k}$ is a faithful $*$-representation of $( I-p )
\mathcal{A}_{1},$ the map
\begin{equation*}
\rho:  ( I-p ) \mathcal{A}_{1}\rightarrow ( I-p )\mathcal{A}_2,
\end{equation*}%
defined by
\begin{equation*}
\rho(x)=\sum_{k\in \mathbb{N}}\rho _{k}(x)q_k, \qquad \forall \ x\in ( I-p )
\mathcal{A}_{1}
\end{equation*}
is a $*$-isomorphism from $( I-p ) \mathcal{A}_{1}$ onto $( I-p )\mathcal{A}%
_2$. Thus the map
\begin{equation*}
\psi _{2}:\mathcal{A}_{2}\rightarrow \mathcal{N},
\end{equation*}
by
\begin{equation}
\psi_{2}(y)= \rho^{-1} ( (I-p ) y ) \qquad \text { for all } y\in \mathcal{A}%
_2,  \label{equ6.2}
\end{equation}
is well-defined. By the fact that $\tau(q_k)=\infty$ for all $k\ge 1$, we
conclude that $\psi_2$ is a $*$-homomorphism such that
\begin{equation*}
\psi _{2} ( \mathcal{A}_{2}\cap \mathcal{K} ( \mathcal{N},\tau ) ) =0,
\qquad \text{ and } \qquad \psi_2(I)=I-p.
\end{equation*}
As a summary, we have proven the following claim.

\vspace{0.2cm}

\textit{Claim \ref{thm6.1}.2. } \textit{\ For }$1\le j\le 2$, $\psi _{j}:%
\mathcal{A}_{j}\rightarrow \mathcal{N}$ \textit{\ is a $*$-homomorphism
satisfying }%
\begin{equation*}
\psi _{j} ( \mathcal{A}_{j}\cap \mathcal{K} ( \mathcal{N},\tau ) ) =0,
\qquad \text{ and } \qquad \psi_j(I)=I-p.
\end{equation*}

\vspace{0.2cm}

\textit{Claim \ref{thm6.1}.3.} \textit{With }$\psi _{1}$\textit{\ and }$\psi
_{2}$\textit{\ as above, we have }%
\begin{equation*}
id_{\mathcal{A}_{1}}\sim _{\mathcal{B}_{1}}id_{\mathcal{A}_{1}}\oplus \psi
_{1}\text{ \ }\mod ( \mathcal{K}_{r} ( \mathcal{N},\tau ) ),
\end{equation*}%
\textit{and }%
\begin{equation*}
id_{\mathcal{A}_{2}}\sim _{\mathcal{B}_{2}}id_{\mathcal{A}_{2}}\oplus \psi
_{2}\text{ \ }\mod ( \mathcal{K}_{r} ( \mathcal{N},\tau ) ) .
\end{equation*}%
\textit{Moreover, for $1\le i\le r$, }%
\begin{equation}
( id_{\mathcal{A}_{1}}\oplus \psi _{1} ) ( a_{i} ) =pa_{i}\oplus
(I-p)a_{i}\oplus \sum_{k=1}^{\mathcal{1}}\rho _{k} ( ( I-p ) a_{i} ) q_{k},
\label{equ6.5}
\end{equation}%
\emph{and}
\begin{equation}
( id_{\mathcal{A}_{2}}\oplus \psi _{2} ) ( y_{i} )=pa_{i}\oplus \sum_{k=1}^{%
\mathcal{1}}\rho _{k} ( (I-p ) a_{i} ) q_{k}\oplus (I-p)a_{i}.
\label{equ6.6}
\end{equation}

\vspace{0.2cm}

\noindent \emph{Proof of the Claim \ref{thm6.1}.3:} By Theorem \ref{VoiThm_2}
and Claim 6.1.2.2,
\begin{equation*}
id_{\mathcal{A}_{1}}\sim _{\mathcal{B}_{1}}id_{\mathcal{A}_{1}}\oplus \psi
_{1}\text{ \ }\mod ( \mathcal{K}_{r} ( \mathcal{N},\tau ) ),
\end{equation*}%
and
\begin{equation*}
id_{\mathcal{A}_{2}}\sim_{\mathcal{B}_{2}}id_{\mathcal{A}_{2}}\oplus \psi
_{2}\text{ \ }\mod ( \mathcal{K}_{r} ( \mathcal{N},\tau ) ).
\end{equation*}%
From (\ref{equ6.0.1}) and (\ref{equ6.1}),
\begin{equation*}
( id_{\mathcal{A}_{1}}\oplus \psi _{1} ) ( a_{i} ) =pa_{i}\oplus
(I-p)a_{i}\oplus \sum_{k=1}^{\mathcal{1}}\rho _{k} ( ( I-p ) a_{i} ) q_{k}.
\end{equation*}%
From (\ref{equ6.0.2}) and (\ref{equ6.2}),
\begin{equation*}
( id_{\mathcal{A}_{2}}\oplus \psi _{2} ) ( y_{i} ) =pa_{i}\oplus \sum_{k=1}^{%
\mathcal{1}}\rho _{k} ( (I-p ) a_{i} ) q_{k}\oplus (I-p)a_{i}
\end{equation*}
for $i\in \{ 1,\ldots ,r \} .$ This ends the proof of the claim.

\vspace{0.2cm}

\noindent\textit{(Continue the proof of theorem:) }By Claim \ref{thm6.1}.3
and Definition \ref{prelim_def3.1.1}, there exist partial isometries $w_1$
and $w_2$ in $\mathcal{N}\otimes \mathcal{B}(\mathcal{H})$, where $\mathcal{H%
}$ is a separable Hilbert space, such that

\begin{enumerate}
\item[(iii)] $w_1w_1^*=I$ and $w_1^*w_1=p\oplus (I-p)\oplus (I-p) $;

\item[(iv)] $\displaystyle \Phi\Big(a_i-w_1 \Big( pa_{i}\oplus
(I-p)a_{i}\oplus \sum_{k=1}^{\mathcal{1}}\rho _{k} ( ( I-p ) a_{i} ) q_{k} %
\Big)w_1^*\Big)\le \epsilon/4,$ for all $1\le i\le r$.

\item[(v)] $w_2w_2^*=I$ and $w_2^*w_2=p\oplus (I-p)\oplus (I-p) $;

\item[( vi)] $\displaystyle \Phi\Big(y_i-w_2 \Big( pa_{i}\oplus \sum_{k=1}^{%
\mathcal{1}}\rho _{k} ( ( I-p ) a_{i} ) q_{k}\oplus (I-p)a_{i} \Big)w_2^*%
\Big)\le \epsilon/4,$ for all $1\le i\le r$.
\end{enumerate}
From (\ref{equ6.5}) and (\ref{equ6.6}), there exists a partial isometry $w_3$
in $\mathcal{N}\otimes \mathcal{B}(\mathcal{H})$ such that

\begin{enumerate}
\item[(vii)] $w_3w_3^*=w_3^*w_3=p\oplus (I-p)\oplus (I-p) $;

\item[( viii)] $\displaystyle pa_{i}\oplus \sum_{k=1}^{\mathcal{1}}\rho _{k}
( ( I-p ) a_{i} ) q_{k}\oplus (I-p)a_{i}=w_3 \Big( pa_{i}\oplus
(I-p)a_{i}\oplus \sum_{k=1}^{\mathcal{1}}\rho _{k} ( ( I-p ) a_{i} ) q_{k} %
\Big)w_3^*,$ for all $1\le i\le r$.
\end{enumerate}

Let $u=w_2w_3w_1$. Then from (iii), (v) and (viii), we know that $u$ is a
unitary element in $\mathcal{N}$. Now (iv) and (vi) imply that
\begin{equation*}
\Phi ( a_{i}-u^{\ast }y_{i}u )\le\frac{\epsilon }{2}.
\end{equation*}
By Claim \ref{thm6.1}.1, there exist diagonal operators $\{\widetilde{d_{i}}%
\}_{i=1}^r$ in $\mathcal{N}$ such that, for each $1\le i\le r$,
\begin{equation*}
\Phi ( y_{i}-\widetilde{d_{i}} ) \le\frac{\epsilon }{2} .
\end{equation*}
Let $d_{i}=u^{\ast }\widetilde{d_{i}}u$ for each $1\le i\le r$. Then $d_i$
is a diagonal operator in $\mathcal{N}$ satisfying
\begin{equation*}
\Phi ( a_{i}-d_{i} ) \leq \Phi ( a_{i}-u^{\ast }y_{i}u ) +\Phi ( u^{\ast
}y_{i}u-u^{\ast }\widetilde{d}_{i}u ) \le\epsilon .\text{ }
\end{equation*}

Case (2): Suppose that $\tau (I-p ) =0$. Then $a_{i}=pa_{i}$ for $1\leq
i\leq r.$ So Claim \ref{thm6.1}.1 finishes the proof of the result.
\end{proof}

Applying Theorem \ref{thm6.1}, we obtain the following
generalization of Voiculescu's Theorem for normal operators in a
type I$_\infty$ factor.

\begin{corollary}
\label{vo2} Let $\mathcal{N}$ be a countably decomposable,  properly
infinite,  semifinite factor with a faithful  normal  semifinite
tracial
weight $\tau$. Assume $a\in\mathcal{N}$ is a normal operator. Given any $%
\epsilon>0,$ there is a diagonal operator $d$ in $\mathcal{N} $ such that $%
\|a-d\|\le \epsilon$ and $\|a-d\|_2\le \epsilon.$
\end{corollary}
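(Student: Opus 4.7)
The plan is to reduce the statement directly to Theorem \ref{thm6.1} with $r=2$. Since $a\in \mathcal{N}$ is normal, I would first decompose $a=a_1+ia_2$ into its real and imaginary parts, where $a_1=(a+a^*)/2$ and $a_2=(a-a^*)/(2i)$ are self-adjoint. Normality of $a$ is equivalent to $a_1a_2=a_2a_1$, so $\{a_1,a_2\}$ is a commuting family of self-adjoint operators in $\mathcal{N}$.

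Next I would invoke Theorem \ref{thm6.1} with $r=2$ and tolerance $\epsilon/2$, producing a pair of commuting diagonal operators $d_1,d_2\in\mathcal{N}$ satisfying
\begin{equation*}
\max_{i=1,2}\{\|a_i-d_i\|,\ \|a_i-d_i\|_2\}\le \epsilon/2.
\end{equation*}
Setting $d=d_1+id_2$, the triangle inequality immediately yields $\|a-d\|\le \|a_1-d_1\|+\|a_2-d_2\|\le \epsilon$, and similarly $\|a-d\|_2\le \epsilon$.

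The only subtle point, and the main obstacle in the reduction, is verifying that $d=d_1+id_2$ is itself \emph{diagonal} in the sense of the paper, i.e.\ of the form $\sum_n \mu_n e_n$ for a family of orthogonal projections $\{e_n\}$ and complex scalars $\{\mu_n\}$. This is not automatic for an arbitrary pair of commuting diagonal operators, since a priori the two diagonalizing families of projections need not be comparable. I would handle this by inspecting the construction in the proof of Theorem \ref{thm6.1}: there the intermediate operators $\widetilde{d}_i$ are built as
\begin{equation*}
\widetilde{d}_i=\sum_{n}\sum_{l=1}^{k_n}\lambda_{i,l}^{(n)}e_l^{(n)}+\sum_{k}\rho_k((I-p)a_i)q_k,
\end{equation*}
where the orthogonal families $\{e_l^{(n)}\}_{n,l}$ and $\{q_k\}_k$ do \emph{not} depend on $i$; only the scalar coefficients depend on $i$. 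Hence $\widetilde{d}_1$ and $\widetilde{d}_2$ are simultaneously diagonal with respect to the single orthogonal family $\{e_l^{(n)}\}_{n,l}\cup\{q_k\}_k$, and so the final operators $d_i=u^*\widetilde{d}_iu$ (with the unitary $u$ produced in that proof) are simultaneously diagonal with respect to $\{u^*e_l^{(n)}u\}_{n,l}\cup\{u^*q_ku\}_k$. Consequently $d=d_1+id_2=u^*(\widetilde{d}_1+i\widetilde{d}_2)u$ is a diagonal operator, and the proof is complete.
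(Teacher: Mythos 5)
Your proposal is correct and follows essentially the same route as the paper, which likewise writes $a=a_1+ia_2$ with commuting self-adjoint parts and invokes Theorem \ref{thm6.1} with $r=2$. Your extra verification that $d=d_1+id_2$ is genuinely diagonal (by noting that in the proof of Theorem \ref{thm6.1} the projection families $\{e_l^{(n)}\}$, $\{q_k\}$ and the unitary $u$ do not depend on $i$, so the $d_i$ are simultaneously diagonal) fills a small point the paper leaves implicit, and is valid; alternatively one could argue abstractly that commuting diagonal operators are simultaneously diagonalizable via their spectral projections.
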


\begin{proof}
Suppose $a=a_{1}+ia_{2}$ where $a_{1}$ and $a_{2}$ are self-adjoint and
commutative$.$ Then the statement is a direct consequence of Theorem \ref{vo}%
.
\end{proof}


\begin{lemma}
\label{lemma6.1.4} Let $\mathcal{N}$ be a countably decomposable,
properly infinite, semifinite factor with a faithful  normal
tracial weight $\tau$. Suppose $q$ is a projection in $\mathcal{N}$
and $\{\lambda_n\}_{n=1}^\infty$ is a family of complex numbers with
a limit point $\lambda_0$. Then, for each $\epsilon>0$, there exists
a family $\{q_n\}_{n=1}^\infty$ of orthogonal projections in
$\mathcal{N}$ such that $\sum_{n=1}^\infty q_n=q$ and
\begin{equation*}
\max\{ \|\lambda_0q-\sum_{n=1}^\infty \lambda_nq_n\|,
\|\lambda_0q-\sum_{n=1}^\infty \lambda_nq_n\|_2 \}\le \epsilon.
\end{equation*}
\end{lemma}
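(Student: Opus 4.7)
The plan is to decompose $q$ into countably many orthogonal finite-trace projections via Proposition~8.5.2 of \cite{Kadison} (the fact recalled in Section~\ref{sec2}), and then to assign each piece to a distinct index $n_m$ chosen so that $\lambda_{n_m}$ is as close to $\lambda_0$ as the trace of that piece demands. The hypothesis that $\lambda_0$ is a limit point of $\{\lambda_n\}$ guarantees an abundance of such indices.

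Concretely, if $q=0$ set $q_n=0$ for all $n$. Otherwise, write $q=\sum_m f_m$ as a finite or countable orthogonal sum of projections with $\tau(f_m)<\infty$. Since every neighborhood of $\lambda_0$ contains infinitely many $\lambda_n$, I can inductively select distinct indices $n_1<n_2<\cdots$ with
\[
|\lambda_{n_m}-\lambda_0|^2\bigl(\tau(f_m)+1\bigr)\le \frac{\epsilon^2}{2^m}\qquad\text{for every }m.
\]
Set $q_{n_m}=f_m$ and $q_n=0$ for $n\notin\{n_m\}$, so that $\sum_n q_n=\sum_m f_m=q$. Because $|\lambda_{n_m}-\lambda_0|\le \epsilon/\sqrt{2}$, the coefficient sequence $\{\lambda_{n_m}\}$ is bounded and the series $\sum_n\lambda_n q_n=\sum_m\lambda_{n_m}f_m$ converges in the strong operator topology.

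To verify the bounds, note that $\lambda_0 q-\sum_n\lambda_n q_n=\sum_m(\lambda_0-\lambda_{n_m})f_m$. The mutual orthogonality of the $f_m$ gives $\bigl\|\sum_m(\lambda_0-\lambda_{n_m})f_m\bigr\|=\sup_m|\lambda_0-\lambda_{n_m}|\le\epsilon$, and normality of $\tau$ combined with orthogonality yields
\[
\Bigl\|\sum_m(\lambda_0-\lambda_{n_m})f_m\Bigr\|_2^2 = \sum_m |\lambda_0-\lambda_{n_m}|^2\tau(f_m)\le \sum_m\frac{\epsilon^2\tau(f_m)}{2^m(\tau(f_m)+1)}\le \sum_m\frac{\epsilon^2}{2^m}=\epsilon^2.
\]
The only non-routine ingredient is invoking Proposition~8.5.2 to decompose $q$ into finite-trace blocks even when $\tau(q)=\infty$; once that is available, matching each $f_m$ to a sufficiently close $\lambda_{n_m}$ is straightforward, and the factor $(\tau(f_m)+1)^{-1}$ absorbs the possible growth of $\tau(f_m)$ in the $2$-norm sum.
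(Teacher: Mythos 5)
Your proof is correct and follows essentially the same route as the paper: decompose $q$ into orthogonal finite-trace projections, match each piece to a $\lambda_{n}$ close enough to $\lambda_0$ (with the tolerance scaled against the trace of that piece), and use orthogonality to control both the operator norm and the $\|\cdot\|_2$-norm. The only cosmetic difference is that you assign each block to a distinct index and compute the $2$-norm by the Pythagorean identity, whereas the paper allows repeated choices of $\lambda_{m_k}$, bounds each block by $\epsilon/2^k$ in both norms, and then regroups the blocks into $q_n=\sum_{m_k=n}e_k$.
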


\begin{proof}
Since $\tau$ is a faithful, normal,  semifinite tracial weight of a countably
decomposable factor $\mathcal{N}$, there exists a family $%
\{e_k\}_{k=1}^\infty$ of orthogonal projections in $\mathcal{N}$ such that $%
\sum_{k=1}^\infty e_k=q$ and $\tau(e_k)<\infty$ for all $k\ge 1$. For each $%
k\ge 1$, there exists a positive integer ${m_k}$ such that
\begin{equation*}
\max\{ \|\lambda_0e_k- \lambda_{m_k}e_k\|, \|\lambda_0e_k-
\lambda_{m_k}e_k\|_2 \}\le \frac {\epsilon}{2^k},
\end{equation*}
as $\lambda_0$ is a limit point of $\{\lambda_k\}_{k=1}^\infty$ and $%
\tau(e_k)<\infty.$ It follows that
\begin{equation*}
\max\{ \|\lambda_0q-\sum_{k=1}^\infty \lambda_{m_k}e_k\|,
\|\lambda_0q-\sum_{k=1}^\infty \lambda_{m_k}e_k\|_2 \}\le \epsilon.
\end{equation*}%
Let, for each $n\ge 1$,
\begin{equation*}
q_n=\sum_{m_k=n}e_{k},
\end{equation*}
where some $q_n$ could be $0$. Then, $\sum_{n=1}^\infty q_n=q$ and
\begin{equation*}
\max\{ \|\lambda_0q-\sum_{n=1}^\infty \lambda_nq_n\|,
\|\lambda_0q-\sum_{n=1}^\infty \lambda_nq_n\|_2 \}\le \epsilon.
\end{equation*}
\end{proof}

Combining Corollary \ref{vo2} and Lemma \ref{lemma6.1.4}, we have the
following result.

\begin{lemma}
\label{lemma6.1.5}  Let $\mathcal{N}$ be a countably decomposable,
properly infinite,  semifinite factor with a faithful  normal
semifinite tracial
weight $\tau$. Assume $a\in\mathcal{N}$ is a normal operator. Suppose that $%
\{\lambda_n\}_{n=1}^\infty$ is a family of complex numbers such that the
ball  $\{\lambda\in\mathbb{C }\ : \ |\lambda|\le 2\|a\|\}$ is contained in
the closure of $\{\lambda_n\}_{n=1}^\infty$. Then, for any $0<\epsilon<\|a\|,
$ there is a family $\{q_n\}_{n=1}^\infty$ of orthogonal projections in $%
\mathcal{N}$ such that
\begin{equation*}
\text{$\|a-\sum_{n=1}^\infty \lambda_nq_n\|\le \epsilon$ \ \ \ and \ \ \ $%
\|a-\sum_{n=1}^\infty \lambda_nq_n\|_2\le \epsilon.$}
\end{equation*}
\end{lemma}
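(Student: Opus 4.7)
The plan is to reduce the statement to Corollary \ref{vo2} and Lemma \ref{lemma6.1.4} via a two-step approximation. First I apply Corollary \ref{vo2} to $a$ with tolerance $\epsilon/2$, producing a diagonal operator $d=\sum_{n=1}^{\infty}\mu_n e_n$ in $\mathcal{N}$, where $\{e_n\}_{n=1}^\infty$ is a family of pairwise orthogonal projections, with $\|a-d\|\le \epsilon/2$ and $\|a-d\|_{2}\le \epsilon/2$. Since $|\mu_n|\le \|d\|\le \|a\|+\epsilon/2<2\|a\|$ (using $\epsilon<\|a\|$), each $\mu_n$ lies in the open ball of radius $2\|a\|$, which sits inside $\overline{\{\lambda_k\}_{k=1}^{\infty}}$; as this open ball is uncountable while $\{\lambda_k\}$ is countable, a short density argument shows that each such $\mu_n$ is in fact an accumulation point of $\{\lambda_k\}$.

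Second, I partition $\mathbb{N}$ into pairwise disjoint infinite subsets $S_1,S_2,\ldots$ with $\bigsqcup_n S_n=\mathbb{N}$ so that for every $n$ the subsequence $\{\lambda_k : k\in S_n\}$ still has $\mu_n$ as an accumulation point. This is a routine diagonal/greedy construction: enumerate the pairs $(n,j)$ and at stage $(n,j)$ select an unused index $k$ with $|\lambda_k-\mu_n|<1/j$ (possible because infinitely many such $k$ exist), adding it to $S_n$; finally dump all leftover indices into $S_1$. For each $n$ I then apply Lemma \ref{lemma6.1.4} inside the corner $e_n\mathcal{N}e_n$ to the projection $e_n$, the sequence $\{\lambda_k\}_{k\in S_n}$ with limit point $\mu_n$, and tolerance $\epsilon/2^{n+1}$, obtaining orthogonal projections $\{q_k\}_{k\in S_n}\subseteq e_n\mathcal{N}e_n$ with $\sum_{k\in S_n}q_k=e_n$ and
\begin{equation*}
\max\Bigl\{\Bigl\|\mu_n e_n-\sum_{k\in S_n}\lambda_k q_k\Bigr\|,\ \Bigl\|\mu_n e_n-\sum_{k\in S_n}\lambda_k q_k\Bigr\|_{2}\Bigr\}\le \frac{\epsilon}{2^{n+1}}.
\end{equation*}

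Reassembling, $\{q_k\}_{k=1}^{\infty}$ is an orthogonal family of projections in $\mathcal{N}$ because the corners $e_n\mathcal{N}e_n$ are mutually orthogonal. Setting $x_n=\mu_n e_n-\sum_{k\in S_n}\lambda_k q_k\in e_n\mathcal{N}e_n$, the orthogonality of the $e_n$ yields $\bigl(\sum_n x_n\bigr)^{\ast}\bigl(\sum_n x_n\bigr)=\sum_n x_n^{\ast}x_n$, from which $\|\sum_n x_n\|=\sup_n\|x_n\|\le \epsilon/4$ and $\|\sum_n x_n\|_{2}=\bigl(\sum_n\|x_n\|_{2}^{2}\bigr)^{1/2}\le \epsilon/2$. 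Two triangle inequalities then give $\|a-\sum_k\lambda_k q_k\|\le \|a-d\|+\|\sum_n x_n\|<\epsilon$ and $\|a-\sum_k\lambda_k q_k\|_{2}\le \epsilon$, completing the argument. The only delicate step is the diagonal partition of $\mathbb{N}$ in step two; once that is settled, everything else reduces to careful bookkeeping on block-orthogonal sums.
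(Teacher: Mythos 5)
Your argument is correct and is essentially the paper's intended proof: the paper establishes this lemma simply by ``combining Corollary \ref{vo2} and Lemma \ref{lemma6.1.4}'', which is exactly the two-step reduction you carry out (Corollary \ref{vo2} with tolerance $\epsilon/2$, then Lemma \ref{lemma6.1.4} applied blockwise with tolerances $\epsilon/2^{n+1}$, using that each $\mu_n$ lies in the open ball of radius $2\|a\|$ and is therefore a limit point of $\{\lambda_k\}$). One cosmetic remark: rather than invoking Lemma \ref{lemma6.1.4} ``inside the corner $e_n\mathcal{N}e_n$'' (which need not be properly infinite), apply it in $\mathcal{N}$ itself with $q=e_n$; the projections it produces automatically lie under $e_n$, and the rest of your block-orthogonal bookkeeping goes through unchanged.
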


\subsection{Voiculescu Theorem for normal operators in semifinite von
Neumann algebras}

\ \newline

In this subsection, we are going to use the theory of direct integral of
separable Hilbert spaces and von Neumann algebras acting on separable
Hilbert spaces (see \cite{Kadison} for general knowledge of direct
integral). 
In  order to give a clear discussion, we list some lemmas which will be
needed in this part of the paper.

\begin{lemma}
\label{k2}(\cite{Kadison}) Suppose $\mathcal{M}$ is a von Neumann algebra
acting on a separable Hilbert space $\mathcal{H}$ and $\mathcal{Z}$ is the
center of $\mathcal{M}$. Then there is a direct integral decomposition of $%
\mathcal{M}$ relative to $\mathcal{Z}$, i.e., there exists a locally compact
complete separable metric (Borel measure) space $(X,\mu)$ such that

\begin{enumerate}
\item $\mathcal{H}$ is (unitarily equivalent to) the direct integral of $ \{
\mathcal{H}_{s}:s\in X \} $ over $(X,\mu),$ where each $\mathcal{H}_{s}$ is
a separable Hilbert space, $s\in X;$

\item $\mathcal{M}$ is (unitarily equivalent to) the direct integral of $ \{
\mathcal{M}_{s}:s\in X \} $ over $(X,\mu),$ where $\mathcal{M}_{s}$ is a
factor in $\mathcal{B} ( \mathcal{H}_{s} ) $ almost everywhere. Also, if $%
\mathcal{M}$ is of type I$_{n}$ ($n$ could be infinite), II$_{1}$,  II$_{%
\mathcal{1}}$ or III, then the components $\mathcal{M}_{s}$ are, almost
everywhere, of type I$_{n}$, II$_{1}$, II$_{\mathcal{1}}$ or III
respectively.
\end{enumerate}
\end{lemma}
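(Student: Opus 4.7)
The plan is to reconstruct the classical reduction theory of von Neumann along the lines of Chapter 14 of \cite{Kadison}, adapted to our abstract setting. Since the statement is attributed to Kadison--Ringrose, I would merely sketch the route rather than reprove everything in detail.

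First, I would analyze the center $\mathcal{Z}$ of $\mathcal{M}$. Because $\mathcal{H}$ is separable, the abelian von Neumann algebra $\mathcal{Z}$ acts on a separable Hilbert space and is therefore countably generated; by the spectral theorem for abelian von Neumann algebras, $\mathcal{Z}$ is $*$-isomorphic to $L^{\infty}(X,\mu)$ for some locally compact complete separable metric space $X$ with a finite (or $\sigma$-finite) Borel measure $\mu$. This supplies the measure space in the statement. Next, I would pick a separating vector $\xi$ for $\mathcal{Z}$ (which exists since $\mathcal{Z}$ is countably decomposable on a separable Hilbert space) and use it to realize $\mathcal{H}$ as a direct integral: the cyclic subspace $\overline{\mathcal{Z}\xi}$ is naturally $L^{2}(X,\mu)$, and the orthogonal complement decomposes into further cyclic pieces that glue into a measurable field $\{\mathcal{H}_{s}\}_{s\in X}$ with $\mathcal{H}=\int_{X}^{\oplus}\mathcal{H}_{s}\,d\mu(s)$. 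This establishes part (1).

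Second, I would decompose $\mathcal{M}$ itself. The commutant condition $[\mathcal{M},\mathcal{Z}]=0$ means every element of $\mathcal{M}$ is decomposable with respect to the direct integral structure, so one obtains a measurable field of von Neumann algebras $\{\mathcal{M}_{s}\}$ with $\mathcal{M}=\int_{X}^{\oplus}\mathcal{M}_{s}\,d\mu(s)$. To see that $\mathcal{M}_{s}$ is a factor $\mu$-a.e., one uses that the center of the direct integral equals the direct integral of the centers, combined with the fact that $\mathcal{Z}$ itself is diagonalizable as $L^{\infty}(X,\mu)$ acting by multiplication; thus $\mathrm{Center}(\mathcal{M}_{s})=\mathbb{C}I$ for almost every $s$.

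Third, the preservation of type under the decomposition is the delicate bookkeeping step and is typically the main obstacle. The idea is to characterize each type in terms of properties of projections (existence of finite projections, existence of minimal projections, equivalence relations between projections and their complements, etc.) and to show that these properties transfer measurably to the fibers. For instance, if $\mathcal{M}$ is of type $\mathrm{I}_{n}$, one chooses a measurable selection of a maximal family of orthogonal abelian projections with central carrier $I$ and checks that the fiberwise count equals $n$ almost everywhere; the semifinite and type $\mathrm{III}$ cases are handled by similarly transferring the defining projection-lattice structure. The measurable-selection arguments needed here are the most technical piece of the proof and are worked out in detail in Theorem 14.2.2 and the surrounding material in \cite{Kadison}; I would simply invoke those results rather than redevelop them.
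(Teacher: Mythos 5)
Your sketch is essentially the paper's own treatment: the paper states this lemma without proof, attributing it directly to the reduction theory of Chapter 14 of \cite{Kadison} (Theorems 14.2.1--14.2.4), which is exactly the route you outline and ultimately invoke for the factor-fibre and type-preservation steps. The outline is correct, so there is nothing further to reconcile.
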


\begin{remark}
\label{k3}Let $\mathcal{M=}\int_{X}\oplus\mathcal{M}_{s}d\mu$ and $\mathcal{%
H=}\int_{X}\oplus\mathcal{H}_{s}d\mu$ be the direct integral decompositions
of $( \mathcal{M},\mathcal{H} ) $ relative to the center $\mathcal{Z}$ of $%
\mathcal{M}$. By the argument in section 14.1 in \cite{Kadison}, we can find
a separable Hilbert space ${\tilde{\mathcal{H}}}$ and a family of unitaries $
\{ U_{s}:\mathcal{H}_{s}\rightarrow{\tilde{\mathcal{H}}}\text{ as }s\in X \}
$ such that $s\rightarrow U_{s}\eta_1(s)$ is measurable (i.e., $s\rightarrow
\langle U_{s}\eta_1(s),\eta_2 \rangle $ is measurable for any vector $\eta_2$
in ${\tilde{\mathcal{H}}}$) for every $\eta_1\in\mathcal{H}$ and $%
s\rightarrow U_{s}x ( s ) U_{s}^{\ast}$ is measurable (i.e., $s\rightarrow
\langle U_{s}x ( s ) U_{s}^{\ast}\eta_1,\eta_2 \rangle $ is measurable for
any vector $\eta_1, \eta_2$ in ${\tilde{\mathcal{H}}}$) for every
decomposable operator $x\in\mathcal{B} ( \mathcal{H} ) .$
\end{remark}

\begin{lemma}
\label{k5}Let $\mathcal{M}$ be a properly infinite von Neumann algebra
acting on a separable Hilbert space $\mathcal{H}$ with a faithful, normal,
semifinite tracial weight $\tau$ and let  $\mathcal{Z}$ be the center of $\mathcal{M}%
.$ Suppose $\mathcal{M=}\int_{X}\oplus\mathcal{M}_{s}d\mu$ and $\mathcal{H=}%
\int_{X}\oplus\mathcal{H}_{s}d\mu$ are direct integral decompositions of $%
\mathcal{M}$ and $\mathcal{H}$ over $( X,\mu ) $ relative to $\mathcal{Z}$.
Then there exist a $\mu$-null set $N$ and a family $\{ \xi_{n} \} _{n\in%
\mathbb{N}}$ of vectors in ${\mathcal{H}}$ such that

\begin{enumerate}
\item[(i)] there is a family of faithful, normal, semifinite tracial weights
$\tau_{s}$ on $\mathcal{M}_{s}$ for $s\in X\setminus N$ satisfying,
for every positive $x\in\mathcal{M}$,
\begin{equation}  \label{equ6.6.4}
\tau ( x ) =\int_{X}\tau_{s} ( x ( s ) ) d\mu ;
\end{equation}

\item[(ii)] moreover, for all $s\in X\setminus N$,
\begin{equation*}
\tau_s(x(s))=\sum_{n\in\mathbb{N}} \langle x(s)\xi_n(s),
\xi_n(s)\rangle, \qquad   \ \text{ for all positive }
x(s)\in\mathcal{M}_s.
\end{equation*}
\end{enumerate}
\end{lemma}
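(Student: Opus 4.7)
The plan is to first represent $\tau $ as a countable sum of vector functionals indexed by vectors in $\mathcal{H}$, then disintegrate each such vector across the direct integral decomposition $\mathcal{H} = \int _X \oplus \mathcal{H}_s d\mu $, and finally verify that the pointwise recipes yield faithful, normal, semifinite, \emph{tracial} weights on almost every fiber $\mathcal{M}_s$.

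For the first step, since $\mathcal{M} \subseteq \mathcal{B}(\mathcal{H})$ with $\mathcal{H}$ separable, $\mathcal{M}$ is automatically countably decomposable, and the normal semifinite weight $\tau $ admits a decomposition $\tau = \sum _k \psi _k $ with each $\psi _k$ a bounded normal positive linear functional. Each $\psi _k$ in turn has the form $\sum _j \omega _{\eta _{k,j}}$ with $\omega _\eta (x) = \langle x \eta , \eta \rangle $, so after re-indexing we obtain a countable family $\{\xi _n\}_{n\in \mathbb{N}}\subseteq \mathcal{H}$ with
\[
\tau (x) \;=\; \sum _{n\in \mathbb{N}} \langle x \xi _n, \xi _n \rangle , \qquad \forall \ x \in \mathcal{M}^+.
\]
Writing $\xi _n$ as the measurable field $s \mapsto \xi _n(s) \in \mathcal{H}_s$ and defining
\[
\tau _s(y) \;=\; \sum _{n\in \mathbb{N}} \langle y \xi _n(s), \xi _n(s) \rangle , \qquad \forall \ y \in \mathcal{M}_s^+,
\]
one gets a normal positive weight on each $\mathcal{M}_s$. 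Tonelli's theorem applied to the positive integrands yields, for $x = \int _X \oplus x(s) d\mu \in \mathcal{M}^+$,
\[
\tau (x) \;=\; \sum _n \int _X \langle x(s) \xi _n(s), \xi _n(s) \rangle d\mu \;=\; \int _X \tau _s(x(s)) d\mu ,
\]
which is (i), and (ii) is built into the definition.

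It remains to show $\tau _s$ is tracial, faithful, and semifinite outside a single $\mu $-null set $N$. For the tracial property, pick a fundamental sequence $\{x_m\}_{m\in \mathbb{N}} \subseteq \mathcal{M}$ whose evaluations $\{x_m(s)\}$ are strong-$*$ dense in $\mathcal{M}_s$ off some null set $N_0$. For each pair $(m,k)$ and each Borel set $E \subseteq X$, the characteristic function $\chi _E$ corresponds to a central projection in $\mathcal{Z}$, so the tracial property of $\tau $ gives
\[
\int _E \tau _s(x_m(s) x_k(s)) d\mu \;=\; \tau (\chi _E x_m x_k) \;=\; \tau (\chi _E x_k x_m) \;=\; \int _E \tau _s(x_k(s) x_m(s)) d\mu .
\]
Varying $E$ forces $\tau _s(x_m(s) x_k(s)) = \tau _s(x_k(s) x_m(s))$ off a null set $N_{m,k}$; setting $N_1 = N_0 \cup \bigcup _{m,k} N_{m,k}$, which remains null, normality of $\tau _s$ together with strong-$*$ density extends the tracial identity to all of $\mathcal{M}_s$ for $s \notin N_1$. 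An analogous disintegration applied to an increasing sequence $\{p_k\} \subseteq \mathcal{M}$ with $p_k \nearrow I$ and $\tau (p_k) < \infty $ (furnished by semifiniteness of $\tau $ and countable decomposability of $\mathcal{M}$) yields semifiniteness of $\tau _s$ on a conull set, and a parallel argument on support projections delivers faithfulness; taking the union of the three null sets produces the required $N$.

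The principal obstacle is the passage from the \emph{integrated} tracial identity to a \emph{pointwise} one on a conull set: it demands simultaneously a fundamental sequence for $\int _X \oplus \mathcal{M}_s d\mu $ and the freedom to localize by arbitrary central projections $\chi _E \in \mathcal{Z}$, which together reduce the verification to countably many scalar identities so that the relevant null sets may be combined without loss of measure.
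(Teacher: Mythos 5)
Your construction is exactly the paper's: write $\tau=\sum_n\omega_{\xi_n}$ as a countable sum of vector functionals (the paper cites Theorem 7.1.12 of Kadison--Ringrose), set $\tau_s=\sum_n\omega_{\xi_n(s)}$, and verify (i) by Tonelli. For the remaining assertion --- that $\tau_s$ is a faithful normal semifinite \emph{trace} for almost every $s$ --- the paper simply invokes the proof of Lemma 14.1.19 in Kadison--Ringrose, whereas you argue it directly, and it is in this part that your sketch has concrete gaps.

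First, $\tau$ and $\tau_s$ are weights, defined only on positive elements (and on the linear span of their integrable ideals), and identity (i) has been established only for positive $x$. For arbitrary members $x_m,x_k$ of a fundamental sequence the products $\chi_E x_m x_k$ are neither positive nor in the definition ideal of $\tau$, so the quantities $\tau(\chi_E x_m x_k)$ and $\int_E\tau_s(x_m(s)x_k(s))\,d\mu$ in your key display need not be defined, and $\tau(\chi_E x_m x_k)=\tau(\chi_E x_k x_m)$ is not an instance of the tracial property of a weight, which reads $\tau(y^*y)=\tau(yy^*)$. The repair is to test on the positive elements $\chi_E x_m^*x_m$ and $\chi_E x_m x_m^*$, which are honest instances of (i) and of traciality. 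Second, the passage from the resulting countably many pointwise identities to traciality of $\tau_s$ on all of $\mathcal{M}_s$ is not a matter of ``normality plus strong-$*$ density'': a normal weight is only $\sigma$-weakly lower semicontinuous, so from $\tau_s(y_j^*y_j)=\tau_s(y_jy_j^*)$ along a strong-$*$ convergent sequence $y_j\to y$ you only get $\tau_s(y^*y)\le\liminf_j\tau_s(y_j^*y_j)$ and $\tau_s(yy^*)\le\liminf_j\tau_s(y_jy_j^*)$, which may both be infinite and do not give equality. A standard way out is to prove instead $\tau_s\big(u(s)\,y\,u(s)^*\big)=\tau_s(y)$ for positive $y$ and a countable family of unitaries $u\in\mathcal{M}$ whose fibers are strong-$*$ dense in the unitary group of $\mathcal{M}_s$ almost everywhere; lower semicontinuity then yields $\tau_s(vyv^*)\le\tau_s(y)$ for every unitary $v\in\mathcal{M}_s$, and replacing $(v,y)$ by $(v^*,vyv^*)$ gives equality, which characterizes traces among normal weights. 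Your treatment of semifiniteness (via $p_k\nearrow I$ with $\tau(p_k)<\infty$) and of faithfulness (via support projections, which still needs a measurable choice of the fiber supports) is fine in outline; it is only the tracial step that, as written, does not go through and needs the adjustments above --- or the citation the paper uses.
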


\begin{proof}
By Definition 7.5.1 and Theorem 7.1.12 in \cite{Kadison}, there is a family $%
\{ \xi_{n} \} _{n\in\mathbb{N}}$ of vectors in ${\mathcal{H}}$ such that $%
\tau=\sum_{n=1}^{\mathcal{1}}\omega_{\xi_{n}}$ where $\omega_{\xi_{n}} (
\cdot ) = \langle \ \cdot \ \xi_{n},\xi_{n} \rangle $ is a vector state for
each $n$. Take $\tau_{s}=\sum_{n=1}^{\mathcal{1}}\omega _{ \xi_{n} ( s ) }$
for every $s\in X.$ 
So for every positive $x\in\mathcal{M}$,
\begin{align*}
\int_{X}\tau_{s} ( x ( s ) ) d\mu & =\int_{X}
\sum_{n=1}^{\mathcal{1}} \langle x ( s ) \xi_{n} ( s ) , \xi_{n} ( s
) \rangle d\mu   =\sum_{n=1}^{\mathcal{1}}\int_{X} \langle x ( s )
\xi_{n} ( s ) , \xi_{n}
( s ) \rangle d\mu \\
& =\sum_{n=1}^{\mathcal{1}} \langle x\xi_{n},\xi_{n} \rangle =\tau ( x ) .
\end{align*}
Thus proof of Lemma 14.1.19 in \cite{Kadison} shows that $\tau_{s}$ is
faithful, normal, semifinite tracial weight of $\mathcal{M}_{s}$ almost
everywhere. By the construction of $\tau_s$,
\begin{equation*}
\tau_s(x(s))=\sum_{n\in\mathbb{N}} \langle x(s)\xi_n(s),
\xi_n(s)\rangle, \qquad\ \text{ for all positive } \
x(s)\in\mathcal{M}_s \ \text{almost everywhere.}
\end{equation*}
\end{proof}

The following is the main result of the section.

\begin{theorem}
\label{voiNormal}  Let $\mathcal{M%
}$ be a properly infinite semifinite von Neumann algebra with
separable pre-dual and let $\tau$ be a faithful
normal semifinite tracial weight of $\mathcal M$.  Assume $a$ is a normal operator in $\mathcal{M}.$ Given any $%
\epsilon>0,$ there is a diagonal operator $d$ in $\mathcal{M}$ such
that
\begin{equation*}
\max \{ \|a-d\|, \|a-d\|_2 \}\le \epsilon.
\end{equation*}
\end{theorem}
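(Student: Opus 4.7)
The plan is to apply von Neumann's reduction theory to decompose $\mathcal{M}$ as a direct integral of countably decomposable, properly infinite, semifinite factors, to apply the factor case (Lemma \ref{lemma6.1.5}, a refinement of Corollary \ref{vo2} in which the eigenvalues can be drawn from any fixed dense set) fiberwise, and then to reassemble the fiberwise diagonalizations into a global diagonal operator. Using Lemma \ref{k2}, Remark \ref{k3}, and Lemma \ref{k5}, I write $\mathcal{M} = \int_X^\oplus \mathcal{M}_s\, d\mu(s)$ and $\tau(x) = \int_X \tau_s(x(s))\, d\mu(s)$, where for $\mu$-a.e.\ $s$ the fiber $\mathcal{M}_s$ is a countably decomposable (it acts on a separable Hilbert space), properly infinite, semifinite factor --- the type classification in Lemma \ref{k2} guarantees the fibers inherit proper infiniteness and semifiniteness because $\mathcal{M}$ is of type $\mathrm{I}_\infty$ and/or $\mathrm{II}_\infty$ --- and $\tau_s$ is a faithful normal semifinite tracial weight on $\mathcal{M}_s$. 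Countable decomposability of $\mathcal{M}$ forces $\mu$ to be $\sigma$-finite, so $X$ admits a measurable partition $X = \bigsqcup_{k=1}^\infty X_k$ with $\mu(X_k) < \infty$. The operator $a$ decomposes as $a = \int_X^\oplus a(s)\, d\mu(s)$ with $a(s)$ normal and $\|a(s)\| \le \|a\|$ almost everywhere.

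Next, I fix a countable dense sequence $\{\lambda_n\}_{n=1}^\infty$ in $\{\lambda \in \mathbb{C}: |\lambda| \le 2\|a\|\}$, and for each $k$ set $\delta_k = \epsilon/\sqrt{2^k(\mu(X_k)+1)}$. For $\mu$-a.e.\ $s \in X_k$, Lemma \ref{lemma6.1.5} applied in $\mathcal{M}_s$ to $a(s)$ with tolerance $\min\{\epsilon,\delta_k\}$ produces a family $\{q_n^{(s)}\}_{n=1}^\infty$ of orthogonal projections in $\mathcal{M}_s$ satisfying
\begin{equation*}
\bigl\|a(s) - \sum_{n=1}^\infty \lambda_n q_n^{(s)}\bigr\| \le \epsilon \quad \text{and} \quad \bigl\|a(s) - \sum_{n=1}^\infty \lambda_n q_n^{(s)}\bigr\|_{2,s} \le \delta_k.
\end{equation*}
The main obstacle is to execute this fiberwise choice so that each map $s \mapsto q_n^{(s)}$ is $\mu$-measurable. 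I plan to address this either by a Kuratowski--Ryll-Nardzewski-type measurable selection argument --- after identifying the fibers with a fixed standard Hilbert space via Remark \ref{k3}, the admissible sequences of orthogonal projections form a nonempty closed-valued multifunction of $s$ into a Polish space, and successive inductive selection yields a measurable family --- or by inspecting the constructive proofs underlying Corollary \ref{vo2}, Lemma \ref{lemma6.1.4}, and Lemma \ref{lemma6.1.5} and verifying that each step (the spectral-calculus subdivision of the spectrum of $a$, the Voiculescu-type absorption furnished by Theorem \ref{VoiThm_2}, and the relabelling of eigenvalues by $\{\lambda_n\}$) can be carried out measurably in $s$.

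With the measurable selection secured, set $q_n = \int_X^\oplus q_n^{(s)}\, d\mu(s)$ for each $n$; these are orthogonal projections in $\mathcal{M}$, and $d := \sum_{n=1}^\infty \lambda_n q_n$ is by construction a diagonal operator in $\mathcal{M}$. The disintegration of the operator norm yields $\|a - d\| = \operatorname{ess\,sup}_s \|a(s) - d(s)\| \le \epsilon$, and the disintegration of the trace (Lemma \ref{k5}) yields
\begin{equation*}
\|a - d\|_2^2 = \int_X \|a(s) - d(s)\|_{2,s}^2\, d\mu(s) \le \sum_{k=1}^\infty \mu(X_k)\, \delta_k^2 \le \sum_{k=1}^\infty \epsilon^2 / 2^k = \epsilon^2,
\end{equation*}
so $\max\{\|a - d\|, \|a - d\|_2\} \le \epsilon$, completing the proof.
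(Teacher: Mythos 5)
Your plan is essentially the paper's proof: decompose $\mathcal{M}$ by reduction theory (Lemma \ref{k2}, Remark \ref{k3}, Lemma \ref{k5}) into properly infinite semifinite factors, fix a countable dense set $\{\lambda_n\}$ in the disc of radius $2\|a\|$, partition $X$ into finite-measure pieces and apply Lemma \ref{lemma6.1.5} fiberwise with exactly the weighted tolerances $\epsilon/(2^{m}(\mu(K_m)+1))^{1/2}$, then reassemble and estimate $\|a-d\|$ as an essential supremum and $\|a-d\|_2^2$ by integrating the fiber traces. The estimates at the end are the same as in the paper and are fine.

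The one place where your write-up is only a sketch is the step you yourself flag as the main obstacle: measurability of $s\mapsto (q_1^{(s)},q_2^{(s)},\dots)$. Your first proposed route, a Kuratowski--Ryll-Nardzewski selection, does not go through as stated, because the multifunction of admissible sequences is not obviously closed-valued in the product $*$-strong topology: the constraints involve the operator norm and the $\|\cdot\|_{s,2}$-norm of $a(s)-\sum_{n}\lambda_n q_n$, i.e.\ infinite sums and norm bounds that are not preserved under $*$-SOT limits of sequences $(q_1^{(k)},q_2^{(k)},\dots)$, so closedness (and hence the hypotheses of KRN) fails or at least requires a separate argument. The paper resolves this differently: it encodes the requirements as countably many Borel conditions on \emph{finite} partial sums $\sum_{i=1}^{j_1}\lambda_i U_s^*q_iU_s$ tested against fixed measurable vector fields $\{\xi_n\}$ (coming from the trace disintegration of Lemma \ref{k5}) and a dense family $\{\eta_n\}$, with $\limsup$ over $j_1$; the resulting set $A\subseteq (X\setminus N_0)\times\mathcal{C}$ is Borel, hence analytic, its projection onto $X$ is full by Lemma \ref{lemma6.1.5}, and the measurable selection theorem for analytic sets (Theorems 14.3.5 and 14.3.6 in \cite{Kadison}) produces the measurable cross-section; conditions (a) and (b) (commutation with a countable SOT-dense family of $\mathcal{M}_s'$) then guarantee that the selected $p_i(s)=U_s^*q_i^{(s)}U_s$ define projections lying in $\mathcal{M}$. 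So your overall architecture is correct, but to complete the proof you would need either to carry out this analytic-set selection argument (your second option, made precise) or to supply a genuinely different selection mechanism, since the KRN route as described would fail.
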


\begin{proof}
We might assume that $\mathcal{M}$ acts on a separable Hilbert space $%
\mathcal{H}$. Assume that a countable dense subset $\{ \lambda_{i} \} _{i\in%
\mathbb{N}}$ of the ball $ \{ \lambda\in \mathbb{C}: \vert \lambda \vert \leq2 \Vert
a \Vert  \} $ is fixed.

Let $\mathcal{M=}\int_{X}\oplus\mathcal{M}_{s}d\mu$,
$\mathcal{H=}\int _{X}\oplus\mathcal{H}_{s}d\mu$ and
$\tau=\int_{X}\oplus\tau_{s}d\mu$ be direct integral decompositions
of $\mathcal{M}$, $\mathcal{H}$ and $\tau$ over $( X,\mu ) $
relative to the center of $\mathcal{M}$. We might assume that, for
$s\in X$ almost everywhere, $\mathcal{M}_s$ is a properly infinite
factor with a faithful normal semifinite tracial weight $\tau_s$.

As $a$ is a normal operator in $\mathcal{M},$ $a=\int_{X}a(s)d\mu$ where $%
a(s)$ is a normal operator in $\mathcal{M}_s$ almost everywhere.
Thus we can find a Borel $\mu$-null set $N_{1}$ such that
\begin{equation}
\text{$a ( s ) $ is normal \qquad and \qquad $\Vert a ( s ) \Vert \leq \Vert
a \Vert $ \ \ for $s\in X\backslash N_{1}$.}  \label{0}
\end{equation}

Note $X$ is a $\sigma$-compact locally compact space and $\mu$ is the
completion of a Borel measure on $X$. We might assume that $X=\cup_{m\in%
\mathbb{N}}K_{m}$, where $\{K_{m}\}_{m\in\mathbb{N}}$ is a family of
disjoint Borel subsets of $X$ such that $\mu ( K_{m} ) <\mathcal{1}$
for all $m\ge 1$.

From Remark \ref{k3}, there are a separable Hilbert space ${\tilde{\mathcal{H%
}}}$ and a family of unitary elements
\begin{equation*}
\{ U_{s}:\mathcal{H}_{s}\mapsto{\tilde{\mathcal{H}}};s\in X \}
\end{equation*}
such that $s\mapsto U_{s}\zeta(s)$ and $s\mapsto U_{s}x(s)U_{s}^{\ast}$ are
measurable for any $\zeta\in\mathcal{H}$ and any decomposable operator $x\in%
\mathcal{B}(\mathcal{H}).$  Let ${B}$ be the unit ball of
$\mathcal{B} ( {\tilde{\mathcal{H}}} ) $
equipped with the $*$-SOT. Then it is metrizable by setting $%
\rho(x,y)=\sum_{n=1}^{\mathcal{1}}\frac{1}{2^{n}} ( \Vert  ( x-y )
\zeta_{n} \Vert + \Vert ( x^{\ast}-y^{\ast } ) \zeta_{n} \Vert ) $
for any $x,y\in{B}$
where $ \{ \zeta_{n} \}_{n\in\mathbb{N}} $ is an orthonormal basis of ${%
\tilde{\mathcal{H}}}$. The metric space $( {B},\rho) $ is complete
and
separable. Now let ${B}_{1}=\cdots={B}_{i}=\cdots={B}$ and $\mathcal{C=}%
\prod _{i\in\emph{N}}{B}_{i}$ provided with the product topology of the $*$%
-SOT on each ${B}_{i}.$ It implies that $\mathcal{C}$ is metrizable and in
fact it is a complete separable metric space. We denote by
\begin{equation*}
( s,q_{1},\ldots ,q_{i},\ldots )
\end{equation*}
an element in $X\times\mathcal{C}$. Since $x\mapsto x^{\ast}$ and $x\mapsto
x^{2}$ are $*$-SOT continuous from ${B}$ to ${B}$, the maps
\begin{equation}
( s,q_{1},\ldots ) \mapsto q_{i}; \quad ( s,q_{1},\ldots ) \mapsto
q_{i}^{2}; \quad ( s,q_{1},\ldots ) \mapsto q_{i}^{\ast} \text{\quad and
\quad } ( s,q_{1},\ldots ) \mapsto q_{i}q_j  \label{1}
\end{equation}
are Borel measurable from $X\times\mathcal{C}$ to ${B}$.

From Definition 14.1.14 and Lemma 14.1.15 in \cite{Kadison}, there exists a
family $\{a_j^{\prime }\}_{j\in\mathbb{N}}$ of operator in $\mathcal{M}%
^{\prime }$ such that $\{a_j^{\prime }(s)\}_{j\in\mathbb{N}}$ is SOT dense
in the unit ball of $\mathcal{M}_s^{\prime }$ almost everywhere. Combining with Remark %
\ref{k3}, there exists a Borel $\mu$-null subset $N_{2}$ of $X$ such that $%
s\mapsto U_{s}a_j^{\prime }(s)U_{s}^{\ast}$ is Borel measurable when
restricted to $X\backslash N_{2} $ for all $j\in\mathbb{N}$. Hence the maps
\begin{equation}
( s,q_{1},\ldots ) \mapsto U_{s}a_j^{\prime }(s)U_{s}^{\ast}q_{i}\qquad
\text{ and } \qquad ( s,q_{1},\ldots ) \mapsto q_{i}U_{s}a_j^{\prime
}(s)U_{s}^{\ast}  \label{2}
\end{equation}
are Borel measurable when restricted to $( X\backslash N_{2} ) \times%
\mathcal{C}$.

From Lemma \ref{k5}, there exist a Borel $\mu$-null set $N_3$ and a family $%
\{ \xi_{n} \} _{n\in\mathbb{N}}$ of vectors in ${\mathcal{H}}$ such that
\begin{equation}  \label{eq6.6.5.1}
\tau_s(x(s))=\sum_{n\in\mathbb{N}} \langle x(s)\xi_n(s),
\xi_n(s)\rangle, \qquad \forall \ x(s)\in(\mathcal{M}_s)^+, \ s\in
X\setminus N_3.
\end{equation}
Define a mapping $\|\cdot\|_{s,2}:\mathcal{M}_s \rightarrow [0,\infty]$ as
follows. For each $s\in X\setminus N_3$,
\begin{equation}  \label{eq6.6.5}
\Vert x(s) \Vert _{s, 2} =\Big( \tau_s(|x(s)|^2) \Big)^{1/2} = \Big( %
\sum_{n=1}^{\mathcal{1}} \| x(s) \xi_{n} ( s ) \|^2 \Big)^{1/2} , \quad
\forall \ x(s)\in \mathcal{M}_s .
\end{equation}

For each $(q_1,\ldots, q_i,\ldots)\in\mathcal{C}$ and ${j_1},
j_2\in\mathbb{N}$,
\begin{align}
\sum_{n=1}^{j_2} &\| \Big(a ( s ) - \sum_{i=1}^{j_1}\lambda_iU_s^*q_iU_s\Big
) \xi_{n} ( s ) \|^2  \notag \\
& = \sum_{n=1} ^{j_2} \Big( \langle a(s)\xi_n(s), a(s)\xi_n(s) \rangle -
\langle U_sa(s)U_s^* U_s\xi_n(s), \sum_{i=1}^{j_1} \lambda_iq_iU_s\xi_n(s)
\rangle  \notag \\
& \qquad - \langle \sum_{i=1}^{j_1} \lambda_i q_iU_s\xi_n(s) , U_sa(s)U_s^*
U_s\xi_n(s) \rangle + \langle \sum_{i=1}^{j_1} \lambda_i q_iU_s\xi_n(s),
\sum_{i=1}^{j_1} \lambda_i q_iU_s\xi_n(s) \rangle \Big).  \label{equa6.11.5}
\end{align}%
Combining Remark \ref{k3} and (\ref{equa6.11.5}), we get, for all $j_1,j_2\in%
\mathbb{N}$,  the map
\begin{equation}
( s,q_{1},\ldots ) \mapsto \sum_{n=1}^{j_2} \|\Big (a ( s ) -
\sum_{i=1}^{j_1}\lambda_iU_s^*q_iU_s\Big ) \xi_{n} ( s ) \|^2  \label{3.1}
\end{equation}
is Borel measurable on $( X\backslash N_{4} ) \times\mathcal{C}$, where $N_4$
is a Borel $\mu$-null subset of $X$.

Fix a family $\{\eta_n\}_{n\in\mathbb{N}}$ of vectors in $\mathcal{H}$ such
that $\{\eta_n(s)\}_{n\in\mathbb{N}}$ is dense in $\mathcal{H}_s$ almost
everywhere. For each $(q_1,\ldots, q_i,\ldots)\in\mathcal{C}$ and ${j_1}, n
\in\mathbb{N}$,
\begin{align}
\| \Big(a ( s )& - \sum_{i=1}^{j_1} \lambda_iU_s^*q_iU_s\Big )\eta_n(s)\|^2
\notag \\
&= \langle a(s)\eta_n(s), a(s)\eta_n(s) \rangle - \langle U_sa(s)U_s^*
U_s\eta_n(s), \sum_{i=1}^{j_1} \lambda_iq_iU_s\eta_n(s) \rangle  \notag \\
& \quad - \langle \sum_{i=1}^{j_1}\lambda_i q_iU_s\eta_n(s) , U_sa(s)U_s^*
U_s\eta_n(s) \rangle + \langle \sum_{i=1}^{j_1} \lambda_i q_iU_s\eta_n(s),
\sum_{i=1}^{j_1} \lambda_i q_iU_s\eta_n(s) \rangle .  \label{equa6.11.6}
\end{align}
Combining Remark \ref{k3} and (\ref{equa6.11.6}), we get, for all $j_1, n \in%
\mathbb{N}$,  the map
\begin{equation}
( s,q_{1},\ldots ) \mapsto \| (a ( s ) - \sum_{i=1}^{j_1}
\lambda_iU_s^*q_iU_s )\eta_n(s)\|  \label{3.2}
\end{equation}
is Borel measurable on $( X\backslash N_{5} ) \times\mathcal{C}$, where $N_5$
is a Borel $\mu$-null subset of $X$.

Finally, let $N_6$ be a Borel $\mu$-null subset of $X$ such that, for all $%
n\in\mathbb{N}$
\begin{equation}
( s,q_{1},\ldots )\mapsto \|\eta_n(s)\|  \label{equation6.16.1}
\end{equation}
is Borel measurable on $( X\backslash N_{6} ) \times\mathcal{C}$.

Let
\begin{equation*}
N_{0}=N_{1}\cup N_{2}\cup N_{3}\cup N_4\cup N_5\cup N_6.
\end{equation*}
Consider \emph{a subset $A$ of $(X\backslash
N_{0})\times\mathcal{C}$, which consists of all these elements
\begin{equation*}
( s,q_{1},q_{2},\ldots ) \in (X\backslash N_{0})\times\mathcal{C}
\end{equation*}
satisfying }

\begin{enumerate}
\item[(a)] \emph{for every $i\ne j\in\mathbb{N},$ $\displaystyle %
q_{i}=q_{i}^{\ast}=q_{i}^{2}$ and $q_{i}q_{j}=0$; }

\item[(b)] \emph{for every $i, j\in\mathbb{N},$ $\displaystyle %
U_{s}a_j^{\prime} ( s ) U_{s}^{\ast}q_{i}=q_{i}U_{s}a_j^{\prime } ( s )
U_{s}^{\ast}$;
}

\item[(c)] \emph{for each $j_2, m\in\mathbb{N}$ and $s\in K_{m}\cap
(X\setminus N_0)$,
\begin{equation*}
\limsup_{j_1\rightarrow \infty}\Big(\sum_{n=1}^{j_2} \|\Big (a ( s ) -
\sum_{i=1}^{j_1}\lambda_iU_s^*q_iU_s\Big )    \xi_{n} ( s ) \| ^2\Big)^{1/2}
\le\frac{\epsilon}{ 2^{m} ( \mu ( K_{m} )+1 ) ^{1/2}};
\end{equation*}
}

\item[(d)] \emph{for each $n\in \mathbb{N}$,
\begin{equation*}
\limsup_{j_1\rightarrow \infty}\| \Big(a ( s ) - \sum_{i=1}^{j_1}
\lambda_iU_s^*q_iU_s \Big)\eta_n(s)\| \le \epsilon \|\eta_n(s)\|.
\end{equation*}
}
\end{enumerate}

From (\ref{1}), (\ref{2}), (\ref{3.1}), (\ref{3.2}), and (\ref%
{equation6.16.1}), it induces that $A$ is a Borel set. Then by Theorem
14.3.5 in \cite{Kadison}, the set $A$ is analytic.

\vspace{0.2cm}

\textit{Claim \ref{voiNormal}.1. Let }$\pi$\textit{\ be the projection from }%
$X\times\mathcal{C}$\textit{\ onto }$X.$\textit{\ Then }$\pi ( A )
=X\backslash N_{0}.$

\vspace{0.2cm}

\noindent\emph{Proof of the Claim: } Recall that $\{ \lambda_{i} \} _{i\in\mathbb{%
N}}$ is dense in the ball $ \{ \lambda\in\mathbb{C}: \vert \lambda \vert \leq2 \Vert
a \Vert  \} .$ For a given $s\in X\setminus N_0$, there exists an $m\in%
\mathbb{N}$ such that $s\in K_m$.  For such $m\in\mathbb{N}$, by (\ref{0})
and Corollary \ref{lemma6.1.5}, there is a family $\{\tilde q_i\}_{i\in%
\mathbb{N}}$ of orthogonal projections in $\mathcal{M}_s$ such that
\begin{equation}  \label{equ6.7}
\max\{\| a ( s ) -\sum_{i\in\mathbb{N}}\lambda_i\tilde q_i\|, \| a ( s )
-\sum_{i\in\mathbb{N}}\lambda_i\tilde q_i\|_{s,2}\} \le \frac 1 2 \cdot
\frac{\epsilon}{ 2^{m} ( \mu ( K_{m} )+1 ) ^{1/2}},
\end{equation}
where $\|\cdot\|_{s,2}$ is defined in (\ref{eq6.6.5}). Put $q_i=U_s\tilde
q_i U_s^*$ for $i\ge 1$. It is not difficult to see that $(s,q_1,\ldots,
q_i,\ldots)$ is an element satisfying (a) and (b). Note that  $%
\sum_{i=1}^{j_1} \lambda_iU_s^*q_iU_s $ converges to $\sum_{i\in\mathbb{N}}
\lambda_iU_s^*q_iU_s$ ($=\sum_{i\in\mathbb{N}}\lambda_i\tilde q_i$) in SOT
as $j_1$ goes to infinity. Now, combining (\ref{equ6.7}) with the definition
of $\|\cdot\|_{s,2}$ in (\ref{eq6.6.5}), we know that $(s,q_1,\ldots,
q_i,\ldots)$ satisfies (c) and (d). Therefore, $(s,q_1,\ldots, q_i,\ldots)$
is an element in $A$. So $\pi ( A ) =X\backslash N_{0}.$ This completes the
proof the claim.

\vspace{0.2cm}

\noindent\emph{Continue the proof of the theorem: } From Claim \ref%
{voiNormal}.1 and Theorem 14.3.6 in \cite{Kadison}, we can find a measurable
mapping
\begin{equation*}
s\mapsto ( q_{1}^{ ( s ) },\ldots,q_{i}^{ ( s ) },\ldots )
\end{equation*}
from $X\backslash N_{0}$ to $\mathcal{C}$ such that, for $s\in X\backslash
N_{0}$ almost everywhere,
\begin{equation*}
( s,q_{1}^{ ( s ) },\ldots,q_{i}^{ ( s ) },\ldots )
\end{equation*}
satisfies conditions (a), (b), (c) and (d).

By defining $q_{i}^{ ( s ) }=0$ for every $i\in\mathbb{N}$ and $s\in N_{0},$
we obtain \emph{a measurable mapping
\begin{equation*}
s\mapsto ( q_{1}^{ ( s ) },\ldots,q_{i}^{ ( s ) },\ldots )
\end{equation*}
from $X$ to $\mathcal{C}$ such that, for $s\in X$ almost everywhere,
\begin{equation*}
( s, q_{1}^{s},\ldots,q_{i}^{ ( s ) },\ldots )
\end{equation*}
satisfies conditions (a), (b), (c) and (d).} For all vectors
$\zeta_1, \zeta_2\in\mathcal{H}$ and $i\in\mathbb{N}$, we have
\begin{equation*}
\langle U_{s}^{\ast}q_{i}^{ ( s ) }U_{s}\zeta_1 ( s ) ,\zeta_2(s) \rangle =
\langle q_{i}^{ ( s ) }U_{s}\zeta_1(s),U_{s}\zeta_2(s) \rangle
\end{equation*}
and thus the map $s\mapsto \langle
U_{s}^{\ast}q_{i}U_{s}\zeta_1(s),\zeta_2(s) \rangle $ is measurable. Note
\begin{equation*}
\vert \langle U_{s}^{\ast}q_{i}^{ ( s ) }U_{s}\zeta_1(s),\zeta_2(s) \rangle
\vert \leq \Vert \zeta_1(s) \Vert \Vert \zeta_2(s) \Vert .
\end{equation*}
So the map $s\mapsto \langle U_{s}^{\ast}q_{i}^{ ( s )
}U_{s}\zeta_1(s),\zeta_2(s) \rangle $ is integrable. Then by Definition
14.1.1 in \cite{Kadison},
\begin{equation}
U_{s}^{\ast}q_{i}^{ ( s ) }U_{s}\zeta_1(s)= ( p_{i}\zeta_1 ) ( s )
\label{equ6.9}
\end{equation}
almost everywhere for some $p_i\zeta_1\in \mathcal{H}$. Thus (b) and (\ref%
{equ6.9}) imply that $p_{i} ( s ) =U_{s}^{\ast}q_{i}^{ ( s ) }U_{s}\in%
\mathcal{M}_{s}$ almost everywhere. It follows that $p_{i}\in\mathcal{M}$.
From (a), we conclude that $\{ p_{i} \} _{i\in\mathbb{N}}$ is a family of
orthogonal projections in $\mathcal{M} . $

Note that $\sum_{i\in\mathbb{N}}\lambda_ip_i(s)$ converges in SOT and $%
\{\eta_n(s)\}_{n\in\mathbb{N}}$ is dense in $\mathcal{H}_s$ almost
everywhere. From (d), it follows that
\begin{equation*}
\|a-\sum_{i\in\mathbb{N}} \lambda_i p_i\|= ess\sup_{s\in X} \|a(s)-\sum_{i\in%
\mathbb{N}}\lambda_ip_i(s)\| \le \epsilon,
\end{equation*}
From (c) and the fact that $\sum_{i\in\mathbb{N}}\lambda_ip_i(s)$ converges
in SOT almost everywhere, it follows that, for all $m, j_2\in\mathbb{N}$ and
$s\in K_m\cap (X\setminus N_0)$,
\begin{equation*}
\Big(\sum_{n=1}^{j_2} \|\Big (a ( s ) - \sum_{i=1}^{\infty}\lambda_ip_i(s)%
\Big )    \xi_{n} ( s ) \| ^2\Big)^{1/2} \le\frac{\epsilon}{ 2^{m} ( \mu (
K_{m} )+1 ) ^{1/2}}.
\end{equation*}
By (\ref{eq6.6.5}), for all $m \in\mathbb{N}$ and $s\in K_m\cap (X\setminus
N_0)$,
\begin{equation}
\Vert a ( s ) -\sum_{i=1}^{\infty}\lambda_{i}p_{i} ( s ) \Vert _{s, 2}^{2}\le \frac{%
\epsilon^2 }{ 2^{2m } ( \mu ( K_{m} )+1 ) }.  \label{equation6.20}
\end{equation}
Therefore,
\begin{align}
\Vert a -\sum_{i=1}^{\infty}\lambda_{i}p_{i} \Vert _{2}&= \big(
\int_{X} \Vert a ( s ) -\sum_{i=1}^{\infty}\lambda _{i}p_{i} ( s )
\Vert _{s,2}^{2}\ d\mu \big) ^{1/2}  \tag{by
(\ref{eq6.6.5.1}) and (\ref{eq6.6.5})} \\
& = \Big( \sum_{m\in\mathbb{N}}\int_{K_{m}} \Vert a ( s )
-\sum_{i=1}^{\infty}\lambda_{i}p_{i} ( s ) \Vert _{s, 2}^{2} \ d\mu\Big ) ^{1/2}  \notag \\
& \le\Big ( \sum_{m\in\mathbb{N}} \frac{\epsilon^2\cdot \mu(K_m)}{
2^{2m } (
\mu ( K_{m} )+1 ) } \Big ) ^{1/2}  \tag{by (\ref{equation6.20})} \\
& \leq \epsilon.  \notag
\end{align}
Let $d=\sum_{i=1}^{\infty}\lambda_{i}p_{i}$. Then $d$ is a diagonal
operator in $\mathcal{M} $ such that
\begin{equation*}
\max\{\|a-d\|, \|a-d\|_2\}\le \epsilon.
\end{equation*}
This finishes the proof of the theorem.
\end{proof}

The ``properly infinite'' condition on $\mathcal{M}$ in the preceding
theorem is unnecessary.

\begin{theorem}
\label{voiNormal2}  Let $\mathcal{M%
}$ be a   semifinite von Neumann algebra with separable pre-dual and
let $\tau$ be a faithful normal semifinite tracial weight of
$\mathcal M$.  Assume $a$ is a normal operator in $\mathcal{M}.$
Given an $\epsilon>0,$ there is a diagonal operator $d$ in
$\mathcal{M}$ such that
\begin{equation*}
\max \{ \|a-d\|, \|a-d\|_2 \}\le \epsilon.
\end{equation*}
\end{theorem}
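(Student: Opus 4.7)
The plan is to reduce to Theorem \ref{voiNormal} by splitting off the properly infinite summand and treating the finite summand by the same direct-integral machinery. Let $z$ be the largest central projection of $\mathcal{M}$ for which $z\mathcal{M}$ is finite. Then $(I-z)\mathcal{M}$ is semifinite, properly infinite, has separable pre-dual, and carries the restriction of $\tau$ as a faithful normal semifinite tracial weight, so Theorem \ref{voiNormal} applied to $(I-z)a$ produces a diagonal operator $d_\infty\in(I-z)\mathcal{M}$ with
\begin{equation*}
\max\{\|(I-z)a-d_\infty\|,\ \|(I-z)a-d_\infty\|_2\}\le \epsilon/\sqrt{2}.
\end{equation*}
It remains to diagonalize $a_0:=za$ inside the finite semifinite algebra $z\mathcal{M}$; note that $\tau|_{z\mathcal{M}}$ may still take the value $\infty$ on $I_{z\mathcal{M}}$, so a central direct-integral argument is still needed.

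For this I would retrace the proof of Theorem \ref{voiNormal} almost verbatim. Decomposing $z\mathcal{M}=\int_X^\oplus \mathcal{N}_s\, d\mu$ over the center gives, almost everywhere, a finite factor $\mathcal{N}_s$ (of type $\mathrm{I}_n$ or $\mathrm{II}_1$) with a faithful normal tracial weight $\tau_s$; since $\mathcal{N}_s$ is a finite factor, $\tau_s$ is automatically a positive finite multiple of the normalized trace, in particular $\tau_s(I_{\mathcal{N}_s})<\infty$. The measurability setup of Theorem \ref{voiNormal} — the space $\mathcal{C}$, the Borel set $A\subset X\times\mathcal{C}$ encoding conditions (a)--(d), and the Jankov--von Neumann measurable selection — is insensitive to the type of the fiber. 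The only step in that proof that genuinely used proper infiniteness of $\mathcal{M}_s$ was invoking Corollary \ref{vo2} (equivalently Lemma \ref{lemma6.1.5}) to verify $\pi(A)=X\setminus N_0$, i.e.\ to produce at each $s$ orthogonal projections $\{q_i^{(s)}\}\subset\mathcal{N}_s$ for which $\sum_i\lambda_i q_i^{(s)}$ is close to $a(s)$ in both $\|\cdot\|$ and $\|\cdot\|_{s,2}$.

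For a finite factor this fiberwise ingredient is elementary: partition $\sigma(a(s))$ into Borel sets $\Delta_k$ of diameter less than $\delta$, take spectral projections $e_k^{(s)}=E_{a(s)}(\Delta_k)$ (which sum to $I_{\mathcal{N}_s}$), choose $n(k)$ with $\lambda_{n(k)}$ within $\delta$ of a fixed center $\mu_k\in\Delta_k$, and set $q_n^{(s)}=\sum_{n(k)=n}e_k^{(s)}$; then $\|a(s)-\sum_n\lambda_n q_n^{(s)}\|\le 2\delta$ and $\|a(s)-\sum_n\lambda_n q_n^{(s)}\|_{s,2}\le 2\delta\,\tau_s(I_{\mathcal{N}_s})^{1/2}$, so choosing $\delta$ sufficiently small delivers the fiberwise target error $\epsilon/(2^m(\mu(K_m)+1)^{1/2})$ on each piece $K_m$ of the $\sigma$-finite partition $X=\bigcup_m K_m$ used in Theorem \ref{voiNormal}. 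The measurable selection then produces orthogonal projections $\{p_i\}\subset z\mathcal{M}$ with $d_{fin}:=\sum_i\lambda_i p_i$ satisfying $\max\{\|a_0-d_{fin}\|,\ \|a_0-d_{fin}\|_2\}\le \epsilon/\sqrt{2}$, and $d:=d_\infty+d_{fin}$ is the required diagonal operator. The main point to check — and the only place I expect to require care — is that the Borel set $A$ remains analytic and that the proof of $\pi(A)=X\setminus N_0$ still goes through when proper infiniteness is dropped; since conditions (a)--(d) are stated entirely in terms of $a(s)$, the fixed dense sequence $\{\lambda_i\}$, and the vectors implementing $\tau_s$, and since the fiberwise construction above is purely spectral (hence measurable in $s$), this amounts to straightforward bookkeeping rather than a substantive new obstacle.
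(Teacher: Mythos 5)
Your argument is correct in substance, but it handles the finite central summand by a genuinely different (and heavier) route than the paper. The first step is identical: both proofs split off the properly infinite central summand and apply Theorem \ref{voiNormal} to it. For the finite summand $z\mathcal{M}$, however, the paper does not re-enter the direct-integral machinery at all: citing Proposition I.6.10 of \cite{Dixmier} and separability of the pre-dual, it writes $z=\sum_{n\ge 1}p_n$ with $p_n$ central and $\tau(p_n)<\infty$, and on each corner $p_n\mathcal{M}$ it approximates the normal operator $p_na$ by a spectral (diagonal) operator in operator norm with error $\epsilon_n$ so small that $\|p_na-d_n\|_2\le \epsilon_n\,\tau(p_n)^{1/2}$ is also tiny; summing the $d_n$ finishes the proof with no measurable selection and no factoriality of fibers. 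Your alternative — rerunning the proof of Theorem \ref{voiNormal} over $z\mathcal{M}$ with finite-factor fibers and replacing Corollary \ref{vo2} by the elementary spectral approximation $\|a(s)-\sum_n\lambda_n q_n^{(s)}\|_{s,2}\le 2\delta\,\tau_s(I_{\mathcal{N}_s})^{1/2}$ — does work: conditions (a)--(d) and the analyticity of $A$ are insensitive to the fiber type, and since membership in $A$ is a fiberwise existence statement you may let $\delta$ depend on $\tau_s(I_{\mathcal{N}_s})$, so $\pi(A)=X\setminus N_0$ still holds. Two caveats if you pursue it: Lemma \ref{k5} is stated in the paper only for properly infinite algebras, so you must observe (correctly) that its proof never uses proper infiniteness; and your premise that a direct-integral argument is unavoidable on the finite part is not accurate, since the central finite-trace decomposition above short-circuits it. What your route buys is a uniform, type-independent template; what the paper's route buys is a much shorter finite-part argument relying only on the spectral theorem and a standard structure fact about finite algebras with separable pre-dual.
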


\begin{proof}
Let $\mathcal{Z}$ be the center of $\mathcal{M}$. By the type decomposition
theorem, there exists a projection $p$ in $\mathcal{Z}$ such that $p\mathcal{%
M}$ is properly infinite (or $0$) and $(I-p)\mathcal{M}$ is finite (or $0$).
From Theorem \ref{voiNormal}, we can find a diagonal operator $d_0$ in $p%
\mathcal{M}$ such that
\begin{equation*}
\max \{ \|pa-d_0\|, \|pa-d_0\|_2 \}\le \frac \epsilon 2.
\end{equation*}
On the other hand, by Proposition I.6.10 in \cite{Dixmier} and the fact that
$\mathcal{M}$ has a separable pre-dual, there exists a family $%
\{p_n\}_{n=1}^\infty$ of orthogonal projections in $\mathcal{Z}$ such that $%
I-p=\sum_{n=1}^\infty p_n$ and $\tau(p_n)<\infty$ for all $n\ge 1$. For each
$n\ge 1$, as $\tau(p_n)<\infty$, we can find a diagonal operator $d_n$ in $p_n%
\mathcal{M}$ such that
\begin{equation*}
\max \{ \|p_na-d_n\|, \|p_na-d_n\|_2 \}\le \frac \epsilon {2^{n+1}}.
\end{equation*}
Let $d=\sum_{n=0}^\infty d_n$. Then $d$ is a diagonal operator $d$ in $%
\mathcal{M}$ satisfying
\begin{equation*}
\max \{ \|a-d\|, \|a-d\|_2 \}\le \epsilon.
\end{equation*}
\end{proof}

\vspace{1cm}

\end{document}